\renewcommand{\ker}{\Ker}
\newcommand{\mc}[1]{\mathcal{#1}}
\newcommand{\mf}[1]{\mathfrak{#1}}
\newcommand{\mb}[1]{\mathbb{#1}}
\newcommand{\id}{\mathbbm{1}}
\DeclareMathOperator{\Hom}{Hom}
\DeclareMathOperator{\End}{End}
\DeclareMathOperator{\tr}{tr}
\DeclareMathOperator{\ad}{ad}
\DeclareMathOperator{\Ker}{Ker}
\DeclareMathOperator{\Span}{Span}
\DeclareMathOperator{\Res}{Res}
\DeclareMathOperator{\gr}{gr}
\theoremstyle{plain}
\newtheorem{theorem}{Theorem}[section]
\newtheorem{lemma}[theorem]{Lemma}
\newtheorem{proposition}[theorem]{Proposition}
\newtheorem{corollary}[theorem]{Corollary}
\theoremstyle{definition}
\newtheorem{definition}[theorem]{Definition}
\newtheorem{example}[theorem]{Example}
\theoremstyle{remark}
\newtheorem{remark}[theorem]{Remark}
\numberwithin{equation}{section}
\definecolor{light}{gray}{.9}
\begin{document}

\title{Generators of the quantum finite $W$-algebras in type $A$}

\author{Alberto De Sole}
\address{Dipartimento di Matematica, Sapienza Universit\`a di Roma,
P.le Aldo Moro 2, 00185 Rome, Italy}
\email{desole@mat.uniroma1.it}
\urladdr{www1.mat.uniroma1.it/\$$\sim$\$desole}
\author{Laura Fedele}
\address{Dipartimento di Matematica, Sapienza Universit\`a di Roma,
P.le Aldo Moro 2, 00185 Rome, Italy}
\email{fedele@mat.uniroma1.it}
%
%
\author{Daniele Valeri}
\address{Yau Mathematical Sciences Center, Tsinghua University, 100084 Beijing, China}
\email{daniele@math.tsinghua.edu.cn}



\begin{abstract}
We prove a conjecture proposed in \cite{DSKV16}
describing the Lax type operator $L(z)$ for the quantum finite $W$-algebras
of $\mf{gl}_N$ in terms of a PBW generating system for the $W$-algebra.
In doing so, we extend this result to an arbitrary good grading
and an arbitrary isotropic subspace of $\mf g[\frac12]$.
\end{abstract}
\keywords{
Finite $W$-algebras, quasideterminant, Yangian, semisimple gradings
}

\maketitle

\tableofcontents

\pagestyle{plain}

\section{Introduction}\label{sec:1}

Quantum finite $W$-algebras were first introduced at the end of the $'70$s 
by Kostant \cite{Ko78}. 
They appeared naturally, 
in the special case of a principal nilpotent element $f\in\mf g$,
within the theory of Whittaker vectors and Whittaker models.
Kostant's work was soon after generalized by his student Lynch \cite{Ly79} 
to the more general case of even nilpotent elements.
In the 90's there has been a great attention by theoretical physicists 
on quantum finite $W$-algebras
and their relation to quantum affine $W$-algebras and BRST cohomology 
(cf. e.g. \cite{dBT93}, the book \cite{BS95} and references therein).
It was however only with Premet \cite{Pre02} in 2002 that a definition 
of quantum finite $W$-algebras for an arbitrary nilpotent element appeared,
and was used to prove a famous conjecture of Kac and Weisfeiler 
on modular representations of reductive Lie algebras.
Since then a growing interest has been shown towards $W$-algebras 
and their representations by the mathematical community,
one of the main reasons being the close connection between 
their representation theory and the representation theory of $\mf g$, 
as for instance in the theory of primitive ideals 
(see e.g. \cite{Lo12,Pre07,Pre07b}).
Moreover, an equivalence of categories due to Skryabin \cite{Sk02} 
relates the category of modules over the $W$-algebra 
and the category of Whittaker modules over $\mf g$.
Another important result is due to Brundan and Kleshchev \cite{BK08b},
extending (in the case of $\mf g = \mf{gl}_N$) the well-known Schur-Weyl duality 
to a duality between the $W$-algebra and the affine degenerate Hecke algebra.

\medskip

Let us briefly review the definition of the quantum finite $W$-algebra.
Let $ \mf g $ be a finite-dimensional complex reductive Lie algebra with
a non-degenerate symmetric bilinear form $ (\cdot\vert\cdot) $.
Let $ f\in\mf g $ be a nilpotent element, included, by Jacobson-Morozov Theorem,
in an $\mf{sl}_2$-triple $(f,h,e)$,
and let $\mf g=\bigoplus_{j\in\frac12\mb Z}\mf g[j]$
be the corresponding $\frac12\ad h$-eigenspace decomposition (Dynkin grading).
The \emph{quantum finite W-algebra} associated to these data is defined as
(cf. Definition \ref{def:Walg})
\begin{equation}\label{20180530:eq4}
W(\mf g,f)
:=\big(
U(\mf g)/U(\mf g)\Span\{ b-(f|b) \}_{b\in\mf g[\geq1]}
\big)^{\ad \mf g[\geq\frac12]}
\,.
\end{equation}
Note that the space $\mc I=U(\mf g)\Span\{ b-(f|b) \}_{b\in\mf g[\geq1]}$
is a left ideal of $U(\mf g)$, hence $U(\mf g)/\mc I$ does not have an algebra structure.
One the other hand, it is not hard to prove that the subspace of invariants \eqref{20180530:eq4}
has a well defined associative product, induced by that of $U(\mf g)$.

The $W$-algebra \eqref{20180530:eq4} can be thought of as a quantum Hamiltonian reduction
of the associative algebra $U(\mf g)$,
and it is a quantization of the Poisson algebra of functions
on the Slodowy slice \cite{Slo80}.
In literature there have been several definitions of the 
quantum finite $W$-algebra, all proved equivalent \cite{DCSHK},
and various generalizations.
In particular the Dynkin grading can be replaced by an arbitrary good grading for $ f\in\mf g$,
and the nilpotent subalgebras $\mf g[\geq1]$ and $\mf g[\geq\frac12]\subset\mf g$ 
in \eqref{20180530:eq4} can be replaced by their extensions associated to a certain
subspace $\mf l\subset\mf g[\frac12]$
isotropic with respect to the bilinear form $\omega=(f|[\cdot\,,\,\cdot])$.
We shall then use the notation $W(\mf g,f)=W(\mf g,f,\Gamma,\mf l)$
to keep track of the dependence on the good grading $\Gamma$
and the isotropic subspace $\mf l\subset\mf g[\frac12]$.
On the other hand,
by the results of Gan and Ginzburg \cite{GG02}, 
and Brundan and Goodwin \cite{BG05} (described in detail in Section \ref{sec:6}), 
the $W$-algebras associated to the various choices of $\Gamma$ and $\mf l$
are all isomorphic,
and the $W$-algebra ultimately only depends on the nilpotent orbit of $f$ in $\mf g$.

In the special case when $f$ is the zero nilpotent 
the $W$-algebra \eqref{20180530:eq4} coincides with the enveloping algebra $U(\mf g)$.
On the other hand, by a result of Kostant-Kazhdan, 
the $W$-algebra for a principal nilpotent $f$ is isomorphic 
to the center of the enveloping algebra:
$W(\mf g,f^{\rm{pr}})\simeq\mc Z(U(\mf g))$.
The isomorphism is explicitly described as 
the restriction of the quotient map 
$U(\mf g)\twoheadrightarrow U(\mf g)/\mc I$.
In this case, we can therefore compute the generators of the $W$-algebra 
by first computing generators for $\mc Z(U(\mf g))$, 
using for instance the Capelli determinant,
and then computing their images in the quotient $U(\mf g)/\mc I$.

In-between these extreme nilpotent cases lie all other $ W$-algebras, 
whose structure is in general hard to describe explicitly.
In fact, one of the main problems in the theory of $W$-algebras
is to find an explicit formula for a collection of PBW generators
(and the commutation relations between them).
Indeed, by a Theorem of Premet \cite{Pre02}
(cf. Theorem \ref{thm:Premetconds} below),
there exist a finite collection of PBW generators for the $W$-algebra 
parametrized by a basis of the centralizer of $f$ in $\mf g$,
but an explicit formula for these generators is, in general, not known.
In the case of the Lie algebra $\mf{gl}_N$ and its arbitrary nilpotent element $f$
Brundan and Kleshchev in \cite{BK06,BK08}
found a recursive formula for an operator $W(z)$
containing all the generators of the $W$-algebra.
In the present paper
we give a different description of the same recursive formula for $W(z)$
and give an alternative proof of its lying in the $W$-algebra.

\medskip

Let us describe explicitly the above mentioned recursive formula 
for the generators of $W(\mf{gl}_N,f)$.
The nilpotent element $f$ is associated to a partition $p_1^{r_1}p_2^{r_2}\dots p_s^{r_s}$
of $N=\sum_ip_ir_i$ with $r=r_1+\dots+r_s$ parts.
A good grading $\Gamma$ for $f$ is associated to a pyramid $p$
of $N$ boxes disposed in rectangles of sizes $p_i\times r_i$, $i=1,\dots,s$
(cf. Section \ref{sec:3.1}).
One example is the right aligned pyramid, corresponding to the good grading $\Gamma_R$.
We number the boxes of the pyramid from $1$ to $N$
going first bottom to top and then right to left,
and we depict an elementary matrix $e_{ij}\in\mf{gl}_N$ as an arrow
going from box $j$ to box $i$.
As explained in Section \ref{sec:3.2},
a basis of the centralizer of $f$ in $\mf g$ is indexed by
triples $(i,j;\ell)$, with $i,j=1,\dots,r_1$ 
(numbering the rows of the pyramid bottom to top) 
and $\ell=0,\dots,\min\{q_i,q_j\}-1$,
where $q_i$ and $q_j$ are the lengths of the rows $i$ and $j$ respectively
(i.e., $q_1=\dots=q_{r_1}=p_1$, $q_{r_1+1}=\dots=q_{r_1+r_2}=p_2$, and so on).
Hence, according to Premet's Theorem,
we want to construct a collection
$$
\big\{w_{i,j;\ell}\,\big|\,i,j=1,\dots,r;\,\ell=0,\dots,\min\{q_i,q_j\}-1\big\}
\,,
$$
of PBW generators of $W(\mf g,f)$
satisfying certain ``Premet's conditions'' (see Definition \ref{def:premet}).
For the grading $\Gamma_R$
we shall construct, recursively, an $r\times r$ matrix 
$W(z)=\big(w_{i,j}(z)\big)_{i,j=1}^{r}$ encoding all the generators of the $W$-algebra 
$W(\mf{gl}_N,f)$, viewed as a subspace of $U(\mf g)/\mc I$,
as follows:
\begin{equation}\label{eq:intro1}
w_{i,j}(z)
=
-(-z)^{q_i}\delta_{ij}
+
\sum_{\ell=0}^{\min\{q_i,q_j\}-1}(-z)^\ell w_{i,j;\ell}
\,.
\end{equation}
The base step in the recursion is $p_1=1$, $r_1=r=N$,
when the pyramid consists of a single column.
In this case we set
$$
w_{i,j}(z)=z\delta_{ij}+e_{ji}
\,\,,\,\,\,\,i,j=1,\dots,N
\,,
$$
where $e_{ji}\in\mf{gl}_N$ is the usual elementary matrix.
For $p_1>1$, erasing the leftmost column from the pyramid $p$,
we obtain a smaller pyramid $p'$ of $N-r_1$ boxes,
which is associated to a good grading $\Gamma'$ for a nilpotent element $f'$
in the Lie subalgebra $\mf g'\simeq \mf{gl}_{N-r_1}$.
The recursive formula \eqref{eq:recW} for the matrix elements $w_{i,j}(z)$,
in the present notation, reads:
\begin{equation}\label{eq:intro2}
\begin{split}
w_{i,j}(z)
&=
w'_{i,j}(z) 
\,\,\text{ if }\,\, j>r_1\,,
\\
w_{i,j}(z)
&=
-\frac1{r_1}\sum_{h=1}^{r_1}
[w'_{i,h}(z),e_{N-r_1+j,N-r_1-r'_1+h}]
\\
&
-\sum_{h=1}^{r_1}
w'_{i,h}(z)(z\delta_{hj}+e_{N-r_1+j,N-r_1+h}-(N-r_1)\delta_{h,j})
\\
& +\sum_{h=r_1+1}^r
w'_{i,h}(z)w'_{h,j;p_h-1}
\,\,\text{ if }\,\, j\leq r_1
\,,
\end{split}
\end{equation}
where $W'(z)=\big(w'_{i,j}(z)\big)_{i,j=1}^r$
is the matrix of generators for the smaller pyramid $p'$.
In the first term in the RHS of \eqref{eq:intro2}
$r'_1$ denotes the height of the shortest column of the pyramid $p'$
(and it is equal to $r_1$ if $p_1>p_2+1$, while it is equal to $r_2$ if $p_1=p_2+1$).
Equation \eqref{eq:intro2} has to be interpreted as follows:
given the elements $w'_{i,j;\ell}\in W(\mf g',f')\subset U(\mf g')/\mc I'$,
we take their representatives in $U(\mf g')$
and the RHS of \eqref{eq:intro2}, viewed as an element of 
$W(\mf g,f)\subset U(\mf g)/\mc I$, is independent of the choice of representatives.

The above recursion \eqref{eq:intro2}
already appeared, with a different notation, in \cite[Lem.3.4]{BK08}.
They obtain that the coefficients $w_{i,j;\ell}$ lie in the $W$-algebra
through an involved construction based on the Gauss decomposition
and the relation between $W$-algebras and the Yangian.
In the present paper, in particular in Theorem \ref{thm:20180208},
we give an alternative,  more direct proof of this fact.

\medskip

Something very similar to the matrix $W(z)$
is the Lax type operator $L(z)$ constructed in \cite{DSKV16,DSKV17}.
This is an $r_1\times r_1$ matrix which is supposed to encode 
the whole structure of the $W$-algebra.
Indeed, the commutation relations among the coefficients of the operator $L(z)$
can be written in terms of a Yangian type identity
(cf. equation \eqref{eq:yang}):
\begin{equation}\label{20180530:eq1}
(z-w)[L_{ij}(z),L_{hk}(w)]
=
L_{hj}(w) L_{ik}(z)-L_{hj}(z) L_{ik}(w)
\,\,,\,\,\,\,
i,j,h,k=1,\dots,r_1\,.
\end{equation}
Equation \eqref{20180530:eq1} is called Yangian identity 
since it coincides (up to a sign) to the defining relation of the Yangian of $\mf{gl}_{r_1}$.

The relationship between $W$-algebras and Yangians (in the case of $\mf g=\mf{gl}_N$ 
and a rectangular nilpotent element) had been highlighted by many authors, 
as for instance Drinfeld \cite{Dr88} or Ragoucy and Sorba \cite{RS99}.
However, the most significant contribution comes from the work of Brundan and Kleshchev who, 
in \cite{BK05}-\cite{BK08}, deeply study the $W$-algebras $ W(\mf{gl}_N,f)$ and establish 
an isomorphism between a certain subquotient of a Yangian $Y_r$ 
(the truncated shifted Yangian) and $W(\mf{gl}_N,f)$.

\medskip

The main weakness of the construction in \cite{DSKV16,DSKV17}
is that it is not clear if, and in which sense, the matrix $L(z)$
encodes all the generators of the $W$-algebra.
In the simpler case of classical affine $W$-algebras,
analogous constructions were already developed by the same authors
\cite{DSKV13,DSKV16a,DSKV17a}.
In this case, the theory was better understood,
and the authors were able to prove that the pseudodifferential Lax operator $L(\partial)$
is obtained as a quasideterminant of an explicitly described operator $W(\partial)$
encoding all the generators of the classical affine $W$-algebras
(for $\mf{gl}_N$ in \cite{DSKV16a}, and for all classical Lie algebras in \cite{DSKV17a}).

The analogous result was only conjectured in \cite{DSKV16}
for the quantum finite $W$-algebras of $\mf{gl}_N$
(and it was only tested for some very special choice of nilpotent $f$).
The main result of the present paper is the proof of that conjecture.
In fact, the matrix $W(z)$ defined by the recursion \eqref{eq:intro2}
solves, in the case of the right aligned pyramid, 
the issue raised in \cite{DSKV16},
as we have
\begin{equation}\label{eq:intro3}
L(z)=\big|W(z)\big|
\,,
\end{equation}
where the quasideterminant on the RHS is with respect to the first $r_1$ rows and columns.
This is stated in Theorem \ref{thm:main} and it is proved 
in Sections \ref{sec:5a}--\ref{sec:5d}.
In section \ref{sec:6} the same result is then generalized to an arbitrary good grading.

\medskip

The paper is organized as follows.
In Section \ref{sec:2} we review the definition of a quantum finite $W$-algebra,
its Kazhdan filtration,
and we state Premet's Theorem \ref{thm:Premetconds}
on the generators of the $W$-algebra.

In Section \ref{sec:3} we introduce all the notation that will be used throughout the paper.
In particular, we study the centralizer of a nilpotent element $f$ in $\mf{gl}_N$,
and we discuss the combinatorics of partitions and pyramids associated to $f$
and its good gradings.

In Section \ref{sec:4} we recall the notion of generalized quasideterminant \eqref{eq:quasidet}.
Using this notion, we review the definition of the operator $L(z)$ defined in \cite{DSKV16},
and we state the main result, Theorem \ref{thm:main},
claiming that equation \eqref{eq:intro3} holds for some operator $W(z)$
encoding all Premet's generators for the $W$-algebra.

The proof of Theorem \ref{thm:main}
is then the content of the following 5 sections.
In particular, Sections \ref{sec:5a}--\ref{sec:5d} 
are devoted to the special case when the good grading $\Gamma$ 
is associated to a right aligned pyramid.
After introducing, in Section \ref{sec:5a}, some notation and useful lemmas,
we provide, in Section \ref{sec:5b} a recursive formula for the operator $L(z)$,
which will be used for the inductive proof of Theorem \ref{thm:main}.
In Section \ref{sec:5c} we then define, still for a right aligned pyramid, the operator $W(z)$
via the recursive formula \eqref{eq:recW},
we prove that its coefficients lie in the $W$-algebra (Theorem \ref{thm:20180208}),
and we describe its form in terms of Premet's generators (Proposition \ref{20180405:prop2}).
All of this will be needed to prove, in Section \ref{sec:5d}, Theorem \ref{thm:main}
for the right aligned pyramid.

In Section \ref{sec:6} we extend our results to an arbitrary good grading $\Gamma$.
In order to do so,
we build a chain of adjacent good gradings for $f$ connecting $\Gamma$ 
to the right aligned grading $\Gamma_R$.
We then use the isomorphisms of \cite{GG02} and \cite{BG05} 
to describe the operator $W(z)$ and deduce Theorem \ref{thm:main}
from the special case of $\Gamma_R$.

Finally, in Section \ref{sec:8} we discuss the example of a pyramid with two columns
and we compute, in this case, the operator $W(z)$.

Throughout the paper the base field $\mb F$ is a a field of characteristic $0$.

\medskip

\noindent
{\bf Acknowledgements:}
We wish to thank Victor Kac for useful discussions
and Alexander Kleshchev for comments on the Ph.D. thesis
of the second author, from which this work originated.
The research of this paper was partially conducted during the visits
of all three authors to IHES,
of the first two authors to MIT,
and of the third author to the University of Rome La Sapienza.
We are grateful to all these institutions for their kind hospitality.
The first author was partially supported by the national PRIN grant ``Moduli and Lie Theory''
and University grants, 
the second author was supported by the University grant ``Avvio alla ricerca'' 2016,
and the third author was supported by a Tshinghua University startup research grant.

\section{Quantum finite \texorpdfstring{$W$}{W}-algebras}
\label{sec:2}

\subsection{Definition of $W(\mf g,f)$}
\label{sec:2.1}

Let $ \mf{g}$ be a reductive finite dimensional Lie algebra,
endowed with a non-degenerate symmetric bilinear form $(\cdot\,|\,\cdot)$.
Let  $ f \in \mf{g} $ be a nilpotent element.
Recall that \cite{EK05} a \textit{good grading} for $ f $ 
is a $ \frac{1}{2} \mb Z$-grading of $ \mf{g} $, 
\begin{equation}\label{eq:gamma}
\Gamma:\,\, \mf{g} 
= \bigoplus_{j \in \frac{1}{2} \mb Z} \mf{g}^\Gamma[j]
= \bigoplus_{j \in \frac{1}{2} \mb Z} \mf{g}[j]
\end{equation}
such that
$f \in \mf{g}[-1]$,
$\text{ad }f: \mf{g}[j] \longrightarrow \mf{g}[j-1]$ 
is injective for $ j \geq \frac{1}{2} $,
and $ \text{ad }f: \mf{g}[j] \longrightarrow \mf{g}[j-1]$ 
is surjective for $ j \leq \frac{1}{2} $.
In particular, if $ \Gamma $ is a good grading for $ f $, then $\text{ad} f$
restricts to a bijection $\mf{g}[\frac{1}{2}] 
\stackrel{\sim}{\longrightarrow} \mf{g}[-\frac{1}{2}]$.
We thus have a non-degenerate skewsymmetric bilinear form on $\mf g[\frac12]$ 
given by $\omega(a,b) = (f|[a,b])$.
Furthermore, let $\mf l\subset\mf g[\frac12]$ be an isotropic subspace w.r.t. this skewsymmetric form,
and let $\mf l\subset \mf l^\perp$ be its orthocomplement.
We shall consider the following two nilpotent subalgebras of $\mf g$:
\begin{equation}\label{eq:mn}
\mf m=\mf l\oplus\mf g[\geq1]
\,\subset\,
\mf n=\mf l^\perp\oplus\mf g[\geq1]
\,\subset\,\mf g
\,.
\end{equation}
Here and further we denote $\mf g[\geq1]=\oplus_{j\geq1}\mf g[j]$.

In \eqref{eq:gamma} and further on, when confusion may not arise,
we omit the superscript $\Gamma$ from the graded spaces $\mf g^\Gamma[j]=\mf g[j]$.
However, in Section \ref{sec:6}, where we will need to use several gradings
at the same time, we will recover the full notation $\mf g^\Gamma[j]$.

Attached to the Lie algebra $\mf g$, its nilpotent element $f$, the good grading $\Gamma$ 
and the isotropic subspace $\mf l\subset\mf g[\frac12]$,
we have the corresponding $W$-algebra:
\begin{definition}\label{def:Walg}
The quantum finite $W$-algebra $W(\mf g,f)=W(\mf g,f,\Gamma,\mf l)$ is
\begin{equation}\label{eq:Walg}
W(\mf g,f)
:=\big(U(\mf g)/\mc I\big)^{\ad \mf n}
\,\,,\text{ where }\,\,
\mc I=U(\mf g)\Span\{b-(f|b)\}_{b\in\mf m}
\,.
\end{equation}
\end{definition}
The quotient $U(\mf g)/\mc I$ is a cyclic left $\mf g$-module
generated by the element $\bar 1$, the image of $1\in U(\mf g)$.
Note that, while $U(\mf g)/\mc I$ does not have any natural algebra structure,
the subspace of $\ad\mf n$-invariants does,
and this structure is induced by that of $U(\mf g)$.
Recall that, by the results of \cite{BG05} and \cite{GG02},
the isomorphism class of $W(\mf g,f)$
does not depend on the choice of the good grading $\Gamma$
and of the isotropic subspace $\mf l$,
and it only depends on the nilpotent orbit of $f$.

\subsection{Kazhdan filtration}
\label{sec:2.2}

Recall that, associated to the good grading $\Gamma$,
we have the \emph{Kazhdan filtration} 
of $U(\mf g)$, defined by
\begin{equation}\label{eq:kaz}
F^\Gamma_{\Delta}U(\mf g)
=
F_{\Delta}U(\mf g)
=
\sum_{s-j_1-\ldots-j_s\leq\Delta}\mf g[j_1]\dots\mf g[j_s]
\,\,,\,\,\,\,
\Delta\in\frac12\mb Z
\,.
\end{equation}
In \eqref{eq:kaz} and further on, when confusion may not arise,
we omit the superscript $\Gamma$ from the filtered spaces 
$F^\Gamma_\Delta=F_\Delta$.
However, in Section \ref{sec:6}, where we will need to use several gradings
at the same time, we will recover the full notation $F^\Gamma_\Delta$.

The filtration \eqref{eq:kaz} is clearly an increasing algebra filtration, 
and the corresponding associated graded is a Poisson algebra
which, as a commutative associative algebra, is
$\gr U(\mf g)=S(\mf g)$,
graded by the Kazhdan degree (or \emph{conformal weight}).

We have the induced Kazhdan filtration 
of the $W$-algebra $W(\mf g,f)$:
\begin{equation}\label{eq:kazW}
F_{\Delta}W(\mf g,f)
=
W(\mf g,f)\cap \big(F_\Delta U(\mf g)/ F_\Delta \mc I\big)
\,\,,\,\,\,\,
\Delta\in\frac12\mb Z_+
\,.
\end{equation}
Note that, since $\mf g[>1]\subset \mc I$, 
we have $F_{\Delta}W(\mf g,f)=0$ for $\Delta<0$.
The associated graded is isomorphic to the classical finite $W$-algebra:
\begin{equation}\label{eq:clfinW}
\gr W(\mf g,f)
\simeq
W^{cl}(\mf g,f)
:=
\big(S(\mf g)\big/S(\mf g)\Span\{b-(f|b)\}_{b\in\mf m}\big)^{\ad\mf n} 
\,.
\end{equation}

\subsection{Premet's generators}
\label{sec:2.2b}

Fix a subspace $ U \subset \mf{g}  $ complementary to $ [f, \mf{g}] $ 
and compatible with the $ \frac{1}{2} \mb Z $-grading. 
For example, we could take $ U = \mf{g}^e $, the Slodowy slice. 
By the non-degeneracy of  $ (\cdot \, | \, \cdot ) $, the orthocomplement 
to $ [f, \mf{g}] $ is $ \mf{g}^f $, hence 
\begin{equation}\label{eq:decompgl12}
\mf{g} 
= 
[f,\mf g]\oplus U
=
\mf{g}^f\oplus U^\perp
\,.
\end{equation}
Let $\pi^f:\,\mf g\twoheadrightarrow\mf g^f$
be the projection with kernel $U^\perp$,
and let $\eta^f:\,S(\mf g)\twoheadrightarrow S(\mf g^f)$ be the
algebra homomorphism defined by
\begin{equation}\label{eq:projectionpif}
\eta^f(a)=\pi^f(a)+(f|a)
\,\,,\,\,\,\,\text{ for }\, a\in\mf g
\,.
\end{equation}
By assumption the subspace $U$ is compatible with the $\Gamma$-grading.
Hence, since for a good grading $\mf g^f\subset\mf g[\leq0]$,
we have $\pi^f(\mf g[\geq\frac12])=0$.
It follows that $\eta^f(b-(f|b))=0$ for every $b\in\mf m$,
and therefore $\eta^f$ induces a surjective algebra homomorphism
\begin{equation}\label{diag:commut-pi_gf}
\eta^f\,:\,\,W^{cl}(\mf g,f)\to S(\mf g^f)
\,.
\end{equation}

In \cite{Pre02}, Premet constructs a PBW basis for $ W(\mf{g}, f)$
by ``inverting'' the map $\eta^f$.
The result was stated in the particular case of a Dynkin grading $\Gamma$
and a Lagrangian subspace $\mf l\subset\mf g[\frac12]$,
but the same proof should work in general.
\begin{theorem}\cite[Theorem 4.6]{Pre02}\label{thm:Premetconds}
There exists a (non-unique) linear map
\begin{equation}
w: \mf{g}^f \longrightarrow W(\mf{g}, f)
\end{equation}
such that
$w(a) \in F_{\Delta}W(\mf{g}, f)$
and
$\eta^f(\gr_\Delta(w(a)) = a$
for $a\in\mf{g}^f[1-\Delta]$.
Moreover, given such a map $w$,
if $ \{x_i\}_{i=1}^{\dim \mf{g}^f} $ is an ordered basis of $ \mf{g}^f$ 
homogeneous w.r.t. the $\Gamma$-grading, $x_i \in \mf{g}^f[1 - \Delta_i]$,
then $\{w(x_i)\}_{i=1}^{\dim \mf{g}^f}$ form a PBW generating set of the $W$-algebra,
in the sense that then the monomials
$$
w(x_{i_1})\dots w(x_{i_k}) 
\,\,,\,\,\,\,
k\geq0,\,
1 \leq i_1 \leq \cdots \leq i_k \leq \dim \mf{g}^f,\, 
\Delta_{i_1} + \ldots + \Delta_{i_k} \leq \Delta
\,,
$$
form a basis of $ F_\Delta W(\mf{g}, f)$.
\end{theorem}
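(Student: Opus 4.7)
The plan is to reduce the statement to the classical finite $W$-algebra $W^{cl}(\mf g,f)$ via the Kazhdan filtration, and then invoke the standard principle that a lift of a basis of the associated graded is a basis of the filtered space. The essential input, which I regard as the crucial step to be secured, is that the surjective homomorphism $\eta^f\colon W^{cl}(\mf g,f)\to S(\mf g^f)$ of \eqref{diag:commut-pi_gf} is in fact an isomorphism of graded commutative algebras. Combined with the stated isomorphism $\gr W(\mf g,f)\simeq W^{cl}(\mf g,f)$ of \eqref{eq:clfinW}, this identifies $\gr W(\mf g,f)$ with $S(\mf g^f)$ via $\eta^f$. Granting these two identifications, the construction of $w$ proceeds as follows: fix a homogeneous basis of $\mf g^f$; for $a\in\mf g^f[1-\Delta]$, view $a\in S(\mf g^f)$ as an element of Kazhdan degree $\Delta$, pull it back through the inverse of $\eta^f$ to obtain a unique $\widetilde w(a)\in\gr_\Delta W^{cl}(\mf g,f)$, and then lift it to any $w(a)\in F_\Delta W(\mf g,f)$ with $\gr_\Delta(w(a))=\widetilde w(a)$. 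Extending linearly over the chosen basis yields the desired map $w\colon\mf g^f\to W(\mf g,f)$ with the required compatibility property holding by construction; the non-uniqueness is exactly the freedom in the choice of lift in $F_{\Delta-1/2}W(\mf g,f)$.

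For the PBW assertion, given an ordered homogeneous basis $\{x_i\}$ of $\mf g^f$ with $x_i\in\mf g^f[1-\Delta_i]$, I would consider the monomials $w(x_{i_1})\cdots w(x_{i_k})$ with $i_1\leq\cdots\leq i_k$ and $\Delta_{i_1}+\cdots+\Delta_{i_k}\leq\Delta$. Passing to the associated graded in $\gr W(\mf g,f)$ and then to $S(\mf g^f)$ via $\eta^f$, these become the ordered commutative monomials $x_{i_1}\cdots x_{i_k}$ of total Kazhdan degree at most $\Delta$. By the usual PBW theorem for the symmetric algebra these form a basis of the corresponding filtered piece of $S(\mf g^f)$. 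The standard filtered-to-graded lifting lemma, namely that any family of elements in a filtered vector space whose symbols form a basis of the associated graded is itself a basis of the filtered space, then implies that the monomials $w(x_{i_1})\cdots w(x_{i_k})$ form a basis of $F_\Delta W(\mf g,f)$.

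The main obstacle is precisely the injectivity of $\eta^f$ on $W^{cl}(\mf g,f)$, i.e., that the classical Hamiltonian reduction produces a commutative algebra whose underlying variety is the transverse slice of the expected dimension. In the Dynkin-Lagrangian case this is Premet's original theorem \cite{Pre02}; in the generality of an arbitrary good grading $\Gamma$ and arbitrary isotropic $\mf l\subset\mf g[\frac12]$ the same argument is expected to apply, based on the compatibility of the decomposition $\mf g=[f,\mf g]\oplus U$ of \eqref{eq:decompgl12} with the grading and on the transversality of the Slodowy slice. One would also need to confirm that passage to $\ad\mf n$-invariants commutes with taking the associated graded in the sense of \eqref{eq:clfinW}, which typically follows from a cohomological vanishing argument exploiting that $\mf n/\mf m$ is $\omega$-isotropic and dual to itself. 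Once these inputs are in place, Steps 1 and 2 above go through without further difficulty.
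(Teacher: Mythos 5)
The paper does not prove this statement: it is quoted from \cite{Pre02} (Theorem 4.6), with only the remark that Premet's proof, given there for the Dynkin grading and a Lagrangian $\mf l$, ``should work in general''; and the paper itself describes that proof as ``inverting the map $\eta^f$''. Your outline is exactly that argument: identify $\gr W(\mf g,f)$ with $W^{cl}(\mf g,f)$ as in \eqref{eq:clfinW}, identify $W^{cl}(\mf g,f)$ with $S(\mf g^f)$ via $\eta^f$ of \eqref{diag:commut-pi_gf}, lift a homogeneous basis of $\mf g^f$ through the Kazhdan filtration, and deduce the PBW statement from the standard filtered-to-graded lemma (using that $S(\mf g^f)$ is a domain, so symbols of products are products of symbols). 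So in approach you are aligned with the cited proof, and the purely formal part of your reduction is sound.

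As a proof, however, it is incomplete at precisely the two points you yourself flag, and those points are not bookkeeping but the entire substance of the theorem. The bijectivity of $\eta^f\colon W^{cl}(\mf g,f)\to S(\mf g^f)$ is the Kostant--Premet--Gan--Ginzburg theorem that the classical Hamiltonian reduction is the coordinate ring of the Slodowy slice; it rests on the isomorphism of affine varieties between $N\times(f+\mf g^e)$ and $f+\mf m^\perp$ given by the adjoint action, proved by a one-parameter contraction argument that uses the good grading, and its extension to an arbitrary good grading and an arbitrary (not necessarily Lagrangian) isotropic $\mf l$ is exactly what needs checking. Likewise the identification $\gr W(\mf g,f)\simeq W^{cl}(\mf g,f)$ requires commuting $\ad\mf n$-invariants with the associated graded, which is established by a cohomological vanishing or spectral-sequence argument (as in \cite{GG02}), not merely asserted. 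Declaring these inputs ``expected to apply'' leaves you at the same level of detail as the paper (a citation plus ``the same proof should work in general''): nothing you wrote is wrong, but the theorem has been reduced to, rather than derived from, its two essential ingredients.
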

\begin{definition}\label{def:premet}
A \emph{Premet map} is an injective linear map $w:\,\mf g^f\to W(\mf g,f)$
satisfying the conditions of Theorem \ref{thm:Premetconds}, 
namely such that, for $a\in\mf g^f[1-\Delta]$,
\begin{enumerate}[(i)]
\item
$w(a) \in F_{\Delta}W(\mf{g}, f)$,
\item
$\eta^f(\gr_\Delta(w(a))) = a$.
\end{enumerate}
\end{definition}

\section{Partitions, pyramids, bases and so on}\label{sec:3}

In the present paper we shall only consider $W$-algebras associated
to the Lie algebra $\mf{gl}_N$.

\subsection{Partitions and pyramids associated to a nilpotent orbit}
\label{sec:3.1}

We let $V$ be a vector space of dimension $N$ and we let $\mf g=\End V$,
endowed with the trace form $(A|B)=\tr(AB)$.
We also fix a nilpotent element $f\in\mf g$,
which is associated to a partition of $N$:
\begin{equation}\label{eq:p}
\lambda=p_1^{r_1}p_2^{r_2}\dots p_s^{r_s}
\,,
\end{equation}
with $p_1>\dots>p_s$, $r_1,\dots,r_s\geq1$,
and $N=r_1p_1+\dots+r_sp_s$.
We also let $r=r_1+\dots+r_s$ the number of parts of the partition.

As in Section \ref{sec:2.1},
we fix a good grading $\Gamma$ for $f$ \eqref{eq:gamma}.
Recall \cite{Wang11} that there exists a semisimple element $h_\Gamma$
such that the $\Gamma$-degrees coincide with the $\ad(h_\Gamma)$-eigenvalues.
Note also that, as a consequence of the Jacobson-Morozov Theorem,
we can construct an $\mf{sl}_2$-triple $(f,h_\Sigma,e_\Sigma)$ of $\mf g$
containing $f$, which is compatible with the grading $\Gamma$,
in the sense that $h_\Sigma\in\mf g[0]$ and $e_\Sigma\in\mf g[1]$.
We thus have the corresponding Dynkin grading by $\frac12\ad(h_\Sigma)$-eigenspaces:
\begin{equation}\label{eq:Dgrading}
\Sigma: \,\,\mf{g} = \bigoplus_{j \in \frac{1}{2} \mb Z} \mf{g}^\Sigma[j]
\end{equation}

The action of the semisimple elements $h_\Gamma$ and $\frac12 h_\Sigma$ on $V$
provides us with the corresponding $\Gamma$ and Dynkin-gradings of $V$:
\begin{equation}\label{eq:Vgrading}
V 
= \bigoplus_{j \in \frac{1}{2} \mb Z} V[j]
= \bigoplus_{j \in \frac{1}{2} \mb Z} V^\Sigma[j]
\,.
\end{equation}
By the theory of good gradings \cite{EK05},
good gradings for $\mf{gl}_N$ are 
associated to the pyramids of shape $\lambda$.
For example,
for the partition $\lambda=3^221$ of $9$,
we can have the pyramid justified to the right:
\begin{figure}[H]
\setlength{\unitlength}{0.15in}
\centering
\begin{picture}(14,11)

\put(7,9){\framebox(2,2){}}

\put(5,7){\framebox(2,2){}}
\put(7,7){\framebox(2,2){}}

\put(3,5){\framebox(2,2){}}
\put(5,5){\framebox(2,2){}}
\put(7,5){\framebox(2,2){}}

\put(3,3){\framebox(2,2){}}
\put(5,3){\framebox(2,2){}}
\put(7,3){\framebox(2,2){}}

\put(2,2){\vector(1,0){9}}
\put(10,1){$x$}

\put(4,1.6){\line(0,1){0.8}}
\put(5,1.8){\line(0,1){0.4}}
\put(6,1.6){\line(0,1){0.8}}
\put(7,1.8){\line(0,1){0.4}}
\put(8,1.6){\line(0,1){0.8}}

\put(5.7,0.6){0}
\put(3.5,0.6){-1}
\put(7.8,0.6){1}

\end{picture}
\caption{}\label{fig1}
\end{figure}
\noindent
Each box corresponds to a basis element of $V$,
and the $x$-coordinates of the center of each box is the corresponding $\Gamma$-degree.
The Dynkin grading for the same partition corresponds to the symmetric pyramid:
\begin{figure}[H]
\setlength{\unitlength}{0.15in}
\centering
\begin{picture}(14,11)

\put(5,9){\framebox(2,2){}}

\put(4,7){\framebox(2,2){}}
\put(6,7){\framebox(2,2){}}

\put(3,5){\framebox(2,2){}}
\put(5,5){\framebox(2,2){}}
\put(7,5){\framebox(2,2){}}

\put(3,3){\framebox(2,2){}}
\put(5,3){\framebox(2,2){}}
\put(7,3){\framebox(2,2){}}

\put(2,2){\vector(1,0){9}}
\put(10,1){$x$}

\put(4,1.6){\line(0,1){0.8}}
\put(5,1.8){\line(0,1){0.4}}
\put(6,1.6){\line(0,1){0.8}}
\put(7,1.8){\line(0,1){0.4}}
\put(8,1.6){\line(0,1){0.8}}

\put(5.7,0.6){0}
\put(3.5,0.6){-1}
\put(7.8,0.6){1}

\end{picture}
\caption{}\label{fig2}
\end{figure}

\subsection{Some maps and decompositions}
\label{sec:3.1b}

As described in the previous Section \ref{sec:3.1},
the vector space $V$ is depicted by a pyramid,
in the sense that the boxes of the pyramid correspond to basis elements of $V$.
With this pictorial description,
the endomorphism $F\in\End V$ 
(corresponding to the nilpotent element $f\in\mf g$,
see the notation described in Section \ref{sec:3.4} below)
acts as a shift to the left (by one).
We also denote by $F^t$ the ``transpose'' of $F$,
which acts as the shift to the right (again by one).
The span of the rightmost (resp. leftmost) boxes of the pyramid
is then 
\begin{equation}\label{eq:V+-}
V_+ = \ker (F^t)
\qquad
\Big(\text{ resp. }\,\, V_-=\ker(F) \,\,\Big)
\,.
\end{equation}
Note that $V_+$ and $V_-$ both have dimension equal to $r$.

Consider the following direct sum decomposition of the space $V$:
\begin{equation}\label{eq:Vdu}
V=V^d\oplus V^u\,,
\end{equation}
where $V^d$ is the subspace corresponding to the bottom rectangle of the pyramid
(of height $r_1$ and base $p_1$),
while $V^u$ is the subspace corresponding to the rest of the pyramid.
In the Dynkin grading, $V^d$ is the isotipic component with respect to the $\mf{sl}_2$
action of highest weight $p_1-1$.
For example, for the diagram of Figure \ref{fig2},
the subspaces $V^d$ and $V^u$ are depicted in gray and green respectively
in the figure below:
\begin{figure}[H]
\setlength{\unitlength}{0.15in}
\setlength\fboxsep{0pt}
\centering
\begin{picture}(14,11)

\put(5,9){\colorbox{teal}{\framebox(2,2){}}}

\put(4,7){\colorbox{teal}{\framebox(2,2){}}}
\put(6,7){\colorbox{teal}{\framebox(2,2){}}}

\put(3,5){\colorbox{lightgray}{\framebox(2,2){}}}
\put(5,5){\colorbox{lightgray}{\framebox(2,2){}}}
\put(7,5){\colorbox{lightgray}{\framebox(2,2){}}}

\put(3,3){\colorbox{lightgray}{\framebox(2,2){}}}
\put(5,3){\colorbox{lightgray}{\framebox(2,2){}}}
\put(7,3){\colorbox{lightgray}{\framebox(2,2){}}}

\put(2,2){\vector(1,0){9}}
\put(10,1){$x$}

\put(4,1.6){\line(0,1){0.8}}
\put(5,1.8){\line(0,1){0.4}}
\put(6,1.6){\line(0,1){0.8}}
\put(7,1.8){\line(0,1){0.4}}
\put(8,1.6){\line(0,1){0.8}}

\put(5.7,0.6){0}
\put(3.5,0.6){-1}
\put(7.8,0.6){1}

\put(11,4.5){$V^d$}
\put(11,8.5){$V^u$}

\end{picture}
\caption{}\label{fig2b}
\end{figure}

As it is apparent by the discussion and the pictures of Section \ref{sec:3.1},
the largest and lowest degrees for both the $\Gamma$ grading and the Dynkin grading of $V$
are $\pm\frac {p_1-1}2$,
while the largest and lowest degrees for the $\Gamma$ grading and the Dynkin grading of $\mf g$
are $\pm (p_1-1)$.
%
Moreover, we have 
\begin{equation}\label{20180215:eq3}
V^d_{\pm} := V^d\cap V_{\pm}
=V^\Sigma[\pm\frac {p_1-1}2]\subseteq V[\pm\frac {p_1-1}2]
\,,
\end{equation}
(at least one of the inclusion being an equality), and
$$
V^\Sigma[\neq\pm\frac {p_1-1}2]\supseteq V[\neq\pm\frac {p_1-1}2]
$$
We also denote $V^u_{\pm}:=V^u\cap V_{\pm}$.
Obviously, we have
\begin{equation}\label{eq:V_-ud}
V_{\pm}=V_{\pm}^d\oplus V_{\pm}^u
\,.
\end{equation}

We also have the following decompositions:
\begin{equation}\label{eq:V-k}
V_{-} := \bigoplus_{k=0}^{p_1-1}
\Big(V_-\cap F^kV_{+}\Big)
\,\,\text{ and }\,\,
V_{+} := \bigoplus_{k=0}^{p_1-1}
\Big(V_+\cap (F^t)^kV_{-}\Big)
\,.
\end{equation}
Pictorially, the meaning of the decomposition \eqref{eq:V-k} is clear:
$V_-\cap F^kV_+$ is the span of the leftmost boxes of a pyramid
in rows of length $k+1$.
Similarly,
$V_+\cap (F^t)^kV_-$ is the span of the rightmost boxes of a pyramid
in rows of length $k+1$.
In particular, we have 
\begin{equation}\label{20180507:eq4}
\begin{array}{l}
\displaystyle{
\vphantom{\Big(}
V_-^d=V_-\cap F^{p_1-1}V_+
\,\,,\,\,\,\,
V_-^u=\bigoplus_{k=0}^{p_1-2}V_-\cap F^{k}V_+
\,,} \\
\displaystyle{
\vphantom{\Big(}
V_+^d=V_+\cap(F^t)^{p_1-1}V_-
\,\,,\,\,\,\,
V_+^u=\bigoplus_{k=0}^{p_1-2}V_+\cap (F^t)^{k}V_-
\,.}
\end{array}
\end{equation}

\subsection{The ``identity'' notation}
\label{sec:3.1c}

Throughout the paper, given a subspace $U\subset V$,
together with a ``natural'' splitting $V=U\oplus W$
(usually associated with the grading of $V$),
we shall denote, with a slight abuse of notation,
by $\id_U$  
both the identity map $U\stackrel{\sim}{\longrightarrow}U$,
the inclusion map $U\hookrightarrow V$,
and the projection map (with kernel $W$) $V\twoheadrightarrow U$;
the correct meaning of $\id_U$ should then be clear from the context.
Likewise,
if we further have a subspace $U_1\subset U$ with a ``natural'' splitting $U=U_1\oplus W_1$,
the same symbol $\id_{U_1}$
can mean not only the three maps 
identity $U_1\stackrel{\sim}{\longrightarrow}U_1$,
inclusion $U_1\hookrightarrow V$,
and projection (with kernel $W_1\oplus W$) $V\twoheadrightarrow U_1$,
but also 
the inclusion $U_1\hookrightarrow U$,
and projection (with kernel $W_1$) $U\twoheadrightarrow U_1$;
again, the correct meaning of $\id_{U_1}$ should then be clear from the context.
For example, since $V_{\pm}\subset V$ come with the natural splittings
$V=V_+\oplus FV=V_-\oplus F^tV$,
we shall denote the identity on $V_{\pm}$,
the inclusion $V_{\pm}\hookrightarrow V$
and the projection $V\twoheadrightarrow V_{\pm}$
(with kernel $FV$ and $F^tV$ respectively),
all by the same symbol $\id_{V_{\pm}}$.

Using the above notation, we clearly have
\begin{equation}\label{20180219:eq3}
FF^t=\id_V-\id_{V_+}=\id_{FV}
\,\,\text{ and }\,\,
F^tF=\id_V-\id_{V_-}=\id_{F^tV}
\,.
\end{equation}

\subsection{The centralizer $\mf g^f$}
\label{sec:3.2}

Recalling the definition \eqref{eq:V+-} of $V_{\pm}$,
we can decompose 
\begin{equation}\label{eq:V+-2}
V
=\bigoplus_{j=0}^{p_1-1}F^j V_+
=\bigoplus_{j=0}^{p_1-1}(F^t)^j V_-
\,,
\end{equation}
and, consequently,
$$
\mf g=\End(V)=
\bigoplus_{i,j=0}^{p_1-1}\Hom(F^i V_+,(F^t)^j V_-)
\,.
$$

We have the corresponding decomposition of the centralizer of $f$ in $\mf g$ as
\begin{equation}\label{eq:central}
\mf g^f=\bigoplus_{k=0}^{p_1-1}\mf g^f_k
\,\,\text{ where }\,\,
\mf g^f_k\subset
\bigoplus_{i=0}^{k}\Hom(F^i V_+,(F^t)^{k-i} V_-)
\,.
\end{equation}
For example, we obviously have
\begin{equation}\label{eq:central0}
\mf g^f_0=\Hom(V_+,V_-)
\,.
\end{equation}
According to the decomposition \eqref{eq:V-k},
we have the decomposition
\begin{equation}\label{20180502:eq2}
\mf g^f_0
=
\bigoplus_{h,k=0}^{p_1-1}
\mf g^f_0(h,k)
\,\,\text{ where }\,\,
\mf g^f_0(h,k)
:=
\Hom(V_+\cap(F^t)^hV_-,V_-\cap F^kV_+)
\,.
\end{equation}
\begin{lemma}\label{lem:gfk}
For $\ell\in\mb Z_+$, consider the linear map $\phi_\ell:\,\mf g\to\mf g$
given by
\begin{equation}\label{eq:phik}
\phi_\ell(A)
=
\sum_{i=0}^\ell F^i(F^t)^\ell A(F^t)^\ell F^{\ell-i}
\,.
\end{equation}
\begin{enumerate}[(a)]
\item
For $0\leq h,k\leq p_1-1$
and $\ell>\min\{h,k\}$,
we have
$$
\phi_\ell(\mf g_0^f(h,k))=0
\,.
$$
\item
For $0\leq h,k\leq p_1-1$
and $\ell\leq\min\{h,k\}$, 
the restriction of $\phi_\ell$ to $\mf g_0^f(h,k)$
is injective, with values in $\mf g_\ell^f$:
$$
\phi_\ell\,:\,\,
\mf g_0^f(h,k)
\hookrightarrow
\mf g^f_\ell
\,.
$$
\item
We have the direct sum decomposition
$$
\mf g^f
=
\bigoplus_{h,k=0}^{p_1-1}\bigoplus_{\ell=0}^{\min\{h,k\}}
\phi_\ell\big(\mf g_0^f(h,k)\big)
\,.
$$
\end{enumerate}
In particular, if $\{u_i\}_{i\in\mc F(h,k)}$ is a basis of 
$\mf g_0^f(h,k)$
for every $0\leq h,k\leq p_1-1$,
then a basis of $\mf g^f$ is given by
\begin{equation}\label{20180502:eq1}
\big\{\phi_{\ell}(u_i)
\,\big|\,
0\leq h,k\leq p_1-1,\,
i\in\mc F(h,k),\,
0\leq \ell\leq \min\{h,k\}
\big\}
\,.
\end{equation}
\end{lemma}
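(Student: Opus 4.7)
My plan is to work entirely with the pyramid picture of Section~\ref{sec:3.1b}: each basis vector of $V$ sits in a specific box, indexed by its row (there are $r$ rows of lengths $q_1,\dots,q_r$) and its position $j\in\{1,\dots,q\}$ within that row. In this basis, $F$ is the left-shift within each row (sending position $j$ to $j-1$, annihilating position $1$) and $F^t$ is the right-shift. Thus $V_+\cap(F^t)^hV_-$ is spanned by the rightmost boxes of rows of length exactly $h+1$, and $V_-\cap F^kV_+$ by the leftmost boxes of rows of length exactly $k+1$. A straightforward position calculation shows that for $v$ at position $j$ of a row of length $q$, the vector $(F^t)^\ell F^{\ell-i}v$ lies in $V_+\cap(F^t)^hV_-$ precisely when $q=h+1$, $j+i=h+1$, and $\ell\leq h$; a symmetric analysis of the leftmost factor $(F^t)^\ell$ acting on the image of $A$, which sits at position $1$ of a row of length $k+1$, yields the constraint $\ell\leq k$. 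Part (a) follows immediately, since $\ell>\min\{h,k\}$ violates one of these constraints for every $v$.

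For part (b), the same position calculation shows that for each $v$ at position $j\in\{h+1-\ell,\dots,h+1\}$ of a row of length $h+1$, only the index $i=h+1-j$ contributes to $\phi_\ell(A)v$, and the resulting vector lies in $(F^t)^{\ell-i}V_-$. Hence $\phi_\ell(A)\in\bigoplus_{i=0}^\ell\Hom(F^iV_+,(F^t)^{\ell-i}V_-)$. Injectivity follows by testing on $v$ equal to the rightmost box of a row of length $h+1$ (so $j=h+1$): only the $i=0$ term survives, giving $\phi_\ell(A)v=(F^t)^\ell Av$. Since $\ell\leq k$, the map $(F^t)^\ell$ is injective on $V_-\cap F^kV_+$, so $\phi_\ell(A)=0$ forces $Av=0$ for every such $v$, hence $A=0$. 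To place $\phi_\ell(A)$ inside $\mf g^f_\ell$ I compute by telescoping
\[
[F,\phi_\ell(A)]=F^{\ell+1}(F^t)^\ell A(F^t)^\ell-(F^t)^\ell A(F^t)^\ell F^{\ell+1}\,.
\]
Both terms vanish by position bookkeeping: in the first, $(F^t)^\ell A(F^t)^\ell$ lands at position $\ell+1$ of a row of length $k+1$ and is killed by the extra left-shift $F^{\ell+1}$; in the second, $(F^t)^\ell F^{\ell+1}v$ has position at most $q-1$, hence is never in $V_+$, so $A$ annihilates it.

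For part (c), the subspaces $\phi_\ell(\mf g^f_0(h,k))$ with distinct $\ell$ lie in distinct graded summands $\mf g^f_\ell$, while for fixed $\ell$ those with different $(h,k)$ involve maps whose nonzero action occurs only between rows of lengths $h+1$ and $k+1$; the sum is therefore direct. Writing $n_h=\#\{i\mid q_i=h+1\}$, one has $\dim\mf g^f_0(h,k)=n_hn_k$, so the proposed direct sum has dimension $\sum_{h,k}(\min\{h,k\}+1)\,n_hn_k$. On the other hand, the classical formula $\dim\mf g^f=\sum_{i,j=1}^r\min\{q_i,q_j\}$ for $\mf{gl}_N$, regrouped according to the values $h+1,k+1$ taken by $q_i,q_j$, becomes $\sum_{h,k}n_hn_k\min\{h+1,k+1\}$, the same number. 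Hence the direct sum exhausts $\mf g^f$, (c) is established, and the final basis statement is immediate.

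The main technical obstacle is the verification $[F,\phi_\ell(A)]=0$: one must identify precisely which of the telescoping boundary terms survive and then confirm that each vanishes by the position analysis above, treating the boundary cases carefully. Once this commutation is secured, all three parts reduce to combinatorics in the pyramid together with the elementary dimension count.
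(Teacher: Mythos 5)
Your proof is correct and follows essentially the same route as the paper: the same telescoping computation for $[F,\phi_\ell(A)]=0$, the same containment $\phi_\ell(A)\in\bigoplus_i\Hom(F^iV_+,(F^t)^{\ell-i}V_-)$, injectivity by evaluating on a single well-chosen vector, and directness via the block decomposition by row lengths plus a dimension count. The only differences are cosmetic: you phrase the operator identities as explicit box/position bookkeeping and invoke the classical formula $\dim\mf g^f=\sum_{i,j}\min\{q_i,q_j\}$ where the paper cites the basis of $\mf g^f$ from \cite{EK05}.
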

\begin{proof}
Let $A\in\mf g^f_0(h,k)$,
i.e. $A\in\End V$ and 
$$
A=\id_{V_-\cap F^kV_+}A\id_{V_+\cap(F^t)^hV_-}
\,.
$$
If $\ell>k$, we have
$(F^t)^\ell\id_{F^kV_+}=0$, so that $\phi_\ell(A)=0$.
If $\ell>h$, we have
$\id_{(F^t)^hV_-}(F^t)^\ell=0$, so that $\phi_\ell(A)=0$.
This proves part (a).
Next,
let $A\in\mf g_0^f=\Hom(V_+,V_-)$,
i.e. $A\in\End V$ and $A=\id_{V_-}A\id_{V_+}$.
Clearly, for $i\leq\ell$, 
$F^i(F^t)^\ell\id_{V_-}$ has image in $(F^t)^{\ell-i}V_-$,
and $\id_{V_+}(F^t)^\ell F^{\ell-i}$ has domain $F^iV_+$.
Hence, 
$$
\phi_\ell(A)\in\,\,\bigoplus_{i=0}^\ell\Hom(F^iV_+,(F^t)^{\ell-i}V_-)
\,.
$$
Therefore, to show that $\phi_\ell(A)$ lies in $\mf g^f_\ell$, we only need to check
that $\phi_\ell (A)$ commutes with $F$.
We have
$$
\begin{array}{l}
\displaystyle{
\vphantom{\Big(}
[F,\phi_\ell(A)]
=
F\phi_\ell(A)-\phi_\ell(A)F
} \\
\displaystyle{
\vphantom{\Big(}
=
\sum_{i=0}^\ell F^{i+1}(F^t)^\ell A(F^t)^\ell F^{\ell-i}
-\sum_{i=0}^\ell F^{i}(F^t)^\ell A(F^t)^\ell F^{\ell-i+1}
} \\
\displaystyle{
\vphantom{\Big(}
=
F^{\ell+1}(F^t)^\ell A(F^t)^\ell-(F^t)^\ell A(F^t)^\ell F^{\ell+1}
=
0\,,}
\end{array}
$$
since $A=\id_{V_-}A\id_{V_+}$, and
$$
F^{\ell+1}(F^t)^\ell\id_{V_-}
=
0
\,\,,\,\,\,\,
\id_{V_+}(F^t)^\ell F^{\ell+1}
=
0
\,.
$$
To complete the proof of claim (b),
we are left to check that the restriction of $\phi_\ell$ to $\mf g_0^f(h,k)$
is injective for $\ell\leq\min\{h,k\}$.
Let $A\in\mf g_0^f(h,k)$ be non-zero,
and let
$0\neq v_+=(F^t)^hv_-\in V_+\cap(F^t)^hV_-$
be such that $0\neq Av_+\in V_-\cap F^kV_+$.
For $\ell\leq\min\{h,k\}$, 
we consider the non-zero vector $(F^t)^{h-\ell}v_-$,
and we shall prove that 
\begin{equation}\label{20180504:eq1}
\phi_\ell(A)((F^t)^{h-\ell}v_-)\neq0\,.
\end{equation}
Note that, for $j\geq0$,
$$
\id_{V_+}(F^t)^\ell F^{j}(F^t)^{h-\ell}v_-
=
\delta_{j,0}v_+
\,.
$$
Hence, since $A=A\id_{V_+}$, we have
$$
\phi_\ell(A)(F^t)^{h-\ell}v_-
=
\sum_{i=0}^\ell F^i(F^t)^\ell A(F^t)^\ell F^{\ell-i}(F^t)^{h-\ell}v_-
=
F^\ell(F^t)^\ell A v_+
\,,
$$
and this vector is non zero since $Av_+\in V_-\cap F^kV_+$, and $\ell\leq k$.
This proves \eqref{20180504:eq1} and completes the proof of part (b).
Finally we prove claim (c).
Given $\ell\geq0$, the sum of the spaces $\phi_\ell(\mf g_0^f(h,k))$ 
is a direct sum.
Indeed, the space $V$ is direct sum of the subspaces $V(p_i)$ given by 
the various rectangles of the pyramid of sizes $p_i\times r_i$, $i=1,\dots,s$,
(cf. \eqref{eq:p}): $V=\oplus_{i=1}^s V(p_i)$.
Then, 
by the definition \eqref{eq:phik} of $\phi_\ell$ 
and \eqref{20180502:eq2} of $\mf g_0^f(h,k)$,
we clearly have
$$
\phi_\ell(\mf g_0^f(h,k))\,\subset\,\Hom(V(h+1),V(k+1))
\,.
$$
It follows that the sum 
$$
\sum_{h,k=0}^{p_1-1}\sum_{\ell=0}^{\min\{h,k\}}\phi_\ell(\mf g_0^f(h,k))
$$
is a direct sum.
The fact that it coincides with $\mf g^f$
is obtained by dimension counting
using, for example, the basis of $\mf g^f$ in \cite{EK05}.
The last assertion of the Lemma is an obvious consequence of part (c).
\end{proof}
We let
\begin{equation}\label{eq:phiz}
\phi_z
=
\sum_{\ell\in\mb Z_+}(-z)^\ell\phi_\ell
\,\,:\,\mf g^f_0\to\mf g^f[z]
\,.
\end{equation}

\subsection{Choices of bases}
\label{sec:3.3}

As it is clear by Figures \ref{fig1}-\ref{fig2}, 
we may assume that
there exists a basis of $V$ compatible with all the $\Gamma$-gradings
that we shall consider.
Hence, there exists a basis $\{u_i\}_{i\in I}$ of $\mf g$ compatible with the $\Gamma$-grading:
$$
I=\sqcup_{j\in\frac12\mb Z}I_j
\,,
$$
where 
$\{u_i\}_{i\in I_j}$ 
is a basis of $\mf g[j]$. 
We also assume that $\{u_i\}_{i\in I_{\frac12}}$ extends a basis $\{u_i\}_{i\in I_{\mf l}}$ of 
the isotropic subspace $\mf l\subset\mf g[\frac12]$.
Hence, $I_{\mf m}:=I_{\mf l}\sqcup(\sqcup_{j\geq 1}I_j)$ is the index set of a basis of $\mf m$ in \eqref{eq:mn},
and $I_{\mf p}:=I\backslash I_{\mf m}$ is the index set of a basis of a complementary subspace $\mf p\subset\mf g$.
We shall also let $\{u^i\}_{i\in I}$ be the basis of $\mf g$ dual to $\{u_i\}_{i\in I}$:
$(u_i|u^j)=\delta_{i,j}$.
Recall that the following completeness identities hold
\begin{equation}\label{eq:compl}
a=\sum_{i\in I}(a|u^i)u_i=\sum_{i\in I}(a|u_i)u^i
\,.
\end{equation}

We will also assume, without loss of generality,
that $\{u_i\}_{i\in I}$ extends
\begin{equation}\label{eq:mcF}
\{u_i\}_{i\in\mc F}
\,,
\end{equation}
basis of $\mf g^f_0=\Hom(V_+,V_-)$
compatible with the decomposition \eqref{20180502:eq2}.
In particular, we have $I^{\Sigma}_{1-p_1}=I_{1-p_1}\subset\mc F$.
Clearly, the set $\mc F$ has cardinality $r^2$.
The dual basis $\{u^i\}_{i\in\mc F}$ is a basis of $\Hom(V_-,V_+)\subset\mf g$.

\subsection{Notational convention}
\label{sec:3.4}

Throughout the paper, we shall use the following notational convention:
we shall denote with lowercase letters the elements of the Lie algebra $\mf g$, viewed as elements of the universal enveloping algebra $U(\mf g)$,
and with the corresponding uppercase letters the same elements of $\mf g$, viewed as elements of $\End V$.
For instance, $F$ denotes the nilpotent endomorphism of $V$ corresponding to $f\in\mf g$.
Moreover, $\{U_i\}_{i\in I}$ is the basis of $\End V$ corresponding to $\{u_i\}_{i\in I}$,
and $\{U^i\}_{i\in I}$ is the dual basis w.r.t. the trace form of $\End V$.

With the above notational convention, we let 
\begin{equation}\label{eq:E}
E=\sum_{i\in I}u_iU^i
\,\,\text{ and }\,\,
E_{\mf p}=\sum_{i\in I_{\mf p}}u_iU^i
\,\,\in U(\mf g)\otimes\End V
\,,
\end{equation}
where we drop the tensor product sign for elements of $U(\mf g)\otimes\End V$.
At times we shall also denote, for $j\in\frac12\mb Z$,
\begin{equation}\label{eq:Ej}
E_j=\sum_{i\in I_j}u_iU^i
\,\,\in U(\mf g)\otimes\End V
\,.
\end{equation}
Similarly for $E_{\leq j}, E_{<j}$, etc.
For example, if $\mf l=0$, we have $E_{\mf p}=E_{\leq\frac12}$.
Clearly, we have $E=\sum_{j\in\frac12\mb Z}E_j$.
By the completeness relations \eqref{eq:compl}, for $a\in\mf g[i]$
(and denoting, as explained above, by $A$ the same element of $\End V$),
we have
\begin{equation}\label{20180215:eq1}
(a|E_j)=\delta_{i+j,0}A
\,.
\end{equation}

Recalling the definition of the quotient space $U(\mf g)/\mc I$ in \eqref{eq:Walg},
we have 
$$
E\bar1=(F+E_{\mf p})\bar1
\in (U(\mf g)/\mc I)\otimes\End V
\,.
$$

We also let
\begin{equation}\label{eq:D}
D_{\mf m}=-\sum_{i\in I_{\mf m}}U^iU_i
\,\in\End V
\,.
\end{equation}

\subsection{Notation for brackets}
\label{sec:3.4b}

Throughout the paper, we shall also use the following notation for brackets.
For elements $a,b$ in an associative algebra $R$, the bracket $[a,b]$
stands, as usual, for the commutator $ab-ba$.
Likewise, the bracket of elements of $R\otimes\End V$
is their commutator:
$$
[a_1A_2,b_1B_2]
=
a_1b_1A_2 B_2-b_1a_1B_2A_2
=
[a_1,b_1]A_2B_2+b_1a_1[A_2,B_2]
\,,
$$
for $a_1,b_1\in R$ and $A_2,B_2\in\End V$.
Here, as usual, we drop the tensor product sign for elements of $R\otimes\End V$.
Instead, we denote by $[\cdot\,,\,\cdot]^1$ the bracket in the first factor of 
$R\otimes\End V$ composed with the associative product in the second factor:
\begin{equation}\label{eq:bracket2}
[a_1A_2,b_1B_2]^1=[a_1,b_1]A_2B_2
\,\in R\otimes\End V
\,.
\end{equation}
We shall at times also encounter the bracket of an element of $R$
and an element of $R\otimes\End V$.
By this we mean the commutator in $R$:
\begin{equation}\label{eq:bracket3}
[a,b_1B_2]=[a,b_1]B_2
\,\Big(=[a\id_V,b_1B_2]\Big)
\,\in R\otimes\End V
\,.
\end{equation}

\subsection{The matrices $Z(z)$ and $W(z)$}
\label{sec:3.5}

Consider the following polynomial
\begin{equation}\label{20180215:eq2}
\id_{V_+}(1+zF^t)^{-1}\id_{V_-}
=
\sum_{k\in\mb Z_+}
(-1)^kz^k
\id_{V_+}(F^t)^k\id_{V_-}
\,\in\mb C[z]\otimes\Hom(V_-,V_+)
\,.
\end{equation}
Here the identity maps are used with the meaning specified in Section \ref{sec:3.1c}.
In particular, $\id_{V_-}$ denotes the inclusion $V_-\hookrightarrow V$,
while $\id_{V_+}$ denotes the projection $V\twoheadrightarrow V_+$
with kernel $FV$.
The operator \eqref{20180215:eq2}
maps each leftmost box of a pyramid to the corresponding rightmost box
obtained by a shift by $k$ to the right, multiplied by $(-z)^k$.

Recalling the map $\phi_z$ in \eqref{eq:phiz}, 
the basis \eqref{eq:mcF} of $\mf g^f_0=\Hom(V_+,V_-)$,
and using the notation introduced in Sections \ref{sec:3.1c} and \ref{sec:3.4},
we define the following operator
\begin{equation}\label{eq:Z}
Z(z)
=
z\id_{V_+}(1+zF^t)^{-1}\id_{V_-}
+
\sum_{i\in\mc F}\phi_z(u_i) U^i
\,\in U(\mf g^f)[z]\otimes\Hom(V_-,V_+)
\,.
\end{equation}
By Lemma \ref{lem:gfk},
the matrix entries of $Z(z)$ are polynomials in $z$
whose coefficients form a basis of $\mf g^f$.
More precisely, if, as in Lemma \ref{lem:gfk}, $\{u_i\}_{i\in\mc F(h,k)}$
is a basis of $\mf g_0^f(h,k)$, for every $0\leq h,k\leq p_1-1$,
we have
\begin{equation}\label{20180502:eq3}
\begin{array}{l}
\displaystyle{
\vphantom{\Big(}
\id_{V_+\cap(F^t)^hV_-}Z(z)\id_{V_-\cap F^k V_+}
} \\
\displaystyle{
\vphantom{\Big(}
=
-\delta_{h,k}(-z)^{k+1}
(F^t)^k\id_{V_-\cap F^k V_+}
+
\sum_{\ell=0}^{\min\{h,k\}} \sum_{i\in\mc F(h,k)}
(-z)^\ell \phi_\ell(u_i) U^i
\,.}
\end{array}
\end{equation}

Let $w:\,\mf g^f\to W(\mf g,f)$ be a Premet map (cf. Definition \ref{def:premet}).
By Theorem \ref{thm:Premetconds} the elements $w(\phi_k(u_i))$,
$i\in\mc F$, $k\in\mb Z_+$, form a set of PBW generators for the $W$-algebra $W(\mf g,f)$.
We denote by $W(z)$ the image via $w$ of the matrix \eqref{eq:Z}:
\begin{equation}\label{eq:W}
W(z)
=
\!\!
z\id_{V_+}(1+zF^t)^{-1}\id_{V_-}
+
\sum_{i\in\mc F}w(\phi_z(u_i)) U^i
\,\in W(\mf g,f)[z]\otimes\Hom(V_-,V_+)
\,.
\end{equation}

\section{The matrix \texorpdfstring{$L(z)$}{L(z)} and generators of the \texorpdfstring{$W$}{W}-algebra}\label{sec:4}

\subsection{Generalized quasideterminants}
\label{sec:4.1}

Recall that \cite{GGRW05,DSKV17}, given $A\in R\otimes \Hom(V,\widetilde V)$
($R$ being a unital associative algebra and $V,\widetilde V$ 
being finite dimensional vector spaces),
and decompositions $V=U\oplus U^\perp$
and $\widetilde V=W\oplus W^\perp$,
the corresponding $(U,W)$ \emph{generalized quasideterminant} of $A$
is defined as
\begin{equation}\label{eq:quasidet}
|A|_{U,W}=(\id_{U} A^{-1}\id_{W})^{-1}
\,\in R\otimes\Hom(U,W)
\,,
\end{equation}
provided that it exists, i.e. provided that $A\in R\otimes \Hom(V,\widetilde V)$
is invertible and $\id_U A^{-1}\id_{W}\in R\otimes\Hom(W,U)$ is invertible.

The generalized quasideterminant has the following hereditary property:
\begin{equation}\label{eq:hered}
\big||A|_{U,W}\big|_{U_1,W_1}
=
|A|_{U_1,W_1}
\,,
\end{equation}
for splittings $V=U\oplus U^\perp$, $U=U_1\oplus U_1^\perp$,
$\widetilde V=W\oplus W^\perp$ and $W=W_1\oplus W_1^\perp$,
provided that all quasideterminants exist.
Obviously, the RHS is associated to the splittings 
$V=U_1\oplus (U^\perp\oplus U_1^\perp)$
and 
$\widetilde V=W_1\oplus (W^\perp\oplus W_1^\perp)$.
\begin{proposition}\cite[Prop.2.4]{DSKV17}\label{prop:quasidet}
Suppose that $A \in R\otimes \Hom(V,\widetilde V)$ is invertible,
and that we have decompositions $V=U\oplus U^\perp$
and $\widetilde V=W\oplus W^\perp$.
Then the $(U,W)$ generalized quasideterminant $|A|_{U,W}$ exists
if and only if $\id_{W^{\perp}}A\id_{U^{\perp}}\in R\otimes\Hom(U^{\perp},W^{\perp})$
is invertible,
and in this case we have
\begin{equation}\label{eq:quasidet2}
|A|_{U,W}
=
\id_W
\Big(
A-
A\id_{U^{\perp}}
\big(\id_{W^{\perp}}A\id_{U^{\perp}}\big)^{-1}
\id_{W^{\perp}}A
\Big)
\id_{U}
\,.
\end{equation}
\end{proposition}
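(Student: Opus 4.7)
The claim is in essence the standard Schur complement formula for the top-left block of an inverse, adapted to the biblock setting of maps $A : V = U \oplus U^\perp \to W \oplus W^\perp = \widetilde V$. My plan is to set up $2\times 2$ block notation for $A$: write $A_{WU} = \id_W A \id_U$, $A_{WU^\perp} = \id_W A \id_{U^\perp}$, $A_{W^\perp U} = \id_{W^\perp} A \id_U$, and $D := \id_{W^\perp} A \id_{U^\perp}$, so that \eqref{eq:quasidet2} asserts
\[
|A|_{U,W} \;=\; A_{WU} - A_{WU^\perp} D^{-1} A_{W^\perp U},
\]
the Schur complement $S$ of $D$.

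For the forward implication, assume $D$ is invertible. I would then exhibit the block-LDU factorization
\[
A \;=\; \bigl(\id_{\widetilde V} + A_{WU^\perp} D^{-1}\bigr)\,\bigl(S + D\bigr)\,\bigl(\id_V + D^{-1} A_{W^\perp U}\bigr),
\]
which is verified by expanding both sides and matching blocks (using $\id_V = \id_U + \id_{U^\perp}$ and $\id_{\widetilde V} = \id_W + \id_{W^\perp}$). The outer factors, being unipotent upper-/lower-triangular endomorphisms of $\widetilde V$ and $V$ respectively, are automatically invertible; since $A$ itself is invertible by hypothesis, the middle factor $S + D$ must be invertible, forcing $S$ to be invertible. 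Inverting the product then yields $\id_U A^{-1} \id_W = S^{-1}$, which simultaneously establishes that $|A|_{U,W}$ exists and that it equals $S$, proving \eqref{eq:quasidet2}.

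For the converse, I would apply the same argument to $B := A^{-1}$ with the roles of source and target splittings exchanged. By hypothesis $|A|_{U,W}$ exists, which means $\id_U B \id_W$ is invertible; running the analogous LDU decomposition of $B$ with this block as the pivot produces a Schur complement $T$ which, by invertibility of $B$, must also be invertible. Inverting that factorization and reading off the $(W^\perp, U^\perp)$-block of $B^{-1} = A$, one finds exactly $T^{-1}$; hence $D = \id_{W^\perp} A \id_{U^\perp}$ is invertible, completing the equivalence.

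The arguments are entirely standard linear algebra, so the main obstacle is not conceptual but notational: one must be consistent with the paper's overloaded ``identity'' convention from Section \ref{sec:3.1c}, interpreting each occurrence of $\id_U$, $\id_{U^\perp}$, $\id_W$, $\id_{W^\perp}$ as an inclusion, a projection, or a bona fide identity map as dictated by the ambient space of each factor in the LDU decomposition. Once this bookkeeping is carried out consistently in $R \otimes \Hom(-,-)$, the verification of the factorization, and hence of the whole proposition, is purely mechanical.
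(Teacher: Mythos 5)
Your proof is correct. Note that the paper does not prove Proposition \ref{prop:quasidet} itself but quotes it from \cite{DSKV17}, and the argument there is in substance the same standard Schur-complement computation you give: your block factorization $A=(\id_{\widetilde V}+A_{WU^\perp}D^{-1})(S+D)(\id_V+D^{-1}A_{W^\perp U})$ is easily verified under the overloaded identity conventions of Section \ref{sec:3.1c}, the outer factors are unipotent hence invertible, invertibility of the block-diagonal middle factor together with that of $D$ forces $S$ invertible and gives $\id_U A^{-1}\id_W=S^{-1}$, and applying the same reasoning to $B=A^{-1}$ with pivot $\id_U B\id_W$ yields $\id_{W^\perp}A\id_{U^\perp}=T^{-1}$ for the converse, exactly as you outline.
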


\subsection{The matrix $L(z)$ for the $W$-algebras in type $A$}
\label{sec:4.2}
Recall from equations \eqref{eq:Vgrading} and \eqref{20180215:eq3} that we have the splittings
$V=V_{\pm}^d\oplus V^{\Sigma}[\neq\pm\frac {p_1-1}2]$.
The Yangian type operator for the quantum finite $W$-algebra $W(\mf g,f)$
defined in \cite{DSKV16}
is constructed as the following $(V_-^d,V_+^d)$ generalized quasideterminant
\begin{equation}\label{eq:L}
L(z)
=
|z\id_V+F+E_{\mf p}+D_{\mf m}|_{V_-^d,V_+^d}\bar1
\,,
\end{equation}
where $E_{\mf p}$ is as in \eqref{eq:E} and $D_{\mf m}$ is as in \eqref{eq:D}.
\begin{example}\label{ex:f=0}
In the special case when $f=0$,
the corresponding pyramid consists of a single column, with $p_1=1$ and $r_1=r=N$,
the good grading is concentrated at degree $0$,
the matrix $D_{\mf m}$ in \eqref{eq:D} vanishes,
and the subspaces $V_{\pm}$ and $V^d_{\pm}$ are all equal to $V$.
Moreover, for $f=0$ we have $W(\mf g,f)=U(\mf g)$.
Hence, in this case the operator $L(z)$ in \eqref{eq:L} becomes
\begin{equation}\label{eq:L1}
L(z)
=
z\id_V+E
\,.
\end{equation}
\end{example}

The following result is proved in \cite{DSKV16,DSKV17} in the case when
$\mf l=0$ and the grading $\Gamma$ coincides with the Dynkin grading $\Sigma$.
The proof in the general case is essentially the same \cite{Fed18}.
\begin{theorem}\label{thm:Lexists}
The quasideterminant defining $L(z)$ exists, and
$$
L(z)\,\in\,
W(\mf g,f)((z^{-1}))\otimes\Hom(V^d_-,V^d_+)
\,.
$$
Moreover, $L(z)$ is an operator of Yangian type for the quantum finite $W$-algebra $W(\mf g,f)$,
i.e.
\begin{equation}\label{eq:yang}
\begin{array}{c}
\displaystyle{
\vphantom{\Big(}
(z-w)\big(
L(z)\otimes L(w)
-\big(\id_{V^d_+}\otimes L(w)\big)
\big(L(z)\otimes\id_{V^d_-}\big)
\big)
} \\
\displaystyle{
\vphantom{\Big(}
=
\Omega
\big(L(w)\otimes L(z)-L(z)\otimes L(w)\big)
\,,}
\end{array}
\end{equation}
in
$W(\mf g,f)[[z^{-1},w^{-1}]][z,w]\otimes\Hom(V^d_-,V^d_+)^{\otimes2}$.
\end{theorem}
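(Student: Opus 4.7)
The plan is to establish the three assertions of Theorem \ref{thm:Lexists} in turn: existence of the quasideterminant, $\ad\mf n$-invariance of its coefficients modulo $\mc I$, and the Yangian identity \eqref{eq:yang}.

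For existence, I would apply Proposition \ref{prop:quasidet} to the auxiliary operator $A(z):=z\id_V+F+E_{\mf p}+D_{\mf m}$. In $U(\mf g)((z^{-1}))\otimes\End V$ this operator is invertible by geometric expansion, $A(z)^{-1}=\sum_{k\geq0}(-z)^{-k-1}(F+E_{\mf p}+D_{\mf m})^k$. I then analyze $\id_{V^d_-}A(z)^{-1}\id_{V^d_+}$: since $V^d_+$ sits in the top Dynkin weight $(p_1-1)/2$ and $V^d_-$ in the bottom weight $-(p_1-1)/2$, the only way to bridge the weight gap of $p_1-1$ inside $A(z)^{-1}$ is via $F^{p_1-1}$, and this operator yields an isomorphism $V^d_+\xrightarrow{\sim}V^d_-$. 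Hence $\id_{V^d_-}A(z)^{-1}\id_{V^d_+}$ has invertible leading term $(-1)^{p_1-1}z^{-p_1}\id_{V^d_-}F^{p_1-1}\id_{V^d_+}$ and can be inverted as a Laurent series in $z^{-1}$, yielding $L(z)\in U(\mf g)((z^{-1}))\otimes\Hom(V^d_-,V^d_+)$, which one then pushes through the quotient by $\mc I$.

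For the $\ad\mf n$-invariance of $L(z)\bar 1$, the plan is to exploit the adjoint-invariance of the canonical tensor $E$: for every $a\in\mf g$ one has $[a,E]^1=-[1\otimes A,E]$ in $U(\mf g)\otimes\End V$, expressing that $E$ is $\ad\mf g$-invariant. Using this identity together with the bracket notation of Section \ref{sec:3.4b}, $[n,A(z)]$ can be rewritten for each $n\in\mf n$ in a form in which the only terms that do not already vanish modulo $\mc I$ are precisely those cancelled by the correction $D_{\mf m}$. Propagating this through the Schur complement expression \eqref{eq:quasidet2} for $L(z)$, and decomposing $n$ according to its components in $\mf l^\perp\subset\mf g[\tfrac12]$ and in $\mf g[\geq1]$, one checks that $[n,L(z)]\bar 1=0$ in $(U(\mf g)/\mc I)\otimes\Hom(V^d_-,V^d_+)$.

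The main obstacle is the Yangian identity \eqref{eq:yang}. My strategy is to first establish an ``RTT-type'' relation for $A(z)$ itself,
\begin{equation*}
(z-w)[A(z)\otimes\id,\,\id\otimes A(w)]=\Omega\bigl((\id\otimes A(w))(A(z)\otimes\id)-(A(w)\otimes\id)(\id\otimes A(z))\bigr)
\end{equation*}
(holding up to terms that annihilate $\bar 1$), where $\Omega$ is the permutation on $V\otimes V$. Such a relation follows from the commutation relations of the $u_i$, repackaged through the completeness identity \eqref{eq:compl} together with the standard swap $(X\otimes Y)\Omega=\Omega(Y\otimes X)$. The delicate step, and the bulk of the work, is then to translate this $A(z)$-level identity into the desired identity for $L(z)$ via the Schur complement formula \eqref{eq:quasidet2} and the hereditary property \eqref{eq:hered}: one must track the many cross terms arising from the inverse of $\id_{(V^d_+)^\perp}A(z)\id_{(V^d_-)^\perp}$ and verify that, after composition with $\Omega$ and projection onto $V^d_\pm\otimes V^d_\pm$, they organize precisely into the right-hand side of \eqref{eq:yang}. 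This combinatorial bookkeeping is where the detailed arguments of \cite{DSKV16,DSKV17} and of \cite{Fed18} for the general good grading $\Gamma$ and isotropic $\mf l$ come in.
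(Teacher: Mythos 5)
The paper itself contains no proof of Theorem \ref{thm:Lexists}: it is quoted from \cite{DSKV16,DSKV17}, with \cite{Fed18} covering the general good grading $\Gamma$ and isotropic subspace $\mf l$, so the only comparison available is with those references, and your outline follows essentially their route. The existence argument is sound: by Proposition \ref{prop:quasidet} it suffices to invert $\id_{V_-^d}(z\id_V+F+E_{\mf p}+D_{\mf m})^{-1}\id_{V_+^d}$, whose leading coefficient is $(-1)^{p_1-1}z^{-p_1}\id_{V_-^d}F^{p_1-1}\id_{V_+^d}$, an isomorphism $V_+^d\stackrel{\sim}{\to}V_-^d$; one small correction is that the degree count should be run with the $\Gamma$-grading rather than the Dynkin one (the endomorphism components of $E_{\mf p}$ have $\Gamma$-degree $\geq-\frac12$, while their Dynkin degrees are not a priori controlled), which still works because, by \eqref{20180215:eq3}, $V_\pm^d$ sit at the extreme degrees $\pm\frac{p_1-1}2$ for every compatible good grading. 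For the remaining two assertions your text is a plan rather than a proof: the verification that $[n,L(z)]\bar1=0$ for $n\in\mf n$ (this is exactly where the specific form of $D_{\mf m}$ enters, and it is a genuine computation, not a formal consequence of $[a,E]^1=[E,A]$), and the passage of the Yangian-type identity from $z\id_V+F+E_{\mf p}+D_{\mf m}$ to its quasideterminant, are precisely the substantive steps, and you defer both to \cite{DSKV16,DSKV17,Fed18}. Note also that in \cite{DSKV17} the second step is not done by tracking cross terms in the Schur complement \eqref{eq:quasidet2}; rather one proves a general statement that generalized quasideterminants of operators of Yangian type for the module $U(\mf g)/\mc I$ are again of Yangian type, which is cleaner than the bookkeeping you propose and is the formulation in which the ``up to terms annihilating $\bar1$'' caveat is made precise (the identity for the big matrix does not hold on the nose in $U(\mf g)\otimes\End V^{\otimes2}$). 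So: right strategy, consistent with the cited proof, but as written it is an outline whose hard parts are borrowed rather than supplied, which is acceptable only to the same extent that the paper itself cites these works instead of proving the theorem.
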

The operator 
$\Omega\in\End{(V^d_+)}^{\otimes2}\simeq\End({V^d_+}^{\otimes 2})$
corresponds to switching the two factors.
\begin{remark}
When written in matrix form,
equation \eqref{eq:yang} coincides with the defining equation
of the Yangian of $\mf{gl}_{r_1}$ \cite{Mol07}.
\end{remark}

\subsection{The matrix $L(z)$ in terms of Premet's generators of $W(\mf g,f)$}
\label{sec:4.3}

The following result, relating the operator $L(z)$ in \eqref{eq:L}
and $W(z)$ in \eqref{eq:W}, was conjectured in \cite{DSKV16}
for the special case of Dynkin grading $\Gamma$ and zero isotropic subspace $\mf l$,
and it is the main result of the present paper.
\begin{theorem}\label{thm:main}
There exists a Premet map $w:\,\mf g^f\to W(\mf g,f)$ (cf. Definition \ref{def:premet})
such that
\begin{equation}\label{eq:main}
L(z)
=
|W(z)|_{V_-^d,V_+^d}
\,,
\end{equation}
where $L(z)$ and $W(z)$ are given by \eqref{eq:L}
and \eqref{eq:W} respectively.
\end{theorem}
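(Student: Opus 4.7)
The plan is to proceed by induction on the number of columns of the pyramid in the case of the right-aligned grading $\Gamma_R$ with $\mf l = 0$, and then to bootstrap to an arbitrary good grading $\Gamma$ and arbitrary isotropic $\mf l$ via the Gan--Ginzburg and Brundan--Goodwin isomorphisms. The base case is $p_1=1$: the pyramid is a single column, $f=0$, and $W(\mf g,f)=U(\mf g)$; by Example \ref{ex:f=0}, $L(z)=z\id_V+E$, and the right-hand side $W(z)$ is also $z\id_V+E$ (since in this degenerate case $\mf g^f=\mf g$ and $\phi_z$ is trivial), while the quasideterminant operation is vacuous because $V_-^d=V_+^d=V$.

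For the inductive step, I would first establish, as a technical preliminary, a recursive formula for $L(z)$ itself. The idea is to apply Proposition \ref{prop:quasidet} and the hereditary property \eqref{eq:hered} to the operator $z\id_V+F+E_{\mf p}+D_{\mf m}$, decomposing $V$ according to the splitting $V=V^d\oplus V^u$. Erasing the leftmost column of the pyramid $p$ produces the pyramid $p'$ with $\mf g'\simeq\mf{gl}_{N-r_1}$, nilpotent $f'$, and right-aligned grading $\Gamma'$. Using the explicit shape of $F$, $E_{\mf p}$ and $D_{\mf m}$ restricted to the appropriate blocks, one should obtain $L(z)$ expressed in terms of $L'(z)$ of the smaller problem, plus explicit correction terms living in the first $r_1$ rows/columns. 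In parallel, the matrix $W(z)$ is defined recursively via \eqref{eq:intro2}.

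The real work lies in Step 3: one must show that the elements $w_{i,j;\ell}$ produced by the recursion \eqref{eq:intro2} actually lie in $W(\mf g,f)=(U(\mf g)/\mc I)^{\ad\mf n}$, and that the resulting map $w$ is a Premet map. The first part amounts to an $\ad\mf n$-invariance check: assuming inductively that $w'_{i,j;\ell}\in W(\mf g',f')$, one must compute how $\ad x$ acts on the RHS of \eqref{eq:intro2} for $x\in\mf n$, exploiting the fact that $\mf n=\mf n'\oplus(\text{extra terms from the new column})$, and that modulo $\mc I$ the elementary matrices appearing in the new column factors are identified with scalars via $b\mapsto(f|b)$. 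The cancellations should match the weighted sum in \eqref{eq:intro2}, which is precisely why the coefficient $-1/r_1$ and the shifts $(N-r_1)$ appear. The second part, identifying the Premet-type form (Proposition \ref{20180405:prop2}), requires tracking Kazhdan degrees of each summand in \eqref{eq:intro2} and evaluating $\eta^f\circ\gr$, using the description of $\mf g^f$ via $\phi_\ell$ from Lemma \ref{lem:gfk}. This is the main obstacle, since the recursion mixes commutators with products at different Kazhdan levels and one must show the leading symbol agrees with $\phi_\ell(u_i)$ after matching indices through the identifications of Section \ref{sec:3.2}.

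Once Steps 2 and 3 are in place, the inductive step of the main theorem is obtained by substituting the recursion for $L(z)$ (expressed through $L'(z)$) and the recursion for $W(z)$ into the identity $|W(z)|_{V_-^d,V_+^d}$, applying Proposition \ref{prop:quasidet}, and using the inductive hypothesis $L'(z)=|W'(z)|_{V_-^{d'},V_+^{d'}}$; the key point is that the hereditary property of the generalized quasideterminant makes this reduction compatible with the two-step nesting $V_-^d\subset V_-\subset V$. Finally, to pass from $\Gamma_R$ to an arbitrary good grading $\Gamma$, I would build a chain of adjacent good gradings as in \cite{BG05}, push the operator $W(z)$ through the Brundan--Goodwin isomorphism at each step, and verify that both the recursive definition of $W(z)$ and the quasideterminant defining $L(z)$ are preserved; a parallel use of the Gan--Ginzburg isomorphism \cite{GG02} extends the result to nonzero isotropic $\mf l\subset\mf g[\tfrac12]$.
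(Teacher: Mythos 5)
Your proposal follows essentially the same route as the paper: induction on the number of columns for the right-aligned pyramid with $\mf l=0$, built on a recursive formula for $L(z)$ (obtained in the paper via the auxiliary operator $T(z)$), the recursive construction of $W(z)$ together with the proofs that its coefficients lie in the $W$-algebra and assemble into a Premet map, the matching recursion for the quasideterminant $|W(z)|_{V_-^d,V_+^d}$, and finally transport to arbitrary $(\Gamma,\mf l)$ through the Gan--Ginzburg and Brundan--Goodwin isomorphisms. The only point where the paper needs noticeably more than you indicate is the last step: those isomorphisms do not preserve the Kazhdan filtrations, so verifying that the transported operator still satisfies Premet's conditions requires the additional machinery of semisimple gradings neutral for $f$.
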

\begin{remark}\label{20180522:rem}
It can be useful to expand the quasideterminant $|W(z)|_{V_-^d,V_+^d}$ in powers of $z$.
By \eqref{eq:W} we get
\begin{align*}
& |W(z)|_{V_-^d,V_+^d}
=
-(-z)^{p_1}(F^t)^{p_1-1}
+
\sum_{\ell=0}^{p_1-1}\sum_{i\in\mc F(p_1-1,p_1-1)}
(-z)^\ell w_{i,\ell}U^i
\\
& +
\sum_{s=0}^\infty 
\sum_{h_1,\dots,h_{s+1}=0}^{p_1-2}
\sum_{\ell_0=0}^{h_1}
\sum_{\ell_1=0}^{\min\{h_1,h_2\}}
\dots
\sum_{\ell_s=0}^{\min\{h_s,h_{s+1}\}}
\sum_{\ell_{s+1}=0}^{h_{s+1}}
\\
&\times
\sum_{i_0\in\mc F(p_1-1,h_1)}
\sum_{i_1\in\mc F(h_1,h_2)}
\dots
\sum_{i_s\in\mc F(h_s,h_{s+1})}
\sum_{i_{s+1}\in\mc F(h_{s+1},p_1-1)}
\\
&\times
\vphantom{\Big(}
(-z)^{L-H-s-1} 
w_{i_0,\ell_0}w_{i_1,\ell_1}\dots w_{i_{s+1},\ell_{s+1}}
U^{i_0}F^{h_1}U^{i_1}\dots 
F^{h_{s+1}}U^{i_{s+1}}
\,,
\end{align*}
where $\mc F(h,k)$ is the index set introduced in \eqref{20180502:eq1},
$L=\ell_0+\ell_1+\dots+\ell_{s+1}$, $H=h_1+\dots+h_{s+1}$,
and $w_{i,\ell}=w(\phi_\ell(u_i))$.
\end{remark}
\begin{remark}\label{20180522:rem2}
We believe that, fixed a subspace $U\subset\mf g$
complementary to $[f,\mf g]$, 
the corresponding Premet map $w:\,\mf g^f\to W(\mf g,f)$
for which equation \eqref{eq:main} holds is unique.
\end{remark}

\section{The case of aligned pyramid: notation and preliminary lemmas}\label{sec:5a}

We shall first prove Theorem \ref{thm:main}
in the special case when the good grading $\Gamma$ corresponds to
a pyramid $p$ which is aligned to the right.
This is the content of the present section.
In Section \ref{sec:6} we shall then extend the proof to the case of an arbitrary
good grading for $f\in\mf g$.

\subsection{Some pictures}\label{sec:5.1}

Throughout this section,
we let $\Gamma:\,\mf g=\bigoplus_{j=-p_1+1}^{p_1-1}\mf g[j]$
be the good grading for $f$ associated to the pyramid aligned to the right.
For example, for the partition $(4,4,3,1)$ of $12$,
we have the pyramid

\begin{figure}[H]
\setlength{\unitlength}{0.15in}
\begin{picture}(12,11)
\setlength\fboxsep{0pt}

\put(7,9){\framebox(2,2){}}

\put(3,7){\framebox(2,2){}}
\put(5,7){\framebox(2,2){}}
\put(7,7){\framebox(2,2){}}

\put(1,5){\framebox(2,2){}}
\put(3,5){\framebox(2,2){}}
\put(5,5){\framebox(2,2){}}
\put(7,5){\framebox(2,2){}}

\put(1,3){\framebox(2,2){}}
\put(3,3){\framebox(2,2){}}
\put(5,3){\framebox(2,2){}}
\put(7,3){\framebox(2,2){}}

\put(0,2){\vector(1,0){11}}
\put(10,1){$x$}

\put(2,1.8){\line(0,1){0.4}}
\put(3,1.6){\line(0,1){0.8}}
\put(4,1.8){\line(0,1){0.4}}
\put(5,1.6){\line(0,1){0.8}}
\put(6,1.8){\line(0,1){0.4}}
\put(7,1.6){\line(0,1){0.8}}
\put(8,1.8){\line(0,1){0.4}}

\put(4.7,0.6){0}
\put(2.5,0.6){-1}
\put(6.8,0.6){1}
\end{picture}
\caption{ }
\label{fig3}
\end{figure}

As explained in Section \ref{sec:3.1},
each box of the pyramid corresponds to a basis element of $V=\mb F^N$,
and its $\Gamma$-degree coincides with the $x$-coordinate of the center of the box.
The nilpotent element $F\in\End V$ is the shift to the left
and $F^t$ is the shift to the right.

Recall the definitions \eqref{eq:V+-}, \eqref{eq:Vdu} and \eqref{eq:V_-ud} 
of the subspaces $V_{\pm},V^d,V^u,V_{\pm}^d,V_{\pm}^u\subset V$.
For example, for the pyramid of Figure \ref{fig3}, the subspaces $V_-$ and $V_+$ 
correspond, respectively, 
to the boxes colored in gray and blue below:

\begin{figure}[H]
\setlength{\unitlength}{0.15in}
\begin{picture}(12,11)
\setlength\fboxsep{0pt}

\put(7,9){\framebox(2,2){}}

\put(3,7){\colorbox{gray}{\framebox(2,2){}}}
\put(5,7){\framebox(2,2){}}
\put(7,7){\colorbox{cyan}{\framebox(2,2){}}}

\put(1,5){\colorbox{gray}{\framebox(2,2){}}}
\put(3,5){\framebox(2,2){}}
\put(5,5){\framebox(2,2){}}
\put(7,5){\colorbox{cyan}{\framebox(2,2){}}}

\put(1,3){\colorbox{gray}{\framebox(2,2){}}}
\put(3,3){\framebox(2,2){}}
\put(5,3){\framebox(2,2){}}
\put(7,3){\colorbox{cyan}{\framebox(2,2){}}}

\put(0,2){\vector(1,0){11}}
\put(10,1){$x$}

\put(2,1.8){\line(0,1){0.4}}
\put(3,1.6){\line(0,1){0.8}}
\put(4,1.8){\line(0,1){0.4}}
\put(5,1.6){\line(0,1){0.8}}
\put(6,1.8){\line(0,1){0.4}}
\put(7,1.6){\line(0,1){0.8}}
\put(8,1.8){\line(0,1){0.4}}

\put(4.7,0.6){0}
\put(2.5,0.6){-1}
\put(6.8,0.6){1}

\put(-1.5,6.5){$V_-$}
\put(11,6.5){$V_+$}

\begin{tikzpicture}[remember picture,overlay]
\path[draw,fill=gray](2.66,4.2) -- (3.43,4.2)--(2.66,3.44)-- cycle;
\path[draw,fill=cyan](3.43,4.19)--(2.66,3.44)--(3.43,3.44) -- cycle;
\end{tikzpicture}

\end{picture}
\caption{ }
\label{fig4}
\end{figure}
\noindent
The subspaces $V_-^d$ and $V_-^u$ correspond, respectively,
to the boxes colored in light grey and green below:
\begin{figure}[H]
\setlength{\unitlength}{0.15in}
\begin{picture}(12,11)
\setlength\fboxsep{0pt}

\put(7,9){\colorbox{teal}{\framebox(2,2){}}}

\put(3,7){\colorbox{teal}{\framebox(2,2){}}}
\put(5,7){\framebox(2,2){}}
\put(7,7){\framebox(2,2){}}

\put(1,5){\colorbox{lightgray}{\framebox(2,2){}}}
\put(3,5){\framebox(2,2){}}
\put(5,5){\framebox(2,2){}}
\put(7,5){\framebox(2,2){}}

\put(1,3){\colorbox{lightgray}{\framebox(2,2){}}}
\put(3,3){\framebox(2,2){}}
\put(5,3){\framebox(2,2){}}
\put(7,3){\framebox(2,2){}}

\put(0,2){\vector(1,0){11}}
\put(10,1){$x$}

\put(2,1.8){\line(0,1){0.4}}
\put(3,1.6){\line(0,1){0.8}}
\put(4,1.8){\line(0,1){0.4}}
\put(5,1.6){\line(0,1){0.8}}
\put(6,1.8){\line(0,1){0.4}}
\put(7,1.6){\line(0,1){0.8}}
\put(8,1.8){\line(0,1){0.4}}

\put(4.7,0.6){0}
\put(2.5,0.6){-1}
\put(6.8,0.6){1}

\put(-1.5,5){$V_-^d$}
\put(-1.5,9){$V_-^u$}

\end{picture}
\caption{ }
\label{fig5}
\end{figure}
\noindent
Moreover, the subspaces $V_+^d$ and $V_+^u$ correspond, respectively,
to the boxes colored in light gray and green below:
\begin{figure}[H]
\setlength{\unitlength}{0.15in}
\begin{picture}(12,11)
\setlength\fboxsep{0pt}

\put(7,9){\colorbox{teal}{\framebox(2,2){}}}

\put(3,7){\framebox(2,2){}}
\put(5,7){\framebox(2,2){}}
\put(7,7){\colorbox{teal}{\framebox(2,2){}}}

\put(1,5){\framebox(2,2){}}
\put(3,5){\framebox(2,2){}}
\put(5,5){\framebox(2,2){}}
\put(7,5){\colorbox{lightgray}{\framebox(2,2){}}}

\put(1,3){\framebox(2,2){}}
\put(3,3){\framebox(2,2){}}
\put(5,3){\framebox(2,2){}}
\put(7,3){\colorbox{lightgray}{\framebox(2,2){}}}

\put(0,2){\vector(1,0){11}}
\put(10,1){$x$}

\put(2,1.8){\line(0,1){0.4}}
\put(3,1.6){\line(0,1){0.8}}
\put(4,1.8){\line(0,1){0.4}}
\put(5,1.6){\line(0,1){0.8}}
\put(6,1.8){\line(0,1){0.4}}
\put(7,1.6){\line(0,1){0.8}}
\put(8,1.8){\line(0,1){0.4}}

\put(4.7,0.6){0}
\put(2.5,0.6){-1}
\put(6.8,0.6){1}

\put(11,5){$V_+^d$}
\put(11,9){$V_+^u$}

\end{picture}
\caption{ }
\label{fig5a}
\end{figure}

For the induction arguments which will be used throughout 
Sections \ref{sec:5a}--\ref{sec:5d},
we shall need to consider the pyramid 
$p'$ obtained from $p$ by removing the leftmost column.

It corresponds to the subspace $V'=V[>-\frac {p_1-1}2]\cong \mb F^{N-r_1}$. 
In the example of Figure \ref{fig3},
the subspace $V'$ and the pyramid $p'$ are depicted in the following picture.

\begin{figure}[H]
\setlength{\unitlength}{0.15in}
\begin{picture}(12,11)
\setlength\fboxsep{0pt}

\put(7,9){\framebox(2,2){}}

\put(3,7){\framebox(2,2){}}
\put(5,7){\framebox(2,2){}}
\put(7,7){\framebox(2,2){}}

\put(1,5){\dashbox{0.2}(2,2){}}
\put(3,5){\framebox(2,2){}}
\put(5,5){\framebox(2,2){}}
\put(7,5){\framebox(2,2){}}

\put(1,3){\dashbox{0.2}(2,2){}}
\put(3,3){\framebox(2,2){}}
\put(5,3){\framebox(2,2){}}
\put(7,3){\framebox(2,2){}}

\put(0,2){\vector(1,0){11}}
\put(10,1){$x$}

\put(2,1.8){\line(0,1){0.4}}
\put(3,1.6){\line(0,1){0.8}}
\put(4,1.8){\line(0,1){0.4}}
\put(5,1.6){\line(0,1){0.8}}
\put(6,1.8){\line(0,1){0.4}}
\put(7,1.6){\line(0,1){0.8}}
\put(8,1.8){\line(0,1){0.4}}

\put(4.7,0.6){0}
\put(2.5,0.6){-1}
\put(6.8,0.6){1}

\put(11,7){$V'$}

\end{picture}
\caption{ }
\label{fig6}
\end{figure}

\noindent
The associated (to $p'$) nilpotent endomorphism is denoted by $F'$,
which is the shift to the left,
while $(F')^t$ is the shift to the right.
Similarly, we denote by $p''$, $V''$, etc. the pyramid, the space, etc. obtained
by removing the two leftmost columns.

For the pyramid $p'$
we can consider the subspaces $V'_+$ and $V'_-$ as in \eqref{eq:V+-}
with $F$ replaced by $F'$.
Clearly, $V'_+=V_+$, while $V'_-$ has the decomposition (cf. \eqref{eq:V_-ud})
\begin{equation}\label{eq:V''_-ud}
V'_-=F^tV_-^{d}\oplus {V}_-^{u}\,
\,.
\end{equation}
For the example of Figure \ref{fig6},
$F^tV_-^{d}$ and ${V}_-^u$ correspond to the 
boxes colored in light grey and green respectively
in the figure below.

\begin{figure}[H]
\setlength{\unitlength}{0.15in}
\begin{picture}(12,11)
\setlength\fboxsep{0pt}

\put(7,9){\colorbox{teal}{\framebox(2,2){}}}

\put(3,7){\colorbox{teal}{\framebox(2,2){}}}
\put(5,7){\framebox(2,2){}}
\put(7,7){\framebox(2,2){}}

\put(1,5){\dashbox{0.2}(2,2){}}
\put(3,5){\colorbox{lightgray}{\framebox(2,2){}}}
\put(5,5){\framebox(2,2){}}
\put(7,5){\framebox(2,2){}}

\put(1,3){\dashbox{0.2}(2,2){}}
\put(3,3){\colorbox{lightgray}{\framebox(2,2){}}}
\put(5,3){\framebox(2,2){}}
\put(7,3){\framebox(2,2){}}

\put(0,2){\vector(1,0){11}}
\put(10,1){$x$}

\put(2,1.8){\line(0,1){0.4}}
\put(3,1.6){\line(0,1){0.8}}
\put(4,1.8){\line(0,1){0.4}}
\put(5,1.6){\line(0,1){0.8}}
\put(6,1.8){\line(0,1){0.4}}
\put(7,1.6){\line(0,1){0.8}}
\put(8,1.8){\line(0,1){0.4}}

\put(4.7,0.6){0}
\put(2.5,0.6){-1}
\put(6.8,0.6){1}

\put(-2.5,5){$F^tV_-^d$}
\put(-1.5,9){$V_-^u$}

\end{picture}
\caption{ }
\label{fig7}
\end{figure}

We can also decompose $V'_-$ as in \eqref{eq:V_-ud}:
\begin{equation}\label{eq:V'_-ud}
V'_-={V'}_-^{d}\oplus {V'}_-^{u}\,,
\end{equation}
where
${V'}_-^{d}
\supset F^t V_-^d$,
and ${V'}_-^{u}
\subset V_-^u$.
For the example of Figure \ref{fig6},
${V'}_-^{d}$ and ${V'}_-^u$ correspond to the 
boxes colored in light gray and green respectively
in the figure below.

\begin{figure}[H]
\setlength{\unitlength}{0.15in}
\begin{picture}(12,11)
\setlength\fboxsep{0pt}

\put(7,9){\colorbox{teal}{\framebox(2,2){}}}

\put(3,7){\colorbox{lightgray}{\framebox(2,2){}}}
\put(5,7){\framebox(2,2){}}
\put(7,7){\framebox(2,2){}}

\put(1,5){\dashbox{0.2}(2,2){}}
\put(3,5){\colorbox{lightgray}{\framebox(2,2){}}}
\put(5,5){\framebox(2,2){}}
\put(7,5){\framebox(2,2){}}

\put(1,3){\dashbox{0.2}(2,2){}}
\put(3,3){\colorbox{lightgray}{\framebox(2,2){}}}
\put(5,3){\framebox(2,2){}}
\put(7,3){\framebox(2,2){}}

\put(0,2){\vector(1,0){11}}
\put(10,1){$x$}

\put(2,1.8){\line(0,1){0.4}}
\put(3,1.6){\line(0,1){0.8}}
\put(4,1.8){\line(0,1){0.4}}
\put(5,1.6){\line(0,1){0.8}}
\put(6,1.8){\line(0,1){0.4}}
\put(7,1.6){\line(0,1){0.8}}
\put(8,1.8){\line(0,1){0.4}}

\put(4.7,0.6){0}
\put(2.5,0.6){-1}
\put(6.8,0.6){1}

\put(-1.5,5.5){${V'}_-^d$}
\put(-1.5,10){${V'}_-^u$}

\end{picture}
\caption{ }
\label{fig7b}
\end{figure}
Note that, with the notation introduced above,
$V_-^d,\,{V'}_-^d,\,{V''}_-^d,\,\dots$
correspond to the leftmost, second leftmost, third leftmost, $\dots$, columns of the pyramid.
They are of heights $r_1,\,r'_1,\,r''_1,\,\dots$ respectively.

Recall the decompositions \eqref{eq:V-k} of $V_{\pm}$.
After removing the leftmost column, we have the corresponding decompositions of ${V'}_{\pm}$.
We have
\begin{equation}\label{20180502:eq4}
V_-\cap F^kV_+
\,\,\left\{\begin{array}{l}
=V'_-\cap F^kV_+ \,\,\text{ if }\,\, k<p_1-2\,, \\
\subset V'_-\cap F^kV_+ \,\,\text{ if }\,\, k=p_1-2\,, \\
\not\subset {V'} \,\,\text{ if }\,\, k=p_1-1\,,
\end{array}\right.
\end{equation}
and similarly
\begin{equation}\label{20180502:eq5}
V_+\cap (F^t)^hV_-
\,\,\left\{\begin{array}{l}
={V}_+\cap (F^t)^h(V')_- \,\,\text{ if }\,\, h<p_1-2\,, \\
\subset V_+\cap (F^t)^{p_1-2}V'_- \,\,\text{ if }\,\, h\geq p_1-2 \,.
\end{array}\right.
\end{equation}
In particular,
\begin{equation}\label{20180504:eq4}
F^tV_-^d\oplus(V_-\cap F^{p_1-2}V_+)={V'}_-^d
\,\,\text{ and }\,\,
V_+^d\oplus(V_+\cap (F^t)^{p_1-2}V_-)={V'}_+^d
\,.
\end{equation}

\subsection{Removing a column}\label{sec:5.1c}

Recall the operator $E\in U(\mf g)\otimes\End V$ in \eqref{eq:E}.
For a right aligned pyramid $p$,
the $\Gamma$-grading is even, hence the operator $E_{\mf p}$ in \eqref{eq:E} 
coincides with
\begin{equation}\label{eq:E<=0}
E_{\leq0}=\sum_{i\in I_{\leq0}}u_iU^i
\,.
\end{equation}
Also, in the case of an aligned pyramid, $\mf m=\mf g[\geq1]$,
and we simply write $D$ in place of the matrix $D_{\mf m}$ in \eqref{eq:D}:
\begin{equation}\label{eq:D0}
D
=
-\sum_{i\in I_{\geq1}}U^iU_i
\,.
\end{equation}
In this case, the matrix $D$ is diagonal with respect to the $\Gamma$-grading
with the following eigenvalues (cf. \cite{DSKV17}):
\begin{equation}\label{20180219:eq2}
D
=
-\sum_{k\in\frac12\mb Z}
\dim(V[\geq k+1])\id_{V[k]}
\,.
\end{equation}

Recall from Section \ref{sec:5.1} 
the notation $p'$ for the pyramid obtained by removing 
the leftmost column from $p$,
and $V'\subset V$ for the corresponding subspace.
As $V$ is the standard representation of $\mf g\simeq\End V$,
$V'$ is the standard representation of $\mf g'=\mf{gl}_{N-r_1}$,
where $r_1$ coincides with the height of the removed column.
We already introduced in Section \ref{sec:5.1} the nilpotent endomorphisms
$F'$ and ${F'}^t$.
Likewise, we denote by $E',E'_{\leq0},D'$
the same operators as in \eqref{eq:E}, \eqref{eq:E<=0} and \eqref{eq:D0}
respectively, for $V'$ and $\mf g'$ in places of $V$ and $\mf g$.
\begin{lemma}\label{lem:restr}
\begin{enumerate}[(a)]
\item
$E_{\leq0}|_{V'}=E'_{\leq0}$
\item
$D|_{V'}=D'$
\item
$F|_{F^tV'}=F'|_{F^tV'}$
\item
$F^t|_{V'}={F'}^t$
\end{enumerate}
\end{lemma}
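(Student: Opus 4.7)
The plan is to verify each of the four identities in turn; all boil down to the pictorial observation that removing the leftmost column from a right-aligned pyramid $p$ corresponds exactly to removing $V[-\frac{p_1-1}{2}]$, and the remaining pyramid $p'$ is again right-aligned. Throughout, I would work with the standard basis $\{e_{\alpha\beta}\}$ of $\mf g=\mf{gl}_N$ indexed by boxes $\alpha,\beta$ of $p$, noting that $V'$ inherits from $V$ the $\Gamma$-grading with $V'[k]=V[k]$ for $k>-\frac{p_1-1}{2}$ and zero otherwise, and that $\Gamma'$ on $\mf g'$ coincides with the restriction $\Gamma|_{\mf g'}$.

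I would start with parts (c) and (d), which are essentially bookkeeping about shifts. Since $F^t$ is the shift to the right on $V$, any $v_\gamma$ with $\gamma$ in column $\geq 2$ (i.e., $\gamma\in V'$) is sent either to $0$ or to the box in column $\geq 3$, still in $V'$; this also describes ${F'}^t$ acting on $V'$, giving (d). For (c), $F^tV'$ is concentrated in columns $\geq 3$, so applying $F$ moves to columns $\geq 2\subset V'$, and this coincides with the shift-left $F'$ restricted to the same subspace.

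For part (a), I would expand using the trace-dual basis $\{E_{\beta\alpha}\}$ of $\End V$ to get $E\,v_\gamma=\sum_\beta e_{\gamma\beta}\otimes v_\beta$, and hence
\[
E_{\leq 0}\,v_\gamma=\sum_{\beta\,:\,x(\beta)\geq x(\gamma)} e_{\gamma\beta}\otimes v_\beta.
\]
If $\gamma\in V'$ then every $\beta$ appearing in the sum satisfies $x(\beta)\geq x(\gamma)>-\frac{p_1-1}{2}$, so $\beta\in V'$ and the coefficient $e_{\gamma\beta}$ lies in $\mf g'$. The resulting sum is precisely the defining expansion of $E'_{\leq 0}\,v_\gamma$. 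For part (b), I would invoke the closed form \eqref{20180219:eq2}: restricting $D$ to $V'$ kills the terms indexed by $k=-\frac{p_1-1}{2}$ (since $V[k]\cap V'=0$ there), and for the surviving indices $k>-\frac{p_1-1}{2}$ one has $\dim V[\geq k+1]=\dim V'[\geq k+1]$ because $V[\leq -\frac{p_1-1}{2}]$ is exactly the subspace that was removed; term-by-term comparison yields the equality $D|_{V'}=D'$.

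The only thing that requires any care is the consistency of grading conventions: verifying that the good grading $\Gamma'$ defining $E'_{\leq 0}$ and $D'$ really is $\Gamma|_{\mf g'}$, and that the induced grading on $V'$ is the one coming from $V$. Once this is settled, all four claims are direct one-line calculations, and I do not foresee any real obstacle.
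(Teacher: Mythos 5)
Your argument is correct and is essentially the paper's own proof spelled out in more detail: the paper simply observes that the operators $U^i$ occurring in $E_{\leq0}$ have non-negative $\Gamma$-degree and hence preserve $V'$ (with the same degree argument for $D$, and (c), (d) declared obvious), which is exactly what your elementary-matrix computation and your use of \eqref{20180219:eq2} make explicit. The grading-convention point you flag is indeed the only subtlety, and it is settled as you indicate, since the $\Gamma'$-degrees on $\mf g'$ are differences of box coordinates and so are unaffected by the uniform half-integer shift of the pyramid $p'$.
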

\begin{proof}
Note that the operators $U^i$ appearing in the expression \eqref{eq:E<=0} of $E_{\leq0}$
have non-negative degrees, hence they preserve the subspace $V'\subset V$.
Claim (a) follows.
The same argument works for claim (b).
Claims (c) and (d) are obvious.
\end{proof}

\subsection{Preliminary Lemmas}\label{sec:5.1b}

\begin{lemma}\label{20180426:lem}
Let $U,W\subset V$ be subspaces, and let $\{q_i\}_{i\in J}$ be a basis of $\Hom(U,W)\subset\mf g$,
and $\{Q^i\}_{i\in J}$ be the dual basis of $\Hom(W,U)\subset\End V$.
We have
$$
\sum_{i\in I}U_i \otimes \id_UU^i\id_W
=
\sum_{i\in I}\id_WU_i\id_U\otimes U^i
\,,
$$
or, equivalently,
$$
\id_UE\id_W
=
\sum_{i\in I}u_i \id_UU^i\id_W
=
\sum_{i\in J}q_i Q^i
\,.
$$
\end{lemma}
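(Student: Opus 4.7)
The strategy is to identify every sum appearing in the statement with a single concrete operator on $V\otimes V$, namely $P\,(\id_U\otimes\id_W)$, where $P\in\End(V\otimes V)$ denotes the transposition $v\otimes w\mapsto w\otimes v$. The key input is the standard identity
\[
\sum_{i\in I}U_i\otimes U^i \;=\; P
\qquad \text{in } \End V\otimes\End V\simeq\End(V\otimes V),
\]
which holds because, in a basis of matrix units $\{e_{ab}\}$ of $V$, the trace form pairs $e_{ab}$ with $e_{ba}$, so the sum equals $\sum_{a,b}e_{ab}\otimes e_{ba}=P$. Throughout I identify $\mf g\simeq \End V$ via the standard representation, so that $u_i\in\mf g$ and $U_i\in\End V$ are the same matrix.

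For the first equation I would combine the above identity with the elementary commutation rule $(A\otimes B)P=P\,(B\otimes A)$, valid for all $A,B\in\End V$. This yields
\[
\sum_{i\in I}U_i\otimes\id_U U^i\id_W
\;=\; (1\otimes\id_U)\,P\,(1\otimes\id_W) \;=\; P\,(\id_U\otimes\id_W),
\]
and symmetrically
\[
\sum_{i\in I}\id_W U_i\id_U\otimes U^i
\;=\; (\id_W\otimes 1)\,P\,(\id_U\otimes 1) \;=\; P\,(\id_U\otimes\id_W),
\]
so the two sides are equal.

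For the ``equivalently'' version, the first equality $\id_U E\id_W=\sum_i u_i\id_U U^i\id_W$ is immediate from $E=\sum_iu_iU^i$ and the convention that $\id_U,\id_W$ act only on the $\End V$ factor. For the last equality, I would run the same matrix-unit computation for the dual bases $\{q_j\}_{j\in J}\subset\Hom(U,W)$ and $\{Q^j\}_{j\in J}\subset\Hom(W,U)$. Choosing any basis $\{e_a\}$ of $V$ compatible with bases $\{e_a\}_{a\in A_U}$ of $U$ and $\{e_a\}_{a\in A_W}$ of $W$, the elements $q_j$ are precisely the matrix units $e_{ab}$ with $a\in A_W$, $b\in A_U$, while the trace-dual is $Q^j=e_{ba}$, so that
\[
\sum_{j\in J}q_j\otimes Q^j \;=\; \sum_{\substack{a\in A_W\\ b\in A_U}}e_{ab}\otimes e_{ba} \;=\; P\,(\id_U\otimes\id_W).
\]
Matching this with the value of $\sum_i u_i\otimes\id_U U^i\id_W$ already computed gives the claimed equality in $\mf g\otimes\End V$, which lifts canonically to $U(\mf g)\otimes\End V$ via $\mf g\hookrightarrow U(\mf g)$.

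The argument has no genuine obstacle; the only thing that requires a bit of care is the overloaded notation of Section~\ref{sec:3.1c}, in which $\id_U,\id_W$ denote simultaneously inclusions $U,W\hookrightarrow V$ and projections $V\twoheadrightarrow U,W$ onto chosen complements. The computations above rely precisely on these splittings: $\id_U,\id_W$ appear as projections on the ``input'' side of $P$ and as inclusions on the ``output'' side, in agreement with the convention fixed there.
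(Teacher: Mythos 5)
Your proof is correct. The paper itself offers no argument here — its proof of Lemma \ref{20180426:lem} is literally ``Obvious'' — and what you write is the standard verification the authors had in mind: the identity $\sum_{i\in I}U_i\otimes U^i=P$ (basis-independence of the Casimir-type element for the trace form) together with $(A\otimes B)P=P(B\otimes A)$ identifies both sides of the first display with $P(\id_U\otimes\id_W)$, and the same canonical element computed for the dual pair $\{q_j\},\{Q^j\}$ (using that the trace pairing between $\Hom(U,W)$ and $\Hom(W,U)$ is non-degenerate for the given splittings, and that $\sum_j q_j\otimes Q^j$ does not depend on the choice of basis) gives the second display. Your closing remark about the overloaded meaning of $\id_U,\id_W$ as projections/inclusions is exactly the only point needing care, and you handle it correctly.
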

\begin{proof}
Obvious.
\end{proof}
\begin{lemma}\label{20180216:lem1}
\begin{enumerate}[(a)]
\item
For $A\in\End V$, we have
$\sum_{i\in I}U_iAU^i=\tr(A)\id_V$.
\item
If $U\subset V$ is a subspace,
$\sum_{i\in I}U_i\id_UU^i=\dim(U)\id_V$.
\end{enumerate}
\end{lemma}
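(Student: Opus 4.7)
The plan is to prove (a) by a short direct computation in a convenient basis, and to deduce (b) as an immediate specialization by taking $A=\id_U$ (viewed as an element of $\End V$ via the splitting $V=U\oplus W$ as in Section \ref{sec:3.1c}), since $\tr(\id_U)=\dim U$.

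The first step is to note that the element $C=\sum_{i\in I}U_i\otimes U^i\in\End V\otimes\End V$ does not depend on the chosen basis $\{u_i\}_{i\in I}$ of $\mf g=\End V$: this is the canonical element associated to the non-degenerate bilinear form $(X|Y)=\tr(XY)$, and a routine change-of-basis check (the matrix of coordinates and its inverse cancel) confirms it. Consequently, the quantity $\sum_{i\in I}U_iAU^i$ that we want to evaluate in part (a) is independent of the choice of basis, and we may compute it in whichever basis is convenient.

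The second step is to carry out that computation in the basis of elementary matrices $\{e_{ab}\}_{1\le a,b\le N}$. Since $\tr(e_{ab}e_{cd})=\delta_{ad}\delta_{bc}$, the basis dual to $\{e_{ab}\}$ with respect to the trace form is $\{e_{ba}\}$. For $A=(A_{cd})\in\End V$ one immediately checks $e_{ab}Ae_{ba}=A_{bb}e_{aa}$, and summing over $a,b$ gives
\[
\sum_{a,b}e_{ab}Ae_{ba}=\Big(\sum_b A_{bb}\Big)\sum_a e_{aa}=\tr(A)\id_V,
\]
which is precisely (a). Part (b) then follows by applying (a) to $A=\id_U$.

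There is no real obstacle in this statement — it is a standard Casimir-type identity whose two ingredients (basis-independence of $C$, and the elementary-matrix computation) are both elementary. The only thing to keep an eye on is the notational convention from Section \ref{sec:3.1c}: in (b) the symbol $\id_U$ must be read as an element of $\End V$ (the projector onto $U$ along the complement used in Section \ref{sec:3.1c}), so that its usual trace on $V$ equals $\dim U$.
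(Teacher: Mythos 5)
Your proof is correct and follows essentially the same route as the paper's: observe basis-independence of the sum, compute in the basis of elementary matrices with dual basis $\{e_{ba}\}$, and obtain (b) as the special case $A=\id_U$. Nothing to add.
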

\begin{proof}
Clearly, the statement is independent of the choice of the dual bases 
$\{U_i\}$, $\{U^i\}$ of $\End V$.
Claim (a) is then easily checked directly, 
choosing the dual bases $\{E_{ij}\}$, $\{E_{ji}\}$
of elementary matrices.
Claim (b) is a special case of (a).
\end{proof}
Recall the definition \eqref{eq:Ej} of the operators $E_j\in U(\mf g)\otimes\End V$,
$j\in\frac12\mb Z$.
\begin{lemma}\label{lem:E}
Let $i,j,k\in\frac12\mb Z$.
\begin{enumerate}[(a)]
\item
Let $X\subset (\End V)[k]$ be an endomorphism.
We have:
\begin{equation}\label{20180301:eq0a}
[E_i,XE_j]^1
=
\delta_{k,i+j}
\sum_{\ell\in\frac12\mb Z}
\tr(E_{i+j}X\id_{V[\ell]})\id_{V[\ell+j]}
-
\delta_{k,0}
\sum_{\ell\in\frac12\mb Z}
\tr(X\id_{V[\ell]})
E_{i+j}\id_{V[\ell+j]}
\,,
\end{equation}
where $[\cdot\,,\,\cdot]^1$ is the bracket defined in \eqref{eq:bracket2}.
\item
Let $U\subset V[k]$ be a subspace. We have:
\begin{equation}\label{20180301:eq0b}
[E_i,\id_UE_j]^1
=
\delta_{i+j,0}
\tr\big(E_{0}\id_U\big)\id_{V[k+j]}
- \dim(U) E_{i+j}\id_{V[k+j]}
\,.
\end{equation}
\item
Let $U\subset V[k]$ be a subspace. We have:
\begin{equation}\label{20180301:eq0c}
[E_i,\id_UF^tE_j]^1
=
\delta_{1,i+j}
\tr\big(\id_UF^tE_{1}\big)\id_{V[k+j-1]}
\equiv
\delta_{1,i+j}
\dim(U\cap F^tV)\id_{V[k+j-1]}
\,.
\end{equation}
\end{enumerate}
\end{lemma}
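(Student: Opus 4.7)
The strategy is a direct computation in a matrix-unit basis, followed by two specializations for parts (b) and (c). Fix a basis $\{v_p\}$ of $V$ compatible with the $\Gamma$-grading, so that $\mf g=\mf{gl}_N=\End V$ has basis $\{e_{pq}\}$ with trace-dual $\{e_{qp}\}$, and $E_j=\sum_{(p,q):\deg v_p-\deg v_q=j}e_{pq}\otimes e_{qp}$. For part (a), I would expand $[E_i,XE_j]^1=\sum_{a\in I_i,b\in I_j}[u_a,u_b]\otimes U^aXU^b$ and apply $[e_{pq},e_{rs}]=\delta_{qr}e_{ps}-\delta_{sp}e_{rq}$ to split the sum in two. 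In the $\delta_{qr}$-part, the easy identity $e_{qp}Xe_{sq}=X_{ps}e_{qq}$ (where $X_{ps}$ is the $(p,s)$ matrix entry of $X$) produces $\sum_{p,q,s}X_{ps}\,e_{ps}\otimes e_{qq}$ with constraints $\deg v_p-\deg v_q=i$, $\deg v_q-\deg v_s=j$; combined with $X\in(\End V)[k]$ this forces $k=i+j$. Regrouping the inner $q$-sum by $\ell:=\deg v_q$ and using $\tr(e_{qp}X\id_{V[\ell]})=X_{pq}\,\delta_{\deg v_q,\ell}$ identifies the result with $\delta_{k,i+j}\sum_\ell\tr(E_{i+j}X\id_{V[\ell]})\,\id_{V[\ell+j]}$. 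The $\delta_{sp}$-part is handled analogously via $e_{qp}Xe_{pr}=X_{pp}\,e_{qr}$, forcing $k=0$ and giving the second summand with the opposite sign.

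Part (b) is the specialization $X=\id_U$ with $U\subset V[k]$: under the convention of Section~\ref{sec:3.1c}, $\id_U$ has endomorphism-degree $0$, with $\id_U\,\id_{V[\ell]}=\delta_{\ell,k}\,\id_U$ and $\tr(\id_U)=\dim U$, so (a) collapses directly to the stated formula (with the first Kronecker becoming $\delta_{i+j,0}$ and the output index $\ell+j=k+j$). For part (c), take $X=\id_U F^t$, which has degree $+1$ since $F^t\in(\End V)[1]$; the $\delta_{k,0}$-factor in (a)'s second term then becomes $\delta_{1,0}=0$ and that term vanishes, while the first, using $\id_U F^t\,\id_{V[\ell]}=\delta_{\ell,k-1}\,\id_U F^t$ and cyclicity of the trace on the $\End V$ factor, reduces to $\delta_{1,i+j}\,\tr(\id_U F^t E_1)\,\id_{V[k+j-1]}$.

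For the final equivalence $\tr(\id_U F^t E_1)\equiv\dim(U\cap F^tV)$, one reduces modulo the left ideal $\mc I$: for $a\in I_1$, $u_a\in\mf g[1]\subset\mf m$, so $u_a\equiv(f|u_a)=(F|U_a)\pmod{\mc I}$. Applying the completeness identity $\sum_{a\in I_1}(F|U_a)\,U^a=F$ (since $F\in\mf g[-1]$ is its own projection onto $\mf g[-1]$) gives $\tr(\id_U F^t E_1)\equiv\tr(\id_U F^t F)=\tr(\id_U\,\id_{F^tV})=\dim(U\cap F^tV)$, the middle step using $F^tF=\id_{F^tV}$ from \eqref{20180219:eq3}. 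The one delicate point is the last trace equality $\tr(\id_U\,\id_{F^tV})=\dim(U\cap F^tV)$: it requires that the kernels of the projections $\id_U$ and $\id_{F^tV}$, chosen compatibly with the $\Gamma$-grading as in Section~\ref{sec:3.1c}, intersect correctly; this is the main subtlety in an otherwise routine unwinding of the definitions.
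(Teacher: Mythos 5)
Your proof is correct and takes essentially the same route as the paper's: a direct expansion of $[E_i,XE_j]^1$ (the paper works with abstract dual bases via the completeness identity, invariance of the trace form and Lemma \ref{20180216:lem1}, you with elementary matrices $e_{pq}$), followed by the specializations $X=\id_U$ and $X=\id_UF^t$ for parts (b) and (c). Your explicit mod-$\mc I$ reduction $\tr(\id_UF^tE_1)\equiv\tr(\id_U\id_{F^tV})=\dim(U\cap F^tV)$, together with the correctly flagged requirement that $\id_U$ be compatible with the splitting $V=V_-\oplus F^tV$ (automatic for the box-spanned subspaces, such as $U=F^tV_-^d$, to which the lemma is actually applied), supplies a step the paper leaves implicit.
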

\begin{proof}
We have:
$$
\begin{array}{l}
\vphantom{\Big(}
\displaystyle{
[E_i,XE_j]^1
=
\sum_{a\in I_i,b\in I_j}[u_a,u_b]U^aXU^b
=
\sum_{a\in I_i,b\in I_j,c\in I}u_c([u_a,u_b]|u^c)U^aX U^b
} \\
\vphantom{\Big(}
\displaystyle{
=
\sum_{a\in I,b\in I_j,c\in I_{i+j}}u_c(u_a|[u_b,u^c])U^aX U^b
=
\sum_{b\in I_j,c\in I_{i+j}}u_c[U_b,U^c]X U^b
} \\
\vphantom{\Big(}
\displaystyle{
=
\sum_{\ell\in\frac12\mb Z}
\sum_{b\in I,c\in I_{i+j}}u_c
(U_bU^c-U^cU_b)X\id_{V[\ell]}U^b\id_{V[\ell+j]}
} \\
\vphantom{\Big(}
\displaystyle{
=
\sum_{\ell\in\frac12\mb Z}
\sum_{b\in I}
(U_bE_{i+j}X \id_{V[\ell]}U^b
-E_{i+j}U_bX \id_{V[\ell]}U^b)
\id_{V[\ell+j]}
} \\
\vphantom{\Big(}
\displaystyle{
=
\delta_{k,i+j}
\sum_{\ell\in\frac12\mb Z}
\tr(E_{i+j}X\id_{V[\ell]})\id_{V[\ell+j]}
-
\delta_{k,0}
\sum_{\ell\in\frac12\mb Z}
\tr(X\id_{V[\ell]})
E_{i+j}\id_{V[\ell+j]}
\,.}
\end{array}
$$
In the first equality we used the definition \eqref{eq:Ej} of $E_i$,
in the second equality we used the completeness identity \eqref{eq:compl},
in the third equality we used the invariance of the trace form of $\mf g$,
in the fourth equality we used again \eqref{eq:compl},
in the fifth equality we used 
the decomposition $\id_V=\sum_\ell\id_{V[\ell]}$,
finally in the last equality we used Lemma \ref{20180216:lem1}.
This proves part (a).
Parts (b) and (c) are obtained as the special cases when $X=\id_U$ and $X=\id_UF^t$
respectively.
\end{proof}
\begin{lemma}\label{lem:E2}
For $q\in\mf h:=\Hom(F^tV_-^d,V_-^d)\subset\mf g$ 
and $w\in F^tV'\oplus V_-^u$, we have
(cf. notation \eqref{eq:bracket3})
\begin{equation}\label{eq:ciao}
[q,E_{\leq0}](w)
=0
\,\,\text{ in }\,\,
\mf g\otimes V
\end{equation}
\end{lemma}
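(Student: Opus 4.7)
My plan is to expand $[q,E_{\leq 0}]$ explicitly as an element of $\mf g\otimes\End V$, and then to show that each resulting term annihilates $w$ because $w$ has no components along $V_-^d$ or $F^tV_-^d$ in a suitable direct sum decomposition of $V$.

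\medskip

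By linearity, I reduce to the case where $q$ is a single rank-one ``elementary'' operator sending a basis vector $F^tu'\in F^tV_-^d$ to $u\in V_-^d$ and killing everything else. Since $V_-^d$ sits in $\Gamma$-degree $-(p_1-1)/2$ and $F^tV_-^d$ in degree $-(p_1-3)/2$, such a $q$ lies in $\mf g[-1]$. I then fix an elementary matrix basis $\{e_{\alpha\beta}\}$ of $\mf g$ adapted to the boxes of the pyramid (with dual basis $\{E_{\beta\alpha}\}\subset\End V$), write $E_{\leq 0}=\sum_{\deg v_\alpha\le\deg v_\beta}e_{\alpha\beta}E_{\beta\alpha}$, and compute $[q,E_{\leq 0}]$ using $[e_{\alpha\beta},e_{\gamma\delta}]=\delta_{\beta\gamma}e_{\alpha\delta}-\delta_{\delta\alpha}e_{\gamma\beta}$. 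The grading constraint inside $E_{\leq 0}$, together with the fact that $\pm(p_1-1)/2$ are respectively the smallest and the largest $\Gamma$-degrees of $V$, will collapse $[q,E_{\leq 0}]$ to two sums: one indexed by $v_\delta\in V'$ with endomorphism factor $E_{\delta,F^tu'}$, and one indexed by $v_\gamma\in V_-^d$ with endomorphism factor $E_{u,\gamma}$.

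\medskip

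The main structural input, which I expect to be the only delicate step, is the direct sum decomposition
\[
V=V_-^d\oplus F^tV_-^d\oplus F^tV'\oplus V_-^u.
\]
Since $V=V_-^d\oplus V'$ by definition of $V'$, it suffices to show $V'=F^tV_-^d\oplus F^tV'\oplus V_-^u$. The dimensions are right: $r_1+(N-r_1-r)+(r-r_1)=N-r_1=\dim V'$. The intersections $F^tV_-^d\cap V_-^u=0$ and $F^tV'\cap V_-^u=0$ hold because $F^tV_-^d$ and $F^tV'$ are contained in $\im F^t$, which misses the leftmost boxes $V_-$ of each row, while $V_-^u\subset V_-$. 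Finally $F^tV_-^d\cap F^tV'=0$ follows from $V_+\subset V'$ together with $V_-^d\cap V'=0$: any relation $F^tu=F^tv'$ with $u\in V_-^d$, $v'\in V'$ gives $u-v'\in\ker F^t=V_+\subset V'$, hence $u\in V_-^d\cap V'=0$.

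\medskip

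Once this decomposition is in place, the conclusion is immediate. Applying the two sums comprising $[q,E_{\leq 0}]$ to $w\in F^tV'\oplus V_-^u$, the endomorphism factors $E_{\delta,F^tu'}$ and $E_{u,\gamma}$ pick out the coefficients of $w$ along $F^tu'\in F^tV_-^d$ and along $v_\gamma\in V_-^d$, respectively; by the direct sum decomposition above, both coefficients vanish. Therefore $[q,E_{\leq 0}](w)=0$ in $\mf g\otimes V$, as claimed.
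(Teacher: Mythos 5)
Your argument is correct, but it takes a more hands-on route than the paper's. The paper's proof pairs the first tensor factor with an arbitrary $a\in\mf g$ and uses invariance of the trace form plus the completeness identity \eqref{eq:compl} to rewrite the pairing as $[A,Q]_{\geq0}(w)$, the non-negative $\Gamma$-degree component of the commutator of the endomorphisms corresponding to $a$ and $q$; it then kills the $AQ$ part because $Q(w)=0$ (as $F^tV'\oplus V_-^u\subset\Ker Q$) and the $QA$ part by the degree observation that $QA(w)\in V_-^d$, the lowest $\Gamma$-degree component of $V$, while $w$ sits in strictly higher degree. You instead expand $E_{\leq0}$ in the elementary-matrix (box) basis, compute the bracket explicitly, and observe that only two families of terms survive, with endomorphism factors $E_{\delta,F^tu'}$ ($v_\delta\in V'$) and $E_{u,\gamma}$ ($v_\gamma\in V_-^d$); these annihilate $w$ because $w$ has no component along $F^tV_-^d$ or $V_-^d$, which is exactly the paper's pair of vanishing statements in basis form (your collapse of the second family uses, correctly, that for the right-aligned pyramid $V[-\frac{p_1-1}{2}]=V_-^d$, cf. \eqref{20180215:eq3}). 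The explicit expansion buys transparency about precisely which terms occur in $[q,E_{\leq0}]$, at the price of fixing a basis; the paper's dualization is shorter and basis-free. One small repair: pairwise trivial intersections together with a dimension count do not in general force a sum of three subspaces to be direct; however, the containments you already invoke give the needed stronger fact, since $F^tV_-^d\oplus F^tV'\subset F^tV$ and $V_-^u\subset V_-$ with $V=V_-\oplus F^tV$ (cf. \eqref{20180219:eq3}), so $(F^tV_-^d\oplus F^tV')\cap V_-^u=0$ and the decomposition $V=V_-^d\oplus F^tV_-^d\oplus F^tV'\oplus V_-^u$ follows (it is also immediate from the box basis, the four subspaces being spanned by complementary sets of boxes).
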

\begin{proof}
Let us pair the first factor in the LHS of \eqref{eq:ciao} with an arbitrary element $a\in\mf g$.
We get, by the invariance of the trace form and the completeness identity \eqref{eq:compl}
$$
\sum_{j\in I_{\leq0}}(a|[q,u_j])U^j(w)
=
[A,Q]_{\geq0}(w)
\,,
$$
where $A$ (resp. $Q$)$\in\End V$ is the endomorphism 
corresponding to $a$ (resp. $q$)$\in\mf g$,
and the index $\geq0$ denotes the component in $\End V$
of non-negative $\Gamma$-degree.
Obviously, $Q(w)=0$, 
since $Q \in \Hom(F^tV_-^d, V_-^d) $ 
and $w \in F^tV' \oplus V_-^u \subset \Ker Q$.
Moreover, $QA(w)\in V_-^d=V[-\frac d2]$,
while $w\in F^tV'\oplus V_-^u\subset V[>-\frac d2]$.
Hence, $(QA)_{\geq0}(w)=0$.
\end{proof}
\begin{lemma}\label{20180216:lem2}
For $ a \in \mf g[k] $, and for any $ i \in \frac12 \mb Z $, the following holds:
\begin{equation}\label{20180219:eq4}
[a, E_{i}] = [E_{i+k},A]
\,, 
\end{equation}
where $A \in\End V$ is the endomorphism 
corresponding to $a \in\mf g$.
In particular, for $i+k\geq1$, we have
\begin{equation}\label{20180219:eq5}
[a, E_{i}] \equiv \delta_{i+k,1}[F,A]
\,\mod \mc I\,, 
\end{equation}
where $\mc I\subset U(\mf g)$ is the left ideal defined in \eqref{eq:Walg}
(for $\mf l=0$).
\end{lemma}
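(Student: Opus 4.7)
The plan is to prove the identity $[a, E_{i}] = [E_{i+k}, A]$ directly, by expanding both sides in terms of the basis $\{u_j\}_{j\in I}$ of $\mf g$ and its trace-dual $\{u^j\}_{j\in I}$, and then to deduce \eqref{20180219:eq5} by reducing modulo $\mc I$.

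For the first identity, I would start from the definition $[a, E_i] = \sum_{j\in I_i}[a, u_j]\, U^j$. Since $a\in\mf g[k]$ and $u_j\in\mf g[i]$, the bracket $[a, u_j]$ lies in $\mf g[i+k]$, so by the completeness identity \eqref{eq:compl} I can expand $[a, u_j] = \sum_{l\in I_{i+k}} ([a, u_j]\vert u^l)\, u_l$. The key step is then the identity $([a, u_j]\vert u^l) = (u_j\vert [u^l, a])$, which follows from invariance and cyclicity of the trace form. Substituting and exchanging the order of summation yields
\[
[a, E_i] = \sum_{l\in I_{i+k}} u_l \sum_{j\in I_i}(u_j\vert [u^l, a])\, U^j.
\]
Now $[U^l, A]$ is an element of $(\End V)[-i]$; hence by the completeness identity applied in $\End V$ with the basis $\{U_j\}$ dual to $\{U^j\}$, we obtain $\sum_{j\in I_i}(u_j\vert [u^l, a])\, U^j = [U^l, A]$, since only indices $j\in I_i$ contribute by degree. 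This gives exactly $[E_{i+k}, A]$.

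For the second identity I would appeal to the just-established equality. When $i+k\geq 1$, every $u_l$ with $l\in I_{i+k}$ lies in $\mf m = \mf g[\geq 1]$, so $u_l \equiv (f\vert u_l) \bmod \mc I$. Hence
\[
[a, E_i] = [E_{i+k}, A] \;\equiv\; \sum_{l\in I_{i+k}} (f\vert u_l)\, [U^l, A] \bmod \mc I.
\]
Since $f\in\mf g[-1]$, the pairing $(f\vert u_l)$ vanishes unless $u_l\in\mf g[1]$, that is, unless $i+k=1$, producing the Kronecker factor $\delta_{i+k,1}$. When $i+k = 1$, completeness gives $\sum_{l\in I_1}(f\vert u_l)\, U^l = F$ in $\End V$, so the sum collapses to $[F, A]$, as required.

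The main (mild) obstacle is purely notational: one must keep careful track of which factor of $U(\mf g)\otimes \End V$ each element lives in, and ensure that the two different uses of completeness---once in $\mf g$ to expand $[a, u_j]$, and once in $\End V$ to reconstitute the result---are applied coherently with the correct identification of the dual bases under the trace form. Once the bookkeeping is arranged cleanly, no further computation is needed.
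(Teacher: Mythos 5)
Your proof is correct and follows essentially the same route as the paper: expand $[a,E_i]$ from the definition, use the completeness identity together with invariance of the trace form to rewrite it as $[E_{i+k},A]$, and then reduce modulo $\mc I$ using $u_l\equiv(f|u_l)$ and the degree constraint $f\in\mf g[-1]$ to obtain the factor $\delta_{i+k,1}[F,A]$. No gaps; the bookkeeping you flag is exactly what the paper's proof does as well.
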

\begin{proof}
We have,
$$
\begin{array}{l}
\displaystyle{
\vphantom{\Big(}
[a,E_{i}] =
\sum_{j \in I_i} [a,u_j]U^j = 
\sum_{j \in I_i, h \in I} u_h ([a,u_j] | u^h)U^j =
\sum_{j \in I_i, h \in I_{i+k}} u_h ([u^h,a] | u_j)U^j 
}\\
\displaystyle{
\vphantom{\Big(}
= \sum_{j \in I, h \in I_{i+k}} u_h ([u^h,a] | u_j)U^j =
\sum_{h \in I_{i+k}} u_h [U^h,A]  =
[E_{i+k},A]
\,.}
\end{array}
$$
Here, in the second equality we have used the completeness identity \eqref{eq:compl}, in the third equality we have used the invariance of the trace form of $\mf g$ and the fact that $ ([a,u_j] | u^h) = 0 $ for any $ j \in I_i $ when $ h \not\in  I_{i+k}$, in the fourth equality we have used the fact that $ ([u^h,a] | u_j) = 0$ for any $ h \in I_h $ when $ j \not\in I_i $, and in the fifth equality we have again used the completeness identity.
Equation \eqref{20180219:eq5} follows from \eqref{20180219:eq4}
and the definition of the ideal $\mc I$.
\end{proof}

\section{Recursive formula for the operator \texorpdfstring{$L(z)$}{L(z)} for an aligned pyramid}\label{sec:5b}


\subsection{The operator $T(z)$}\label{sec:5.2}

Recall from Section \ref{sec:3.1c} that $V_\pm\subset V$ come with the natural splittings
$V = V_+\oplus FV=V_-\oplus F^tV$.
Recall also the definition \eqref{eq:L} of the operator $L(z)$.
\begin{proposition}\label{prop:T}
The following generalized quasideterminant exists:
\begin{equation}\label{eq:def:matrixT}
T(z) =  |z\id_V + F + E_{\mf p} + D_{\mf{m}}  |_{V_-,V_+}
\,\in U(\mf g)[z]\otimes \Hom(V_-,V_+)
\,.
\end{equation}
Moreover, we have
\begin{equation}\label{eq:LT}
L(z)
=
| T(z) |_{V_-^d,V_+^d} \bar1
\,,
\end{equation}
where the RHS is associated to the splittings \eqref{eq:V_-ud} of $V_-$ and $V_+$.
\end{proposition}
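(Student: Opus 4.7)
The plan is to combine Proposition~\ref{prop:quasidet} (to build $T(z)$) with the hereditary property \eqref{eq:hered} (to derive \eqref{eq:LT}). Set $M(z) := z\id_V + F + E_{\mf p} + D_{\mf m}$, so that $L(z) = |M(z)|_{V_-^d, V_+^d}\bar 1$ by \eqref{eq:L} and the target object is $T(z) = |M(z)|_{V_-, V_+}$.

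For the existence of $T(z)$, I apply Proposition~\ref{prop:quasidet} with the splittings $V = V_- \oplus F^tV$ and $V = V_+ \oplus FV$ coming from \eqref{20180219:eq3}. This reduces to showing that the block
\begin{equation*}
A(z) := \id_{FV}\,M(z)\,\id_{F^tV} \,\in\, U(\mf g)[z] \otimes \Hom(F^tV, FV)
\end{equation*}
is invertible. By \eqref{20180219:eq3}, $F$ restricts to a bijection $F|_{F^tV}: F^tV \to FV$ with inverse $F^t|_{FV}$, so I factor
\begin{equation*}
A(z) = F|_{F^tV}\big(\id_{F^tV} + Y(z)\big), \quad Y(z) := F^t|_{FV}\big(z\,\pi + \id_{FV}(E_{\mf p}+D_{\mf m})\id_{F^tV}\big),
\end{equation*}
with $\pi := \id_{FV}\,\id_V\,\id_{F^tV}$. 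Then invertibility of $A(z)$ reduces to nilpotency of $Y(z)$, since in that case $(\id + Y(z))^{-1}$ equals the truncated geometric series $\sum_k (-Y(z))^k$ and is polynomial.

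To prove $Y(z)$ is nilpotent, I expand it as $\sum_k y_k \otimes Y_k$ in $U(\mf g)[z] \otimes \End F^tV$. Since $E_{\mf p} = E_{\leq 0} = \sum_i u_i U^i$ with $u_i \in \mf g[j_i]$, $j_i \leq 0$, the dual element $u^i$ lies in $\mf g[-j_i]$ and hence $U^i$ has $\Gamma$-degree $-j_i \geq 0$; the operator $D_{\mf m}$ is $\Gamma$-diagonal by \eqref{20180219:eq2}; the projection $\pi$ is $\Gamma$-degree preserving because the splitting $V = V_+ \oplus FV$ is graded; and $F^t$ raises $\Gamma$-degree by $1$. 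Therefore every $Y_k \in \End F^tV$ strictly raises $\Gamma$-degree by at least $1$, so any $p$-fold composition $Y_{k_1}\cdots Y_{k_p}$ raises $\Gamma$-degree by at least $p$. As the $\Gamma$-degrees on $F^tV$ lie in an interval of length $p_1 - 2$, I conclude $Y(z)^{p_1-1} = 0$, proving nilpotency and hence the existence of $T(z) \in U(\mf g)[z] \otimes \Hom(V_-, V_+)$.

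Finally, to derive \eqref{eq:LT} I invoke the hereditary property. Both $|M(z)|_{V_-, V_+}$ (just proved) and $|M(z)|_{V_-^d, V_+^d}$ (Theorem~\ref{thm:Lexists}) exist, so both satisfy $|M(z)|_{U,W}^{-1} = \id_U\, M(z)^{-1}\, \id_W$ by definition. With $U_1 = V_-^d \subset V_- = U$ and $W_1 = V_+^d \subset V_+ = W$, the computation
\begin{equation*}
\id_{U_1}\,|M(z)|_{U,W}^{-1}\,\id_{W_1} = \id_{U_1}\,M(z)^{-1}\,\id_{W_1} = |M(z)|_{U_1,W_1}^{-1}
\end{equation*}
(using $\id_{U_1}\id_U = \id_{U_1}$ and $\id_W\id_{W_1} = \id_{W_1}$ from Section~\ref{sec:3.1c}) shows that the iterated quasideterminant $||M(z)|_{V_-,V_+}|_{V_-^d, V_+^d}$ exists and coincides with $|M(z)|_{V_-^d, V_+^d}$; projecting to $U(\mf g)/\mc I$ via $\bar 1$ then yields \eqref{eq:LT}. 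The main obstacle is the degree-bookkeeping in the nilpotency step: one must verify carefully that each summand of $Y(z)$ -- in particular those with nontrivial $U(\mf g)$-coefficients coming from $E_{\mf p}$ -- raises the $\Gamma$-degree on $F^tV$ by at least one at the level of the $\End F^tV$-factor alone, independently of its $U(\mf g)$-coefficient.
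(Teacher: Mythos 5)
Your proposal is correct and follows essentially the same route as the paper: reduce via Proposition~\ref{prop:quasidet} to the invertibility of the corner block $\id_{FV}(z\id_V+F+E_{\mf p}+D_{\mf m})\id_{F^tV}$, factor out the invertible piece $\id_{FV}F\id_{F^tV}$, observe that the remainder is nilpotent because it strictly raises the $\Gamma$-degree (so the geometric series is a finite polynomial in $z$), and then obtain \eqref{eq:LT} from the hereditary property \eqref{eq:hered}; you merely spell out the degree bookkeeping and re-derive heredity by hand. The only point to add explicitly is the hypothesis of Proposition~\ref{prop:quasidet} (and the $M(z)^{-1}$ used in your hereditary computation), namely that $z\id_V+F+E_{\mf p}+D_{\mf m}$ is itself invertible in $U(\mf g)((z^{-1}))\otimes\End V$, which is immediate by geometric expansion in $z^{-1}$ and is the first line of the paper's proof.
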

\begin{proof}
Clearly, $z\id_V + F + E_{\mf p} + D_{\mf{m}}$
is invertible in $U(\mf g)((z^{-1}))\otimes\End V$ by geometric series expansion.
Hence, by Proposition \ref{prop:quasidet},
in order to prove that the generalized quasideterminant defining $T(z)$ exists,
it suffices to prove that
$$
\id_{FV}(z\id_V + F + E_{\mf p} + D_{\mf{m}})\id_{F^tV}
\,\in U(\mf g)[z]\otimes\Hom(F^t V,FV)
$$
is invertible.
This is the case since $\id_{FV}F\id_{F^tV}$ is obviously invertible,
and 
$$
(\id_{FV}F\id_{F^tV})^{-1}\circ\id_{FV}(z\id_V + E_{\mf p} + D_{\mf{m}})\id_{F^tV}
$$
is nilpotent (having positive degree).
As a consequence, the corresponding geometric series expansion for
$(\id_{FV}(z\id_V + F + E_{\mf p} + D_{\mf{m}})\id_{F^tV})^{-1}$
is finite, so it is polynomial in $z$.
The first claim follows by Proposition \ref{prop:quasidet}.
Equation \eqref{eq:LT} follows
by the decompositions $V=V_-^d\oplus(V_-^u\oplus F^tV)=V_+^d\oplus(V_+^u\oplus FV)$ and the hereditary property \eqref{eq:hered}
of the generalized quasideterminants.
\end{proof}
\begin{remark}\label{rem:BK1}
The operator $T(z)$, in a slightly different form, first appeared in \cite{BK06}.
The precise relation of the operator $T^{BK}(z)$ in \cite{BK06}
and the operator $T(z)$ defined above is 
$$
T^{BK}(z)=z^{-1-X}T(z)z^{X}\,,
$$
where $z^{X}$ is the semisimple endomorphism of $V$
with value $z^k$ on $V[k]$.
\end{remark}

\subsection{Recursive formula for $T(z)$}\label{sec:5.3}

With the notation of Section \ref{sec:5.1c}
we denote by $T'(z)$ the operator \eqref{eq:def:matrixT}
relative to the pyramid $p'$.
\begin{proposition}\label{prop:recursionT}
The following recursive formula holds for the operator $T(z)$:
\begin{equation}\label{eq:recT}
\begin{array}{l}
\vphantom{\Big(}
T(z)
=
T'(z)\id_{V_-^u}
-\frac1{r_1}[T'(z),\id_{F^tV_-^d}E_{-1}]^1
-T'(z)F^t(z\id_V+E_0+D)\id_{V_-^d}
\,,
\end{array}
\end{equation}
where the bracket $[\cdot\,,\,\cdot]^1$ is on the first factor of $U(\mf g)\otimes\End V$
(cf. Section \ref{sec:3.4b}).
\end{proposition}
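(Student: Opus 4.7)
The plan is to apply the Schur-complement formula of Proposition~\ref{prop:quasidet} to $T(z) = |A|_{V_-, V_+}$, with $A = z\id_V + F + E_{\leq 0} + D$, and analogously to $T'(z) = |A'|_{V'_-, V_+}$, with $A' = z\id_{V'} + F' + E'_{\leq 0} + D'$; then compare the two expressions term by term. The enabling fact is Lemma~\ref{lem:restr}, which yields $\id_{V'}A\id_{V'} = A'$: the operator $A$ restricts to $A'$ on $V' \subset V$, so the discrepancy between $T(z)$ and $T'(z)$ can come only from the leftmost column $V_-^d$, which is precisely the complement of $V'$ in $V$.

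Next I would split $\id_{V_-} = \id_{V_-^u} + \id_{V_-^d}$ and handle the two contributions separately. For the $V_-^u$-component, since $V_-^u \subset V_- \cap V'_-$ and is annihilated by both $F$ and $F'$, decomposing $F^tV = F^tV_-^d \oplus F^tV'$ (and noting $F^tV' = (F')^tV'$ by Lemma~\ref{lem:restr}(d)), and then tracking the Schur complement through this refinement, $T(z)\id_{V_-^u}$ reduces to exactly $T'(z)\id_{V_-^u}$, reproducing the first term of \eqref{eq:recT}. For the $V_-^d$-component, $F\id_{V_-^d} = 0$ forces $A\id_{V_-^d} = (z\id_V + E_{\leq 0} + D)\id_{V_-^d}$, whose degree-zero part $(z\id_V + E_0 + D)\id_{V_-^d}$ maps $V_-^d$ into itself and whose lower-degree components land in $V'$. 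Passing this through the inverse $(\id_{FV}A\id_{F^tV})^{-1}$, whose block on $FV_-^d$ is to leading order $F^t|_{V_-^d}$ (the inverse of the isomorphism $F\colon F^tV_-^d \stackrel{\sim}{\to} V_-^d$, valid since $V_-^d \cap V_+ = 0$ for $p_1 \geq 2$), produces the third term $-T'(z)F^t(z\id_V + E_0 + D)\id_{V_-^d}$.

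The remaining commutator correction $-\frac{1}{r_1}[T'(z), \id_{F^tV_-^d}E_{-1}]^1$ arises from the non-commutativity of the $U(\mf g)$-valued factors when rearranging the Schur-complement expression into the form $T'(z)\cdot(\cdots)$. The coefficient $\frac{1}{r_1}$ emerges from the trace identity of Lemma~\ref{20180216:lem1}(b) applied to $U = V_-^d$ with $\dim V_-^d = r_1$, which enters the Schur inversion of the $r_1 \times r_1$ block $\id_{FV_-^d}A\id_{F^tV_-^d}$; the projection $\id_{F^tV_-^d}$ and the choice of $E_{-1}$ record that the correction lives precisely at the $V_-^d \leftrightarrow F^tV_-^d$ interface. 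I expect the main obstacle to be verifying that all residual non-commutative corrections, after reorganization, combine into exactly this one commutator without leaving any extra terms; this will require repeated use of Lemma~\ref{lem:E} to evaluate commutators of $T'(z)$ with the individual degree components $E_i$ of $A$, together with Lemma~\ref{20180216:lem2} to work modulo the ideal $\mc I$.
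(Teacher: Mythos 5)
Your plan correctly isolates the two easy features (the $V_-^u$-component reducing to $T'(z)\id_{V_-^u}$, and $F\id_{V_-^d}=0$ forcing the term $-T'(z)F^t(z\id_V+E_0+D)\id_{V_-^d}$), but the heart of the proposition --- the exact appearance of the single correction $-\frac1{r_1}[T'(z),\id_{F^tV_-^d}E_{-1}]^1$ and the fact that nothing else survives --- is precisely the point you leave open, and the tools you name would not close it. Two concrete problems. First, \eqref{eq:recT} is an exact identity in $U(\mf g)[z]\otimes\Hom(V_-,V_+)$, not a congruence modulo $\mc I$: it is later fed into the proof of Proposition \ref{prop:recursionL}, where it gets multiplied on the right by elements of $U(\mf g)((z^{-1}))$, and $\mc I$ is only a \emph{left} ideal, so ``working modulo $\mc I$ via Lemma \ref{20180216:lem2}'' (i.e.\ \eqref{20180219:eq5}) proves a strictly weaker statement that cannot be propagated; the paper's proof of this proposition never reduces mod $\mc I$. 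Second, your route through the explicit Schur-complement formula of Proposition \ref{prop:quasidet} requires controlling the full inverse $(\id_{FV}(z\id_V+F+E_{\leq0}+D)\id_{F^tV})^{-1}$ and comparing it with the analogous inverse for the smaller pyramid; ``to leading order $F^t|_{V_-^d}$'' is not enough, since the higher-order terms of this finite geometric series are exactly what couple the $F^tV_-^d$- and $F^tV'$-blocks and would have to reassemble into the commutator, and you give no mechanism for that.

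The paper sidesteps the expansion entirely: it uses the defining (uniqueness) characterization of the quasideterminant --- $T(z)v$ is the unique element of $U(\mf g)[z]\otimes V_+$ of the form $(z\id_V+F+E_{\leq0}+D)(v+w)$ with $w\in U(\mf g)((z^{-1}))\otimes F^tV$ --- and shows that the right-hand side of \eqref{eq:recT}, applied to $v\in V_-^d$, can be written in exactly that form. Two ingredients absent from your proposal are essential there: the identity $E_{<0}(v)=-\frac1{r_1}[E_{\leq0},\id_{F^tV_-^d}E_{-1}]^1(v)$, which comes from Lemma \ref{lem:E}(b) applied to the $r_1$-dimensional space $F^tV_-^d$ (this, rather than Lemma \ref{20180216:lem1}(b) applied to $V_-^d$ inside a Schur inversion, is where the coefficient $1/r_1$ enters); and Lemma \ref{lem:E2}, which gives $[q,E'_{\leq0}](w)=0$ for $q\in\Hom(F^tV_-^d,V_-^d)$ and $w\in F^tV'\oplus V_-^u$, so that when $E_{\leq0}$ is upgraded to $T'(z)$ inside the bracket the discrepancy is again of the admissible form $(z\id_{V'}+F'+E'_{\leq0}+D')(\,\cdot\,)$ with values in $F^tV'$, and the uniqueness argument applies. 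Without these steps, or a worked-out substitute, the proposal does not produce the commutator term, so as it stands there is a genuine gap.
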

\begin{proof}
Let $v\in V_-=V_-^u\oplus V_-^d$.
By the definition of the generalized quasideterminant \eqref{eq:def:matrixT}, 
$T(z)v$ is the unique element of $U(\mf g)[z]\otimes V_+$
of the form
\begin{equation}\label{eq:rec1}
T(z)v
=
(z\id_V+F+E_{\leq0}+D)(v+w)
\,,
\end{equation}
for some
$w\in U(\mf g)((z^{-1}))\otimes F^tV$.

Consider first the case when $v\in V_-^u$.
In this case, $T'(z)v$ is the unique element of $U(\mf g')[z]\otimes V_+$
of the form
\begin{equation}\label{eq:rec2}
T'(z)v
=
(z\id_{V'}+F'+E'_{\leq0}+D')(v+w')
\,,
\end{equation}
for some $w'\in U(\mf g')((z^{-1}))\otimes F^tV'$.
By Lemma \ref{lem:restr},
the RHS of \eqref{eq:rec2} is unchanged if we replace $\id_{V'}$, $F'$, $E'_{\leq0}$ and $D'$
by $\id_{V}$, $F$, $E_{\leq0}$ and $D$ respectively.
Since, 
obviously, $U(\mf g')\subset U(\mf g)$ and $F^tV'\subset F^tV$,
the RHS of \eqref{eq:rec2} is of the form \eqref{eq:rec1}.
Namely, $T'(z)v$ solves the problem defining $T(z)v$,
and therefore they coincide.
Note that the second and third term of the RHS of \eqref{eq:recT}
vanish when applied to $v\in V_-^u$.
Hence,
the restrictions to $V_-^u$ of both sides of equation \eqref{eq:recT}
coincide.

Next, let $v\in V_-^d$.
Note that we have the decomposition (cf. Figures \ref{fig5} and \ref{fig7})
$F^tV=F^tV_-^{d}\oplus F^tV'$.
Hence, the formula \eqref{eq:rec1} defining $T(z)v\in U(\mf g)[z]\otimes V_+$
can be rewritten as
\begin{equation}\label{eq:rec3}
T(z)v
=
(z\id_V+F+E_{\leq0}+D)(v+w_1+w_2)
\,,
\end{equation}
for some
$w_1\in U(\mf g)((z^{-1}))\otimes F^tV_-^{d}$
and $w_2\in U(\mf g)((z^{-1}))\otimes F^t{V'}$.
Applying the projection onto $V_-^d$ (with kernel $V_-^u\oplus F^tV$)
to both sides of equation \eqref{eq:rec3},
and recalling that $V_+\subset V_-^u\oplus F^tV$, we get
$$
0
=
(z\id_V+E_{0}+D)(v)+F(w_1)
\,.
$$
Hence, equation \eqref{eq:rec3} becomes
\begin{equation}\label{eq:rec3b}
T(z)v
=
(z\id_V+F+E_{\leq0}+D)(v
-F^t(z\id_V+E_{0}+D)(v)+w_2)
\,.
\end{equation}
Since $F\circ F^t$ acts as the identity on $V_-^d$, we have
$$
\begin{array}{l}
\vphantom{\Big(}
(z\id_V+F+E_{\leq0}+D)(
F^t(z\id_V+E_{0}+D)(v))
\\
\vphantom{\Big(}
=
(z\id_V+E_{0}+D)(v)
+
(z\id_V+E_{\leq0}+D)
(F^t(z\id_V+E_{0}+D)(v))
\,.
\end{array}
$$
Hence, equation \eqref{eq:rec3b}
can be rewritten as
\begin{equation}\label{eq:rec4}
T(z)v
=
E_{<0}(v)
-
(z\id_V+E_{\leq0}+D)
(F^t(z\id_V+E_{0}+D)(v))
+
(z\id_V+F+E_{\leq0}+D)(w_2)
\,.
\end{equation}
Note that $w_2$ lies in $U(\mf g)((z^{-1}))\otimes F^tV'$.
Moreover, $v\in V_-^d$ and therefore $F^t(z\id_V+E_{0}+D)(v)$ lies in 
$U(\mf g)((z^{-1}))\otimes F^tV_-^{d}$.
Recalling that $F'\id_{F^tV_-^{d}}=0$
and applying Lemmas \ref{lem:restr} and \ref{lem:E}(b), we thus get, from \eqref{eq:rec4}
that $T(z)v$ is the unique element of $U(\mf g)((z^{-1}))\otimes V_+$ of the form
\begin{equation}\label{eq:rec5}
\begin{array}{l}
\displaystyle{
\vphantom{\Big(}
T(z)v
=
-
\frac1{r_1}[E_{\leq0},\id_{F^tV_-^d}E_{-1}]^1(v)
} \\
\displaystyle{
\vphantom{\big(}
-
(z\id_{V'}\!+\!F'\!+\!E'_{\leq0}\!+\!D')
(F^t(z\id_V\!+\!E_{0}\!+\!D)(v))
+
(z\id_{V'}\!+\!F'\!+\!E'_{\leq0}\!+\!D')(w_2)
\,,}
\end{array}
\end{equation}
for some $w_2\in U(\mf g)((z^{-1}))\otimes F^t{V'}$.

Next, let us compute separately the second and third term in the RHS of \eqref{eq:recT}
applied to $v\in V_-^d$.
The second term in the RHS of \eqref{eq:recT} is
(cf. the notation of Section \ref{sec:3.4b})
\begin{equation}\label{eq:rec6}
-\frac1{r_1}[T'(z),\id_{F^tV_-^d}E_{-1}]^1(v)
=
\frac1{r_1}\sum_{i\in I}[u_i,T'(z)\id_{F^tV_-^d}U^i\id_{V_-^d}v]
\,.
\end{equation}
By Lemma \ref{20180426:lem}, we can replace
\begin{equation}\label{20180216:eq3}
\sum_{i\in I}u_i \id_{F^tV_-^d}U^i\id_{V_-^d}
=
\sum_{i\in J}q_iQ^i
\,\in U(\mf g)\otimes\End V\,,
\end{equation}
where $\{q_i\}_{i\in J}$
is a basis of $\mf h:=\Hom(F^t V_-^d,V_-^d)\subset\mf g$,
$\{q^i\}_{i\in J}$
is the dual basis of $\Hom(V_-^d,F^t V_-^d)\subset\mf g$,
and, as usual, $\{Q_i\}_{i\in J}$, $\{Q^i\}_{i\in J}$
denote the same elements, viewed as endomorphisms in $\End V$.
Hence, equation \eqref{eq:rec6} becomes
\begin{equation}\label{eq:rec6b}
-\frac1{r_1}[T'(z),\id_{F^tV_-^d}E_{-1}]^1(v)
=
\frac1{r_1}\sum_{i\in J}[q_i,T'(z)Q^iv]
\,.
\end{equation}
Note that $Q^iv\in F^tV_-^{d}$ for every $i\in J$.
By the definition \eqref{eq:rec1} of $T'(z)$, 
$T'(z)Q^iv$ is the unique element of $U(\mf g')((z^{-1}))\otimes F^tV$
of the form
\begin{equation}\label{eq:rec7}
T'(z)Q^iv
=
(z\id_{V'}+F'+E'_{\leq0}+D')(Q^iv+w_i)
\,,
\end{equation}
for some $w_i\in U(\mf g')((z^{-1}))\otimes F^tV'$.
Combining equations \eqref{eq:rec6b} and \eqref{eq:rec7},
we get
\begin{equation}\label{eq:rec8}
\begin{array}{l}
\displaystyle{
\vphantom{\Big(}
-\frac1{r_1}[T'(z),\id_{F^tV_-^d}E_{-1}]^1(v)
=
\frac1{r_1}\sum_{i\in J}
[q_i,(z\id_{V'}+F'+E'_{\leq0}+D')(Q^iv+w_i)]
} \\
\displaystyle{
\vphantom{\Big(}
=
\frac1{r_1}\sum_{i\in J}
\Big(
[q_i,E_{\leq0}(Q^iv)]
+
[q_i,(z\id_{V'}+F'+E'_{\leq0}+D')(w_i)]
\Big)
} \\
\displaystyle{
\vphantom{\Big(}
=
-\frac1{r_1}
[E_{\leq0},\id_{F^tV_-^d}E_{-1}](v)
+
(z\id_{V'}+F'+E'_{\leq0}+D')
\frac1{r_1}
\sum_{i\in J}
[q_i,w_i]
} \\
\displaystyle{
\vphantom{\Big(}
\,\,\,\,\,\,\,\,\,\,\,\,\,\,\,\,\,\,\,\,\,\,\,\,\,\,\,\,\,\,\,\,\,\,\,\,
\,\,\,\,\,\,\,\,\,\,\,\,\,\,\,\,\,\,\,\,\,\,\,\,\,\,\,\,\,\,\,\,\,\,\,\,
\,\,\,\,\,\,\,\,\,\,\,\,\,\,\,\,\,\,\,\,\,\,\,\,\,\,\,\,\,\,\,\,\,\,\,\,
\,\,\,\,\,\,\,\,\,\,\,\,\,\,\,\,\,\,\,\,\,\,\,\,\,\,\,\,\,\,\,\,\,\,\,\,
\in U(\mf g)((z^{-1}))\otimes V_+
\,.
}
\end{array}
\end{equation}
For the last term of the RHS of \eqref{eq:rec8}
we used the fact $[q_i,E'_{\leq 0}](w_i)=0$ 
for every $i\in J$,
due to Lemma \ref{lem:E2}.

By the definition \eqref{eq:rec1} of $T'(z)$,
the third term in the RHS of \eqref{eq:recT} applied to $v\in V_-^d$ 
is the unique element 
in $U(\mf g')((z^{-1}))\otimes V_+$ of the form
\begin{equation}\label{eq:rec9}
-(z\id_{V'}+F'+E'_{\leq0}+D')
\big(F^t(z\id_V+E_0+D)(v)+\widetilde w\big)
\,,
\end{equation}
for some $\widetilde w\in U(\mf g')((z^{-1}))\otimes F^tV'$.

To conclude, we observe that the sum of \eqref{eq:rec8} and \eqref{eq:rec9}
is an element of $U(\mf g)((z^{-1}))\otimes V_+$
and it has the same form as the RHS of \eqref{eq:rec5}
with 
$$
w_2
=
\frac1{r_1} \sum_{i\in J}[q_i,w_i]-\widetilde w\,\in U(\mf g)((z^{-1}))\otimes F^tV'
\,.
$$
This completes the proof.
\end{proof}

\subsection{Recursive formula for $L(z)$}\label{sec:5.4}

Recall from \eqref{eq:LT} that  $ L(z) = \widetilde{L}(z)\bar1$, where
\begin{equation}\label{eq:Ltilde}
\widetilde{L}(z) = | T(z) |_{V_-^d,V_+^d}  \in U(\mf g)((z^{-1}))\otimes \Hom(V_-^d,V_+^d)
\,.
\end{equation}
With the notation of Sections \ref{sec:5.1} and \ref{sec:5.1c}
we denote by 
\begin{equation}\label{eq:L'tilde}
\widetilde{L'}(z) = | T'(z) |_{{V'}_-^d,{V'}_+^d}  \in U(\mf g')((z^{-1}))\otimes \Hom({V'}_-^d,{V'}_+^d)
\,,
\end{equation}
the operator \eqref{eq:Ltilde}
relative to the pyramid $p'$.

Also recall from Section \ref{sec:5.1} that we have the following decompositions
\begin{equation}\label{20180215:eq4}
{V'}_-^d=F^tV_-^d\oplus(V_-^u\cap{V'}_-^d)
\quad\text{and}\quad
{V'}_+^d=V_+^d\oplus(V_+^u\cap {V'}_+^d)
\,.
\end{equation}
\begin{proposition}\label{prop:recursionL}
The following recursive formula holds for the operator $\widetilde{L}(z)$:
\begin{equation}\label{eq:recL}
\widetilde{L}(z) = 
-\frac1{r_1}[| \widetilde{L'}(z) |_{F^tV_-^d,V_+^d},\id_{F^tV_-^d}E_{-1}]^1
-| \widetilde{L'}(z) |_{F^tV_-^d,V_+^d}F^t(z\id_V+E_0+D)\id_{V_-^d}
\,,
\end{equation}
where the quasideterminant $| \widetilde{L'}(z) |_{F^tV_-^d,V_+^d}$ is associated to the splittings
in \eqref{20180215:eq4}.
\end{proposition}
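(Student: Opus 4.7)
The plan is to apply Proposition~\ref{prop:quasidet} to rewrite both $\widetilde L(z)=|T(z)|_{V_-^d,V_+^d}$ and $M:=|\widetilde{L'}(z)|_{F^tV_-^d,V_+^d}$ as Schur-complement formulas sharing the \emph{same} pivot operator, and then substitute the recursion~\eqref{eq:recT} for $T(z)$ into the first. The key preliminary observation is that, by~\eqref{eq:recT}, the second and third terms of the recursion for $T(z)$ vanish on $V_-^u$, so $T(z)\id_{V_-^u}=T'(z)\id_{V_-^u}$ (for the second term, this uses that $\id_{F^tV_-^d}E_{-1}$ is zero on $V_-^u$ by a degree count, since $V_-^u\subset V[>-\tfrac{p_1-1}2]$ and $E_{-1}$ shifts degree by $+1$). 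Applying Proposition~\ref{prop:quasidet} to $\widetilde L(z)$ with the splittings $V_-=V_-^d\oplus V_-^u$ and $V_+=V_+^d\oplus V_+^u$ then yields
\[
\widetilde L(z)\,=\,\id_{V_+^d}\bigl(\id_V-P\bigr)T(z)\id_{V_-^d},\qquad P:=T'(z)\id_{V_-^u}\bigl(\id_{V_+^u}T'(z)\id_{V_-^u}\bigr)^{-1}\id_{V_+^u}.
\]

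By the hereditary property~\eqref{eq:hered} we have $M=|T'(z)|_{F^tV_-^d,V_+^d}$. Applying Proposition~\ref{prop:quasidet} to $T'(z)\colon V'_-\to V_+$ with the splittings $V'_-=F^tV_-^d\oplus V_-^u$ (from~\eqref{eq:V''_-ud}) and $V_+=V_+^d\oplus V_+^u$, the \emph{same} operator $P$ appears as the Schur-complement correction, giving
\[
M\,=\,\id_{V_+^d}\bigl(\id_V-P\bigr)T'(z)\id_{F^tV_-^d}.
\]
This coincidence of the pivot $P$ in both formulas is what makes the recursion work.

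Now substitute~\eqref{eq:recT} for $T(z)\id_{V_-^d}$: the first term of~\eqref{eq:recT} vanishes on $V_-^d$, so only the commutator and $F^t$-terms remain. The $F^t$-term $-T'(z)F^t(z\id_V+E_0+D)\id_{V_-^d}$ has the factor after $T'(z)$ with image in $F^tV_-^d$, so we may insert $\id_{F^tV_-^d}$ and recognize $\id_{V_+^d}(\id_V-P)T'(z)\id_{F^tV_-^d}=M$. This produces the second term $-M F^t(z\id_V+E_0+D)\id_{V_-^d}$ of~\eqref{eq:recL}.

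The main obstacle is then to match the commutator term, i.e.\ to prove
\[
\id_{V_+^d}(\id_V-P)\,[T'(z),\id_{F^tV_-^d}E_{-1}]^1 \,=\, [M,\id_{F^tV_-^d}E_{-1}]^1.
\]
Because $P$ carries $U(\mf g')$-valued coefficients (via $T'(z)$ and the inverse $(\id_{V_+^u}T'(z)\id_{V_-^u})^{-1}$), it does not commute with the first-factor bracket via a naive Leibniz rule, and the straightforward expansion produces a ``defect'' correction term. The identity nevertheless holds thanks to two compatibility features of the splittings: $P$ factors through $\id_{V_+^u}$ on the right, while $\id_{F^tV_-^d}E_{-1}$ is supported on $V_-^d$ with image in $F^tV_-^d$, a subspace disjoint from both $V_-^u$ and $V_+^u$. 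Tracking how the $\End V$-valued idempotents interact with the first-factor commutator and using these disjointness relations, the defect terms collapse after the projection by $\id_{V_+^d}$. This commutator identity is the technical heart of the argument; combined with the $F^t$-term analysis above, it recovers~\eqref{eq:recL}.
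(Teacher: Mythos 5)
Your overall route is legitimate: rewriting $\widetilde L(z)=|T(z)|_{V_-^d,V_+^d}$ and $M:=|\widetilde{L'}(z)|_{F^tV_-^d,V_+^d}=|T'(z)|_{F^tV_-^d,V_+^d}$ via the explicit formula \eqref{eq:quasidet2}, noting that $T(z)\id_{V_-^u}=T'(z)\id_{V_-^u}$ so that the same Schur pivot $\id_{V_+^u}T'(z)\id_{V_-^u}$ appears in both, and then substituting \eqref{eq:recT}. This parallels the paper's proof of the analogous recursion for $|\widetilde W(z)|_{V_-^d,V_+^d}$ (Proposition \ref{20180408:prop2}), whereas the paper proves Proposition \ref{prop:recursionL} itself by the uniqueness characterization of the quasideterminant together with Lemma \ref{lem:E2}; your matching of the $F^t$-term is correct (modulo a word on the invertibility of $T(z)$ and of the pivot needed to invoke Proposition \ref{prop:quasidet}, which is implicit in Proposition \ref{prop:T} and Theorem \ref{thm:Lexists}).

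The genuine gap is the commutator step, which you yourself call the technical heart but do not prove, and the mechanism you offer for it is the wrong one. Writing $\id_{F^tV_-^d}E_{-1}=\sum_{i\in J}q_iQ^i$ with $\{q_i\}$ a basis of $\mf h=\Hom(F^tV_-^d,V_-^d)$ (Lemma \ref{20180426:lem}) and expanding $[M,\id_{F^tV_-^d}E_{-1}]^1$ by the Leibniz rule, the discrepancy between your two sides is exactly the defect $\sum_{i\in J}\id_{V_+^d}[P,q_i]\,T'(z)\id_{F^tV_-^d}Q^i$, and its vanishing is a statement about the $U(\mf g')$-valued coefficients of $P$: it amounts to $[T'(z)\id_{V_-^u},q]=0$ for all $q\in\mf h$ (then the pivot and its inverse also commute with $q$, hence $[P,q]=0$). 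No bookkeeping of the $\End V$-valued idempotents, disjointness of $F^tV_-^d$ from $V_-^u$ and $V_+^u$, or projection by $\id_{V_+^d}$ can yield this: the second factor of the defect is generically a nonzero map $V_-^d\to V_+^d$, so the cancellation must occur in the first tensor factor, and it is not automatic there since $\mf g'$ does not commute with $\mf h$ (any $x\in\mf g'$ with a component landing in $F^tV_-^d$ has $[x,q]=-qx\neq0$). The identity does hold, but only because of the special structure of $T'(z)\id_{V_-^u}$: for $v\in V_-^u$ one has $T'(z)v=(z\id_{V'}+F'+E'_{\leq0}+D')(v+w)$ with $w\in U(\mf g')((z^{-1}))\otimes F^tV'$; Lemma \ref{lem:E2} kills $[q,E'_{\leq0}]$ on $V_-^u\oplus F^tV'$, so $[T'(z)v,q]=(z\id_{V'}+F'+E'_{\leq0}+D')([w,q])$ with $[w,q]\in U(\mf g)((z^{-1}))\otimes F^tV'$, and the uniqueness in the defining property of $T'(z)$ then forces $[w,q]=0$. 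This is precisely the kind of argument the paper supplies (compare \eqref{20180522:eq1} in the proof of Proposition \ref{20180408:prop2}, and the use of Lemma \ref{lem:E2} in the paper's own proof of Proposition \ref{prop:recursionL}); without it, your argument does not close.
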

\begin{proof}
First, we observe that the quasideterminant $ | \widetilde{L'}(z) |_{F^tV_-^d,V_+^d}$ exists.
Indeed, using the decompositions $V'={V'}_{\pm}^d\oplus{V'}^\Sigma[\neq \pm\frac {d-1}2]$
and \eqref{20180215:eq4},
and the hereditary property \eqref{eq:hered} of quasideterminants we have
$$
| \widetilde{L'}(z) |_{F^tV_-^d,V_+^d}=| z\id_{V'} + F' + E'_{\leq 0} + D' |_{F^tV_-^d, V_+^d}
\,.
$$
The existence of the latter quasideterminant is proved in the same way as for 
$L(z) $ in Theorem \ref{thm:Lexists} (cf. \cite{DSKV17}).

Let $ v \in  V^d_- $. Then, by \eqref{eq:Ltilde} and the definition of quasideterminant, 
$ \widetilde{L}(z)v $ is the unique element in $ U(\mf g) ((z^{-1})) \otimes V_+^d $ of the form
\begin{equation}\label{eq:recL1}
\widetilde{L}(z)v = T(z) (v + w)
\,,
\end{equation}
for some $ w \in U(\mf g )((z^{-1}))\otimes V_-^u$ 
(here we are using the splitting \eqref{eq:V_-ud}).
By Proposition \ref{prop:recursionT} we can rewrite \eqref{eq:recL1} as
\begin{equation}\label{eq:recL2}
\widetilde{L}(z)v = T'(z)w - \frac{1}{r_1}[T'(z), \id_{F^tV_-^d}E_{-1}]^1v - T'(z)F^t(z\id_V + E_0 + D)v
\,.
\end{equation}

Let us now consider the RHS of \eqref{eq:recL}. 
By \eqref{eq:L'tilde}, and using the splittings \eqref{eq:V_-ud} 
(both for $V $ and $V'$), \eqref{eq:V''_-ud}, 
\eqref{eq:V'_-ud} and \eqref{20180215:eq4}, by the hereditary property of generalized quasideterminants we have
\begin{equation}\label{eq:recL3}
| \widetilde{L}'(z) |_{F^t{V}_-^d,{V}_+^d}
= | | T'(z) |_{{V'}_-^d,{V'}_+^d}  |_{F^t{V}_-^d,{V}_+^d}
= | T'(z) |_{F^t{V}_-^d,{V}_+^d}
\,.
\end{equation}
Next, we apply the second summand in the RHS of \eqref{eq:recL} to $ v \in V^d_-$.
By \eqref{eq:recL3} we get that
$ | \widetilde{L}'(z) |_{F^tV_-^d,V_+^d} F^t(z\id_V+E_0+D) v$
is the unique element of $U(\mf g) \otimes V_+^d $ of the form
\begin{equation}\label{eq:recL4}
T'(z) \big( F^t(z\id_V+E_0+D)v + w^{(1)} \big)
 \,,
 \end{equation}
for some $ w^{(1)} \in U(\mf g)((z^{-1}))\otimes V^u_- $
(here we are using the decomposition \eqref{eq:V''_-ud}). 
Let us apply the first summand in the RHS of \eqref{eq:recL} to $ v \in V^d_-$.
Again by \eqref{eq:recL3} and by equation \eqref{20180216:eq3}
(cf. Lemma \ref{20180426:lem}),
we have
\begin{equation}\label{eq:recL5}
\begin{array}{l}
\displaystyle{
\vphantom{\Big(}
[ | \widetilde{L}'(z) |_{F^tV_{-}^d,V_+^d}, \id_{F^tV_-^d}E_-^d]^1 v 
= 
\sum_{i \in J}[ | T'(z) |_{F^tV_{-}^d,V_+^d}, q_iQ^i]^1v 
} \\
\displaystyle{
\vphantom{\Big(}
=
\sum_{i \in J}[ | T'(z) |_{F^tV_{-}^d,V_+^d} Q^iv , q_i ]
\,.}
\end{array}
\end{equation}
By the definition of quasideterminant, for every $ i \in J $, $ | T'(z) |_{F^tV_{-}^d,V_+^d}Q^iv $ 
is the unique element of $ U(\mf g')((z^{-1})) \otimes V_+^d $ 
of the form $ T'(z)(Q^iv + w^{(2)}_i)$, for certain 
$ w^{(2)}_i \in U(\mf g)((z^{-1}))\otimes V^u_- $. 
Therefore, the RHS of \eqref{eq:recL5} becomes 
\begin{equation}\label{eq:recL6}
\begin{array}{l}
\displaystyle{
\vphantom{\Big(}
\sum_{i \in J}[ T'(z) (Q^iv + w^{(2)}_i), q_i]
= } \\
\displaystyle{
\vphantom{\Big(}
=\sum_{i \in J}[ T'(z) , q_i](Q^iv) + \sum_{i \in J}[ T'(z)w^{(2)}_i, q_i] 
= } \\
\displaystyle{
\vphantom{\Big(}
= [ T'(z) , \id_{F^tV_-^d}E_{-1}]^1v + \sum_{i \in J}[ T'(z)w^{(2)}_i, q_i] 
\,.}
\end{array}
\end{equation}
Moreover, for each $ i \in J $, $ T'(z)w^{(2)}_i$ is the unique element 
of $ U(\mf g')[z]\otimes V_+ $ of the form
\begin{equation}
(z\id_{V'} + F' + E'_{\leq 0} + D')(w^{(2)}_i + x^{(2)}_i)\,,
\end{equation}
for some $ x^{(2)}_i \in U(\mf g')((z^{-1})) \otimes F^tV' $. Thus,
\begin{equation}\label{eq:recL8}
\begin{array}{l}
\displaystyle{
\vphantom{\Big(}
\sum_{i \in J}[ T'(z)w^{(2)}_i, q_i]  = \sum_{i \in J}
[ (z\id_{V'} + F' + E'_{\leq 0} + D')(w^{(2)}_i + x^{(2)}_i), q_i] 
= } \\
\displaystyle{
\vphantom{\Big(}
= \sum_{i \in J}[ E'_{\leq 0}, q_i] (w^{(2)}_i + x^{(2)}_i)  
+ (z\id_{V'} + F' + E'_{\leq 0} + D') \sum_{i \in J}[ w^{(2)}_i + x^{(2)}_i, q_i] 
= } \\
\displaystyle{
\vphantom{\Big(}
= (z\id_{V'} + F' + E'_{\leq 0} + D') \sum_{i \in J}[ w^{(2)}_i + x^{(2)}_i, q_i] 
\,.}
\end{array}
\end{equation}
Indeed, by Lemma \ref{lem:E2} we have that $ [ E'_{\leq 0}, q_i] (w^{(2)}_i + x^{(2)}_i) = 0 $ for every 
$ i \in J$.
Moreover, 
\begin{equation}\label{eq:recL8b}
(z\id_{V'} + F' + E'_{\leq 0} + D') \frac1{r_1}\sum_{i \in J}[ w^{(2)}_i + x^{(2)}_i, q_i] 
= T'(z) \frac1{r_1} \sum_{i \in J}[ w^{(2)}_i, q_i] 
\,.
\end{equation}
Indeed, by applying the RHS of \eqref{eq:recL} to $ v \in V_-^d $ and combining it with \eqref{eq:recL4}, \eqref{eq:recL6} and \eqref{eq:recL8}
we can conclude that
$$
(z\id_{V'} + F' + E'_{\leq 0} + D') \frac1{r_1} \sum_{i \in J}[ w^{(2)}_i + x^{(2)}_i, q_i]  
\in U(\mf g)((z^{-1}))\otimes V_+
\,.
$$
On the other hand, $ T'(z) \frac1{r_1}\sum_{i \in J}[ w^{(2)}_i, q_i]  $
is the unique element of $ U(\mf g)((z^{-1})) \otimes V_+ $ of the form
$$
(z\id_{V'} + F' + E'_{\leq 0} + D') \big(  \frac1{r_1}\sum_{i \in J}[ w^{(2)}_i , q_i] + \widetilde{x}_i \big)
\,,
$$
for some $ \widetilde{x}_i \in U(\mf g)((z^{-1}))\otimes F^tV' $.
Hence, \eqref{eq:recL8b} holds since $ \frac1{r_1}\sum_{i \in J}[x^{(2)}_i, q_i] 
\in U(\mf g)((z^{-1}))\otimes F^tV'$ as well.

Combining \eqref{eq:recL4}, \eqref{eq:recL6}, \eqref{eq:recL8} and \eqref{eq:recL8b}
we get that the RHS of equation \eqref{eq:recL}  applied to $ v \in V^d_- $
is equal to
\begin{equation}\label{eq:recL9}
\begin{array}{l}
\displaystyle{
\vphantom{\Big(}
-\frac1{r_1}[ T'(z), \id_{F^tV_-^d}E_{-1}]^1v - T'(z)F^t(z\id_V+E_0+D)v 
\,- } \\
\displaystyle{
\vphantom{\Big(}
- T'(z)w^{(1)} - T'(z) \frac1{r_1}\sum_{i \in J}[ w^{(2)}_i, q_i] 
\in U(\mf g)((z^{-1}))\otimes V_+^d
\,.}
\end{array}
\end{equation}
To conclude, we observe that \eqref{eq:recL9} has the same form as the RHS of \eqref{eq:recL2}
with 
$$
w
=
\frac1{r_1}\sum_{i \in J}[ q_i, w^{(2)}_i] - w^{(1)}  \,\in U(\mf g)((z^{-1}))\otimes V_-^u
\,.
$$
This completes the proof.
\end{proof}

\section{The operator \texorpdfstring{$W(z)$}{W(z)} for an aligned pyramid}\label{sec:5c}

\subsection{Recursive definition for the operator $W(z)$}\label{sec:5.5}

In this section we construct recursively,
for a right aligned pyramid, a matrix $W(z)$ as in \eqref{eq:W}.
We shall prove in Theorem \ref{thm:20180208} below
that coefficients of the matrix $W(z)$ indeed lie in the $W$-algebra $W(\mf g,f)$,
while in the next Section \ref{sec:5.6}
we shall prove that the matrix $W(z)$ has Premet's form \eqref{eq:W}.

Consider first the case when $f=0$.
In this case the partition $\lambda$ has all parts equal to $1$,
$p_1=1$, and the corresponding pyramid consists of a single column.
Moreover, we have $W(\mf g,0)=U(\mf g)$, and $V=V[0]=V_-=V_+$.
We let, in this case,
\begin{equation}\label{20180208:eq1}
W(z)
=
z\id_V+E
\,\in U(\mf g)[z]\otimes\End V
\,,
\end{equation}
where $E$ is the matrix \eqref{eq:E}.

Next, let $p$ be a right aligned pyramid with $p_1>1$.
We let $W(z)=\widetilde W(z)\bar1$,
where $\widetilde W(z) \in U(\mf g)[z]\otimes\Hom(V_-,V_+)$,
and $\bar 1$ is the image of $1$ in the quotient $U(\mf g)/\mc I$ (cf. \eqref{eq:Walg}).
Denote, as in Section \ref{sec:5.1}, 
by $p'$ the pyramid obtained from $p$ by removing the leftmost column.
Assume by induction that the matrix 
$\widetilde W'(z) \in U(\mf g')[z]\otimes\Hom(V'_-,V_+)$
has been defined.
Then, we define $\widetilde W(z)$
via the following recursive formula (cf. \eqref{eq:recT}):
\begin{equation}\label{eq:recW}
\begin{array}{l}
\vphantom{\Big(}
\widetilde W(z)
=
\widetilde W'(z)\id_{V_-^u}
-
\frac1{r_1}[\widetilde W'(z),\id_{F^tV_-^d}E_{-1}]^1-\widetilde W'(z)F^t(z\id_V+E_0+D)\id_{V_-^d}
\\
\vphantom{\Big(}
\qquad\qquad
+\Res_x x^{-1}\widetilde W'(z)
\id_{V_-^u}
(1+x^{-1}F)^{-1}\widetilde W'(x)F^t\id_{V_-^d}
\,,
\end{array}
\end{equation}
where $(1+x^{-1}F)^{-1}$ is expanded as a geometric series.
The recursive definition \eqref{eq:recW} of $\widetilde{W}(z)$
first appeared, in matrix components, in \cite{BK08}.
Recall from \eqref{eq:mcF} 
that $\{U^i\}_{i\in\mc F}$ is a basis of $\Hom(V_-,V_+)\subset\End V$.
\begin{lemma}\label{20180222:lem1}
Let, as in Lemma \ref{lem:gfk}, $\{u_i\}_{i\in\mc F(h,k)}$ be a basis of $\mf g_0^f(h,k)$
for every $0\leq h,k\leq p_1-1$,
and let $\mc F=\sqcup_{h,k}\mc F(h,k)$.
Let $\{U^i\}_{i\in\mc F(h,k)}$ be the dual (w.r.t. the trace form) basis of
$\Hom(V_-\cap F^kV_+,V_+\cap(F^t)^hV_-)$.
If we decompose the matrix $ \widetilde W(z) $ in the following form
\begin{equation}\label{20180222:eq1}
\widetilde W(z) = z \id_{V_+}(1 + zF^t)^{-1}\id_{V_-} 
+ \sum_{i \in \mc F} \widetilde w_i(z) U^i
\,,
\end{equation}
where $ \widetilde w_i(z) \in U(\mf g)[z] $, 
then the degree of the polynomials $\widetilde w_i(z)$
are subject to the following restrictions:
\begin{equation}\label{20180222:eq2}
\deg\big(
\widetilde w_i(z)
\big)
\leq \min\{h,k\}
\,\text{ for every }\,
i\in\mc F(h,k)
\,.
\end{equation}
Equivalently (cf. equation \eqref{20180502:eq3}),
\begin{equation}\label{20180502:eq3b}
\begin{array}{l}
\displaystyle{
\vphantom{\Big(}
\id_{V_+\cap(F^t)^hV_-}\widetilde{W}(z)\id_{V_-\cap F^k V_+}
} \\
\displaystyle{
\vphantom{\Big(}
=
-\delta_{h,k}(-z)^{k+1}
(F^t)^k\id_{V_-\cap F^k V_+}
+
\sum_{\ell=0}^{\min\{h,k\}} \sum_{i\in\mc F(h,k)}
(-z)^\ell \widetilde{w}_{i,\ell} U^i
\,,}
\end{array}
\end{equation}
for some elements $\widetilde{w}_{i.\ell}\in U(\mf g)$.
\end{lemma}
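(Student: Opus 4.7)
My plan is to induct on $p_1$, the length of the longest row of the pyramid. For the base case $p_1 = 1$ the claim is immediate: with $F = 0$ and $V_+ = V_- = V$, one has $\widetilde W(z) = z\id_V + E$; the identity piece reduces to $z\id_V = z\id_{V_+}(1 + zF^t)^{-1}\id_{V_-}$, and the remainder gives $\widetilde w_i = u_i$ of degree $0 = \min\{0, 0\}$.

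For the inductive step I will work with the equivalent block formulation \eqref{20180502:eq3b}, showing for each pair $(h, k)$ that $\id_{V_+ \cap (F^t)^h V_-} \widetilde W(z) \id_{V_- \cap F^k V_+}$ has $z$-degree at most $\min\{h, k\}$ off-diagonal and, on-diagonal, has leading term $-(-z)^{k+1}(F^t)^k \id_{V_- \cap F^k V_+}$ with remainder of degree $\leq k$. The key organizing observation is that the four terms in \eqref{eq:recW} have complementary supports in $k$: the projection $\id_{V_-^u}$ in term 1 and $\id_{V_-^d}$ in terms 3 and 4 kill opposite pieces of $V_-$, while in term 2 a $\Gamma$-degree count shows that $\id_{F^t V_-^d} U^i \id_{V_- \cap F^k V_+}$ vanishes unless $k = p_1 - 1$ (since $u_i \in \mf g[-1]$ forces $U^i$ to shift $\Gamma$-degree by $+1$, and only $v \in V_-^d$ then lands in the $\Gamma$-degree of $F^t V_-^d$). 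Hence only term 1 contributes for $k \leq p_1 - 2$, while terms 2, 3, and 4 contribute for $k = p_1 - 1$.

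In the first regime the required degree bound follows directly from the inductive hypothesis applied to $\widetilde W'(z)$, using $V_- \cap F^k V_+ \subset V'_-$ and the natural matching of $\mc F(h, k)$ with the corresponding index sets for $p'$. In the second regime, the three contributions are controlled as follows: term 2 is a commutator in $U(\mf g)$, so its $z$-degree is inherited from $\widetilde W'(z)$; term 3, $-\widetilde W'(z) F^t(z\id_V + E_0 + D)\id_{V_-^d}$, picks up one extra power of $z$, but this is compensated because $F^t\id_{V_-^d}$ lands inside $F^t V_-^d \subset (V'_-)^d$, where the inductive block of $\widetilde W'$ has one lower column index, and indeed a direct computation of the identity piece $z\id_{V'_+}(1 + z(F')^t)^{-1}\id_{V'_-}$ applied to $F^t V_-^d$ yields $z(-z)^{p_1-2}(F^t)^{p_1-1}$, which combined with the $-z$ factor produces precisely the required diagonal leading term $-(-z)^{p_1}(F^t)^{p_1-1}\id_{V_-^d}$; term 4 is a residue in the auxiliary variable $x$, which extracts $x$-coefficients of $\widetilde W'(x)$ and therefore contributes no new $z$-dependence, its $z$-degree coming solely from the remaining factor $\widetilde W'(z)$.

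I expect the main obstacle to be term 4: one must carefully verify that the coupling between $\widetilde W'(z)$ and $\widetilde W'(x)$ through $(1 + x^{-1} F)^{-1}$ does not let the $z$-degree of the $(h, p_1 - 1)$-block exceed $\min\{h, p_1 - 1\} = h$, despite the geometric-series expansion of $(1 + x^{-1} F)^{-1}$ producing many terms that mix different blocks of $\widetilde W'$. Once this is settled, the lemma follows by collecting the contributions and matching them against \eqref{20180502:eq3b}.
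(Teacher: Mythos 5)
Your setup (induction on $p_1$, reduction to the block form \eqref{20180502:eq3b}, the observation that only the first term of \eqref{eq:recW} survives on $V_-\cap F^kV_+$ for $k<p_1-1$, and the computation of the diagonal leading term $-(-z)^{p_1}(F^t)^{p_1-1}$ from the third term when $h=k=p_1-1$) matches the paper's proof. But there is a genuine gap exactly where you flag "the main obstacle", and moreover your degree bookkeeping for the third and fourth terms is wrong in the case $k=p_1-1$, $h<p_1-1$. For such a block, the third term $-\widetilde W'(z)F^t(z\id_V+E_0+D)\id_{V_-^d}$ is \emph{not} compensated by a lower column index: $F^tV_-^d$ sits in the column-$(p_1-2)$ block of $\widetilde W'$, whose degree bound is $\min\{h,p_1-2\}=h$, so the explicit factor of $z$ produces a surviving term $(-z)^{h+1}\sum_i\widetilde w'_{i,h}{U'}^iF^t\id_{V_-^d}$ of degree $h+1>\min\{h,k\}$ (the second term of \eqref{20180507:eq5}). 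Likewise, your claim that the residue term "contributes no new $z$-dependence, its $z$-degree coming solely from $\widetilde W'(z)$" is false: the left factor $\id_{V_+\cap(F^t)^hV_-}\widetilde W'(z)\id_{V'_-\cap F^jV_+}$ with $j=h$ contains the diagonal piece $-(-z)^{h+1}(F^t)^h$, so the fourth term also produces a degree-$(h+1)$ contribution.

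The lemma holds because these two excess terms cancel each other, and establishing this cancellation is the heart of the inductive step, which your proposal leaves open. Concretely, in the fourth term one computes the residue explicitly: since $\widetilde W'(x)=\id_{V_+}\widetilde W'(x)$ and $\id_{V_-\cap F^hV_+}(1+x^{-1}F)^{-1}\id_{V_+}$ collapses to $(-x)^{-h}F^h$, the residue $\Res_x$ extracts precisely the top ($x$-degree $h$) coefficient $\widetilde w'_{i,h}$ of the relevant block of $\widetilde W'(x)$, yielding $-(-z)^{h+1}\sum_i\widetilde w'_{i,h}{U'}^iF^t\id_{V_-^d}$ (this is \eqref{20180507:eq6} in the paper), which cancels the excess term from the third term. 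Without this computation "collecting the contributions" does not close the argument. A second, smaller omission: in the case $h=p_1-1$, $k<p_1-1$ your appeal to the "natural matching" of index sets skips the check that the inductive diagonal term $\delta_{k,p_1-2}(-z)^{p_1-1}(F^t)^{p_1-2}$ of the $(p_1-2,k)$-block of $\widetilde W'$ dies after composing with $\id_{V_+^d}$ on the left (a power of $F^t$ cannot send a row of length $p_1-1$ into a longest row); this is needed to keep the degree at $\min\{h,k\}=k$ there.
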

\begin{proof}
The equivalence of \eqref{20180222:eq1}-\eqref{20180222:eq2}
and \eqref{20180502:eq3b} is obvious.
We shall prove \eqref{20180502:eq3b} by induction on the number $p_1$ 
of columns of the pyramid $p$.
For $p_1=1$ we have $V_+=V_-=V$ and $\widetilde{W}(z)=z\id_V+E$
by \eqref{20180208:eq1}.
In this case the claim \eqref{20180502:eq3b} is obvious.
For $p_1>1$, we shall consider separately the four cases
(1) $h,k<p_1-1$,
(2) $h=p_1-1,\,k<p_1-1$, 
(3) $h<p_1-1,\,k=p_1-1$,
and (4) $h=k=p_1-1$.
Recall that $V_-\cap F^kV_+\subset V_-^u$ for $k<p_1-1$,
and $V_-\cap F^{p_1-1}V_+=V_-^d$.
If $h,k<p_1-1$,
by the inclusions \eqref{20180502:eq4} and \eqref{20180502:eq5}
we have
\begin{equation}\label{20180507:eq2}
\id_{V_+\cap(F^t)^hV_-}
=
\id_{V_+\cap(F^t)^hV_-}
\id_{V_+\cap(F^t)^hV'_-}
\,\,\text{ and }\,\,
\id_{V_-\cap F^k V_+}
=
\id_{V'_-\cap F^k V_+}
\id_{V_-\cap F^k V_+}
\,.
\end{equation}
Hence,
by the recursive definition \eqref{eq:recW} of $\widetilde{W}(z)$
and the inductive assumption,
we have
\begin{equation}\label{20180504:eq2}
\begin{array}{l}
\displaystyle{
\vphantom{\Big(}
\id_{V_+\cap(F^t)^hV_-}\widetilde{W}(z)\id_{V_-\cap F^k V_+}
} \\
\displaystyle{
\vphantom{\Big(}
=
\id_{V_+\cap(F^t)^hV_-}
\id_{V_+\cap(F^t)^hV'_-}
\widetilde{W'}(z)
\id_{V'_-\cap F^k V_+}
\id_{V_-\cap F^k V_+}
} \\
\displaystyle{
\vphantom{\Big(}
=
-\delta_{h,k}(-z)^{k+1}
(F^t)^k
\id_{V_-\cap F^k V_+}
\!\!\!
+
\!\!\!\!\!\!
\sum_{\ell=0}^{\min\{h,k\}} 
\!\!\!\!\!\!\!
\sum_{\ i\in\mc F'(h,k)}
\!\!\!\!\!\!
(-z)^\ell \widetilde{w}'_{i,\ell} 
\id_{V_+\cap(F^t)^hV_-}{U'}^i\id_{V_-\cap F^k V_+}
\,.}
\end{array}
\end{equation}
The second term in the RHS of \eqref{20180504:eq2}
is a polynomial in $z$ of degree less than or equal to $\min\{h,k\}$.
The coefficient of $(-z)^\ell$ in such term
lies in $U(\mf g)\otimes\Hom(V_-\cap F^k V_+,V_+\cap(F^t)^hV_-)$,
hence it can always be written as 
$\sum_{i\in\mc F(h,k)} \widetilde{w}_{i,\ell} U^i$ for some $\widetilde{w}_{i,\ell}\in U(\mf g)$,
proving \eqref{20180502:eq3b}.
Next, 
consider the case $h=p_1-1$, $k<p_1-1$.
In this case the argument is similar.
Recall that $V_+\cap (F^t)^{p_1-1}V_-=V_+^d$
and hence, by the inclusions \eqref{20180504:eq4}
we have
\begin{equation}\label{20180507:eq7}
\id_{V_+\cap(F^t)^{p_1-1}V_-}
=
\id_{V_+^d}
=
\id_{V_+^d}
\id_{V_+\cap(F^t)^{p_1-2}V'_-}
\,.
\end{equation}
Hence,
by the recursive definition \eqref{eq:recW} of $\widetilde{W}(z)$
and the inductive assumption,
we have
\begin{equation}\label{20180504:eq3}
\begin{array}{l}
\displaystyle{
\vphantom{\Big(}
\id_{V_+^d}\widetilde{W}(z)\id_{V_-\cap F^k V_+}
} \\
\displaystyle{
\vphantom{\Big(}
=
\id_{V_+^d}
\id_{V_+\cap(F^t)^{p_1-2}V'_-}
\widetilde{W'}(z)
\id_{V'_-\cap F^k V_+}
\id_{V_-\cap F^k V_+}
} \\
\displaystyle{
\vphantom{\Big(}
=
-\delta_{k,p_1-2}(-z)^{p_1-1}
\id_{V_+^d}
(F^t)^{p_1-1}
\id_{V_-\cap F^{p_1-2} V_+}
} \\
\displaystyle{
\vphantom{\Big(}
+
\sum_{\ell=0}^k
\sum_{\ i\in\mc F'(p_1-2,k)}
(-z)^\ell \widetilde{w}'_{i,\ell} 
\id_{V_+^d}{U'}^i\id_{V_-\cap F^k V_+}
\,.}
\end{array}
\end{equation}
Note that the first term in the RHS of \eqref{20180504:eq3}
vanishes since, loosely speaking, 
a power of $F^t$ cannot map a row of length $p_1-1$
to a row of length $p_1$. 
For the second term in the RHS of \eqref{20180504:eq3}
the argument is the same as for the second term in the RHS of \eqref{20180504:eq2}:
it is a polynomial in $z$ of degree bounded by $k=\min\{h,k\}$,
with coefficients in $U(\mf g)\otimes\Hom(V_-\cap F^k V_+,V_+^d)$.
Next,
consider the case $h<p_1-1$, $k=p_1-1$.
Recall that $V_d\cap F^{p_1-1}V_+=V_-^d$, and by \eqref{20180504:eq4}, 
we have
\begin{equation}\label{20180507:eq3}
\id_{F^tV_-^d}
=
\id_{{V'}_-^d}
\id_{F^tV_-^d}
\,.
\end{equation}
By the recursive equation \eqref{20180222:eq1} and the inductive assumption, we have
\begin{equation}\label{20180507:eq1}
\begin{array}{l}
\displaystyle{
\vphantom{\Big(}
\id_{V_+\cap(F^t)^hV_-}\widetilde{W}(z)\id_{V_-^d}
} \\
\displaystyle{
\vphantom{\Big(}
=
-
\frac1{r_1}
\id_{V_+\cap(F^t)^hV_-}
[\id_{V_+\cap(F^t)^hV'_-}\widetilde W'(z)\id_{{V'}_-^d},
\id_{F^tV_-^d}
E_{-1}]^1
\id_{V_-^d}
} \\
\displaystyle{
\vphantom{\Big(}
-
\id_{V_+\cap(F^t)^hV_-}
\id_{V_+\cap(F^t)^hV'_-}\widetilde W'(z)\id_{{V'}_-^d}
F^t(z\id_V+E_0+D)
\id_{V_-^d}
} \\
\displaystyle{
\vphantom{\Big(}
+
\sum_{j=0}^{p_1-2}
\Res_x x^{-1}
\id_{V_+\cap(F^t)^hV_-}
\id_{V_+\cap(F^t)^hV'_-}\widetilde W'(z)\id_{V'_-\cap F^jV_+}
} \\
\displaystyle{
\vphantom{\Big(}
\qquad\qquad\qquad\qquad
\times
\id_{V_-\cap F^jV_+}
(1+x^{-1}F)^{-1}\widetilde W'(x)F^t
\id_{V_-^d}
} \\
\displaystyle{
\vphantom{\Big(}
=
-
\frac1{r_1}
\sum_{\ell=0}^{h} \sum_{i\in\mc F'(h,p_1-2)}
(-z)^\ell
\id_{V_+\cap(F^t)^hV_-}
[
{\widetilde{w}'}_{i,\ell} {U'}^i
,
\id_{F^tV_-^d}
E_{-1}]^1
\id_{V_-^d}
} \\
\displaystyle{
\vphantom{\Big(}
-
\id_{V_+\cap(F^t)^hV_-}
\Big(
-\delta_{h,p_1-2}(-z)^{p_1-1}
(F^t)^{p_1-2}\id_{V'_-\cap F^{p_1-2} V_+}
} \\
\displaystyle{
\vphantom{\Big(}
+
\sum_{\ell=0}^{h} \sum_{i\in\mc F'(h,p_1-2)}
(-z)^\ell {\widetilde{w}'}_{i,\ell} {U'}^i
\Big)
F^t(z\id_V+E_0+D)
\id_{V_-^d}
} \\
\displaystyle{
\vphantom{\Big(}
+
\sum_{j=0}^{p_1-2}
\Res_x x^{-1}
\id_{V_+\cap(F^t)^hV_-}
\Big(
-\delta_{h,j}(-z)^{j+1}
(F^t)^j\id_{V'_-\cap F^j V_+}
} \\
\displaystyle{
\vphantom{\Big(}
+
\sum_{\ell=0}^{\min\{h,j\}} \sum_{i\in\mc F'(h,j)}
(-z)^\ell {\widetilde{w}'}_{i,\ell} {U'}^i
\Big)
\id_{V_-\cap F^jV_+}
(1+x^{-1}F)^{-1}\widetilde W'(x)F^t
\id_{V_-^d}
\,,}
\end{array}
\end{equation}
For the first equality of \eqref{20180507:eq1}
we used the recursive definition \eqref{eq:recW} of $\widetilde{W}(z)$,
and the identities \eqref{20180507:eq4}, \eqref{20180507:eq2} and \eqref{20180507:eq3}.
For the second equality of \eqref{20180507:eq1}
we used the inductive assumption \eqref{20180502:eq3b}.
Having to prove \eqref{20180502:eq3b},
we are only interested in the terms in the RHS of \eqref{20180507:eq1}
with powers of $z$ greater than $h=\min\{h,k\}$.
We wish to prove that the sum of all such terms vanishes.
They are
\begin{equation}\label{20180507:eq5}
\begin{array}{l}
\displaystyle{
\vphantom{\Big(}
\delta_{h,p_1-2}
(-z)^{p_1-1}
\id_{V_+\cap(F^t)^hV_-}
(F^t)^{p_1-1}
(z\id_V+E_0+D)
\id_{V_-^d}
} \\
\displaystyle{
\vphantom{\Big(}
+
(-z)^{h+1}
\sum_{i\in\mc F'(h,p_1-2)}
\id_{V_+\cap(F^t)^hV_-}
 {\widetilde{w}'}_{i,h} {U'}^i
F^t \id_{V_-^d}
} \\
\displaystyle{
\vphantom{\Big(}
-
(-z)^{h+1}
\Res_x x^{-1}
\id_{V_+\cap(F^t)^hV_-}
(F^t)^h
\id_{V_-\cap F^hV_+}
(1+x^{-1}F)^{-1}\widetilde W'(x)F^t
\id_{V_-^d}
\,.}
\end{array}
\end{equation}
The first term in \eqref{20180507:eq5} is identically zero:
for $h=p_1-2$, we have 
$$
\id_{V_+\cap(F^t)^hV_-}(F^t)^{p_1-1}=0
\,.
$$
We can compute the third term in \eqref{20180507:eq5}
by computing explicitly the residue in $x$.
Since $\widetilde W'(x)=\id_{V_+}\widetilde W'(x)$, we have
\begin{equation}\label{20180507:eq6}
\begin{array}{l}
\displaystyle{
\vphantom{\Big(}
-
(-z)^{h+1}
\Res_x x^{-1}
\id_{V_+\cap(F^t)^hV_-}
(F^t)^h
\id_{V_-\cap F^hV_+}
(1+x^{-1}F)^{-1}\widetilde W'(x)F^t
\id_{V_-^d}
} \\
\displaystyle{
\vphantom{\Big(}
=
(-z)^{h+1}
\Res_x (-x)^{-h-1}
\id_{V_+\cap(F^t)^hV_-}
(F^t)^h
\id_{V_-\cap F^hV_+}
F^h
\widetilde W'(x)
F^t
\id_{V_-^d}
} \\
\displaystyle{
\vphantom{\Big(}
=
(-z)^{h+1}
\Res_x (-x)^{-h-1}
\id_{V_+\cap(F^t)^hV_-}
\widetilde W'(x)
F^t
\id_{V_-^d}
} \\
\displaystyle{
\vphantom{\Big(}
=
-(-z)^{h+1}
\sum_{i\in\mc F'(h,p_1-2)}
\id_{V_+\cap(F^t)^hV_-}
\widetilde{w}'_{i,h}{U'}^i
F^t
\id_{V_-^d}
\,.}
\end{array}
\end{equation}
For the last equality we used the inductive assumption \eqref{20180502:eq3b}
on $\widetilde W'(x)$ and we computed the residue in $x$.
Combining \eqref{20180507:eq5} and \eqref{20180507:eq6},
we get that \eqref{20180507:eq5} vanishes, as claimed.
Finally, we consider the case $h=k=p_1-1$.
The computation is similar to the previous case.
Using \eqref{20180507:eq7} in place of \eqref{20180507:eq2},
the same computations leading to \eqref{20180507:eq1}
lead to
\begin{equation}\label{20180507:eq8}
\begin{array}{l}
\displaystyle{
\vphantom{\Big(}
\id_{V_+^d}\widetilde{W}(z)\id_{V_-^d}
} \\
\displaystyle{
\vphantom{\Big(}
=
-
\frac1{r_1}
\sum_{\ell=0}^{p_1-2} \sum_{i\in\mc F'(p_1-2,p_1-2)}
(-z)^\ell
\id_{V_+^d}
[
{\widetilde{w}'}_{i,\ell} {U'}^i
,
\id_{F^tV_-^d}
E_{-1}]^1
\id_{V_-^d}
} \\
\displaystyle{
\vphantom{\Big(}
-
\id_{V_+^d}
\Big(
-(-z)^{p_1-1}
(F^t)^{p_1-2}\id_{V'_-\cap F^{p_1-2} V_+}
} \\
\displaystyle{
\vphantom{\Big(}
+
\sum_{\ell=0}^{p_1-2} \sum_{i\in\mc F'(p_1-2,p_1-2)}
(-z)^\ell {\widetilde{w}'}_{i,\ell} {U'}^i
\Big)
F^t(z\id_V+E_0+D)
\id_{V_-^d}
} \\
\displaystyle{
\vphantom{\Big(}
+
\sum_{j=0}^{p_1-2}
\Res_x x^{-1}
\id_{V_+^d}
\Big(
-\delta_{j,p_1-2}(-z)^{j+1}
(F^t)^j\id_{V'_-\cap F^j V_+}
} \\
\displaystyle{
\vphantom{\Big(}
+
\sum_{\ell=0}^{j} \sum_{i\in\mc F'(p_1-2,j)}
(-z)^\ell {\widetilde{w}'}_{i,\ell} {U'}^i
\Big)
\id_{V_-\cap F^jV_+}
(1+x^{-1}F)^{-1}\widetilde W'(x)F^t
\id_{V_-^d}
\,.}
\end{array}
\end{equation}
As before, 
in order to prove \eqref{20180502:eq3b}
we are only interested in the terms in the RHS of \eqref{20180507:eq8}
with powers of $z$ greater than $p_1-1=\min\{h,k\}$.
There is only one:
\begin{equation}\label{20180507:eq9}
-(-z)^{p_1}
\id_{V_+^d}
(F^t)^{p_1-2}\id_{V'_-\cap F^{p_1-2} V_+}
F^t
\id_{V_-^d}
=
-(-z)^{p_1}
\id_{V_+^d}
(F^t)^{p_1-1}
\id_{V_-^d}
\,.
\end{equation}
Since \eqref{20180507:eq9}
agrees with the first term in the RHS of \eqref{20180502:eq3b},
the claim is proved.
\end{proof}
\begin{theorem}\label{thm:20180208}
For every $a\in\mf g[\geq1]$ we have
\begin{equation}\label{20180208:eq2}
[a,\widetilde W(z)]
\,\in\,
\mc I[z]\otimes\Hom(V_-,V_+)
\,.
\end{equation}
Hence, $W(z)$ has coefficients with entries in the $W$-algebra $W(\mf g,f)$:
\begin{equation}\label{20180208:eq2b}
W(z)
\,\in\,
W(\mf g,f)[z]\otimes\Hom(V_-,V_+)
\,.
\end{equation}
\end{theorem}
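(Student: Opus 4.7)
The plan is to argue by induction on the number of columns $p_1$ of the aligned pyramid $p$. When $p_1=1$ there is nothing to prove: $f=0$ forces $\mf g[\geq 1]=0$, so \eqref{20180208:eq2} is vacuous, and $W(\mf g,0)=U(\mf g)$ with $\widetilde{W}(z)=z\id_V+E$ makes \eqref{20180208:eq2b} immediate. Observe also that claim \eqref{20180208:eq2b} is equivalent to \eqref{20180208:eq2} together with the polynomiality of $\widetilde{W}(z)$ from Lemma \ref{20180222:lem1}, since in the aligned setting with $\mf l=0$ one has $\mf n=\mf g[\geq 1]$. Thus it suffices to establish \eqref{20180208:eq2} inductively.

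For the inductive step, fix a homogeneous $a\in\mf g[k]$ with $k\geq 1$. Because $a$ shifts the boxes of the pyramid strictly to the right and $V_-^d$ is the leftmost column, we can decompose $a=a'+b$ as a sum of homogeneous pieces, where $a'\in\mf g'[k]$ is the part whose action preserves $V'$ and $b\in\Hom(V_-^d,V')$ is the part sending the leftmost column into $V'$ (with the appropriate $\Gamma$-degree). Then $[a,\,\cdot\,]=[a',\,\cdot\,]+[b,\,\cdot\,]$ applied to \eqref{eq:recW} splits into two contributions. For the $a'$ part, the inductive hypothesis in $U(\mf g')$ gives $[a',\widetilde{W}'(z)]\in\mc I'[z]\otimes\Hom(V'_-,V_+)$, and $\mc I'\subset\mc I$ under the natural embedding $U(\mf g')\hookrightarrow U(\mf g)$. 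The commutators of $a'$ with the remaining factors $\id_{F^tV_-^d}E_{-1}$, $F^t(z\id_V+E_0+D)\id_{V_-^d}$ and with $\widetilde{W}'(x)F^t\id_{V_-^d}$ in the residue term are handled using Lemma \ref{lem:E} and Lemma \ref{20180216:lem2}, which express these commutators as elements that either already lie in $\mc I\otimes\End V$ (using that $a'\in\mf g[\geq 1]$) or that get absorbed into $\mc I$ after left-multiplication by $\widetilde{W}'(z)$.

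The delicate step is the $b$-contribution. Because $b$ genuinely lies outside $\mf g'$, the commutator $[b,\widetilde{W}'(z)]$ is nonzero and a priori produces terms outside $\mc I$. The key observation is that the residue term $\Res_x x^{-1}\widetilde{W}'(z)\id_{V_-^u}(1+x^{-1}F)^{-1}\widetilde{W}'(x)F^t\id_{V_-^d}$ in \eqref{eq:recW} was inserted precisely so that $[b,\,\cdot\,]$ applied to it cancels, modulo $\mc I$, the residual contributions of $[b,\widetilde{W}'(z)\id_{V_-^u}]$, $-\tfrac1{r_1}[b,[\widetilde{W}'(z),\id_{F^tV_-^d}E_{-1}]^1]$, and $-[b,\widetilde{W}'(z)F^t(z\id_V+E_0+D)\id_{V_-^d}]$. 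I expect this cancellation to be the main obstacle of the proof: concretely, one must expand the geometric series $(1+x^{-1}F)^{-1}=\sum_{j\geq 0}(-x^{-1}F)^j$, invoke the explicit shape of $\widetilde{W}'(x)$ from Lemma \ref{20180222:lem1} (in particular the nontrivial polar part coming from the summand $x\id_{V_+}(1+xF^t)^{-1}\id_{V'_-}$), and carefully match residues in $x$, using the subspace identities \eqref{20180502:eq4}--\eqref{20180504:eq4}. In substance this is the matrix/operator version of the inductive computation carried out by Brundan and Kleshchev in \cite[Lem. 3.4]{BK08}; the reformulation in terms of \eqref{eq:recW} makes transparent why the residue correction is exactly what is needed. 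Finally, \eqref{20180208:eq2b} follows from \eqref{20180208:eq2} because $\widetilde{W}(z)$ is polynomial in $z$ by Lemma \ref{20180222:lem1} and $\mf n=\mf g[\geq 1]$ in the aligned case.
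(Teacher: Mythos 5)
Your overall architecture (induction on $p_1$, splitting $a\in\mf g[\geq1]$ as a piece $a'\in\mf g'[\geq1]$ plus a piece $b\in\Hom(V_-^d,V')$, treating the $a'$ part via the inductive hypothesis and $\mc I'\subset\mc I$) matches the paper. But at the decisive point you only state the problem: for $b\in\Hom(V_-^d,V')$ you say the residue term in \eqref{eq:recW} ``was inserted precisely so that'' the contributions cancel modulo $\mc I$, and you describe this cancellation as the main obstacle without carrying it out. As written, your plan for this step cannot go through: the inductive hypothesis controls only commutators of $\widetilde W'(z)$ with elements of $\mf g'[\geq1]$, while $b\notin\mf g'$, so $[b,\widetilde W'(z)]$ is simply not accessible from the inductive data (Lemma \ref{20180222:lem1} pins down only the symbol of $\widetilde W'(z)$, not the commutators you would need to match residues). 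So this is a genuine gap, not merely an omitted routine computation.

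The paper closes exactly this gap by two ideas absent from your proposal. First, it reduces the problematic case using $\Hom(V_-^d,V')=\Hom(V_-^d,{V'}_-^d)+[\Hom(V_-^d,{V'}_-^d),\mf g'[\geq1]]$ and the Jacobi identity, so that one may assume $a\in\Hom(V_-^d,{V'}_-^d)$, i.e.\ $a$ maps the leftmost column to the adjacent one. Second, for such $a$ it unfolds the recursion \eqref{eq:recW} a \emph{second} time, expressing $\widetilde W(z)$ through $\widetilde W''(z)$; since $[\Hom(V_-^d,{V'}_-^d),\mf g'']=0$, one has $[a,\widetilde W''(z)]=0$ identically, and $\ad a$ only hits the explicit factors $E_0$, $E'_0$, $\id_{F^tV_-^d}E_{-1}$, which can be evaluated by Lemmas \ref{lem:E} and \ref{20180216:lem2}. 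This double unfolding is also why the paper's induction needs two base cases, $p_1=1$ \emph{and} $p_1=2$ (the latter requiring its own explicit computation \eqref{20180217:eq1}--\eqref{20180219:eq11}), whereas your induction starts only from $p_1=1$. A smaller inaccuracy: for the $a'$ part, the terms produced by $[a',\widetilde W'(z)]\in\mc I'[z]$ are not ``absorbed by left-multiplication''; since $\mc I$ is a left ideal, the issue is pushing the factors $b-(f|b)$ to the right past the entries of $E_0$ in $F^t(z\id_V+E_0+D)\id_{V_-^d}$, which generates commutators $[b,E_0]\id_{V_-^d}$ that must separately be shown to vanish modulo $\mc I$ (as in \eqref{20180219:eq13}--\eqref{20180219:eq14}).
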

\begin{proof}
We shall prove condition \eqref{20180208:eq2} with a two step induction.
First, we prove the two base cases $p_1=1$ and $p_1=2$.
When $p_1=1$, we have $\mf g=\mf g[0]$, and so condition \eqref{20180208:eq2}
is empty.

Let us consider the case $p_1=2$.
In this case the $\Gamma$-grading of $V$ is as in the following picture:

\begin{figure}[H]
\setlength{\unitlength}{0.15in}
\begin{picture}(12,11)
\setlength\fboxsep{0pt}

\put(3,3){\framebox(2,5){$\vdots$}}
\put(5,3){\framebox(2,8){$\vdots$}}

\put(2,2){\vector(1,0){7}}
\put(10,1){$x$}

\put(4,1.8){\line(0,1){0.4}}
\put(5,1.6){\line(0,1){0.8}}
\put(6,1.8){\line(0,1){0.4}}

\put(4.7,0.6){0}
\put(3,0.6){$-\frac12$}
\put(5.7,0.6){$\frac12$}

\put(-3,5.5){${V}_-^d=V[-\frac12]$}
\put(2,9.5){${V}_-^u$}

\put(8,7.5){${V}_+=V[\frac12]=V'$}
\end{picture}
\caption{ }
\label{fig11}
\end{figure}
\noindent
where $\dim(V_-^d)=r_1$ and $\dim(V_+)=r$.
The $\Gamma$-grading of $\mf g$ is
$\mf g=\mf g[-1]\oplus\mf g[0]\oplus\mf g[1]$,
where
\begin{equation}\label{20180219:eq1}
\begin{array}{l}
\displaystyle{
\vphantom{\Big(}
\mf g[-1]=\Hom(V_+,V_-^d)
\,,\,\,
\mf g[1]=\Hom(V_-^d,V_+)
\,,} \\
\displaystyle{
\vphantom{\Big(}
\mf g[0]=\mf g'\oplus\End(V_-^d)
\,,\,\,
\mf g'=\End(V_+)
\,.}
\end{array}
\end{equation}
From equation \eqref{20180208:eq1} we have that
\begin{equation}\label{20180219:eq6}
\widetilde{W}'(z)=z\id_{V'}+E'\in U(\mf g')[z]\otimes\End V'
\,.
\end{equation}
Let $a\in\mf g[1]$.
By the recursive formula \eqref{eq:recW}, $\widetilde W(z)\id_{V_-^u}=\widetilde W'(z)\id_{V_-^u}$.
Hence, 
by \eqref{20180219:eq5} and \eqref{20180219:eq6}, we have
$$
[a, \widetilde W(z)\id_{V_-^u}] 
= 
[a, E'\id_{V_-^u}] 
= 
[a, E_0\id_{V_-^u}] 
\equiv
[F,A]\id_{V_-^u}
=0
\,\mod\mc I
\,,
$$
since $V_-^u \subset \Ker F\cap\ker A$.
On the other hand, from equations \eqref{eq:recW} and \eqref{20180219:eq6}, 
we have
\begin{equation}\label{20180217:eq1}
\begin{array}{l}
\vphantom{\Big(}
[a, \widetilde W(z)\id_{V_-^d}]
=
-
\frac1{r_1} [a, [E',\id_{F^tV_-^d}E_{-1}]^1]
\\
\vphantom{\Big(}
-
[a,E']F^t(z\id_{V} + E_0 + D)\id_{V_-^d}
-
(z\id_{V'} + E')F^t[a, E_0]\id_{V_-^d} 
\\
\vphantom{\Big(}
+
\Res_x x^{-1}[a,E']\id_{V_-^u}(1 + x^{-1}F)^{-1}(x\id_{V'} \!+\! E')F^t\id_{V_-^d}
\\
\vphantom{\Big(}
+
(z\id_{V'} + E')
\id_{V_-^u}
[a, E']F^t\id_{V_-^d} 
\,.
\end{array}
\end{equation}
Note that $E'=\id_{V_+}E_0$.
Hence, by Lemma \ref{lem:E}(b), we have
\begin{equation}\label{20180601:eq1}
[E',\id_{F^tV_-^d}E_{-1}]^1
=
-r_1E_{-1}
\,,
\end{equation}
and applying equation \eqref{20180219:eq4} we get
\begin{equation}\label{20180219:eq7}
-\frac1{r_1}[a,[E',\id_{F^tV_-^d}E_{-1}]^1]
= 
[E_0,A]
\,.
\end{equation}
By \eqref{20180219:eq4}, we also have
\begin{equation}\label{20180219:eq8}
[a, E_0]\equiv [F,A]\mod\mc I
\,\,,\,\,\,\,
[a, E']
\equiv 
[F,A]\id_{V_+}
=
-AF
\mod\mc I
\,.
\end{equation}
Moreover, 
again by \eqref{20180219:eq4}, we have
\begin{equation}\label{20180219:eq9}
\begin{array}{l}
\displaystyle{
\vphantom{\Big(}
[a,E']F^t(z\id_{V} + E_0 + D)\id_{V_-^d}
=
\sum_{i\in I_0}
[a,u_i]U^iF^t(z\id_{V} + E_0 + D)\id_{V_-^d}
} \\
\displaystyle{
\vphantom{\Big(}
\equiv
-AFF^t(z\id_{V} + E_0 + D)\id_{V_-^d}
+\sum_{i\in I_0}
U^iF^t[[a,u_i],E_0]\id_{V_-^d}
} \\
\displaystyle{
\vphantom{\Big(}
\equiv
-A(z\id_{V} + E_0 + D)\id_{V_-^d}
+\sum_{i\in I_0}
U^iF^t[F,[A,U_i]]\id_{V_-^d}
} \\
\displaystyle{
\vphantom{\Big(}
=
-A(z\id_{V} + E_0 + D)\id_{V_-^d}
+\sum_{i\in I}
U^i\id_{F^tV}[A,U_i]\id_{V_-^d}
} \\
\displaystyle{
\vphantom{\Big(}
=
-A(z\id_{V} + E_0 + D)
-r_1
A
\mod\mc I
\,.}
\end{array}
\end{equation}
For the last equality we used Lemma \ref{20180216:lem1}.
Similarly, we have
\begin{equation}\label{20180219:eq10}
\begin{array}{l}
\displaystyle{
\vphantom{\Big(}
\Res_x x^{-1}[a,E']\id_{V_-^u}(1 + x^{-1}F)^{-1}(x\id_{V'} \!+\! E')F^t\id_{V_-^d}
} \\
\displaystyle{
\vphantom{\Big(}
=
\sum_{i\in I_0}
\Res_x x^{-1}[a,u_i]U^i\id_{V_-^u}(1 + x^{-1}F)^{-1}(x\id_{V'} \!+\! E')F^t\id_{V_-^d}
} \\
\displaystyle{
\vphantom{\Big(}
\equiv
-
\Res_x x^{-1}AF\id_{V_-^u}(1 + x^{-1}F)^{-1}(x\id_{V'} \!+\! E')F^t\id_{V_-^d}
} \\
\displaystyle{
\vphantom{\Big(}
+
\sum_{i\in I_0}
\Res_x x^{-1}U^i\id_{V_-^u}(1 + x^{-1}F)^{-1}[[a,u_i],E']F^t\id_{V_-^d}
} \\
\displaystyle{
\vphantom{\Big(}
=
\sum_{i\in I_0}
U^i\id_{V_-^u}[[a,u_i],E']F^t\id_{V_-^d}
} \\
\displaystyle{
\vphantom{\Big(}
\equiv
-\sum_{i\in I_0}
U^i\id_{V_-^u}[A,U_i]FF^t\id_{V_-^d}
} \\
\displaystyle{
\vphantom{\Big(}
=
-\sum_{i\in I}
U^i\id_{V_-^u}[A,U_i]\id_{V_-^d}
} \\
\displaystyle{
\vphantom{\Big(}
=
(r-r_1)A
\mod\mc I
\,.}
\end{array}
\end{equation}
Again, 
for the last equality we used Lemma \ref{20180216:lem1}.
Combining equations \eqref{20180217:eq1}, 
\eqref{20180219:eq7}, \eqref{20180219:eq8}, \eqref{20180219:eq9} and \eqref{20180219:eq10},
we get
\begin{equation}\label{20180219:eq11}
\begin{array}{l}
\displaystyle{
\vphantom{\Big(}
[a, \widetilde W(z)\id_{V_-^d}]
\equiv
[E_0,A]
+
A(z\id_{V} + E_0 + D)
+
r_1A
-
(z\id_{V'} + E')F^tFA
} \\
\displaystyle{
\vphantom{\Big(}
+
(r-r_1)A
-
(z\id_{V'} + E')
\id_{V_-^u}
A
\mod\mc I
\,.}
\end{array}
\end{equation}
It is now easy to check that the RHS of \eqref{20180219:eq11}
vanishes, using the facts that $D|_{V_-^d}=-r$,
$F^tF+\id_{V_-^u}=\id_{V_+}$ and $E_0\id_{V_+}=E'$.

Next, we prove the claim for $p_1>2$,
assuming that \eqref{20180208:eq2} holds for $ p' $.
Note that we can decompose 
$$
\mf g[\geq 1]  =  \mf{g'}[\geq 1] \oplus \Hom(V_-^d,V')
\,.
$$
We shall prove Claim \eqref{20180208:eq2}
separately for $a\in \mf g'[\geq1]$
and for $a\in \Hom(V_-^d,V')$.
For $a \in \mf{g'}[\geq 1]$, by the induction hypothesis we have
\begin{equation}\label{20180219:eq12}
[a, \widetilde{W}(z) \id_{V_-^u}]
= 
[a, \widetilde{W}'(z) \id_{V_-^u}] \in \mc I[z]
\,,
\end{equation}
since $\mc I' =U(\mf g')\Span\{b - (f' | b)\}_{b \in \mf{g'}[\geq 1]} \subset \mc I$. 
The last inclusion holds because, 
as it is easily checked, $(b|f')=(b|f)$ for every $b\in\mf{g'}[\geq 1]$.
Next, using equation \eqref{eq:recW} we have
\begin{equation}\label{20180217:eq3}
\begin{array}{l}
\vphantom{\Big(}
[a, \widetilde W(z)\id_{V_-^d}]
=
-
\frac1{r_1}[a, [\widetilde W'(z),\id_{F^tV_-^d}E_{-1}]^1]
-[a,\widetilde W'(z)]F^t(z\id_V+E_0+D)\id_{V_-^d}
\\
\vphantom{\Big(}
\quad - \widetilde W'(z)F^t[a,E_0]\id_{V_-^d}
+\Res_x x^{-1}[a,\widetilde W'(z)]
\id_{V_-^u}
(1+x^{-1}F)^{-1}\widetilde W'(x)F^t\id_{V_-^d}
\\
\vphantom{\Big(}
\quad
+\Res_x x^{-1}\widetilde W'(z)
\id_{V_-^u}
(1+x^{-1}F)^{-1}[a,\widetilde W'(x)]F^t\id_{V_-^d}
\,.
\end{array}
\end{equation}
By the induction hypothesis, $ [a, \widetilde{W}'(z)] \equiv 0\mod \mc I'$. 
In particular, the last term in the RHS of \eqref{20180217:eq3} vanishes modulo $\mc I$.
The second last term in the RHS of \eqref{20180217:eq3} vanishes as well (modulo $\mc I$, 
in fact modulo $\mc I'$),
since by inductive assumption $\widetilde W'(x)$ has coefficients 
in $\widetilde W' : = \{y \in U(\mf g') \mid [a, y] \in \mc I', \forall a \in \mf g'[\geq 1] \}$,
and $\mc I' \subset \widetilde W' $ is a bilateral ideal.
On the other hand, by \eqref{20180219:eq5},
we have 
\begin{equation}\label{20180219:eq14}
[a,E_0]\id_{V_-^d}
\equiv
\id_{V_-^d}
[F,A]
\id_{V_-^d}
=0
\mod\mc I
\,,
\end{equation}
since $F\id_{V_-^d}=A\id_{V_-^d}=0$
(recall that $A\in\End V'$).
We are left to consider the first two terms in the RHS of \eqref{20180217:eq3}.
For the second term, 
by the induction hypothesis $[a,\widetilde W'(z)]\equiv 0\mod\mc I'$. 
In other words, it is a linear combination of the form
\begin{equation}\label{20180219:eq15}
[a,\widetilde W'(z)]
=
\sum u(b-(f|b))C
\,,
\end{equation}
with $u\in U(\mf g')$ and $b\in\mf g'[\geq1]$ and $C\in\Hom(V'_-,V'_+)$.
Hence, the second term of the RHS of \eqref{20180217:eq3}
has the form
\begin{equation}\label{20180219:eq13}
\begin{array}{l}
\displaystyle{
\vphantom{\Big(}
-\sum u(b-(b|f))CF^t(z\id_V+E_0+D)\id_{V_-^d}
} \\
\displaystyle{
\vphantom{\Big(}
=
-\sum uCF^t(z\id_V+E_0+D)\id_{V_-^d}(b-(b|f))
-\sum uCF^t[b,E_0]\id_{V_-^d}
\,.}
\end{array}
\end{equation}
Both terms in the RHS of \eqref{20180219:eq13}
vanish modulo $\mc I$,
the first by definition and the second by \eqref{20180219:eq14}.
Next, let us consider 
the first term in the RHS of equation \eqref{20180217:eq3}.
Using the notation introduced in \eqref{20180216:eq3}
(cf. Lemma \ref{20180426:lem}), 
we have
$$
\begin{array}{l}
\displaystyle{
\vphantom{\Big(}
[a, [\widetilde W'(z),\id_{F^tV_-^d}E_{-1}]^1]
=
\sum_{i \in J}[a, [\widetilde W'(z), q_i]]\id_{F^tV_-^d}Q^i
}\\
\displaystyle{
\vphantom{\Big(}
=
\sum_{i \in J} \big( 
[[a, \widetilde W'(z)],q_i]
+
[\widetilde W'(z),[a, q_i]]
\big)
\id_{F^tV_-^d}Q^i
\,.}
\end{array}
$$
By construction, 
$q_i \in \mf h:=\Hom(F^tV_-^d, V_-^d)$ 
and $a\in\mf g'[\geq 1] \subset \Hom(V', V[\geq-\frac{p_1-1}2+2])$.
Since $F^tV_-^d\cap V[\geq-\frac{p_1-1}2+2]=0$ and $V_-^d\cap V'=0$,
$[a,q_i]=0$ for all $i\in J$.
On the other hand, 
by \eqref{20180219:eq15} we have
$$
[[a, \widetilde W'(z)],q_i]=\sum \big([u,q_i](b-(f|b))+u[b,q_i]\big)C
\equiv0\mod\mc I\,.
$$
Indeed, for the same argument as above (with $b$ in place of $a$), we have $[b,q_i]=0$.
Hence, \eqref{20180208:eq2} holds for $a\in\mf g'[\geq1]$.

We are left to prove Claim \eqref{20180208:eq2} for $a\in\Hom(V_-^d,V')$.
Note that
$$
\Hom(V_-^d,V')
=
\Hom(V_-^d,{V'}_-^d)
+
[\Hom(V_-^d,{V'}_-^d),\mf g'[\geq1]]
\,.
$$
Hence, by the Jacobi identity,
it suffices to prove Claim \eqref{20180208:eq2} for $a\in\Hom(V_-^d,{V'}_-^d)$.
We have the following decomposition
\begin{equation}\label{20180219:eq16}
V_-
=
{V'}_-^u
\oplus
({V'}_-^d\cap V_-^u)
\oplus
V_-^d
\,.
\end{equation}
We shall consider separately the restriction of equation \eqref{20180208:eq2}
to each of the subspaces \eqref{20180219:eq16}.
When we restrict to ${V'}_-^u\subset V_-^u$, we have,
by double recursion,
$$
[a, \widetilde W(z)\id_{{V'}_-^u}] = 
[a, \widetilde W'(z)\id_{{V'}_-^u}] =
[a, \widetilde W''(z)\id_{{V'}_-^u}] = 0
\,,
$$
since 
\begin{equation}\label{20180219:eq17}
[\Hom(V_-^d,{V'}_-^d),\mf g'']=0
\,.
\end{equation}
Next, when we restrict \eqref{20180208:eq2}
to ${V'}_-^d\cap V_-^u$, we have,
by double recursion,
$$
\begin{array}{l}
\vphantom{\Big(}
\widetilde W(z)\id_{{V'}_-^d \cap V_-^u} = 
\widetilde W'(z)\id_{{V'}_-^d \cap V_-^u} 
\\
\vphantom{\Big(}
=
-
\frac1{r_1 + r'_1}[\widetilde W''(z),\id_{{F'}^t{V'}_-^d}E_{-1}]^1\id_{{V'}_-^d \cap V_-^u}
\\
\vphantom{\Big(}
-
\widetilde W''(z)F^t(z\id_{V'}+E_0+D)\id_{{V'}_-^d\cap V_-^u}
\\
\vphantom{\Big(}
+ \Res_x x^{-1}\widetilde W''(z)
\id_{{V'}_-^u}
(1+x^{-1}F)^{-1}\widetilde W''(x)F^t\id_{{V'}_-^d\cap V_-^u}
\,,
\end{array}
$$
where, as before, $r'_1$ denotes the height of the leftmost column of the pyramid $p'$
(which is equal to $r_1$ or $r_2$ depending whether $p_1>p_2+1$ or $p_1=p_2+1$
respectively).
Here we used Lemma \ref{lem:restr}.
By \eqref{20180219:eq17}, $[a,\widetilde W''(z)]=0$.
Hence,
\begin{equation}\label{20180218:eq1}
\begin{array}{l}
\displaystyle{
\vphantom{\Big(}
[a,\widetilde W(z)\id_{{V'}_-^d \cap V_-^u} ]
} \\
\displaystyle{
\vphantom{\Big(}
=
-
\frac1{r_1 + r'_1}[\widetilde W''(z),\id_{F^t{V'}_-^d}[a,E_{-1}]]^1\id_{{V'}_-^d \cap V_-^u} 
-\widetilde W''(z)F^t[a,E_0]\id_{{V'}_-^d \cap V_-^u}
\,.}
\end{array}
\end{equation}
By equation \eqref{20180219:eq4}, we have
$$
\id_{F^t{V'}_-^d}[a,E_{-1}]\id_{{V'}_-^d \cap V_-^u}
=
\id_{F^t{V'}_-^d}[E_0,A]\id_{{V'}_-^d \cap V_-^u} 
=
0\,,
$$
since $A\id_{V'}=0$ and $E_0$ preserves $V'$.
Hence,
the first term in the RHS of \eqref{20180218:eq1} vanishes.
Moreover, by equation \eqref{20180219:eq5}, 
the second term in the RHS of \eqref{20180218:eq1} is
$$
-\widetilde W''(z)F^t[a,E_0]\id_{{V'}_-^d \cap V_-^u}
\equiv
-\widetilde W''(z)F^t[F,A]\id_{{V'}_-^d \cap V_-^u}
=
0
\,,
$$
since $A({V'}_-^d)=0$ and $F(V_-^u)=0$.

We are now left consider the most difficult case:
the restriction of \eqref{20180208:eq2} to $V_-^d$.
By double recursion \eqref{eq:recW} we have
\begin{equation}\label{20180221:eq1}
\begin{array}{l}
\displaystyle{
\vphantom{\Big(}
\widetilde W(z)\id_{V_-^d}
}\\
\displaystyle{
\vphantom{\Big(}
=
-\frac1{r_1}
\Big[
-\frac1{r_1 + r'_1}[\widetilde W''(z),\id_{{F'}^t{V'}_-^d}E'_{-1}]^1
-\widetilde W''(z){F'}^t(z\id_{V'}+\textcolor{red}{E'_0}+D')\id_{{V'}_-^d}
}\\
\displaystyle{
\vphantom{\Big(}
\quad
+\Res_y y^{-1}\widetilde W''(z)
\id_{{V'}_-^u}
(1+y^{-1}{F'})^{-1}\widetilde W''(y){F'}^t\id_{{V'}_-^d}
\,,
\id_{F^tV_-^d}\textcolor{red}{E_{-1}}
\Big]^1
}\\
\displaystyle{
\vphantom{\Big(}
-
\bigg(
-\frac1{r_1 + r'_1}[\widetilde W''(z),\id_{{F'}^t{V'}_-^d}E'_{-1}]^1
-\widetilde W''(z){F'}^t(z\id_{V'}+\textcolor{red}{E'_0}+D')\id_{{V'}_-^d}
}\\
\displaystyle{
\vphantom{\Big(}
\quad
+\Res_y y^{-1}\widetilde W''(z)
\id_{{V'}_-^u}
(1+y^{-1}{F'})^{-1}\widetilde W''(y){F'}^t\id_{{V'}_-^d}
\bigg)
F^t(z\id_V+\textcolor{red}{E_0}+D)\id_{V_-^d}
}\\
\displaystyle{
\vphantom{\Big(}
+\Res_x x^{-1}\widetilde W''(z)
\id_{{V'}_-^u}
(1+x^{-1}F)^{-1}
\bigg(
-\frac1{r_1 + r'_1}[\widetilde W''(x),\id_{{F'}^t{V'}_-^d}E'_{-1}]^1
}\\
\displaystyle{
\vphantom{\Big(}
\quad
- \widetilde W''(x){F'}^t(x\id_{V'}+\textcolor{red}{E'_0}+D')\id_{{V'}_-^d}
}\\
\displaystyle{
\vphantom{\Big(}
\quad
+\Res_y y^{-1}\widetilde W''(y)
\id_{{V'}_-^u}
(1+y^{-1}{F'})^{-1}\widetilde W''(y){F'}^t\id_{{V'}_-^d}
\bigg)
F^t\id_{V_-^d}
}\\
\displaystyle{
\vphantom{\Big(}
+\Res_x x^{-1}
\bigg(
\!-\frac1{r_1 + r'_1}[\widetilde W''(z),\id_{{F'}^t{V'}_-^d}E'_{-1}]^1
-\widetilde W''(z){F'}^t(z\id_{V'}+\textcolor{red}{E'_0}+D')\id_{{V'}_-^d}
}\\
\displaystyle{
\vphantom{\Big(}
\quad
+\Res_y y^{-1}\widetilde W''(z)
\id_{{V'}_-^u}
(1+y^{-1}{F'})^{-1}\widetilde W''(y){F'}^t\id_{{V'}_-^d}
\bigg)
\id_{V_-^u \cap {V'}_-^d}
(1+x^{-1}F)^{-1}
}\\
\displaystyle{
\vphantom{\Big(}
\quad
\times
\bigg(
-\frac1{r_1 + r'_1}[\widetilde W''(x),\id_{{F'}^t{V'}_-^d}E'_{-1}]^1
-\widetilde W''(x){F'}^t(x\id_{V'}+\textcolor{red}{E'_0}+D')\id_{{V'}_-^d}
}\\
\displaystyle{
\vphantom{\Big(}
\quad
+\Res_{\bar{y}} \bar{y}^{-1}\widetilde W''(\bar{y})
\id_{{V'}_-^u}
(1+\bar{y}^{-1}{F'})^{-1}\widetilde W''(\bar{y}){F'}^t\id_{{V'}_-^d}
\bigg)
F^t\id_{V_-^d}
\,.}
\end{array}
\end{equation}
For the last two terms we used 
the decomposition $ V_-^u = {V'}_-^u \oplus (V_-^u \cap {V'}_-^d) $.
By assumption, $a\in\Hom(V_-^d,{V'}_-^d)\subset\mf g$
(and $A\in\Hom(V_-^d,{V'}_-^d)\subset\End V$).
Hence, 
$[a,\widetilde W''(z)]=0$, and, by Lemma \ref{20180216:lem2},
$$
[a, \id_{{F'}^t{V'}_-^d}E'_{-1}]
=  \id_{{F'}^t{V'}_-^d} [a,E_{-1}]\id_{{V'}_-^d}
=  \id_{{F'}^t{V'}_-^d} [E_0,A]\id_{{V'}_-^d}
= 0
\,.
$$
Hence, if we use \eqref{20180221:eq1}
to compute $[a, \widetilde W(z)\id_{V_-^d}]$,
we only need to consider the terms in which $\ad a$ acts 
on $E'_0$, $E_0$ and $\id_{F^tV_-^d}E_{-1}$.
Which leaves us precisely with seven terms
(corresponding to the seven highlighted terms in the RHS of \eqref{20180221:eq1}).
By Lemma \ref{20180216:lem2}, we have
\begin{equation}\label{20180223:eq3}
\begin{array}{l}
\displaystyle{
\vphantom{\Big(}
[a,E_0]
=
[E_1,A]
\,\equiv 
[F,A]\mod\mc I
\,,} \\
\displaystyle{
\vphantom{\Big(}
[a,E'_0]
=
\id_{V'}[E_1,A]\id_{V'}
=
-AE_1
\,\equiv
-AF\mod\mc I
\,,} \\
\displaystyle{
\vphantom{\Big(}
[a,E_{-1}]
=
[E_0,A]
\,.}
\end{array}
\end{equation}
Note that 
$\id_{{V'}_-^d}E_{-1}$,
$\id_{V_-^d}E_0$,
$\id_{{V'}_-^d}E_0$,
$\id_{{V'}_-^d}AE_0$,
are all matrices with coefficients in 
$\End(V_-^d\oplus {V'}_-^d)\subset\mf g$,
while $\widetilde W''(z)$ is a matrix with coefficients in $\mf g''\subset\mf g$.
Since, obviously, $[\mf g'',\End(V_-^d\oplus {V'}_-^d)]=0$,
we have
\begin{equation}\label{20180301:eq1}
\big[\widetilde W''(z), \id_{{V'}_-^d}E_{-1}\big]^1 
=
\big[\widetilde W''(z), \id_{V_-^d\oplus {V'}_-^d}E_{0}\big]^1 
=
\big[\widetilde W''(z), \id_{{V'}_-^d}AE_{0}\big]^1 
= 0
\,.
\end{equation}
Moreover,
$E_{0}\id_{{V'}_-^d}$,
$E_{-1}\id_{{V'}_-^d}$,
and $\id_{{V'}_-^d}AE_0$
are matrices with coefficients in 
$\End({V'}_-^d)$,
$\Hom({V''}_-^d,{V'}_-^d)$
and $\End(V_-^d)\subset\mf g$ respectively.
Since, obviously, 
$[\End({V'}_-^d\oplus{V''}_-^d),\End(V_-^d)]=0$,
we have
\begin{equation}\label{20180301:eq1b}
\big[
E_{0},
\id_{{V'}_-^d}AE_0
\big]^1 
=
\big[
E_{-1},
\id_{{V'}_-^d}AE_0
\big]^1 
= 0
\,.
\end{equation}
Let us now compute the first two contributions to $[a,\widetilde W(z)]$
(coming from the first two highlighted terms in the RHS of \eqref{20180221:eq1}).
They are
\begin{equation}\label{20180301:eq2}
\begin{array}{l}
\displaystyle{
\vphantom{\Big(}
\Big[a,
-\frac1{r_1}
\Big[
-\frac1{r_1 + r'_1}[\widetilde W''(z),\id_{{F'}^t{V'}_-^d}E'_{-1}]^1
-\widetilde W''(z){F'}^t(z\id_{V'}+\textcolor{red}{E'_0}+D')\id_{{V'}_-^d}
}\\
\displaystyle{
\vphantom{\Big(}
\quad
+\Res_y y^{-1}\widetilde W''(z)
\id_{{V'}_-^u}
(1+y^{-1}{F'})^{-1}\widetilde W''(y){F'}^t\id_{{V'}_-^d}
\,,
\id_{F^tV_-^d}\textcolor{red}{E_{-1}}
\Big]^1
\Big]
}\\
\displaystyle{
\vphantom{\Big(}
=
\frac1{r_1(r_1+r'_1)}
\Big[
[\widetilde W''(z),\id_{{F'}^t{V'}_-^d}E'_{-1}]^1
\,,
\id_{F^tV_-^d}[E_0,A]
\Big]^1
}\\
\displaystyle{
\vphantom{\Big(}
-
\frac1{r_1}
\Big[
\widetilde W''(z){F'}^tAE_1\id_{{V'}_-^d}
\,,
\id_{F^tV_-^d}E_{-1}
\Big]^1
}\\
\displaystyle{
\vphantom{\Big(}
+
\frac1{r_1}
\Big[
\widetilde W''(z){F'}^t(z\id_{V'}+E'_0+D')\id_{{V'}_-^d}
\,,
\id_{F^tV_-^d}[E_0,A]
\Big]^1
}\\
\displaystyle{
\vphantom{\Big(}
=
\frac1{r_1(r_1+r'_1)}
\Big[
\widetilde W''(z),
[\id_{{F'}^t{V'}_-^d}E'_{-1}
\,,
\id_{F^tV_-^d}[E_0,A]]^1
\Big]^1
}\\
\displaystyle{
\vphantom{\Big(}
-
\frac1{r_1}
\widetilde W''(z)
{F'}^tA
\Big[
E_1
\,,
\id_{F^tV_-^d}E_{-1}
\Big]^1
+
\frac1{r_1}
\widetilde W''(z)
{F'}^t
\Big[
E_0
\,,
\id_{F^tV_-^d}[E_0,A]
\Big]^1
}\\
\displaystyle{
\vphantom{\Big(}
=
\frac1{r_1(r_1+r'_1)}
\Big[
\widetilde W''(z),
\id_{{F'}^t{V'}_-^d}[E_{-1}
\,,
\id_{F^tV_-^d}E_0]^1
\Big]^1A\id_{V_-^d}
}\\
\displaystyle{
\vphantom{\Big(}
-
\frac1{r_1}
\widetilde W''(z)
{F'}^tA
\Big[
E_1
\,,
\id_{F^tV_-^d}E_{-1}
\Big]^1
+
\frac1{r_1}
\widetilde W''(z)
{F'}^t
\Big[
E_0
\,,
\id_{F^tV_-^d}E_0
\Big]^1A\id_{V_-^d}
}\\
\displaystyle{
\vphantom{\Big(}
=
-
\frac1{r_1+r'_1}
\Big[
\widetilde W''(z),
\id_{{F'}^t{V'}_-^d}E_{-1}
\Big]^1A\id_{V_-^d}
+
\widetilde W''(z)
{F'}^t[A,E_0]\id_{V_-^d}
\,.}
\end{array}
\end{equation}
For the first equality of \eqref{20180301:eq2} we used \eqref{20180223:eq3} and 
\eqref{20180301:eq1},
for the second equality we used \eqref{20180301:eq1} again, 
for the third equality we have used  \eqref{20180301:eq1b},
and for the last equality we used Lemma \ref{lem:E}(b).
Next, let us consider the third and fourth contributions 
to $[a,\widetilde W(z)\id_{V_-^d}]$
(coming from the third and fourth highlighted term in \eqref{20180221:eq1}).
We have
\begin{equation}\label{20180221:eq3b}
\begin{array}{l}
\displaystyle{
\vphantom{\Big(}
-\Bigg[a,
\bigg(
-\frac1{r_1 + r'_1}[\widetilde W''(z),\id_{{F'}^t{V'}_-^d}E'_{-1}]^1
-\widetilde W''(z){F'}^t(z\id_{V'}+\textcolor{red}{E'_0}+D')\id_{{V'}_-^d}
}\\
\displaystyle{
\vphantom{\Big(}
+\Res_y y^{-1}\widetilde W''(z)
\id_{{V'}_-^u}
(1+y^{-1}{F'})^{-1}\widetilde W''(y){F'}^t\id_{{V'}_-^d}
\bigg)
F^t(z\id_V+\textcolor{red}{E_0}+D)\id_{V_-^d}
\Bigg]
}\\
\displaystyle{
\vphantom{\Big(}
\equiv
\widetilde W''(z){F'}^t[E_1,A]\id_{F^tV_-^d}
F^t(z\id_V+E_0+D)\id_{V_-^d}
}\\
\displaystyle{
\vphantom{\Big(}
-
\bigg(
-\frac1{r_1 + r'_1}[\widetilde W''(z),\id_{{F'}^t{V'}_-^d}E'_{-1}]^1
-\widetilde W''(z){F'}^t(z\id_{V'}+E'_0+D')\id_{{V'}_-^d}
}\\
\displaystyle{
\vphantom{\Big(}
+\Res_y y^{-1}\widetilde W''(z)
\id_{{V'}_-^u}
(1+y^{-1}{F'})^{-1}\widetilde W''(y){F'}^t\id_{{V'}_-^d}
\bigg)
F^t[F,A]\id_{V_-^d}
}\\
\displaystyle{
\vphantom{\Big(}
\equiv
\widetilde W''\!(z){F'}^t[F,A]\id_{F^tV_-^d}
F^t(z\id_V\!\!+\!\!E_0\!\!+\!\!D)\id_{V_-^d}\!
+
\!\widetilde W''\!(z){F'}^t[[E_1,A],\!\id_{F^tV_-^d}
F^tE_0]^1\id_{V_-^d}
}\\
\displaystyle{
\vphantom{\Big(}
-
\bigg(
-\frac1{r_1 + r'_1}[\widetilde W''(z),\id_{{F'}^t{V'}_-^d}E'_{-1}]^1
-\widetilde W''(z){F'}^t(z\id_{V'}+E'_0+D')\id_{{V'}_-^d}
}\\
\displaystyle{
\vphantom{\Big(}
+\Res_y y^{-1}\widetilde W''(z)
\id_{{V'}_-^u}
(1+y^{-1}{F'})^{-1}\widetilde W''(y){F'}^t\id_{{V'}_-^d}
\bigg)
\id_{F^tV_-^d}A\id_{V_-^d}
}\\
\displaystyle{
\vphantom{\Big(}
\equiv
-
\widetilde W''(z){F'}^tA((z-N+r_1)\id_V+E_0)\id_{V_-^d}
-
r_1 \widetilde W''(z){F'}^tA\id_{V_-^d}
}\\
\displaystyle{
\vphantom{\Big(}
+
\frac1{r_1 + r'_1}
[\widetilde W''(z),\id_{{F'}^t{V'}_-^d}E'_{-1}]^1
\id_{F^tV_-^d}A\id_{V_-^d}
}\\
\displaystyle{
\vphantom{\Big(}
+
\widetilde W''(z){F'}^t((z-N+2r_1+r'_1)\id_{V'}+E'_0)
\id_{F^tV_-^d}A\id_{V_-^d}
}\\
\displaystyle{
\vphantom{\Big(}
-
\Res_y y^{-1}\widetilde W''(z)
\id_{{V'}_-^u}
(1+y^{-1}{F'})^{-1}\widetilde W''(y){F'}^t
\id_{F^tV_-^d}A\id_{V_-^d}
\,.}
\end{array}
\end{equation}
For the first equality we used \eqref{20180223:eq3},
for the second equality we used \eqref{20180223:eq3}
and the identities $F\id_{V_-^d}=0$ and $F^tF=\id_{F^tV_-^d}$,
while for the third equality
we used the identities $A\id_{F^tV_-^d}=0$,
$F\id_{F^tV_-^d}F^t=\id_{V_-^d}$,
$D\id_{V_-^d}=-(N-r_1)\id_{V_-^d}$,
and $D'\id_{F^tV_-^d}=-(N-2r_1-r'_1)\id_{F^tV_-^d}$
(cf. \eqref{20180219:eq2}),
and the fact that, by Lemma \ref{lem:E}(c), we have
$$
[E_1,\id_{F^tV_-^d}F^tE_0]^1
=
r_1\id_{V_-^d}
\,.
$$
Next, the fifth contribution to 
$[a,\widetilde W(z)\id_{V_-^d}]$
coming from \eqref{20180221:eq1})
is, by \eqref{20180223:eq3}
\begin{equation}\label{20180301:eq3}
\begin{array}{l}
\displaystyle{
\vphantom{\Big(}
\Bigg[a,
\Res_x x^{-1}\widetilde W''(z)
\id_{{V'}_-^u}
(1+x^{-1}F)^{-1}
\bigg(
-\frac1{r_1 + r'_1}[\widetilde W''(x),\id_{{F'}^t{V'}_-^d}E'_{-1}]^1
}\\
\displaystyle{
\vphantom{\Big(}
- \widetilde W''(x){F'}^t(x\id_{V'}+\textcolor{red}{E'_0}+D')\id_{{V'}_-^d}
}\\
\displaystyle{
\vphantom{\Big(}
+\Res_y y^{-1}\widetilde W''(y)
\id_{{V'}_-^u}
(1+y^{-1}{F'})^{-1}\widetilde W''(y){F'}^t\id_{{V'}_-^d}
\bigg)
F^t\id_{V_-^d}
\Bigg]
}\\
\displaystyle{
\vphantom{\Big(}
\equiv
\Res_x x^{-1}\widetilde W''(z)
\id_{{V'}_-^u}
(1+x^{-1}F)^{-1}
\widetilde W''(x){F'}^tAF\id_{{V'}_-^d}
F^t\id_{V_-^d}
}\\
\displaystyle{
\vphantom{\Big(}
=
\Res_x x^{-1}\widetilde W''(z)
\id_{{V'}_-^u}
(1+x^{-1}F)^{-1}
\widetilde W''(x){F'}^tA\id_{V_-^d}
\,.}
\end{array}
\end{equation}
Next, let us consider the contribution to 
$[a,\widetilde W(z)\id_{V_-^d}]$
coming from the sixth highlighted term in \eqref{20180221:eq1}).
It gives
\begin{equation}\label{20180301:eq4}
\begin{array}{l}
\displaystyle{
\vphantom{\Big(}
-
\Res_x x^{-1}
\widetilde W''(z){F'}^t[a,\textcolor{red}{E'_0}]\id_{{V'}_-^d}
\id_{V_-^u \cap {V'}_-^d}
(1+x^{-1}F)^{-1}
}\\
\displaystyle{
\vphantom{\Big(}
\times
\bigg(
-\frac1{r_1 + r'_1}[\widetilde W''(x),\id_{{F'}^t{V'}_-^d}E'_{-1}]^1
-\widetilde W''(x){F'}^t(x\id_{V'}+\textcolor{red}{E'_0}+D')\id_{{V'}_-^d}
}\\
\displaystyle{
\vphantom{\Big(}
+\Res_{\bar{y}} \bar{y}^{-1}\widetilde W''(\bar{y})
\id_{{V'}_-^u}
(1+\bar{y}^{-1}{F'})^{-1}\widetilde W''(\bar{y}){F'}^t\id_{{V'}_-^d}
\bigg)
F^t\id_{V_-^d}
}\\
\displaystyle{
\vphantom{\Big(}
=
\Res_x x^{-1}
\widetilde W''(z){F'}^tAE_1
\id_{V_-^u \cap {V'}_-^d}
(1+x^{-1}F)^{-1}
}\\
\displaystyle{
\vphantom{\Big(}
\times
\bigg(
\!\!\!\!
-\frac1{r_1 \!+\! r'_1}[\widetilde W''(x),\id_{{F'}^t{V'}_-^d}E'_{-1}]^1
-\widetilde W''(x){F'}^t(x\id_{V'}+E'_0+D')\id_{{V'}_-^d}
}\\
\displaystyle{
\vphantom{\Big(}
+\Res_{\bar{y}} \bar{y}^{-1}\widetilde W''(\bar{y})
\id_{{V'}_-^u}
(1+\bar{y}^{-1}{F'})^{-1}\widetilde W''(\bar{y}){F'}^t\id_{{V'}_-^d}
\bigg)
F^t\id_{V_-^d}
}\\
\displaystyle{
\vphantom{\Big(}
\equiv
\Res_x x^{-1}
\widetilde W''(z){F'}^tA
\Bigg[E_1,
\id_{V_-^u \cap {V'}_-^d}
(1+x^{-1}F)^{-1}
}\\
\displaystyle{
\vphantom{\Big(}
\times
\bigg(
\!\!\!\!
-\frac1{r_1 \!+\! r'_1}[\widetilde W''(x),\id_{{F'}^t{V'}_-^d}E'_{-1}]^1
-\widetilde W''(x){F'}^t(x\id_{V'}+E'_0+D')\id_{{V'}_-^d}
}\\
\displaystyle{
\vphantom{\Big(}
+\Res_{\bar{y}} \bar{y}^{-1}\widetilde W''(\bar{y})
\id_{{V'}_-^u}
(1+\bar{y}^{-1}{F'})^{-1}\widetilde W''(\bar{y}){F'}^t\id_{{V'}_-^d}
\bigg)
\Bigg]^1
F^t\id_{V_-^d}
\,.}
\end{array}
\end{equation}
For the first equality we used \eqref{20180223:eq3},
for the second equality we used the fact that $E_1\equiv F\mod\mc I$
and the identity $F\id_{V_-^u}=0$.
Note that $E_1\id{{V'}_-^d}$ is a matrix with coefficients in $\Hom(V_-^d,{V'}_-^d)\subset\mf g$,
hence it commutes, with respect to the bracket $[\cdot\,,\,\cdot]^1$
with $\widetilde W''(z)$ and with $\id_{{F'}^t{V'}_-^d}E'_{-1}$
(which has coefficients in $\Hom({F'}^t{V'}_-^d,{V'}_-^d)$).
While it has a non trivial bracket with $E_0'$
(which we replace by $E_0$ thanks to Lemma \ref{lem:restr}).
Hence, the RHS of \eqref{20180301:eq4}
can be rewritten as
\begin{equation}\label{20180301:eq5}
\begin{array}{l}
\displaystyle{
\vphantom{\Big(}
-
\Res_x x^{-1}
\widetilde W''(z){F'}^tA
\Big[E_1,
\id_{V_-^u \cap {V'}_-^d}
(1+x^{-1}F)^{-1}
\widetilde W''(x){F'}^t
E_0
\Big]^1
\id_{{V'}_-^d}
F^t\id_{V_-^d}
\,.}
\end{array}
\end{equation}
Let
\begin{equation}\label{20180301:eq6}
\begin{array}{l}
\displaystyle{
\vphantom{\Big(}
X
:=
\Res_x x^{-1}
\id_{V_-^u \cap {V'}_-^d}
(1+x^{-1}F)^{-1}
\widetilde W''(x){F'}^t
\id_{{V'}_-^d}
} \\
\displaystyle{
\vphantom{\Big(}
=
(-1)^{p_1}
\Res_x x^{-p_1+1}
\id_{V_-^u \cap {V'}_-^d}
F^{p_1-2}
\widetilde W''(x){F'}^t
\id_{{V'}_-^d}
\,.}
\end{array}
\end{equation}
For the second equality in \eqref{20180301:eq6} we used the fact that 
$\widetilde W''(z)=\id_{V_+}\widetilde W''(z)\in U(\mf g)\otimes \Hom(V''_-,V_+)$
and 
$$
\id_{{V'}_-^d}(1+x^{-1}F)^{-1}\id_{V_+}=(-1)^{p_1}x^{-p_1+2}\id_{{V'}_-^d}F^{p_1-2}\id_{V_+}
\,.
$$
By Lemma \ref{20180222:lem1},
$$
\widetilde W''(x)
=
x\id_{V_+}(1+xF^t)^{-1}\id_{V''_-}
+\sum_{i\in\mc F''}\widetilde {w}''_i(x) U^i
\,,
$$
and $\widetilde {w}''_i(x)$ is a polynomial in $x$ of degree at most $p_1-3$,
hence it does not contribute to the residue in \eqref{20180301:eq6}.
Hence, 
\begin{equation}\label{20180301:eq7}
\begin{array}{l}
\displaystyle{
\vphantom{\Big(}
X
=
(-1)^{p_1}
\Res_x x^{-p_1+2}
\id_{V_-^u \cap {V'}_-^d}
F^{p_1-2}
\id_{V_+}(1+xF^t)^{-1}\id_{V''_-}
{F'}^t
\id_{{V'}_-^d}
} \\
\displaystyle{
\vphantom{\Big(}
=
-
\id_{V_-^u \cap {V'}_-^d}
F^{p_1-2}
\id_{V_+}
(F^t)^{p_1-3}
\id_{V''_-}
{F'}^t
\id_{{V'}_-^d}
=
-
\id_{V_-^u \cap {V'}_-^d}
\,.}
\end{array}
\end{equation}
In view of \eqref{20180301:eq6}-\eqref{20180301:eq7},
we can rewrite \eqref{20180301:eq5} as
\begin{equation}\label{20180301:eq8}
\begin{array}{l}
\displaystyle{
\vphantom{\Big(}
-
\widetilde W''(z){F'}^tA
\Big[E_1,XE_0\Big]^1
\id_{{V'}_-^d}
F^t\id_{V_-^d}
} \\
\displaystyle{
\vphantom{\Big(}
=
\widetilde W''(z){F'}^tA
\Big[E_1,
\id_{V_-^u \cap {V'}_-^d}
E_0\Big]^1
\id_{{V'}_-^d}
F^t\id_{V_-^d}
} \\
\displaystyle{
\vphantom{\Big(}
=
-r'_1
\widetilde W''(z){F'}^tA
E_1
F^t\id_{V_-^d}
\equiv
-r'_1
\widetilde W''(z){F'}^tA
\id_{V_-^d}
\,.}
\end{array}
\end{equation}
For the second equality we used Lemma \ref{lem:E}(b),
while for the last equality we used the fact that $E_1\equiv F\mod\mc I$.
Next, let us consider the contribution to 
$[a,\widetilde W(z)\id_{V_-^d}]$
coming from the seventh (and last) highlighted term in \eqref{20180221:eq1}).
It gives
\begin{equation}\label{20180301:eq9}
\begin{array}{l}
\displaystyle{
\vphantom{\Big(}
-
\Res_x x^{-1}
\bigg(
\!-\frac1{r_1 \!+\! r'_1}[\widetilde W''(z),\id_{{F'}^t{V'}_-^d}E'_{-1}]^1
-\widetilde W''(z){F'}^t(z\id_{V'}+\textcolor{red}{E'_0}+D')\id_{{V'}_-^d}
}\\
\displaystyle{
\vphantom{\Big(}
+\Res_y y^{-1}\widetilde W''(z)
\id_{{V'}_-^u}
(1+y^{-1}{F'})^{-1}\widetilde W''(y){F'}^t\id_{{V'}_-^d}
\bigg)
}\\
\displaystyle{
\vphantom{\Big(}
\times
\id_{V_-^u \cap {V'}_-^d}
(1+x^{-1}F)^{-1}
\widetilde W''(x){F'}^t[a,\textcolor{red}{E'_0}]\id_{{V'}_-^d}
F^t\id_{V_-^d}
}\\
\displaystyle{
\vphantom{\Big(}
\equiv
\Res_x x^{-1}
\bigg(
\!-\frac1{r_1 \!+\! r'_1}[\widetilde W''(z),\id_{{F'}^t{V'}_-^d}E'_{-1}]^1
-\widetilde W''(z){F'}^t(z\id_{V'}+E'_0+D')\id_{{V'}_-^d}
}\\
\displaystyle{
\vphantom{\Big(}
+\Res_y y^{-1}\widetilde W''(z)
\id_{{V'}_-^u}
(1+y^{-1}{F'})^{-1}\widetilde W''(y){F'}^t\id_{{V'}_-^d}
\bigg)
}\\
\displaystyle{
\vphantom{\Big(}
\times
\id_{V_-^u \cap {V'}_-^d}
(1+x^{-1}F)^{-1}
\widetilde W''(x){F'}^tA
F\id_{{V'}_-^d}
F^t\id_{V_-^d}
}\\
\displaystyle{
\vphantom{\Big(}
=
-\frac1{r_1 + r'_1}
\Res_x x^{-1}
[\widetilde W''(z),\id_{{F'}^t{V'}_-^d}E'_{-1}]^1
\id_{V_-^u \cap {V'}_-^d}
(1+x^{-1}F)^{-1}
}\\
\displaystyle{
\vphantom{\Big(}
\times
\widetilde W''(x){F'}^tA \id_{V_-^d}
-
\Res_x x^{-1}
\widetilde W''(z){F'}^t((z-N+2r_1+r'_1)\id_{V'}+E'_0)
}\\
\displaystyle{
\vphantom{\Big(}
\times
\id_{V_-^u \cap {V'}_-^d}
(1+x^{-1}F)^{-1}
\widetilde W''(x){F'}^tA \id_{V_-^d}
+
\Res_x \Res_y
x^{-1} y^{-1}
\widetilde W''(z)
}\\
\displaystyle{
\vphantom{\Big(}
\times
\id_{{V'}_-^u}
(1+y^{-1}{F'})^{-1}\widetilde W''(y){F'}^t
\id_{V_-^u \cap {V'}_-^d}
(1+x^{-1}F)^{-1}
\widetilde W''(x){F'}^tA \id_{V_-^d}
}\\
\displaystyle{
\vphantom{\Big(}
=
\frac1{r_1 + r'_1}
[\widetilde W''(z),\id_{{F'}^t{V'}_-^d}E'_{-1}]^1
\id_{V_-^u \cap {V'}_-^d}
A \id_{V_-^d}
}\\
\displaystyle{
\vphantom{\Big(}
+
\widetilde W''(z){F'}^t((z-N+2r_1+r'_1)\id_{V'}+E'_0)
\id_{V_-^u \cap {V'}_-^d}
A \id_{V_-^d}
}\\
\displaystyle{
\vphantom{\Big(}
-
\Res_y
y^{-1}
\widetilde W''(z)
\id_{{V'}_-^u}
(1+y^{-1}{F'})^{-1}\widetilde W''(y){F'}^t
\id_{V_-^u \cap {V'}_-^d}
A \id_{V_-^d}
\,.}
\end{array}
\end{equation}
For the first equality we used \eqref{20180223:eq3},
for the second equality we used the identities
$F\id_{{V'}_-^d}F^t\id_{V_-^d}=\id_{V_-^d}$
and $D'\id_{{V'}_-^d}=-(N-2r_1-r'_1)\id_{{V'}_-^d}$
(cf. \eqref{20180219:eq2}),
and for the last equality we used equations \eqref{20180301:eq6} and \eqref{20180301:eq7}.
To conclude, we combine
equations \eqref{20180221:eq1}, \eqref{20180301:eq2}, \eqref{20180221:eq3b},
\eqref{20180301:eq3}, 
\eqref{20180301:eq8} and \eqref{20180301:eq9}, to get
\begin{equation}\label{20180302:eq1}
\begin{array}{l}
\displaystyle{
\vphantom{\Big(}
[a,\widetilde W(z)\id_{V_-^d}]
\equiv
-
\frac1{r_1+r'_1}
\Big[
\widetilde W''(z),
\id_{{F'}^t{V'}_-^d}E_{-1}\Big]^1
\big(1-\id_{F^tV_-^d}-\id_{V_-^u \cap {V'}_-^d}\big)
A\id_{V_-^d}
}\\
\displaystyle{
\vphantom{\Big(}
-
\widetilde W''(z){F'}^t
(E_0+z-N+2r_1+r'_1)(1-\id_{F^tV_-^d}-\id_{V_-^u \cap {V'}_-^d})
A\id_{V_-^d}
}\\
\displaystyle{
\vphantom{\Big(}
+
\Res_x x^{-1}\widetilde W''(z)
\id_{{V'}_-^u}
(1+x^{-1}F)^{-1}
\widetilde W''(x)
{F'}^t
(1\!-\!\id_{F^tV_-^d}\!-\!\id_{V_-^u \cap {V'}_-^d})
A\id_{V_-^d}
=0
\,}
\end{array}
\end{equation}
since, obviously, $(1-\id_{F^tV_-^d}-\id_{V_-^u \cap {V'}_-^d})A=0$.
\end{proof}

\subsection{Recursive formula for the operator $Z(z) $}\label{sec:5.5b}

Recall the matrix $Z(z)\in U(\mf g^f)[z]\otimes\Hom(V_-,V_+)$ from equation \eqref{eq:Z}.
\begin{proposition}\label{20180405:prop1}
The matrix $Z(z) $ satisfies the following recursive formula:
\begin{equation}\label{20180312:eq2}
\begin{array}{l}
\displaystyle{
\vphantom{\Big(}
Z(z) =  Z'(z)\id_{V_-^u} -  \frac1{r_1}[Z'(z), \id_{F^tV_-^d}E_{-1}]^1 
} \\
\displaystyle{
\vphantom{\Big(}
- 
zZ'(z)F^t\id_{V_-^d}
-
z\id_{V_+}(1 + zF^t)^{-1}F^tE_0\id_{V_-^d}
} \\
\displaystyle{
\vphantom{\Big(}
+
z\id_{V_+}(1 + zF^t)^{-1}\id_{V_-^u} \Res_x x^{-1} (1 + x^{-1}F)^{-1} Z'(x)F^t\id_{V_-^d}
\,.}
\end{array}
\end{equation}
where $Z'(z) \in U(\mf {g'}^f)[z]\otimes\Hom(V'_-,V_+)$ is the analogue 
of $Z(z) $ with respect to $ \mf g' $.
\end{proposition}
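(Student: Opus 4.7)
My strategy is to verify \eqref{20180312:eq2} by a direct computation, using the explicit matrix decomposition \eqref{20180502:eq3} of $Z(z)$ (together with its analogue for $Z'(z)$) and the definition \eqref{eq:phik} of the map $\phi_\ell$. I would split the identity according to the direct sum $V_- = V_-^u \oplus V_-^d$ and verify the restrictions to these two summands separately.

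On $V_-^u$, the last three summands on the RHS of \eqref{20180312:eq2} vanish because they carry $\id_{V_-^d}$, and the commutator $[Z'(z), \id_{F^tV_-^d}E_{-1}]^1$ vanishes on $V_-^u$ because $F^tV_-^d \subset {V'}_-^d$ has trivial intersection with $V_-^u$ (cf.\ \eqref{20180504:eq4}). Thus the identity reduces to $Z(z)\id_{V_-^u} = Z'(z)\id_{V_-^u}$, which I verify block-by-block using \eqref{20180502:eq3}: for $k < p_1-1$, the inclusions \eqref{20180502:eq4}--\eqref{20180502:eq5} identify the relevant source and target blocks in $p$ and $p'$, and by Lemma \ref{lem:restr} the powers of $F$ and $F^t$ appearing in $\phi_\ell$ agree with those of $F'$ and ${F'}^t$ on these blocks; hence $\phi_\ell(u_i) = \phi'_\ell(u_i)$ and the bases $\{u_i\}_{i \in \mc F(h,k)}$ agree for the two pyramids.

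On $V_-^d$, the computation is the main part. Using \eqref{20180502:eq3}, the LHS equals the leading polynomial $-(-z)^{p_1}\id_{V_+^d}(F^t)^{p_1-1}\id_{V_-^d}$ plus a sum of $(-z)^\ell \id_{V_+\cap (F^t)^hV_-}\phi_\ell(u_i)U^i\id_{V_-^d}$ terms indexed by $0 \leq h \leq p_1-1$, $0 \leq \ell \leq h$, and $i \in \mc F(h, p_1-1)$. I match each of these with a specific contribution from the RHS: the combination $-zZ'(z)F^t\id_{V_-^d} - z\id_{V_+}(1+zF^t)^{-1}F^tE_0\id_{V_-^d}$ reproduces the leading polynomial (using the $\delta_{h,k}$ piece of \eqref{20180502:eq3} for $Z'(z)$) together with the $\ell = 0$, $h = p_1-1$ contribution; the commutator $-\frac{1}{r_1}[Z'(z), \id_{F^tV_-^d}E_{-1}]^1\id_{V_-^d}$ supplies the remaining $\ell = 0$ contributions for $h < p_1-1$ via Lemma \ref{lem:E}(b); and the residue term furnishes all contributions with $\ell \geq 1$. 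The matching is carried out by a direct comparison that uses the definition \eqref{eq:phik} of $\phi_\ell$ on each side.

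The main obstacle is the explicit evaluation of the residue term $z\id_{V_+}(1+zF^t)^{-1}\id_{V_-^u}\Res_x x^{-1}(1+x^{-1}F)^{-1}Z'(x)F^t\id_{V_-^d}$. Expanding $(1+x^{-1}F)^{-1} = \sum_{j=0}^{p_1-2}(-x^{-1})^j F^j$ as a finite geometric series and substituting the block form of $Z'(x)$, the residue picks out specific $\phi'_j$-coefficients; the outer factor $z\id_{V_+}(1+zF^t)^{-1}\id_{V_-^u}$ then produces the correct $(-z)^\ell$ powers and projects onto the target blocks $V_+\cap (F^t)^hV_-$. Using the identities $F^{j+1}(F^t)^j = \id_{F^{j+1}V}$ (cf.\ \eqref{20180219:eq3}) together with the block identifications \eqref{20180502:eq4}--\eqref{20180502:eq5}, the resulting sum rearranges into the expected form $\sum_{h, \ell, i}(-z)^\ell \phi_\ell(u_i)U^i$, completing the match.
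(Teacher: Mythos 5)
Your treatment of the restriction to $V_-^u$ is essentially correct (and is the paper's first step), and the general strategy of expanding in powers of $z$ on $V_-^d$ is also the paper's; but the specific term-by-term matching you propose on $V_-^d$ is not the one that holds, and as stated it would break down. At order $z^0$ the only contribution from the right-hand side is the commutator term: $-zZ'(z)F^t\id_{V_-^d}$ and the residue term start at order $z^1$ (cf.\ \eqref{20180426:eq2}, \eqref{20180426:eq4b}) and the $E_0$-term sits entirely at order $z^{p_1-1}$ (cf.\ \eqref{20180426:eq3}); hence the commutator must account for \emph{all} of $\sum_{i\in\mc F}u_iU^i\id_{V_-^d}$, including the block $h=p_1-1$, not just $h<p_1-1$ as you assert, and this is exactly \eqref{20180312:eq4a}. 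Dually, the residue term cannot by itself furnish the contributions with $\ell\geq1$: its $U(\mf g^f)$-coefficients are $\phi'_k(u'_i)$, built from $F'$, and one recovers $\phi_\ell(u_i)$ (built from $F$) only by combining, at each fixed power $(-z)^k$ with $1\le k\le p_1-2$, three sources: the commutator (order $k$, coefficients $\phi'_k(u'_i)$), the term $-zZ'(z)F^t\id_{V_-^d}$ (order $k$, coefficients $\phi'_{k-1}(u'_i)$) and the residue term (also order $k$). This is \eqref{20180312:eq4b}: after pairing with an arbitrary $a\in\mf g$, the commutator yields precisely the top summand $\id_{V_+}(F^t)^kA\id_{V_-^d}$ of $(a|\phi_k(u_i))$ (the $h=k$ term in the sum defining $\phi_k$ in \eqref{eq:phik}), while the other two combine into $(1-\id_{V_+\cap(F^t)^{k-1}V_-})\sum_i\phi'_{k-1}(u'_i){U'}^iF^t\id_{V_-^d}$, which produces the remaining summands $h<k$; the power $(-z)^{p_1-1}$ is handled the same way with the $E_0$-term supplying the top summand, cf.\ \eqref{20180312:eq4c}. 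Assigning whole powers of $z$ (or whole blocks) to single right-hand-side terms is incompatible with these recombinations, so the matching step of your outline, as written, fails.

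A second, related gap is the computational device. You invoke Lemma \ref{lem:E}(b) to evaluate $-\frac1{r_1}[Z'(z),\id_{F^tV_-^d}E_{-1}]^1$, but that lemma computes brackets of the form $[E_i,XE_j]^1$ with an $E$-type matrix in the first slot; here the first slot has coefficients $\phi'_k(u'_i)$ (or, at $k=0$, the two-sided truncation $\id_{V_+}E\id_{V'_-}$), which are not of that form, so the lemma does not apply directly. What makes the computation go through — and what is missing from your plan — is pairing the identity with an arbitrary $a\in\mf g$ and using the invariance of the trace form, the completeness relations \eqref{eq:compl}, Lemma \ref{20180426:lem} and Lemma \ref{20180216:lem1}; this is how one evaluates both the bracket (as in \eqref{20180426:eq8}) and the summands of $(a|\phi_k(u_i))$. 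As a minor point, your reason for dropping the commutator on $V_-^u$ (``trivial intersection'') is loose: the correct statement is that $\id_{F^tV_-^d}E_{-1}=\id_{F^tV_-^d}E\id_{V_-^d}$, so its matrix part kills $V_-^u$; that part of the argument is easily repaired.
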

\begin{proof}
First, let us consider $Z(z)\id_{V_-^u}$. By \eqref{eq:Z} we have
\begin{equation}
\begin{array}{l}
\displaystyle{
\vphantom{\Big(}
Z(z)\id_{V_-^u}
=
z\id_{V_+}(1+zF^t)^{-1}\id_{V_-^u}
+
\sum_{i\in\mc F}\phi_z(u_i) U^i\id_{V_-^u}
} \\
\displaystyle{
\vphantom{\Big(}
=
z\id_{V_+}(1+zF^t)^{-1}\id_{V_-^u}
+
\sum_{i\in\mc F'}\phi'_z(u'_i) {U'}^i\id_{V_-^u}
=
Z'(z)\id_{V_-^u}
\,.}
\end{array}
\end{equation}
Here, $\{u'_i\}_{i \in \mc F} $ is a basis of $ \Hom(V_+, V'_-) \subset \mf g$, and $ \{{U'}^i\}_{i \in \mc F} $ is the dual basis of $ \Hom(V'_-, V_+) \subset \End V$. Moreover, $\phi'_z(u'_i) $ is defined as $\phi_z(u_i) $, with $ F' $ in place of $F $.
Hence, in the second equality we have used the fact that $ V_-^u \subset V_- \cap V'_- $, and that $ \phi'_z $ and $ \phi_z $ clearly coincide on a basis of $ \Hom(V_+, V_-^u)$.
When composed to $\id_{V_-^d}$ the first term of the RHS of \eqref{20180312:eq2}
vanishes, 
the second term gives
\begin{equation}\label{20180426:eq1}
-  \frac1{r_1}[Z'(z), \id_{F^tV_-^d}E_{-1}]^1 
=
- 
\frac1{r_1}\sum_{k=0}^{p_1-2} \sum_{i \in \mc F'} (-z)^k [\phi'_k(u'_i){U'}^i,  \id_{F^tV_-^d}E_{-1}]^1 
\,,
\end{equation}
the third term gives
\begin{equation}\label{20180426:eq2}
- 
zZ'(z)F^t\id_{V_-^d}
=
z\id_{V_+}(1 + zF^t)^{-1}\id_{V_-^d}
+
\sum_{k=0}^{p_1-2} \sum_{i \in \mc F'} (-z)^{k+1} \phi'_k(u'_i){U'}^i F^t\id_{V_-^d}
\,,
\end{equation}
the fourth term gives
\begin{equation}\label{20180426:eq3}
-
z\id_{V_+}(1 + zF^t)^{-1}F^tE_0\id_{V_-^d}
=
(-z)^{p_1-1} (F^t)^{p_1-1}E_0\id_{V_-^d}
\,,
\end{equation}
and the fifth term gives
\begin{equation}\label{20180426:eq4}
\begin{array}{l}
\displaystyle{
\vphantom{\Big(}
z\id_{V_+}(1 + zF^t)^{-1}\id_{V_-^u} \Res_x x^{-1} (1 + x^{-1}F)^{-1} Z'(x)F^t\id_{V_-^d}
} \\
\displaystyle{
\vphantom{\Big(}
=
\Res_x z\id_{V_+}(1 + zF^t)^{-1}\id_{V_-^u} (1 + x^{-1}F)^{-1} 
\id_{V_+}(1+xF^t)^{-1}F^t\id_{V_-^d}
} \\
\displaystyle{
\vphantom{\Big(}
+
\sum_{i\in\mc F'}\sum_{k=0}^{p_1-2}
\Res_x x^{-1} (-x)^k \phi'_k(u_i')
z\id_{V_+}(1 + zF^t)^{-1}\id_{V_-^u}  (1 + x^{-1}F)^{-1} {U'}^i F^t\id_{V_-^d}
\,.}
\end{array}
\end{equation}
The first term in the RHS of \eqref{20180426:eq4} vanishes 
since $\id_{V_-^u}F^i\id_{V_+}(F^t)^j\id_{V_-^d}$ is always zero for all $i,j\geq0$.
For the second term in the RHS of \eqref{20180426:eq4} we compute the residue in $x$, to get
\begin{equation}\label{20180426:eq4b}
\begin{array}{l}
\displaystyle{
\vphantom{\Big(}
\sum_{i\in\mc F'}\sum_{k=0}^{p_1-2}
\phi'_k(u_i')
z\id_{V_+}(1 + zF^t)^{-1}\id_{V_-^u}  F^k {U'}^i F^t\id_{V_-^d}
} \\
\displaystyle{
\vphantom{\Big(}
=
-\sum_{i\in\mc F'}\sum_{k=0}^{p_1-2}(-z)^{k+1}
\phi'_k(u_i')
\id_{V_+}(F^t)^k\id_{V_-^u}  F^k {U'}^i F^t\id_{V_-^d}
} \\
\displaystyle{
\vphantom{\Big(}
=
-\sum_{i\in\mc F'}\sum_{k=0}^{p_1-2}(-z)^{k+1}
\phi'_k(u_i')
\id_{V_+\cap(F^t)^kV_-} {U'}^i F^t\id_{V_-^d}
\,.}
\end{array}
\end{equation}
For the last equality we expanded $\id_{V_+}=\sum_{k=0}^{p_1-1}\id_{V_+\cap(F^t)^kV_-}$,
and we used the obvious identity
$$
\id_{V_+\cap(F^t)^kV_-}
(F^t)^h\id_{V_-}  F^h
=
\id_{V_+\cap(F^t)^kV_-}\delta_{h,k}
\,.
$$
Combining \eqref{20180426:eq1}-\eqref{20180426:eq4b},
we rewrite equation \eqref{20180312:eq2} (composed to $\id_{V_-^d}$) as
\begin{equation}\label{20180312:eq4}
\begin{array}{l}
\displaystyle{
\vphantom{\Big(}
\sum_{i\in\mc F}\sum_{k =0}^{p_1-1} (-z)^k \phi_k(u_i) U^i\id_{V_-^d}
=
- \frac1{r_1}\sum_{k=0}^{p_1-2} \sum_{i \in \mc F'} (-z)^k [\phi'_k(u'_i){U'}^i,  \id_{F^tV_-^d}E_{-1}]^1 
} \\
\displaystyle{
\vphantom{\Big(}
+ \sum_{k=0}^{p_1-2} \sum_{i \in \mc F'} (-z)^{k+1} \phi'_k(u'_i){U'}^i F^t\id_{V_-^d}
+ (-z)^{p_1-1} (F^t)^{p_1-1}E_0\id_{V_-^d}
} \\
\displaystyle{
\vphantom{\Big(}
-\sum_{i\in\mc F'}\sum_{k=0}^{p_1-2}(-z)^{k+1}
\phi'_k(u_i')
\id_{V_+\cap(F^t)^kV_-} {U'}^i F^t\id_{V_-^d}
\,.}
\end{array}
\end{equation}
We shall prove equation \eqref{20180312:eq4}
by equating the coefficients of each power of $z$ in both sides.
First, let us consider the constant terms in $z$. 
(Note that $ \phi_0 $ is the identity map on $ u_i $.)
Equation \eqref{20180312:eq4} reduces to
\begin{equation}\label{20180312:eq4a}
\begin{array}{l}
\displaystyle{
\vphantom{\Big(}
\sum_{i\in\mc F}u_i U^i\id_{V_-^d}
=
- \frac1{r_1}\sum_{i \in \mc F'} [u'_i{U'}^i,  \id_{F^tV_-^d}E_{-1}]^1 
\,,}
\end{array}
\end{equation}
which we need to prove.
Pairing the LHS of \eqref{20180312:eq4a} with an arbitrary element $ a \in \mf g$, 
we have 
\begin{equation}\label{20180426:eq5}
\sum_{i\in\mc F} (a | u_i) U^i\id_{V_-^d}
= 
\id_{V_+}A\id_{V_-^d}
 \,,
\end{equation}
where $ A $ is the element of $ \End V $corresponding to $ a \in \mf g $
(cf. Section \ref{sec:3.4}).
On the other hand, pairing the RHS of \eqref{20180312:eq4a} with the same element $a \in \mf g$, 
we have
\begin{equation}\label{20180426:eq6}
\begin{array}{l}
\displaystyle{
\vphantom{\Big(}
- \frac1{r_1}\sum_{i\in\mc F'} \sum_{j \in I} (a | [u'_i , u_j ]) {U'}^i\id_{F^tV_-^d}U^j\id_{V_-^d}
=
- \frac1{r_1}\sum_{i\in\mc F'} {U'}^i\id_{F^tV_-^d}[A, U'_i]\id_{V_-^d}
} \\
\displaystyle{
\vphantom{\Big(}
=
\frac1{r_1}\sum_{i\in I} \id_{V_+}{U'}^i\id_{F^tV_-^d} U'_iA \id_{V_-^d}
=
\id_{V_+}A \id_{V_-^d}
\,,}
\end{array}
\end{equation}
where in the first equality we used the completeness identity \eqref{eq:compl}, 
in the second equality we used the fact that $ U'_i \id_{V_-^d} = 0 $, 
and in the last equality we used Lemma \ref{20180216:lem1}.
Comparing \eqref{20180426:eq5} and \eqref{20180426:eq6},
we get equation \eqref{20180312:eq4a}.
Next, the coefficients of $(-z)^{k} $, for $ 1 \leq k \leq p_1-2$,
in \eqref{20180312:eq4}
give the equation
\begin{equation}\label{20180312:eq4b}
\begin{array}{l}
\displaystyle{
\vphantom{\Big(}
\sum_{i\in\mc F}\phi_k(u_i) U^i\id_{V_-^d}
=
- \frac1{r_1}\sum_{i \in \mc F'} [\phi'_k(u'_i){U'}^i,  \id_{F^tV_-^d}E_{-1}]^1 
} \\
\displaystyle{
\vphantom{\Big(}
+(1- 
\id_{V_+\cap(F^t)^{k-1}V_-}
) 
\sum_{i \in \mc F'} \phi'_{k-1}(u'_i) {U'}^i F^t \id_{V_-^d}
\,,}
\end{array}
\end{equation}
which we need to prove.
As before, we pair the LHS of \eqref{20180312:eq4b} with $a\in\mf g$
(and denote by $A\in\End V$ the corresponding endomorphism).
As a result, we get, recalling the definition \eqref{eq:phik} of $\phi_k$,
\begin{equation}\label{20180426:eq7}
\begin{array}{l}
\displaystyle{
\vphantom{\Big(}
\sum_{i\in\mc F}(a|\phi_k(u_i)) U^i\id_{V_-^d}
=
\sum_{i\in\mc F}\sum_{h=0}^k \tr(A F^h (F^t)^k U_i (F^t)^k F^{k-h}) 
U^i\id_{V_-^d}
} \\
\displaystyle{
\vphantom{\Big(}
=
\sum_{h=0}^k 
\id_{V_+}
(F^t)^k F^{k-h} A (F^t)^{k-h} \id_{V_-^d}
\,.}
\end{array}
\end{equation}
For the last equality we used the cyclic property of the trace and the completeness 
identity \eqref{eq:compl}.
We also used the facts that 
$\sum_{i\in\mc F}u_i\otimes U^i=\sum_{i\in I}u_i\otimes \id_{V_+}U^i\id_{V_-}$,
and $F^h(F^t)^k\id_{V_-^d}=(F^t)^{k-h}\id_{V_-^d}$ for all $h\leq k$.
If we pair the first term of the RHS of \eqref{20180312:eq4b} with $a\in\mf g$,
we get, by the definition \eqref{eq:phik} of $\phi_k$,
\begin{equation}\label{20180426:eq8}
\begin{array}{l}
\displaystyle{
\vphantom{\Big(}
- \frac1{r_1}\sum_{i \in \mc F'}
\sum_{\ell\in I_{-1}}
(a|[\phi'_k(u'_i),  u_\ell])
{U'}^i\id_{F^tV_-^d}U^\ell
} \\
\displaystyle{
\vphantom{\Big(}
=
- \frac1{r_1}
\sum_{i \in \mc F'}
\sum_{h=0}^k
\sum_{\ell\in I}
\tr(A[(F^t)^{k-h}U'_i(F^t)^k{F'}^{k-h},U_\ell])
{U'}^i\id_{F^tV_-^d}U^\ell\id_{V_-^d}
} \\
\displaystyle{
\vphantom{\Big(}
=
- \frac1{r_1}
\sum_{i \in \mc F'}
\sum_{h=0}^k
\sum_{\ell\in I}
\tr(A[(F^t)^{k-h}U'_i(F^t)^k{F'}^{k-h},\id_{V_-^d}U_\ell\id_{F^tV_-^d}])
{U'}^iU^\ell
\,.}
\end{array}
\end{equation}
For both equalities we used Lemma \ref{20180426:lem}.
We also used the fact that 
$F^h(F^t)^k\id_{V'_-}=(F^t)^{k-h}\id_{V'_-}$ if $h\leq k$ and $k\leq p_1-2$.
Note that the summands in the RHS of \eqref{20180426:eq8}
with $h<k$ vanish,
since $F'\id_{V_-^d}=0$ and $\id_{F^tV_-^d}(F^t)^{k-h}\id_{V'_-}=0$ for $h<k$.
Hence, \eqref{20180426:eq8} becomes, by Lemma \ref{20180216:lem1}(b),
\begin{equation}\label{20180426:eq8b}
\begin{array}{l}
\displaystyle{
\vphantom{\Big(}
\frac1{r_1}
\sum_{i \in \mc F'}
\sum_{\ell\in I}
\tr(A\id_{V_-^d}U_\ell\id_{F^tV_-^d}U'_i(F^t)^k)
{U'}^iU^\ell
} \\
\displaystyle{
\vphantom{\Big(}
=
\frac1{r_1}
\sum_{i \in I'}
\id_{V_+}{U'}^i\id_{F^tV_-^d}U'_i(F^t)^kA\id_{V_-^d}
=
\id_{V_+}(F^t)^kA\id_{V_-^d}
\,,}
\end{array}
\end{equation}
which coincides with the summand $h=k$ in \eqref{20180426:eq7}.
If instead we pair the second term of the RHS of \eqref{20180312:eq4b} with $a\in\mf g$,
we get
\begin{equation}\label{20180426:eq9}
\begin{array}{l}
\displaystyle{
\vphantom{\Big(}
(1- \id_{V_+\cap(F^t)^{k-1}V_-} ) 
\sum_{i \in \mc F'} (a|\phi'_{k-1}(u'_i)) {U'}^i F^t \id_{V_-^d}
} \\
\displaystyle{
\vphantom{\Big(}
=
(1\!-\! \id_{V_+\cap(F^t)^{k-1}V_-} ) 
\sum_{i \in I'}
\sum_{h=0}^{k-1}
\tr(A(F^t)^{k-h-1}U'_i(F^t)^{k-1}{F}^{k-h-1})
\id_{V_+} {U'}^i F^t \id_{V_-^d}
} \\
\displaystyle{
\vphantom{\Big(}
=
(1- \id_{V_+\cap(F^t)^{k-1}V_-} ) 
\sum_{h=0}^{k-1}
\id_{V_+} (F^t)^{k-1}{F}^{k-h-1}A(F^t)^{k-h} \id_{V_-^d}
} \\
\displaystyle{
\vphantom{\Big(}
=
\sum_{h=0}^{k-1}
\id_{V_+} (F^t)^{k}{F}^{k-h}A(F^t)^{k-h} \id_{V_-^d}
\,.}
\end{array}
\end{equation}
For the first equality we used Lemma \ref{20180426:lem},
for the second equality we used the cyclic property of the trace
and the completeness identity \eqref{eq:compl},
and for the last equality we used the following identity:
$$
(1- \id_{V_+\cap(F^t)^{k-1}V_-} ) 
\id_{V_+} (F^t)^{k-1}
=
\id_{V_+} (F^t)^{k}F
\,,
$$
which holds for every $k\geq1$, and we leave as an exercise to the reader.
Note that the RHS of \eqref{20180426:eq9} 
equals the summands in \eqref{20180426:eq7} with $h<k$.
Hence, equation \eqref{20180312:eq4b} holds.
Finally, we need to prove that the coefficients of $(-z)^{p_1-1}$
in both sides of equation \eqref{20180312:eq4} coincide.
In other words, we are left to prove the following identity:
\begin{equation}\label{20180312:eq4c}
\begin{array}{l}
\displaystyle{
\vphantom{\Big(}
\sum_{i\in\mc F}\phi_{p_1-1}(u_i) U^i\id_{V_-^d}
=
(F^t)^{p_1-1}E_0\id_{V_-^d}
} \\
\displaystyle{
\vphantom{\Big(}
+
(1-\id_{V_+\cap(F^t)^{p_1-2}V_-})
\sum_{i\in\mc F'}
\phi'_{p_1-2}(u_i')
 {U'}^i F^t\id_{V_-^d}
\,.}
\end{array}
\end{equation}
Pairing the LHS of \eqref{20180312:eq4c} with $a\in\mf g$, we get,
with the same computation as in \eqref{20180426:eq7},
\begin{equation}\label{20180427:eq1}
\sum_{i\in\mc F}(a|\phi_{p_1-1}(u_i)) U^i\id_{V_-^d}
=
\sum_{h=0}^{p_1-1} 
\id_{V_+}
(F^t)^{p_1-1} F^{p_1-1-h} A (F^t)^{p_1-1-h} \id_{V_-^d}
\,.
\end{equation}
Pairing the first term in the RHS of \eqref{20180312:eq4c} with $a\in\mf g$, we get,
by the completeness identity \eqref{eq:compl}
\begin{equation}\label{20180427:eq2}
\begin{array}{l}
\displaystyle{
\vphantom{\Big(}
(F^t)^{p_1-1}(a|E_0)\id_{V_-^d}
} \\
\displaystyle{
\vphantom{\Big(}
=
(F^t)^{p_1-1}A\id_{V_-^d}
\,,}
\end{array}
\end{equation}
which coincides with the summand of \eqref{20180427:eq1} with $h=p_1-1$.
Finally, pairing the second term in the RHS of \eqref{20180312:eq4c} with $a\in\mf g$, 
we get, by \eqref{20180426:eq9} with $k=p_1-1$,
\begin{equation}\label{20180427:eq3}
\begin{array}{l}
\displaystyle{
\vphantom{\Big(}
(1-\id_{V_+\cap(F^t)^{p_1-2}V_-})
\sum_{i\in\mc F'}
(a|\phi'_{p_1-2}(u_i'))
 {U'}^i F^t\id_{V_-^d}
} \\
\displaystyle{
\vphantom{\Big(}
=
\sum_{h=0}^{p_1-2}
\id_{V_+} (F^t)^{p_1-1}{F}^{p_1-1-h}A(F^t)^{p_1-1-h} \id_{V_-^d}
\,,}
\end{array}
\end{equation}
which coincides with the summands of \eqref{20180427:eq1} with $h<p_1-1$.
Combining \eqref{20180427:eq1}, \eqref{20180427:eq2} and \eqref{20180427:eq3},
we get \eqref{20180312:eq4c},
completing the proof of the Proposition.
\end{proof}

\subsection{A choice for the subspace $U^\perp\subset\mf g$ complementary to $\mf g^f$}\label{sec:5.6a}

For every pyramid $p$,
fix a subspace $U^\perp\subset\mf g$ 
compatible with the $\Gamma$-grading of $\mf g$
and complementary to $\mf g^f$, cf. \eqref{eq:decompgl12}
(and we let $U$ be its orthocomplement with respect to the trace form).
It is defined as follows:
\begin{equation}\label{20180511:eq1}
U^\perp
=
\id_{F^tV}\End V\oplus\id_{V_-}(\End V)[>0]
\,\subset\mf g
\,.
\end{equation}
In other words, with the ``pictorial'' description presented in Section \ref{sec:3},
$U^\perp$ is spanned by all arrows between the boxes of the pyramid
which do NOT end in a leftmost box ($V_-$),
and the arrows ending in a leftmost box but strictly bending to the right
(i.e. of strictly positive degree).
\begin{lemma}\label{def:U}
The family of subspaces $U^\perp$, depending on the pyramid $p$,
defined in \eqref{20180511:eq1}
is compatible with the $\Gamma$-grading of $\mf g$,
is complementary to $\mf g^f$,
and satisfies the following three conditions:
\begin{enumerate}[(i)]
\item ${U'}^\perp \subset U^\perp$;
\item $[{U'}^\perp, \mf h] =0$, where $\mf h=\Hom(F^tV_-^d, V_-^d)\subset\mf g$;
\item $\phi'_\ell(\Hom(V_+,F^tV_-^d)) \subset U^\perp$
for every $\ell=0,\dots,p_1-2$,
where $\phi'_\ell:\,{\mf g'}_0^f\to{\mf g'}_\ell^f$ is the map defined by \eqref{eq:phik}.
\end{enumerate}
\end{lemma}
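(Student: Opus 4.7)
I will interpret both summands of $U^\perp$ pictorially in terms of arrows in the pyramid and then handle each claim in turn. In the picture of Section~\ref{sec:3.1}, $\id_{F^tV}\End V$ is spanned by arrows whose head lies in a non-leftmost box of its row, while $\id_{V_-}(\End V)[>0]$ is spanned by strictly right-shifting arrows whose head lies in a leftmost box. Both summands are sums of graded pieces, so $U^\perp$ is plainly compatible with the $\Gamma$-grading; in particular, $U^\perp$ contains no non-positive-degree arrow ending in a leftmost box.

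For complementarity $\mf g=\mf g^f\oplus U^\perp$, I would use the PBW basis $\{\phi_\ell(u_i)\}$ of $\mf g^f$ from Lemma~\ref{lem:gfk} and study the component of each basis element whose image lies in $V_-$. For $u_i\in\mf g_0^f(h,k)$ with $\ell\leq\min\{h,k\}$, only the summand $j=\ell$ in \eqref{eq:phik} has image in $V_-$, landing in $V_-\cap F^kV_+$; the $V_-$-part therefore has $\Gamma$-degree $\ell-k\leq0$. So it cannot sit in the strictly-positive-degree second summand of $U^\perp$, and being nonzero with image in $V_-$ (disjoint from $F^tV$) it cannot sit in the first summand either. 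The $V_-$-parts of distinct basis elements are supported on $\Hom$-spaces with sources and targets uniquely determined by $(h,k,\ell)$, hence are linearly independent; this gives $\mf g^f\cap U^\perp=0$, and a dimension count completes the splitting.

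Conditions (i) and (iii) are direct. For (i), I would split ${U'}^\perp$ using $V'_-=F^tV_-^d\oplus(V_-^u\cap V'_-)$: elements with image in ${F'}^tV'$ or in $F^tV_-^d$ sit inside $\id_{F^tV}\End V\subset U^\perp$, while those with image in $V_-^u\cap V'_-$ are positive-degree maps into $V_-$, hence in $\id_{V_-}(\End V)[>0]\subset U^\perp$ (the $\Gamma$-grading on $\mf g'$ being the restriction of the one on $\mf g$). For (iii), $A$ sends $V_+$ into $F^tV_-^d$, which sits in column $2$ of the tallest rows $1,\dots,r_1$, so every summand $(F')^j({F'}^t)^\ell A({F'}^t)^\ell(F')^{\ell-j}$ of $\phi'_\ell(A)$ has image in columns $2,\dots,2+\ell\leq p_1$ of those same rows; none of these boxes is leftmost in its row of $p$, hence the image sits in $F^tV$.

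The main obstacle is (ii). For $B\in{U'}^\perp$ and $Q\in\mf h$, the easy half $BQ=0$ follows because $Q$ maps into $V_-^d$ while $B\in\End V'$ (extended by zero) vanishes on $V_-^d\subset V\setminus V'$. The harder half $QB=0$ amounts to showing $B(V)\cap F^tV_-^d=0$. When $B\in\id_{{F'}^tV'}\End V'$, this is immediate since $F^tV_-^d$ lies in the leftmost column of $p'$, i.e.\ inside $V'_-$, which is disjoint from ${F'}^tV'$. When $B\in\id_{V'_-}(\End V')[>0]$ the argument is a degree count: for a right-aligned $p$ one has $V_-^d=V[-(p_1-1)/2]$, so $V'=V[\geq-(p_1-3)/2]$, whereas $F^tV_-^d$ sits in $V[-(p_1-3)/2]$, the minimal $\Gamma$-degree of $V'$; no strictly positive-degree map out of $V'$ can therefore reach $F^tV_-^d$.
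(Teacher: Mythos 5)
Your proposal is correct and follows essentially the same route as the paper: the grading compatibility is immediate, conditions (i) and (iii) are handled by the same pictorial/box arguments, and in (ii) your two cases match the paper's (for the second case the paper invokes $\id_{{V'}_-^d}(\End V')[>0]=0$ together with $F^tV_-^d\subset{V'}_-^d$, which is exactly your degree count in different clothing). The only substantive divergence is in the complementarity statement. For $U^\perp\cap\mf g^f=0$ the paper argues degree by degree, using $\mf g^f\subset\mf g[\leq 0]$ and the fact that $\id_{F^tV}\End V\cap\mf g^f=0$ (a nonzero element of $\mf g^f$ always has nonzero image inside $\ker F=V_-$, since one may apply a suitable power of $F$), whereas you extract the $V_-$-component of each basis element $\phi_\ell(u_i)$ of Lemma \ref{lem:gfk}; your computation (only the $j=\ell$ summand survives, giving $U_i(F^t)^\ell$ of degree $\ell-k\leq0$, with supports separated by $(h,k,\ell)$ and right-multiplication by $(F^t)^\ell$ injective on $\mf g_0^f(h,k)$) is valid and a reasonable alternative. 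However, you then write that ``a dimension count completes the splitting'' without performing it: this is precisely the remaining half of the paper's argument, namely $\dim(\id_{F^tV}\End V)=N(N-r)$, $\dim\big(\id_{V_-}(\End V)[>0]\big)=\sum_{i<j}r_ir_j(p_i-p_j)$ and $\dim\mf g^f=\sum_{i,j}r_ir_j\min\{p_i,p_j\}$, whose total is $N^2$. It is a routine computation, but it must be supplied (or replaced by a direct spanning argument) for the proof of complementarity to be complete.
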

\begin{proof}
The subspace $U^\perp$ is obviously compatible with the $\Gamma$-grading.
Let us show that it is complementary to $\mf g^f$.
In order to prove that $U^\perp\cap\mf g^f=0$,
we can proceed degree by degree,
since all spaces $\id_{F^tV}\End V$, $\id_{V_-}\End V$ and $\mf g^f$
are compatible with the $\Gamma$-grading.
Since, moreover, $\mf g^f\subset\mf g[\leq0]$, and obviously $\id_{F^tV}\End V\cap\mf g^f=0$,
we immediately get that $U^\perp\cap\mf g^f=0$.
Hence, to prove that $U^\perp$ is complementary to $\mf g^f$,
we only need to perform a dimension computation.
Obviously,
\begin{equation}\label{20180511:eq2}
\dim\big(\id_{F^tV}\End V\big)
=
N(N-r)
\,.
\end{equation}
By Lemma \ref{lem:gfk}, we have (in the notation \eqref{eq:p})
\begin{equation}\label{20180511:eq3}
\begin{array}{l}
\displaystyle{
\vphantom{\Big(}
\dim(\mf g^f)
=
\sum_{h,k=0}^{p_1-1}\sum_{\ell=0}^{\min\{h,k\}}\dim\Hom(V_+\cap (F^t)^hV_-,V_-\cap F^kV_+)
} \\
\displaystyle{
\vphantom{\Big(}
=
\sum_{i,j=1}^{s}\sum_{\ell=0}^{\min\{p_i,p_j\}-1}r_ir_j
=
\sum_{i,j=1}^{s}r_ir_j\min\{p_i,p_j\}
=
\sum_{i<j}r_ir_j p_j
+
\sum_{i\geq j}r_ir_j p_i
\,.}
\end{array}
\end{equation}
In order to compute the dimension of $\id_{V_-}(\End V)[>0]$,
having in mind the ``pictorial'' description of the space $V$ presented in Section \ref{sec:3},
for each of the $r_j$ leftmost boxes of the pyramid in a rectangle $r_j\times p_j$,
we need to count all boxes strictly to its left,
which are $r_1\times(p_1-p_j)$, $r_2\times(p_2-p_j)$, $\dots$, $r_{j-1}\times(p_{j-1}-p_j)$.
Hence,
\begin{equation}\label{20180511:eq4}
\dim\big(
\id_{V_-}(\End V)[>0]
\big)
=
\sum_{j=1}^sr_j\sum_{i=1}^{j-1}r_i(p_i-p_j)
=
\sum_{i<j}r_ir_j(p_i-p_j)
\,.
\end{equation}
Combining \eqref{20180511:eq2}, \eqref{20180511:eq3} and \eqref{20180511:eq4},
we get
$$
\dim(U^\perp)+\dim(\mf g^f)
=
N(N-r)+\sum_{i,j}r_ir_jp_i
=
N^2
=
\dim(\mf g)
\,.
$$
Next, let us prove condition (i).
Obviously, $F^tV'\subset F^tV$, so that 
$$
\id_{F^tV'}\End V'\subset\id_{F^tV}\End V
\,.
$$
Moreover, ${V'}_-^u\subset V_-^u$,
and $\id_{{V'}_-^d}(\End V')[>0]=0$,
so that 
$$
\id_{V'_-}(\End V')[>0]
=
\id_{{V'}_-^u}(\End V')[>0]
\subset
\id_{V_-}(\End V)[>0]
\,.
$$
As a consequence, condition (i) holds.
Next, let us prove condition (ii).
Since $V_-^d\cap V'=0$, $F^tV_-^d\cap F^tV'=0$ and $F^tV_-^d\cap {V'}_-^u=0$,
we have
$$
[\id_{F^tV'}\End V',\Hom(F^tV_-^d,V_-^d)]=0
\,\,\text{ and }\,\,
[\id_{V'_-}(\End V')[>0],\Hom(F^tV_-^d,V_-^d)]=0
\,.
$$
As a consequence, condition (ii) holds.
Finally, we prove condition (iii).
For every $A\in\End V$ and $i<\ell$, we have
$$
F^i(F^t)^\ell A\in\id_{F^tV}\End V\subset U^\perp
\,,
$$
while, for $A\in\Hom(V,F^tV_-^d)$ we also have
$$
F^\ell(F^t)^\ell A
=
A
\in
\id_{F^tV}\End V
\subset U^\perp
\,.
$$
Recalling the definition \eqref{eq:phik} of the map $\phi_\ell$, we conclude that 
condition (iii) holds,
completing the proof of the Lemma.
\end{proof}
Recall the surjective algebra homomorphism $\eta^f:\,S(\mf g)\twoheadrightarrow S(\mf g^f)$
given by \eqref{eq:projectionpif}.
\begin{lemma}\label{20180510:lem}
\begin{enumerate}[(a)]
\item
For every $v\in S(\mf g')$ and $u\in S(\mf g)$, we have
\begin{equation}\label{20180510:eq7}
\eta^f(vu)=\eta^f(\eta^{f'}(v)u)
\,,
\end{equation}
provided that ${U'}^\perp\subset U^\perp$.
In particular, \eqref{20180510:eq7} holds for the subspaces $U^\perp$ in \eqref{20180511:eq1}.
\item
Let $\mf h\subset\mf g$ be a subspace such that $[{U'}^\perp,\mf h]=0$.
Then, for every $v\in S(\mf g')$ and $h\in\mf h$, we have
\begin{equation}\label{20180510:eq7b}
\eta^f([v,h])=\eta^f([\eta^{f'}(v),h])
\,.
\end{equation}
In particular, \eqref{20180510:eq7b} holds 
for the subspaces $U^\perp$ in \eqref{20180511:eq1}
and $\mf h=\Hom(F^tV_-^d,V_-^d)\subset\mf g$.
\end{enumerate}
\end{lemma}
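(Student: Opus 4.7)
The plan is to reduce both parts to a linear statement about $a \in \mf g'$, using that $\eta^f$ is an algebra homomorphism. For part (a), the identity $\eta^f(vu) = \eta^f(\eta^{f'}(v)u)$ factors through the linear claim $\eta^f(a) = \eta^f(\eta^{f'}(a))$ for $a \in \mf g'$: apply multiplicativity of $\eta^f$ to peel off $u$, then multiplicativity of $\eta^f$ and $\eta^{f'}$ to reduce $v \in S(\mf g')$ to a single letter. For part (b), the Leibniz rule for the Poisson bracket with a linear element $h$, combined with part (a), reduces the problem to the linear claim $\eta^f([a,h]) = \eta^f([\eta^{f'}(a),h])$ for $a \in \mf g'$.

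For the linear case of (a), I would write $a = \pi^{f'}(a) + a'$ with $a' \in {U'}^\perp$. Since ${U'}^\perp \subset U^\perp = \ker\pi^f$, this gives $\pi^f(a) = \pi^f(\pi^{f'}(a))$. Unpacking the definitions of $\eta^f$ and $\eta^{f'}$, what remains of the linear claim is the scalar identity $(f|a') = (f'|a)$. I would derive this from two elementary observations. The first is that $(f|b) = (f'|b)$ for every $b \in \mf g'$, which follows from a short trace computation showing that $f - f'$ lies in $\Hom(V', V_-^d)$ under the extension-by-zero embedding $\mf g' \hookrightarrow \mf g$, hence is orthogonal to $\mf g' \subset \Hom(V',V')$ with respect to the trace form. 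The second is that $(f'|\pi^{f'}(a)) = 0$, since $\pi^{f'}(a) \in {\mf g'}^{f'} \subset \mf g'[\leq 0]$ while $f' \in \mf g'[-1]$, so the only nonzero pairing would require a degree-$1$ contribution from ${\mf g'}^{f'}$, which does not exist. Combining, $(f|a') = (f'|a') = (f'|a) - (f'|\pi^{f'}(a)) = (f'|a)$.

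For the linear case of (b), the decomposition $a = \pi^{f'}(a) + a'$ together with the hypothesis $[{U'}^\perp, \mf h] = 0$ yields $[a,h] = [\pi^{f'}(a), h]$; on the other side, $\eta^{f'}(a)$ differs from $\pi^{f'}(a)$ only by the scalar $(f'|a)$, which has trivial Poisson bracket with $h$, so $[\eta^{f'}(a), h] = [\pi^{f'}(a), h]$ as well. The two sides of the linear claim are therefore literally equal as elements of $\mf g \subset S(\mf g)$, so applying $\eta^f$ is immediate.

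The ``in particular'' assertions then follow at once: for the subspaces $U^\perp$ of \eqref{20180511:eq1}, the inclusion ${U'}^\perp \subset U^\perp$ and the commutation $[{U'}^\perp, \mf h] = 0$ for $\mf h = \Hom(F^t V_-^d, V_-^d)$ are precisely conditions (i) and (ii) of Lemma \ref{def:U}. The only non-routine ingredient in the argument is the trace identity $(f|b) = (f'|b)$ for $b \in \mf g'$, which requires a brief check with respect to the embedding $\mf g' \hookrightarrow \mf g$; everything else is a mechanical unpacking of definitions.
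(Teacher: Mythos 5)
Your proof is correct and takes essentially the same route as the paper's: reduce to $v\in\mf g'$ by multiplicativity (respectively the derivation property together with part (a)), decompose $v$ via the primed projection, and use ${U'}^\perp\subset U^\perp$, the identity $(f|b)=(f'|b)$ for $b\in\mf g'$, and $(f'|{\mf g'}^{f'})=0$. The only difference is cosmetic: you spell out the trace identity $(f|b)=(f'|b)$, which the paper merely asserts, and in part (b) you note the two brackets agree already in $\mf g$ before applying $\eta^f$.
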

\begin{proof}
Both $\eta^f$ and $\eta^f\circ\eta^{f'}$ are algebra homomorphisms,
hence it suffices to prove \eqref{20180510:eq7} for $v\in\mf g'\subset\mf g$ and $u=1$.
Let 
\begin{equation}\label{20180510:eq9}
v
=
v_1+v_2
=
{v'}_1+{v'}_2
\,,
\end{equation}
with $v_1\in\mf g^f$, $v_2\in U^\perp$,
${v'}_1\in\mf g'^{f'}$, ${v'}_2\in {U'}^\perp$.
Since, by assumption, ${U'}^\perp\subset U^\perp$, we have
$$
{v'}_1-v_1=v_2-{v'}_2\in U^\perp
\,.
$$
Hence,
$$
\eta^f(\eta^{f'}(v))
=
\eta^f({v'}_1+(f'|{v'}_2))
=
v_1+(f|v_2-{v'}_2)+(f'|{v'}_2)
=
v_1+(f|v_2)
=\eta^f(v)
\,.
$$
For the third equality we used the fact that 
the forms $(f|\,\cdot)$ and $(f'|\,\cdot)$
coincide on $\mf g'$.
This proves part (a).
Next, let us prove part (b).
Note that, since $\eta^f$ and $\eta^{f'}$ are algebra homomorphisms
and $\ad(a)=[a,\,\cdot]$ is a derivation of $S(\mf g)$,
it suffices to prove \eqref{20180510:eq7b} for $v\in\mf g'$.
Let us decompose it as in \eqref{20180510:eq9}.
We have
$$
\begin{array}{l}
\displaystyle{
\vphantom{\Big(}
\eta^f([\eta^{f'}(v),h])
=
\eta^f([{v'}_1+(f'|{v'}_2),h])
=
\eta^f([{v'}_1,h])
} \\
\displaystyle{
\vphantom{\Big(}
=
\eta^f([v,h])-\eta^f([{v'}_2,h])
=
\eta^f([v,h])
\,.}
\end{array}
$$
For the fourth equality we used the assumption that $[{U'}^\perp,\mf h]=0$.
Hence, \eqref{20180510:eq7b} holds.
The last assertion of part (b) follows by
Lemma \ref{def:U}(ii) and the definition of the trace form.
\end{proof}

\subsection{Premet's form of the operator $W(z)$}\label{sec:5.6}

\begin{proposition}\label{20180405:prop2}
For the subspace $U^\perp$ defined by \eqref{20180511:eq1}
there exists a Premet map
$w:\,\mf g^f\to W(\mf g,f)$ (cf. Definition \ref{def:premet})
such that 
\begin{equation}\label{20180507:eq10}
w(Z(z))=W(z)
\,.
\end{equation}
\end{proposition}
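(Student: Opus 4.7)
The plan is to define $w$ tautologically from the expansion of $W(z)$ given by Lemma \ref{20180222:lem1}, so that the identity $w(Z(z))=W(z)$ holds by construction, and then to verify the two Premet axioms of Definition \ref{def:premet} by induction on the number of columns $p_1$ of the pyramid, comparing the recursion \eqref{eq:recW} for $W(z)$ with the recursion \eqref{20180312:eq2} for $Z(z)$ after passing to the Kazhdan associated graded.

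By Theorem \ref{thm:20180208} and Lemma \ref{20180222:lem1}, the elements $w_{i,\ell}:=\widetilde w_{i,\ell}\bar1\in W(\mf g,f)$ appearing in \eqref{20180502:eq3b} are well-defined. Since the family $\{\phi_\ell(u_i)\}$ is a basis of $\mf g^f$ by Lemma \ref{lem:gfk}, I would define $w:\mf g^f\to W(\mf g,f)$ as the unique linear map with $w(\phi_\ell(u_i))=w_{i,\ell}$. The identity $w(Z(z))=W(z)$ is then immediate from comparing \eqref{20180502:eq3} with \eqref{20180502:eq3b}. The substance of the proposition is the assertion that this $w$ is a Premet map; combining conditions (i) and (ii) of Definition \ref{def:premet}, this is equivalent to showing that, after passing to the Kazhdan associated graded $\gr W(\mf g,f)=W^{cl}(\mf g,f)$ and applying $\eta^f$ coefficient-wise, the matrix $W(z)$ maps to $Z(z)$ in $S(\mf g^f)[z]\otimes\Hom(V_-,V_+)$.

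I would prove this last assertion by induction on $p_1$. The base case $p_1=1$ is immediate: $\mf g=\mf g^f$, $\mc I=0$, and by \eqref{20180208:eq1} both $W(z)$ and $Z(z)$ equal $z\id_V+E$. For the inductive step, assuming the analogous identity $\eta^{f'}(\gr W'(z))=Z'(z)$ for the pyramid $p'$, I compare \eqref{eq:recW} with \eqref{20180312:eq2} term by term. The first summand $\widetilde W'(z)\id_{V_-^u}$ reduces, under $\gr$ followed by $\eta^f$, to $Z'(z)\id_{V_-^u}$ via the factorization $\eta^f=\eta^f\circ\eta^{f'}$ of Lemma \ref{20180510:lem}(a), whose hypothesis ${U'}^\perp\subset U^\perp$ is provided by Lemma \ref{def:U}(i). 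For the commutator summand, one uses Lemma \ref{20180510:lem}(b), whose hypothesis $[{U'}^\perp,\mf h]=0$ comes from Lemma \ref{def:U}(ii); the commutator drops Kazhdan degree by exactly one, while $E_{-1}$ has Kazhdan degree $2$, so the bracket survives at the right Kazhdan order and matches the corresponding term of \eqref{20180312:eq2}. In the third summand $-\widetilde W'(z)F^t(z\id_V+E_0+D)\id_{V_-^d}$, the scalar matrix $D$ has Kazhdan degree $0$ and is therefore strictly dominated by the $z\id_V$ contribution; the $E_0$ contribution splits under $\eta^f$ into its $\mf g^f$-part and a part in $U^\perp$ that vanishes, the latter identified thanks to Lemma \ref{def:U}(iii), so that the leading Kazhdan part of this summand reproduces the two terms $-zZ'(z)F^t\id_{V_-^d}-z\id_{V_+}(1+zF^t)^{-1}F^tE_0\id_{V_-^d}$ of \eqref{20180312:eq2} through the explicit form \eqref{eq:Z} of $Z'(z)$. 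The residue summand is treated analogously by expanding the geometric series $(1+x^{-1}F)^{-1}$ and matching with the last term of \eqref{20180312:eq2}.

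The hard part will be the matching at the third and residue summands: a single $\mf g$-valued expression such as $-\widetilde W'(z)F^t(z\id_V+E_0+D)\id_{V_-^d}$ in \eqref{eq:recW} must be identified with the pair $-zZ'(z)F^t\id_{V_-^d}$ and $-z\id_{V_+}(1+zF^t)^{-1}F^tE_0\id_{V_-^d}$ in \eqref{20180312:eq2}, and similarly for the residue term. This requires careful bookkeeping of Kazhdan degrees together with the precise action of $\eta^f$ on the individual components of $E_0$, and closely mirrors the computations used in the proof of Proposition \ref{20180405:prop1} itself.
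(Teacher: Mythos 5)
Your proposal follows essentially the same route as the paper's proof: define $w$ by matching the coefficient expansions of $W(z)$ and $Z(z)$ (which forces \eqref{20180507:eq10}), then verify Premet's two conditions by induction on $p_1$, comparing the recursions \eqref{eq:recW} and \eqref{20180312:eq2} through $\eta^f$ at the top level of the extended Kazhdan filtration, with Lemma \ref{20180510:lem}(a),(b) and properties (i)--(iii) of $U^\perp$ from Lemma \ref{def:U} invoked exactly where you indicate, and with the $D$-term dropping out and both residue terms dying under $\eta^f$ as you anticipate. The only point to keep logically separate is that the filtration bound $\widetilde W(z)\in F_1\big(U(\mf g)[z]\otimes\End V\big)$ (Premet condition (i)) must be proved first, by a short induction on the recursion, before the symbol $\gr_1\widetilde W(z)$ and the identity $\eta^f(\gr_1\widetilde W(z))=Z(z)$ even make sense; this is precisely how the paper organizes the argument.
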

\begin{proof}
By the definition \eqref{eq:Z} of $Z(z)$ and Lemma \ref{lem:gfk}, we have
\begin{equation}\label{20180507:eq11}
\begin{array}{l}
\displaystyle{
\vphantom{\Big(}
Z(z)
=
z\id_{V_+}(1+zF^t)^{-1}\id_{V_-}
} \\
\displaystyle{
\vphantom{\Big(}
+
\sum_{h,k=0}^{p_1-1}
\sum_{i\in\mc F(h,k)}
\sum_{\ell=0}^{\min\{h,k\}}
(-z)^\ell
\phi_\ell(u_i) U^i
\,\in U(\mf g^f)[z]\otimes\Hom(V_-,V_+)
\,.}
\end{array}
\end{equation}
On the other hand,
by Lemma \ref{20180222:lem1}, we also have
\begin{equation}\label{20180507:eq12}
\begin{array}{l}
\displaystyle{
\vphantom{\Big(}
W(z)
=
\widetilde{W}(z)\bar 1
=
z\id_{V_+}(1+zF^t)^{-1}\id_{V_-}
} \\
\displaystyle{
\vphantom{\Big(}
+
\sum_{h,k=0}^{p_1-1}
\sum_{i\in\mc F(h,k)}
\sum_{\ell=0}^{\min\{h,k\}}
(-z)^\ell
\widetilde{w}_{i,\ell}\bar1 U^i
\,\in W(\mf g,f)[z]\otimes\Hom(V_-,V_+)
\,.}
\end{array}
\end{equation}
Since, by Lemma \ref{lem:gfk},
the collection of elements $\phi_\ell(u_i)$,
with $i\in\mc F(h,k)$, $\ell\in\{0,\dots,\min\{h,k\}\}$ and $0\leq h,k\leq p_1-1$,
is a basis of $\mf g^f$,
equation \eqref{20180507:eq10}
obviously defines uniquely a linear map $w:\,\mf g^f\to W(\mf g,f)$,
mapping
\begin{equation}\label{20180508:eq1}
\mf g^f\ni \phi_\ell(u_i)\,\mapsto\,
w(\phi_\ell(u_i)):=\widetilde{w}_{i,\ell}\bar1\,\in W(\mf g,f)
\,.
\end{equation}

Recall the definition \eqref{eq:kaz} of the Kazhdan filtration of $U(\mf g)$,
and the induced filtration \eqref{eq:kazW} of $W(\mf g,f)$.
We next prove the first Premet condition (i) in Definition \eqref{def:premet},
i.e. if $a\in\mf g^f[1-\Delta]$ then $w(a)\in F_\Delta W(\mf g,f)$.
By the definition of the $\Gamma$-grading of $\mf g$
and the definition \eqref{20180502:eq2} of $\mf g_0^f(h,k)$,
it is immediate to check that, if $i\in\mc F(h,k)$,
then $u_i$ has $\Gamma$-degree $-k$.
Moreover, for $0\leq\ell\leq\min\{h,k\}$,
by the definition \eqref{eq:phik} of the map $\phi_\ell$,
we also have $\phi_\ell(u_i)\in\mf g^f[-k+\ell]$.
Hence, by \eqref{20180508:eq1},
the first Premet condition amounts to proving that,
for every $i\in\mc F(h,k)$ and $0\leq\ell\leq\min\{h,k\}$,
we have
\begin{equation}\label{20180508:eq2}
\widetilde{w}_{i,\ell}\bar1\,\in F_{1+k-\ell}W(\mf g,f)
\,.
\end{equation}
Let $\End V=\bigoplus_{j=-p_1+1}^{p_1-1}(\End V)[j]$
be the $\Gamma$-grading of $\End V$
(which is the same as the $\Gamma$-grading of $\mf g$).
We extend the Kazhdan filtrations of $U(\mf g)$ and $W(\mf g,f)$
to filtrations of $U(\mf g)[z]\otimes\End V$ and $W(\mf g,f)[z]\otimes\End V$
by letting $z$ have Kazhdan degree equal to $1$,
and the elements of $(\End V)[j]$ have Kazhdan degree equal to $-j$.
In other words, we let
\begin{equation}\label{20180508:eq3}
F_\Delta \big(
U(\mf g)[z]\otimes\End V
\big)
=
\sum_{\ell=0}^\infty\sum_{k=-p_1+1}^{p_1-1}
z^\ell F_{\Delta+k-\ell}U(\mf g)\otimes(\End V)[k]
\,,
\end{equation}
and the same for $W(\mf g,f)$.
For example, we have, recalling \eqref{eq:Ej},
\begin{equation}\label{20180508:eq8}
\begin{array}{l}
\displaystyle{
\vphantom{\Big(}
F\in (\End V)[-1]\subset 
F_1 \big(
U(\mf g)[z]\otimes\End V
\big)
\,,} \\
\displaystyle{
\vphantom{\Big(}
F^t\in (\End V)[1]\subset 
F_{-1} \big(
U(\mf g)[z]\otimes\End V
\big)
\,,} \\
\displaystyle{
\vphantom{\Big(}
E_j\,\in \mf g[j]\otimes(\End V)[-j]\subset F_1\big(
U(\mf g)[z]\otimes\End V
\big)
\,\,,\,\text{ for every }\,\, j\in\frac12\mb Z
\,.}
\end{array}
\end{equation}
The filtration \eqref{20180508:eq3} is obviously an algebra filtration:
\begin{equation}\label{20180508:eq5}
F_{\Delta_1} \big(
U(\mf g)[z]\otimes\End V
\big)
\cdot
F_{\Delta_2} \big(
U(\mf g)[z]\otimes\End V
\big)
\subset
F_{\Delta_1+\Delta_2} \big(
U(\mf g)[z]\otimes\End V
\big)
\,,
\end{equation}
and, by the properties of the usual Kazhdan filtration of $U(\mf g)$, we also have
\begin{equation}\label{20180508:eq6}
\big[
F_{\Delta_1} \big(
U(\mf g)[z]\otimes\End V
\big)
,
F_{\Delta_2} \big(
U(\mf g)[z]\otimes\End V
\big)
\big]^1
\subset
F_{\Delta_1+\Delta_2-1} \big(
U(\mf g)[z]\otimes\End V
\big)
\,.
\end{equation}
Moreover, although the $\Gamma$-grading of $V'\subset V$ is NOT compatible
with that of $V$ (due to the shift by $\frac12$ of the $x$ coordinates of the boxes
in the corresponding pyramid),
the $\Gamma$-grading of $\End V'$ and $\End V$ 
(being defined by differences in $x$ coordinates) are compatible:
$(\End {V'})[k]\subset(\End V)[k]$.
Likewise, the $\Gamma$-gradings of $\mf g'$ and $\mf g$ are compatible,
and therefore
the Kazhdan filtration of $U(\mf g')$ is compatible with that of $U(\mf g)$.
As a consequence, we have
\begin{equation}\label{20180508:eq7}
F_{\Delta} \big(
U(\mf g')[z]\otimes\End V'
\big)
\subset
F_{\Delta} \big(
U(\mf g)[z]\otimes\End V
\big)
\,.
\end{equation}
Using Lemma \ref{20180222:lem1},
we can then translate the claim \eqref{20180508:eq2},
with respect to this extended filtration, to
\begin{equation}\label{20180508:eq4}
\widetilde{W}(z)
\,\in\,
F_1\big(
U(\mf g)[z]\otimes\End V
\big)
\,,
\end{equation}
which we need to prove.
We shall prove \eqref{20180508:eq4}
by induction on $p_1$
using the recursive definition \eqref{eq:recW} of $\widetilde{W}(z)$.
For $p_1=1$ we have $\widetilde{W}(z)=z\id_V+E$, and, since $\mf g=\mf g[0]$, 
the claim is obvious.
For $p_1>1$, by the inductive assumption and \eqref{20180508:eq7},
we have
\begin{equation}\label{20180508:eq9a}
\widetilde W'(z)
\in 
F_1\big(
U(\mf g)[z]\otimes\End V
\big)
\,,
\end{equation}
and hence, by \eqref{20180508:eq8} and \eqref{20180508:eq6}, we also have
\begin{equation}\label{20180508:eq9b}
[\widetilde W'(z),\id_{F^tV_-^d}E_{-1}]^1
\in 
F_1\big(
U(\mf g)[z]\otimes\End V
\big)
\,,
\end{equation}
and
\begin{equation}\label{20180508:eq9c}
\widetilde W'(z)F^t(z\id_V+E_0+D)
\in 
F_1\big(
U(\mf g)[z]\otimes\End V
\big)
\,.
\end{equation}
Moreover, we have, by \eqref{20180502:eq4} and \eqref{20180507:eq12}
\begin{equation}\label{20180508:eq9d}
\begin{array}{l}
\displaystyle{
\vphantom{\Big(}
\Res_x x^{-1}\widetilde W'(z)
\id_{V_-^u}
(1+x^{-1}F)^{-1}\widetilde W'(x)F^t
} \\
\displaystyle{
\vphantom{\Big(}
=
-\sum_{\ell=0}^{p_1-2}
\widetilde W'(z)
\Res_x (-x)^{-1-\ell}
\id_{V_-\cap F^\ell V_+}
F^\ell
\widetilde W'(x)F^t
} \\
\displaystyle{
\vphantom{\Big(}
=
\sum_{\ell=0}^{p_1-2}
\sum_{i\in\mc F(\ell,p_1-2)}
\widetilde W'(z)
\id_{V_-\cap F^\ell V_+}
F^\ell
\widetilde{w}'_{i,\ell}
{U'}^i
F^t
\,\in 
F_1\big(
U(\mf g)[z]\otimes\End V
\big)
\,.}
\end{array}
\end{equation}
Condition \eqref{20180508:eq4} then follows by the recursive equation \eqref{eq:recW}
and \eqref{20180508:eq9a}-\eqref{20180508:eq9d}.

Finally, we are left to prove Premet's condition (ii) in Definition \eqref{def:premet},
i.e. $\eta^f(\gr_\Delta w(a))=a$ for every $a\in\mf g^f[1-\Delta]$.
As above, we can rewrite this condition in terms of the matrices $Z(z)$ and $\widetilde{W}(z)$
using the extended filtration \eqref{20180508:eq3}:
\begin{equation}\label{20180510:eq1}
\eta^f(\gr_1 \widetilde{W}(z)\bar1) = Z(z)
\,.
\end{equation}
Recall the surjective algebra homomorphism $\eta^f:\,S(\mf g)\twoheadrightarrow S(\mf g^f)$
given by \eqref{eq:projectionpif}.
It clearly maps $S(\mf g)\Span\{b-(f|b)\}_{b\in\mf g_{\geq1}}$ to $0$,
hence it induces the corresponding homomorphism, which we still denote $\eta^f$,
$$
\eta^f\,:\,\,W^{cl}(\mf g,f)\simeq 
\big(S(\mf g)/S(\mf g)\Span\{b-(f|b)\}_{b\in\mf g_{\geq1}}\big)^{\ad \mf g_{\geq1}} 
\,\longrightarrow\, S(\mf g^f)
\,.
$$
Note that $\eta^f$ is a projection map: $(\eta^f)^2=\eta^f$ (since $(f|\mf g^f)=0$).
For $v\in S(\mf g)$, we have 
\begin{equation}\label{20180510:eq2}
\eta^f(v)=\eta^f(v\bar 1^{cl})
\,,
\end{equation}
where $v\mapsto v\bar 1^{cl}$ is the natural quotient map 
$S(\mf g)\twoheadrightarrow S(\mf g)/S(\mf g)\Span\{b-(f|b)\}_{b\in\mf g_{\geq1}}$.
On the other hand, taking the associated graded with respect to the Kazhdan filtration,
we have $\gr U(\mf g)=S(\mf g)$.
Hence, if we denote, as usual,
$v\mapsto v\bar 1$ the natural quotient map
$U(\mf g)\twoheadrightarrow 
U(\mf g)/U(\mf g)\Span\{b-(f|b)\}_{b\in\mf g_{\geq1}}$,
then
\begin{equation}\label{20180510:eq3}
\gr_\Delta(v\bar1)=\gr_\Delta(v)\bar1^{cl}
\,,
\end{equation}
for every $v\in F_\Delta U(\mf g)$.
In view of \eqref{20180510:eq2} and \eqref{20180510:eq3},
we rewrite \eqref{20180510:eq1} as the following equation
\begin{equation}\label{20180510:eq4}
\eta^f(\gr_1 \widetilde{W}(z)) = Z(z)
\,,
\end{equation}
which we need to prove.
We shall prove equation \eqref{20180510:eq4} by induction on $p_1$.
For $p_1=1$, we have $f=0$, $\mf g^f=\mf g=\mf g[0]$
and $\widetilde W(z)=Z(z)=z\id+E$,
so the claim is obvious.
Next, let $p_1>1$.
Recall the recursive definition \eqref{eq:recW} of $\widetilde{W}(z)$.
Recalling \eqref{20180508:eq8}, \eqref{20180508:eq5}, \eqref{20180508:eq6} 
and \eqref{20180508:eq7},
and since $D\in F_0(U(\mf g)[z]\otimes\End V)$, we have
\begin{equation}\label{20180510:eq5}
\begin{array}{l}
\displaystyle{
\vphantom{\Big(}
\gr_1(\widetilde W(z))
=
\gr_1(\widetilde W'(z))\id_{V_-^u}
} \\
\displaystyle{
\vphantom{\Big(}
-
\frac1{r_1}[\gr_1(\widetilde W'(z)),\id_{F^tV_-^d}E_{-1}]^1
-
\gr_1(\widetilde W'(z))F^t(z\id_V+E_0)\id_{V_-^d}
} \\
\displaystyle{
\vphantom{\Big(}
+
\Res_x x^{-1}
\gr_1(\widetilde W'(z))
\id_{V_-^u}
(1+x^{-1}F)^{-1}
\gr_1(\widetilde W'(x))
F^t
\id_{V_-^d}
\,.}
\end{array}
\end{equation}
For the last term, we used the following observation:
taking $\Res_x x^{-1} F(x)$ 
amounts to taking the coefficient of $x^0$ in $F(x)$,
and hence we can assign to $x$ an arbitrary Kazhdan degree, 
without changing the value of $\Res_x x^{-1} \gr_\Delta F(x)$
On the other hand, if we assign to $x$ Kazhdan degree $1$, we obviously have
$$
\begin{array}{l}
\displaystyle{
\vphantom{\Big(}
\gr_1\big(
\widetilde W'(z)
\id_{V_-^u}
(1+x^{-1}F)^{-1}
\widetilde W'(x)
F^t
\id_{V_-^d}
\big)
} \\
\displaystyle{
\vphantom{\Big(}
=
\gr_1(\widetilde W'(z))
\id_{V_-^u}
(1+x^{-1}F)^{-1}
\gr_1(\widetilde W'(x))
F^t
\id_{V_-^d}
\,.}
\end{array}
$$
Next, we apply $\eta^f$ to both sides of \eqref{20180510:eq5}.
As a result, we get
\begin{equation}\label{20180510:eq6}
\begin{array}{l}
\displaystyle{
\vphantom{\Big(}
\eta^f(\gr_1\widetilde W(z))
=
\eta^f\big(
\gr_1(\widetilde W'(z))\id_{V_-^u}
\big)
-
\frac1{r_1}
\eta^f\big(
[\gr_1(\widetilde W'(z)),\id_{F^tV_-^d}E_{-1}]^1
\big)
} \\
\displaystyle{
\vphantom{\Big(}
-
\eta^f\big(
z\gr_1(\widetilde W'(z))F^t\id_{V_-^d}
\big)
-
\eta^f\big(
\gr_1(\widetilde W'(z))F^tE_0\id_{V_-^d}
\big)
} \\
\displaystyle{
\vphantom{\Big(}
+
\eta^f\big(
\Res_x x^{-1}
\gr_1(\widetilde W'(z))
\id_{V_-^u}
(1+x^{-1}F)^{-1}
\gr_1(\widetilde W'(x))
F^t
\id_{V_-^d}
\big)
\,.}
\end{array}
\end{equation}
We discuss each term in the RHS of \eqref{20180510:eq6} separately.
The first term, thanks to Lemma \ref{20180510:lem}(a), is,
by the inductive assumption,
\begin{equation}\label{20180510:eq8}
\eta^f\big(
\gr_1(\widetilde W'(z))\id_{V_-^u}
\big)
=
\eta^f\big(
\eta^{f'}(\gr_1 \widetilde W'(z)))\id_{V_-^u}
\big)
=
\eta^f\big(
Z'(z)\id_{V_-^u}
\big)
\,.
\end{equation}
The same argument works for the third term in the RHS of \eqref{20180510:eq6},
leading to
\begin{equation}\label{20180510:eq11}
\eta^f\big(
z\gr_1(\widetilde W'(z))F^t\id_{V_-^d}
\big)
=
\eta^f\big(
zZ'(z)F^t\id_{V_-^d}
\big)
\,.
\end{equation}
Next, let us consider the second term in the RHS of \eqref{20180510:eq6}.
Note that, by Lemma \ref{20180426:lem}, we have
$$
\id_{F^tV_-^d}E_{-1}
=
\id_{F^tV_-^d}E\id_{V_-^d}
\,\in\mf h\otimes\End V
\,,
$$
where $\mf h$ is as in Lemma \ref{def:U}(ii).
By Lemma \ref{20180510:lem}(b) and the inductive assumption,
we thus have
\begin{equation}\label{20180510:eq10}
\begin{array}{l}
\displaystyle{
\vphantom{\Big(}
\eta^f\big(
[\gr_1(\widetilde W'(z)),\id_{F^tV_-^d}E_{-1}]^1
\big)
=
\eta^f\big(
[\eta^{f'}(\gr_1\widetilde W'(z)),\id_{F^tV_-^d}E_{-1}]^1
\big)
} \\
\displaystyle{
\vphantom{\Big(}
=
\eta^f\big(
[Z'(z),\id_{F^tV_-^d}E_{-1}]^1
\big)
\,.}
\end{array}
\end{equation}
For the fourth term in the RHS of \eqref{20180510:eq6}
we shall need condition (iii) of Lemma \ref{def:U}.
By Lemma \ref{20180510:lem}(a), the inductive assumption, 
and equation \eqref{eq:Z},
we have
\begin{equation}\label{20180510:eq12}
\begin{array}{l}
\displaystyle{
\vphantom{\Big(}
\eta^f\big(
\gr_1(\widetilde W'(z))F^tE_0\id_{V_-^d}
\big)
=
\eta^f\big(
\eta^{f'}(\gr_1\widetilde W'(z))F^tE_0\id_{V_-^d}
\big)
} \\
\displaystyle{
\vphantom{\Big(}
=
\eta^f\big(
Z'(z)F^tE_0\id_{V_-^d}
\big)
=
\eta^f\big(
z\id_{V_+}(1+zF^t)^{-1}F^tE_0\id_{V_-^d}
\big)
} \\
\displaystyle{
\vphantom{\Big(}
+
\sum_{i\in{\mc F}'}
\sum_{\ell\in\mb Z_+}
(-z)^\ell
\eta^f\big(
\phi'_\ell(u'_i) {U'}^i
F^tE_0\id_{V_-^d}
\big)
\,.}
\end{array}
\end{equation}
By Lemma \ref{20180426:lem}, 
$$
\sum_{i\in\mc F'}U'_i \otimes {U'}^i\id_{F^tV_-^d} \,\in\,
\Hom(V_+,F^tV_-^d)\otimes\End V\,.
$$
Hence, by Lemma \ref{def:U}(iii), we have
\begin{equation}\label{20180510:eq13}
\sum_{i\in{\mc F}'}
\eta^f\big(
\phi'_\ell(u'_i)
\big)
 {U'}^i\id_{F^tV_-^d}
=
\sum_{i\in{\mc F}'}
(f|\phi'_\ell(u'_i))
{U'}^i
\id_{F^tV_-^d}
=0
\,,
\end{equation}
since $\phi'_\ell(u'_i)\in{\mf g'}^{f'}\subset\mf g_{\leq0}\subset f^\perp$.
By \eqref{20180510:eq13} the second term in the RHS of \eqref{20180510:eq12}
vanishes, so that
\begin{equation}\label{20180510:eq14}
\eta^f\big(
\gr_1(\widetilde W'(z))F^tE_0\id_{V_-^d}
\big)
=
\eta^f\big(
z\id_{V_+}(1+zF^t)^{-1}F^tE_0\id_{V_-^d}
\big)
\,.
\end{equation}
Finally, we want to prove that the last term in the RHS of \eqref{20180510:eq6}
vanishes.
First, by the same arguments as above, based on Lemma \ref{20180510:lem}(a), 
we have
\begin{equation}\label{20180510:eq15}
\begin{array}{l}
\displaystyle{
\vphantom{\Big(}
\eta^f\big(
\Res_x x^{-1}
\gr_1(\widetilde W'(z))
\id_{V_-^u}
(1+x^{-1}F)^{-1}
\gr_1(\widetilde W'(x))
F^t
\id_{V_-^d}
\big)
} \\
\displaystyle{
\vphantom{\Big(}
=
\eta^f\big(
\Res_x x^{-1}
Z'(z)
\id_{V_-^u}
(1+x^{-1}F)^{-1}
Z'(x)
F^t
\id_{V_-^d}
\big)
\,.}
\end{array}
\end{equation}
Since $\eta^f$ is an algebra homomorphism, 
in order to prove that \eqref{20180510:eq15} is $0$,
it is enough to prove that 
\begin{equation}\label{20180510:eq16}
\eta^f\big(
\id_{V_-^u}
(1+x^{-1}F)^{-1}
Z'(x)
F^t
\id_{V_-^d}
\big)
\end{equation}
vanishes.
Since $\id_{V_-^u}F^i\id_{V_+}(F^t)^j\id_{V_-^d}$
is zero for all $i,j\geq0$,
we can replace $Z'(x)$ in \eqref{20180510:eq16}
by its second contribution coming from the expansion \eqref{eq:Z},
thus getting
$$
\sum_{i\in{\mc F}'}
\sum_{\ell\in\mb Z_+}
(-z)^\ell
\id_{V_-^u}
(1+x^{-1}F)^{-1}
\eta^f\big(\phi'_\ell(u'_i)\big)
 {U'}^i
F^t
\id_{V_-^d}
\,,
$$
which vanishes by \eqref{20180510:eq13}.
Note that, the exact same argument proving that \eqref{20180510:eq15} vanishes,
also shows that applying $\eta^f$ to the last term in the recursive equation 
\eqref{20180312:eq2} of $Z(z)$ we get zero:
\begin{equation}\label{20180510:eq16b}
\eta^f\big(
\Res_x x^{-1}
z\id_{V_+}(1+zF^t)^{-1}
\id_{V_-^u}
(1+x^{-1}F)^{-1}
Z'(x)
F^t
\id_{V_-^d}
\big)
=0
\,.
\end{equation}
Combining \eqref{20180510:eq6}, \eqref{20180510:eq8}, \eqref{20180510:eq11},
\eqref{20180510:eq10}, \eqref{20180510:eq14}, \eqref{20180510:eq15}, we get
\begin{equation}\label{20180510:eq17}
\begin{array}{l}
\displaystyle{
\vphantom{\Big(}
\eta^f(\gr_1\widetilde W(z))
=
\eta^f\Big(
Z'(z)\id_{V_-^u}
-
\frac1{r_1}
[Z'(z),\id_{F^tV_-^d}E_{-1}]^1
-
zZ'(z)F^t\id_{V_-^d}
} \\
\displaystyle{
\vphantom{\Big(}
-
z\id_{V_+}(1+zF^t)^{-1}F^tE_0\id_{V_-^d}
\Big)
\,.
}
\end{array}
\end{equation}
By equations \eqref{20180312:eq2} and \eqref{20180510:eq16b},
the RHS of \eqref{20180510:eq17} equals $\eta^f(Z(z))$,
which is the same as $Z(z)$ since $\eta^f|_{\mf g^f}=\id$.
This proves \eqref{20180510:eq4}
and concludes the proof of the Proposition.
\end{proof}
\begin{remark}\label{20180511:rem}
The choice of the subspaces $U^\perp\subset\mf g$ complementary to $\mf g^f$
is obviously not unique.
A different choice is, for example,
\begin{equation}\label{20180511:eq1b}
U^\perp
=
\End V\id_{FV}\oplus\Big(\bigoplus_{k=0}^{p_1-1}(F^t)^{k+1}\End V\id_{V_+\cap (F^t)^kV_-}\Big)
\,\subset\mf g\,.
\end{equation}
Note that for this choice of subspace Lemma \ref{def:U}
fails, since conditions (ii) and (iii) do not hold
(but some similar properties do).
On the other hand, it is possible to check that, also for the choice \eqref{20180511:eq1b} 
of $U^\perp$, Proposition \ref{20180405:prop2} holds
(with essentially the same proof).
\end{remark}

\section{Recursion for the quasideterminant of \texorpdfstring{$W(z)$}{W(z)}
and proof of Theorem \ref{thm:main} for an aligned pyramid}\label{sec:5d}

\subsection{Recursive formula for the quasideterminant of $W(z)$}\label{sec:5.7}

Consider the operator $\widetilde W(z) \in U(\mf g)[z]\otimes \Hom(V_-,V_+) $ as in \eqref{eq:recW}, and the generalized quasideterminant $ | \widetilde W(z) |_{V_-^d,V_+^d} $ according to the decomposition \eqref{eq:V_-ud} of $ V_\pm $.
\begin{proposition}\label{20180408:prop2}
For the operator $| \widetilde W(z) |_{V_-^d,V_+^d} $ the following recursive formula holds:
\begin{equation}\label{20180406:eq1}
\begin{array}{l}
\displaystyle{
\vphantom{\Big(}
| \widetilde W(z) |_{V_-^d,V_+^d} 
=
-\frac1{r_1}[| \widetilde W'(z) |_{F^tV_-^d,V_+^d} ,\id_{F^tV_-^d}E_{-1}]^1
} \\
\displaystyle{
\vphantom{\Big(}
-| \widetilde W'(z) |_{F^tV_-^d,V_+^d} F^t(z\id_V+E_0+D)\id_{V_-^d}
\,,}
\end{array}
\end{equation}
where the bracket $[\cdot\,,\,\cdot]^1$ is on the first factor of $U(\mf g)\otimes\End V$
(cf. Section \ref{sec:3.4b}), and for the quasideterminant $| \widetilde W'(z) |_{F^tV_-^d,V_+^d}  $ we are considering the decompositions  \eqref{eq:V''_-ud} and  \eqref{eq:V_-ud} of $ V'_- $ and $ V_+ $ respectively.
\end{proposition}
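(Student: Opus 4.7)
The strategy is to mirror the proof of Proposition \ref{prop:recursionL}, replacing the recursion \eqref{eq:recT} for $T(z)$ with the recursion \eqref{eq:recW} for $\widetilde W(z)$. Before starting, I would verify that the generalized quasideterminant $|\widetilde W'(z)|_{F^tV_-^d,V_+^d}$ exists. This follows by Proposition \ref{prop:quasidet}: using the recursion \eqref{eq:recW} one checks that $\widetilde W(z)\id_{V_-^u} = \widetilde W'(z)\id_{V_-^u}$, so $\id_{V_+^u}\widetilde W'(z)\id_{V_-^u} = \id_{V_+^u}\widetilde W(z)\id_{V_-^u}$, which is invertible by the same leading-$z$ argument used for $|\widetilde W(z)|_{V_-^d,V_+^d}$ (via Lemma \ref{20180222:lem1}).

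Next, for $v \in V_-^d$, I would characterize $|\widetilde W(z)|_{V_-^d,V_+^d}v$ as the unique element of $U(\mf g)((z^{-1}))\otimes V_+^d$ of the form $\widetilde W(z)(v+w)$ with $w \in U(\mf g)((z^{-1}))\otimes V_-^u$. Substituting the recursion \eqref{eq:recW}, the crucial observation is that the extra residue term in \eqref{eq:recW} (absent in \eqref{eq:recT}) is already of the form $\widetilde W'(z)$ applied to an element of $V_-^u$, so it can be absorbed into the free parameter. This yields
\[
|\widetilde W(z)|_{V_-^d,V_+^d}v = \widetilde W'(z)\tilde w - \tfrac1{r_1}[\widetilde W'(z),\id_{F^tV_-^d}E_{-1}]^1 v - \widetilde W'(z)F^t(z\id_V + E_0 + D)v,
\]
for a uniquely determined $\tilde w \in U(\mf g)((z^{-1}))\otimes V_-^u$, precisely analogous to \eqref{eq:recL2}.

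Then I would compute the RHS of \eqref{20180406:eq1} applied to $v \in V_-^d$. The second summand, by the defining property of $|\widetilde W'(z)|_{F^tV_-^d,V_+^d}$, equals $\widetilde W'(z)(F^t(z\id_V+E_0+D)v + w^{(1)})$ for some $w^{(1)} \in V_-^u$. For the first summand, applying Lemma \ref{20180426:lem} to write $\id_{F^tV_-^d}E_{-1} = \sum_{i\in J} q_iQ^i$ and expanding via the derivation rule $[\widetilde W'(z)(Q^iv+w_i^{(2)}),q_i] = [\widetilde W'(z),q_i]^1(Q^iv+w_i^{(2)}) + \widetilde W'(z)[w_i^{(2)},q_i]$ yields the two canonical terms matching the characterization above, plus correction terms of the form $\widetilde W'(z)\cdot(\text{element of }V_-^u)$, plus a residual $-\tfrac1{r_1}\sum_i[\widetilde W'(z),q_i]^1 w_i^{(2)}$.

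The main obstacle will be showing that this residual combines correctly with the $\widetilde W'(z)\cdot(\ldots)$ corrections so that the total RHS has the same form as the characterization of the LHS, allowing uniqueness of $\tilde w$ to close the argument. In the proof of Proposition \ref{prop:recursionL} this is handled by the explicit description $T'(z) = |z\id_{V'}+F'+E'_{\leq0}+D'|_{V'_-,V_+}$, which reduces $[T'(z),q_i]$ to $[E'_{\leq0},q_i]$, annihilating $w_i^{(2)}$ via Lemma \ref{lem:E2}. Since we do not have such an explicit form for $\widetilde W'(z)$, my approach would be to proceed by induction on $p_1$, using the recursion \eqref{eq:recW} for $\widetilde W'(z)$ itself to expand the bracket $[\widetilde W'(z),q_i]^1$ in terms of $\widetilde W''(z)$ and lower-order objects, and to observe that $q_i \in \Hom(F^tV_-^d,V_-^d)$ commutes with $\widetilde W''(z)$ (since $\widetilde W''(z)$ has coefficients in $U(\mf g'')$ and $V_-^d \cap V'' = 0$, so $[q_i,\mf g''] = 0$). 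Iterating the expansion together with the identities from Lemma \ref{lem:E} and the bracket identities analogous to \eqref{20180301:eq1}, the residual term collapses to a multiple of $\widetilde W'(z)$ applied to an element of $V_-^u$, completing the match.
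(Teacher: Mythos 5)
Your strategy is sound and would yield a complete proof, but it packages the argument differently from the paper. The paper does not use the ``unique element of the form $\widetilde W(z)(v+w)$'' characterization; it starts from the explicit formula \eqref{eq:quasidet2} of Proposition \ref{prop:quasidet} for $|\widetilde W(z)|_{V_-^d,V_+^d}$, substitutes the recursion \eqref{eq:recW}, observes that the two residue contributions cancel against each other (because $(\id_{V_+^u}\widetilde W'(z)\id_{V_-^u})^{-1}\id_{V_+^u}\widetilde W'(z)\id_{V_-^u}=\id_{V_-^u}$), recombines the remaining terms into the two summands of \eqref{20180406:eq1} via \eqref{eq:quasidet2}, and is left with one extra bracket term which vanishes because of the key identity $[\widetilde W'(z)\id_{V_-^u},q]=0$ for all $q\in\mf h=\Hom(F^tV_-^d,V_-^d)$ (equation \eqref{20180522:eq1}). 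Your route, mirroring Proposition \ref{prop:recursionL}, is equally legitimate: the absorption of the residue term of \eqref{eq:recW} into the free parameter is correct, and the characterization argument closes once you control the residual $-\frac1{r_1}\sum_i[\widetilde W'(z),q_i]^1 w_i^{(2)}$. Note, however, that this residual does not merely ``combine'' with the corrections: since each $w_i^{(2)}$ takes values in $V_-^u$, it is killed outright by the same identity $[\widetilde W'(z)\id_{V_-^u},q_i]=0$, which is exactly the lemma the paper isolates; so your obstacle coincides with the paper's key step. Your proposed mechanism for it is essentially right but overbuilt and slightly misaimed: no iteration or induction on $p_1$ is needed beyond a single application of \eqref{eq:recW} to $\widetilde W'(z)$ (restricted to $V_-^u={V'}_-^u\oplus(V_-^u\cap{V'}_-^d)$), and the relevant tools are Lemma \ref{20180426:lem} together with the vanishing of brackets between disjoint Hom-spaces (as in \eqref{20180301:eq1}), not Lemma \ref{lem:E}; the coefficients of $\widetilde W''(z)$ commute with $\mf h$ as you say, and the residual factors $E'_{-1}\id_{V_-^u\cap{V'}_-^d}$, $E'_0\id_{V_-^u\cap{V'}_-^d}$ lie in Hom-spaces commuting with $\mf h$, which finishes it. Finally, for your characterizations to apply you should also record that $\widetilde W(z)$ and $\widetilde W'(z)$ themselves are invertible (not only the corner blocks), which follows by the same leading-coefficient argument via Lemma \ref{20180222:lem1} that you invoke; the paper makes this point in one sentence. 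What your approach buys is a uniform treatment parallel to Proposition \ref{prop:recursionL}, making the final identification in Section \ref{sec:5d} transparent; what the paper's computation buys is that the residue terms are disposed of by an explicit one-line cancellation rather than by an absorption argument.
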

\begin{remark}\label{20180511:rem2}
Note that equation \eqref{20180406:eq1} is indeed a recursive formula
since, by the hereditary property \eqref{eq:hered} of the quasideterminant,
we have
$$
| \widetilde W'(z) |_{F^tV_-^d,V_+^d}
=
\big|
| \widetilde W'(z) |_{{V'}_-^d,{V'}_+^d}
\big|_{F^tV_-^d,V_+^d}
\,.
$$
\end{remark}
\begin{proof}
First, we prove that the quasideterminants 
$ | \widetilde W(z) |_{V_-^d,V_+^d} $ and $ | \widetilde W'(z) |_{F^tV_-^d,V_+^d} $ exist. 
For the first quasideterminant,
by Proposition \ref{prop:quasidet} it suffices to show that 
$ \id_{V_+^u}\widetilde W(z)\id_{V_-^u} $ is invertible 
in $ U(\mf g)((z^{-1})) \otimes \Hom(V_-^u,V_+^u) $. 
By Lemma \ref{20180222:lem1} we have
\begin{equation}\label{20180511:eq5}
\id_{V_+^u}\widetilde W(z)\id_{V_-^u} 
= z \id_{V_+^u}(1 + zF^t)^{-1} \id_{V_-^u} + \sum_{i \in \mc F} \widetilde w_i(z) \id_{V_+^u}U^i\id_{V_-^u}
\,.
\end{equation}
The first term in the RHS of \eqref{20180511:eq5}
is invertible, with inverse
$$
z^{-1}\id_{V_-^u}(1 + z^{-1}F)^{-1}\id_{V_+^u}
\,\in \mb F[z^{-1}]\otimes\Hom(V_+^u,V_-^u)
\,.
$$
Hence, we can rewrite the RHS of \eqref{20180511:eq5} as
\begin{equation}\label{20180511:eq6}
z \id_{V_+^u}(1 + zF^t)^{-1} \id_{V_-^u}  \big( \id_{V_-^u} 
+\sum_{i \in \mc F} 
z^{-1}\id_{V_-^u}(1 + z^{-1}F)^{-1}\id_{V_+^u}
\widetilde w_i(z) U^i\id_{V_-^u}\big)
 \,.
\end{equation}
It is immediate to check, using Lemma \ref{20180222:lem1},
that 
$$
\id_{V_-^u}(1 + z^{-1}F)^{-1}\id_{V_+^u}
\widetilde w_i(z) U^i\id_{V_-^u}
\,\in U(\mf g)[z^{-1}]\otimes\End(V_-^u)
\,.
$$
Hence, 
$$
\id_{V_-^u} 
+\sum_{i \in \mc F} 
z^{-1}\id_{V_-^u}(1 + z^{-1}F)^{-1}\id_{V_+^u}
\widetilde w_i(z) U^i\id_{V_-^u}
$$
has inverse in $U(\mf g)[[z^{-1}]]\otimes\End(V_-^u)$,
which can be computed by geometric expansion.
As a consequence, \eqref{20180511:eq6} has inverse in 
$ U(\mf g)((z^{-1})) \otimes \Hom(V_-^u,V_+^u) $,
and the quasideterminant $ | \widetilde W(z) |_{V_-^d,V_+^d} $ exists.
With exactly the same argument,
one can show that $\widetilde{W}(z)$ is invertible
(so that $|\widetilde{W}(z)|_{V^d_-,V^d_+}$ exists)
and that $| \widetilde W'(z) |_{F^tV_-^d,V_+^d} $ exists as well.

Next, we prove equation \eqref{20180406:eq1}.
By the definition \eqref{eq:quasidet2} of generalized quasideterminant we have
\begin{equation}\label{20180406:eq2}
| \widetilde W(z) |_{V_-^d,V_+^d} 
=
\id_{V_+^d}
\Big(
\widetilde W(z)-
\widetilde W(z)\id_{V_-^u}
\big(\id_{V_+^u}\widetilde W(z)\id_{V_-^u}\big)^{-1}
\id_{V_+^u}\widetilde W(z)
\Big)
\id_{V_-^d}
\,.
\end{equation}
By the recursive formula \eqref{eq:recW} for $\widetilde{W}(z)$, 
the RHS of equation \eqref{20180406:eq2} becomes
\begin{equation}\label{20180406:eq3}
\begin{array}{l}
\displaystyle{
\vphantom{\Big(}
-
\frac1{r_1}
\id_{V_+^d}
[\widetilde W'(z),\id_{F^tV_-^d}E_{-1}]^1
\id_{V_-^d}
-
\id_{V_+^d}
\widetilde W'(z)F^t(z\id_V+E_0+D)
\id_{V_-^d}
} \\
\displaystyle{
\vphantom{\Big(}
+\Res_x x^{-1}
\id_{V_+^d}
\widetilde W'(z)
\id_{V_-^u}
(1+x^{-1}F)^{-1}\widetilde W'(x)F^t
\id_{V_-^d}
} \\
\displaystyle{
\vphantom{\Big(}
+
\frac1{r_1}\id_{V_+^d}
\widetilde W'(z)\id_{V_-^u}
\big(\id_{V_+^u}\widetilde W'(z)\id_{V_-^u}\big)^{-1}
\id_{V_+^u}
[\widetilde W'(z),\id_{F^tV_-^d}E_{-1}]^1
\id_{V_-^d}
} \\
\displaystyle{
\vphantom{\Big(}
+
\id_{V_+^d}
\widetilde W'(z)\id_{V_-^u}
\big(\id_{V_+^u}\widetilde W'(z)\id_{V_-^u}\big)^{-1}
\id_{V_+^u}
\widetilde W'(z)F^t(z\id_V+E_0+D)
\id_{V_-^d}
} \\
\displaystyle{
\vphantom{\Big(}
-
\!\Res_x\! x^{-1}\!
\id_{V_+^d}
\widetilde W'\!(z)\id_{V_-^u}
\big(\!\id_{V_+^u}\widetilde W'\!(z)\id_{V_-^u}\!\big)^{-1}\!
\id_{V_+^u}\!
\widetilde W'\!(z)
\id_{V_-^u}\!
(1\!+\!x^{-1}\!F)^{-1}\widetilde W'\!(x)F^t
\id_{V_-^d}
.}
\end{array}
\end{equation}
Note that the two residue terms (the third and the last one) 
in \eqref{20180406:eq3} vanish, since
$$ 
\big(\id_{V_+^u}\widetilde W'(z)\id_{V_-^u}\big)^{-1}\id_{V_+^u}\widetilde W'(z)\id_{V_-^u} 
= \id_{V_-^u}
\,.
$$
Combining the second and fifth terms in \eqref{20180406:eq3},
we get
\begin{equation}\label{20180406:eq4}
\begin{array}{l}
\displaystyle{
\vphantom{\Big(}
-\id_{V_+^d}
\widetilde W'(z)F^t(z\id_V+E_0+D)
\id_{V_-^d}
} \\
\displaystyle{
\vphantom{\Big(}
+
\id_{V_+^d}
\widetilde W'(z)\id_{V_-^u}
\big(\id_{V_+^u}\widetilde W'(z)\id_{V_-^u}\big)^{-1}
\id_{V_+^u}
\widetilde W'(z)F^t(z\id_V+E_0+D)\id_{V_-^d}
} \\
\displaystyle{
\vphantom{\Big(}
=
-\id_{V_+^d}
\Big(
\widetilde W'(z)
-
\widetilde W'(z)\id_{V_-^u}
\big(\id_{V_+^u}\widetilde W'(z)\id_{V_-^u}\big)^{-1}
\id_{V_+^u}
\widetilde W'(z)
\Big)
\id_{F^tV_-^d}
\times
} \\
\displaystyle{
\vphantom{\Big(}
\times
F^t(z\id_V+E_0+D)
\id_{V_-^d}
=
-|\widetilde W'(z)|_{F^tV_-^d,V_+^d}
F^t(z\id_V+E_0+D)
\id_{V_-^d}
\,.}
\end{array}
\end{equation}
For the first equality in \eqref{20180406:eq4} we used the fact that 
$ F^t(z\id_V + E_0 + D)\id_{V_-^d} \subset F^tV_-^d $, 
while for the second equality we used the definition \eqref{eq:quasidet2} 
of the quasideterminant $| \widetilde W'(z) |_{F^tV_-^d,V_+^d}  $, 
with respect to the decompositions  \eqref{eq:V''_-ud} and  \eqref{eq:V_-ud} 
of $ V'_- $ and $ V_+ $ respectively.
On the other hand, combining the first and fourth terms in \eqref{20180406:eq3}, we get
\begin{equation}\label{20180406:eq5}
\begin{array}{l}
\displaystyle{
\vphantom{\Big(}
-
\frac1{r_1}
\id_{V_+^d}
[\widetilde W'(z),\id_{F^tV_-^d}E_{-1}]^1
\id_{V_-^d}
} \\
\displaystyle{
\vphantom{\Big(}
+
\frac1{r_1}
\id_{V_+^d}
\widetilde W'(z)\id_{V_-^u}
\big(\id_{V_+^u}\widetilde W'(z)\id_{V_-^u}\big)^{-1}
\id_{V_+^u}
[\widetilde W'(z),\id_{F^tV_-^d}E_{-1}]^1
\id_{V_-^d}
} \\
\displaystyle{
\vphantom{\Big(}
=
-
\frac1{r_1}[\id_{V_+^d}\Big(\widetilde W'(z)-\widetilde W'(z)\id_{V_-^u}
\big(\id_{V_+^u}\widetilde W'(z)\id_{V_-^u}\big)^{-1}
\id_{V_+^u}
\widetilde W'(z)
\Big)
,\id_{F^tV_-^d}E_{-1}]^1
} \\
\displaystyle{
\vphantom{\Big(}
-
\frac1{r_1}\id_{V_+^d}\sum_{j \in I_{-1}}
[\widetilde W'(z)\id_{V_-^u}
\big(\id_{V_+^u}\widetilde W'(z)\id_{V_-^u}\big)^{-1}, u_j]
\id_{V_+^u}\widetilde W'(z)
\id_{F^tV_-^d}U^j
} \\
\displaystyle{
\vphantom{\Big(}
=
-
\frac1{r_1}[  |\widetilde W'(z)|_{F^tV_-^d, V_+^d},\id_{F^tV_-^d}E_{-1}]^1
} \\
\displaystyle{
\vphantom{\Big(}
-
\frac1{r_1}\id_{V_+^d}\sum_{j \in I_{-1}}
[\widetilde W'(z)\id_{V_-^u}
\big(\id_{V_+^u}\widetilde W'(z)\id_{V_-^u}\big)^{-1}, u_j]
\id_{V_+^u}\widetilde W'(z)
\id_{F^tV_-^d}U^j
\,.}
\end{array}
\end{equation}
For the first equality we used the Leibniz rule for the bracket $[\cdot\,,\,\cdot]^1$
and the definition \eqref{eq:Ej} of $E_{-1}$.
Combining \eqref{20180406:eq2}, \eqref{20180406:eq3}, \eqref{20180406:eq4}
and \eqref{20180406:eq5},
we get our claim \eqref{20180406:eq1}
once we prove that the last term in the RHS of \eqref{20180406:eq5}
vanishes.
By Lemma \eqref{20180426:lem}, we have
$$
\sum_{j \in I_{-1}}
u_j\otimes \id_{F^tV_-^d}U^j
=
\sum_{j\in J} u_j\otimes U^j
\,,
$$
where $\{u_j\}_{j\in J}$ is a basis of 
$$
\mf h=\Hom(F^tV_-^d, V_-^d)\subset \mf g
\,.
$$
Hence, in order to prove the claim
it suffices to show that
\begin{equation}\label{20180522:eq1}
[ \widetilde W'(z)\id_{V_-^u}, u] = 0
\,\,\text{ for every }\,\,
u\in \mf h
\,.
\end{equation}
Note that we can decompose $V_-^u = {V'}_-^u \oplus (V_-^u \cap {V'}_-^d)$. 
Hence, the LHS of \eqref{20180522:eq1} decomposes as
\begin{equation}\label{20180522:eq2}
[ \widetilde W'(z)\id_{{V'}_-^u}, u] 
+
[ \widetilde W'(z)\id_{V_-^u\cap{V'}_-^d}, u]
\,.
\end{equation}
By the recursive formula \eqref{eq:recW} for $ \widetilde W(z) $, we have
$$ \widetilde W'(z)\id_{{V'}_-^u} =  \widetilde W''(z)\id_{{V'}_-^u} 
\in U(\mf g'') \otimes \Hom({V'}_-^u,V_+)\,,
$$
and the Lie bracket with $u \in \mf h$ vanishes since, obviously, 
$ [\mf g'',\mf h] = 0 $. 
Hence, the first term in \eqref{20180522:eq2} vanishes.
On the other hand, by the recursive formula \eqref{eq:recW}
applied to $ \widetilde W'(z) $, we have
\begin{equation}\label{20180516:eq1}
\begin{array}{l}
\displaystyle{
\vphantom{\Big(}
 \widetilde W'(z)\id_{V_-^u \cap {V'}_-^d} = -\frac1{r_1+r'_1}[\widetilde W''(z), \id_{F^t{V'}_-^d}E'_{-1}]^1\id_{V_-^u \cap {V'}_-^d}
} \\
\displaystyle{
\vphantom{\Big(}
- \widetilde W''(z)F^t(z\id_{V'} + E'_0 + D')\id_{V_-^u \cap {V'}_-^d}
} \\
\displaystyle{
\vphantom{\Big(}
+ \Res_x x^{-1} \widetilde W''(z) \id_{{V'}_-^u}(1 + x^{-1}F)^{-1}
\widetilde W''(x)F^t\id_{V_-^u \cap {V'}_-^d}
\,.}
\end{array}
\end{equation}
As before, the bracket of $u \in \mf h$ 
with $ \widetilde W''(z) \in U(\mf g'')[z]\otimes \End V'' $ vanishes,
since $[\mf h,\mf g'']=0$.
Hence, the second term in \eqref{20180522:eq2} is
\begin{equation}\label{20180522:eq3}
-\frac1{r_1+r'_1}[\widetilde W''(z), \id_{F^t{V'}_-^d}[E'_{-1},u]]^1\id_{V_-^u \cap {V'}_-^d}
- \widetilde W''(z)F^t[E'_0,u]\id_{V_-^u \cap {V'}_-^d}
\,.
\end{equation}
By Lemma \ref{20180426:lem}, we have 
$$
\id_{F^t{V'}_-^d}E'_{-1} \id_{V_-^u \cap {V'}_-^d}
=
\sum_{j\in\tilde J_1}u_j U^j
\,,
$$
where $\{u_j\}_{j\in\tilde J_1}$ is a basis of 
$$
\tilde{\mf h}_1=\Hom(F^t{V'}_-^d,V_-^u \cap {V'}_-^d)\subset\mf g \otimes \End V' 
\,.
$$
On the other hand, we clearly have
$[\tilde{\mf h}_1,\mf h]=0$,
since
${V'} \cap V_-^d = 0 $ and $ (V_-^u\cap {V'}_-^d) \cap F^tV_-^d = 0 $.
Hence, the first summand of \eqref{20180522:eq3} vanishes.
Similarly, by Lemma \ref{20180426:lem}, we have 
$$
E'_0\id_{V_-^u \cap {V'}_-^d} 
=
\sum_{j\in\tilde J_2}u_j U^j
\,,
$$
where $\{u_j\}_{j\in\tilde J_2}$ is a basis of 
$$
\tilde{\mf h}_2=\Hom({V'}_-^d, V_-^u\cap {V'}_-^d)\subset\mf g \otimes \End V' 
\,,
$$
and again we have
$[\tilde{\mf h}_2,\mf h]=0$
for exactly the same reason as before.
Hence, the second summand of \eqref{20180522:eq3} vanishes.
This completes the proof.
\end{proof}

\subsection{Proof of Theorem \ref{thm:main} in the case of an aligned pyramid}\label{sec:5.8}

First, we observe that the operator 
$ W(z)=\widetilde{W}(z)\bar 1 \in W(\mf g,f)[z] \otimes \Hom(V_-,V_+) $
defined recursively by \eqref{eq:recW}
is the same as the operator $W(z)$ 
which is associated via \eqref{eq:W}
to the Premet map $w:\,\mf g^f \longrightarrow W(\mf g,f)$ 
given by Proposition \ref{20180405:prop2}.

Since the algebra structure of $ W(\mf g,f) $ is induced by that of $ U(\mf g)$ we have that
$$
| \widetilde W(z) |_{V_-^d,V_+^d} \bar 1 = | W(z) |_{V_-^d,V_+^d}
\,.
$$
Hence, the claim \eqref{eq:main} can be restated as the following equation:
\begin{equation}\label{20180516:eq2}
\widetilde L(z) 
= 
| \widetilde W(z) |_{V_-^d,V_+^d}
\,,
\end{equation}
which we need to prove.
This equation holds since,
by Proposition \ref{prop:recursionL},
$\tilde L(z)$ satisfies the recursive equation \eqref{eq:recL},
and, by Proposition \ref{20180408:prop2},
$| \widetilde W(z) |_{V_-^d,V_+^d}$ satisfies exactly the same recursive 
equation \eqref{20180406:eq1} (cf. Remark \ref{20180511:rem2}).
The fact that for $p_1=1$ \eqref{20180516:eq2}
is clear since, in this case,
$ \widetilde L(z) = z\id_V + E $ (cf. Example \ref{ex:f=0}),
while 
$ |\widetilde W(z) |_{V_-^d,V_+^d} = \widetilde W(z)  = z\id_V + E $ 
by definition of $W(z) $.

\section{The general case: arbitrary pyramids}
\label{sec:6}

In the present section we prove Theorem \ref{thm:main} in the general case, 
when the good grading $\Gamma$ corresponds to a pyramid $p$ which is not
necessarily aligned to the right.

Throughout the Section we will consider several gradings of the Lie algebra $\mf g$,
good or, in general, \emph{semisimple} (cf. Definition \ref{def:grad} below)
and we will make the following fundamental assumption.
We fix, once and forall, a nilpotent element $f$, i.e. a partition 
$p_1^{r_1}\dots p_s^{r_s}$ of $N$, and we fix a basis of the vector space $V$
of the form $f^k(e_i)$, $i=1,\dots,r=\sum_ir_i$, $k=0,\dots,p_i-1$.
Hence, the basis of the vector space $V$ can be depicted, as we did in 
the previous sections, as a collection of boxes arranged in a pyramid
with rectangles of sizes $p_i\times r_i$, $i=1,\dots,s$
(and the exact shape of the pyramid is associated to a good grading of $\mf g$).

As before, we shall assume that all the good gradings $\Gamma$ that we encounter
are compatible with the given basis.
This amounts to require that the semisimple element 
$h_\Gamma$ associated to the grading (see Section \ref{sec:3.1})
is diagonal in the fixed basis.
Similarly, for all semisimple gradings $G$ that we shall encounter,
we require that the semisimple element $h$ attached to it is diagonal
in the given basis.

\subsection{Adjacent pyramids}
\label{sec:6.0}

Recall from Section \ref{sec:3.1} that good gradings $\Gamma$
for the nilpotent element $f\in\mf{gl}_N$
are in bijective correspondence with pyramids of $N$ boxes.
Following Brundan and Goodwin \cite{BG05}
we introduce the notion of adjacent pyramids, which will be essential
in the present section.

Given a pyramid $p$ for the partition $p_1^{r_1}p_2^{r_2}\dots p_s^{r_s}$ of $N$,
we denote by $x^{(p)}(i)$ the $x$ coordinate ot the 
box $i$.
We say that a pyramid $\tilde p$, corresponding to the same partition of $N$, 
is \emph{adjacent to the right} of $p$ 
if $x^{\tilde p}(i)=x^{(p)}(i)$ or $x^{(p)}(i)+\frac12$ for every box $i$.
In this case we say that $p$ is adjacent to the left of $\tilde p$
and that $p$ and $\tilde p$ are \emph{adjacent}.

For example, in the following Figure \ref{fig:adj}, $\tilde p$ is adjacent to the right of $p$,
while $\tilde{\tilde p}$ is adjacent to the right of $\tilde p$ but not of $p$.
\begin{figure}[H]
\setlength{\unitlength}{0.15in}
\centering
\begin{picture}(14,9)

\put(-5,7){\framebox(2,2){}}

\put(-6,5){\framebox(2,2){}}
\put(-4,5){\framebox(2,2){}}

\put(-7,3){\framebox(2,2){}}
\put(-5,3){\framebox(2,2){}}
\put(-3,3){\framebox(2,2){}}
\put(-1,3){\framebox(2,2){}}

\put(-8,2){\vector(1,0){9}}
\put(0,1){$x$}

\put(-6,1.8){\line(0,1){0.4}}
\put(-5,1.6){\line(0,1){0.8}}
\put(-4,1.8){\line(0,1){0.4}}
\put(-3,1.6){\line(0,1){0.8}}
\put(-2,1.8){\line(0,1){0.4}}
\put(-1,1.6){\line(0,1){0.8}}
\put(0,1.8){\line(0,1){0.4}}

\put(-3.3,0.6){0}
\put(-5.5,0.6){-1}
\put(-1.2,0.6){1}

\put(-3.3,-0.6){$p$}

\put(6,7){\framebox(2,2){}}

\put(5,5){\framebox(2,2){}}
\put(7,5){\framebox(2,2){}}

\put(3,3){\framebox(2,2){}}
\put(5,3){\framebox(2,2){}}
\put(7,3){\framebox(2,2){}}
\put(9,3){\framebox(2,2){}}

\put(2,2){\vector(1,0){9}}
\put(10,1){$x$}

\put(4,1.8){\line(0,1){0.4}}
\put(5,1.6){\line(0,1){0.8}}
\put(6,1.8){\line(0,1){0.4}}
\put(7,1.6){\line(0,1){0.8}}
\put(8,1.8){\line(0,1){0.4}}
\put(9,1.6){\line(0,1){0.8}}
\put(10,1.8){\line(0,1){0.4}}

\put(6.7,0.6){0}
\put(4.5,0.6){-1}
\put(8.8,0.6){1}

\put(6.7,-0.6){$\tilde p$}

\put(17,7){\framebox(2,2){}}

\put(15,5){\framebox(2,2){}}
\put(17,5){\framebox(2,2){}}

\put(13,3){\framebox(2,2){}}
\put(15,3){\framebox(2,2){}}
\put(17,3){\framebox(2,2){}}
\put(19,3){\framebox(2,2){}}

\put(12,2){\vector(1,0){9}}
\put(20,1){$x$}

\put(14,1.8){\line(0,1){0.4}}
\put(15,1.6){\line(0,1){0.8}}
\put(16,1.8){\line(0,1){0.4}}
\put(17,1.6){\line(0,1){0.8}}
\put(18,1.8){\line(0,1){0.4}}
\put(19,1.6){\line(0,1){0.8}}
\put(20,1.8){\line(0,1){0.4}}

\put(16.7,0.6){0}
\put(14.5,0.6){-1}
\put(18.8,0.6){1}

\put(16.7,-0.6){$\tilde{\tilde p}$}

\end{picture}
\caption{}\label{fig:adj}
\end{figure}

It is clear by the definition,
that given an arbitrary pyramid $p$,
we can construct a sequence of pyramids associated to the same partition of $N$,
and hence of good gradings
\begin{equation}\label{20180523:eq4}
\Gamma_0=\Gamma\,,\,\, \Gamma_1\,,\,\,\dots\,,\,\, \Gamma_K=\Gamma_R\,,
\end{equation}
such that the pyramid associated to $\Gamma_i$ is adjacent to the right 
to the pyramid corresponding to $\Gamma_{i-1}$
and $\Gamma_R$ is the (unique) grading corresponding to the
pyramid aligned to the right for that partition.
Hence, any two pyramids, are linked by a chain of adjacent pyramids.

Let $p$ and $\tilde p$ be two adjacent pyramids,
and let $\Gamma: \mf g = \bigoplus_{i \in \frac12 \mb Z} \mf g^{\Gamma}[i] $
and $\tilde \Gamma: \mf g = \bigoplus_{j \in \frac12 \mb Z} \mf g^{\tilde\Gamma}[j] $
be the corresponding gradings.
Note that given an elementary matrix $e_{ij}\in\mf g$, 
depicted as an arrow from box $j$ to box $i$ in the pyramid,
which has degree $k=x^{(p)}(i)-x^{(p)}(j)$ with respect to the grading $\Gamma$,
it must have degree $k$, $k+\frac12$ or $k-\frac12$ 
with respect to the grading $\tilde\Gamma$.
In other words,
\begin{equation}\label{20180523:eq1}
\mf g^{\Gamma}[k]
=
(\mf g^{\Gamma}[k]\cap\mf g^{\tilde\Gamma}[k])
\oplus
(\mf g^{\Gamma}[k]\cap\mf g^{\tilde\Gamma}[k+\frac12])
\oplus
(\mf g^{\Gamma}[k]\cap\mf g^{\tilde\Gamma}[k-\frac12])
\,.
\end{equation}
\begin{lemma}\cite[Lemma 26]{BG05}\label{20180419:lem1}
Let $\Gamma$ and $\tilde\Gamma$ be two adjacent good gradings for $f$.
There exist Lagrangian subspaces $ \mf l \subseteq \mf g^{\Gamma}[\frac12] $
and $ \tilde{\mf l} \subseteq \mf g^{\tilde\Gamma}[\frac12] $
(with respect to the form $ \omega $ defined in Section \ref{sec:2.1})
such that
\begin{equation}\label{20180419:eq4}
\mf l \oplus \mf g^{\Gamma}[\geq1] = \tilde{\mf l} \oplus \mf g^{\tilde\Gamma}[\geq1]
\,.
\end{equation}
\end{lemma}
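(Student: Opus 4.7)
The plan is to construct $\mf l$ and $\tilde{\mf l}$ simultaneously by exploiting the piece common to the two $\frac12$-eigenspaces. First I would use the compatibility \eqref{20180523:eq1} to decompose
$$\mf g^{\Gamma}[\tfrac12] = A_1 \oplus A_{1/2} \oplus A_0, \qquad \mf g^{\tilde\Gamma}[\tfrac12] = B_1 \oplus B_{1/2} \oplus B_0,$$
where $A_j := \mf g^{\Gamma}[\frac12] \cap \mf g^{\tilde\Gamma}[j]$ and $B_j := \mf g^{\tilde\Gamma}[\frac12] \cap \mf g^{\Gamma}[j]$, so that $A_{1/2} = B_{1/2}$ is the common piece. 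Note that $A_1 = \mf g^{\Gamma}[\frac12] \cap \mf g^{\tilde\Gamma}[1]$ is precisely the obstruction that must be added to $\mf g^{\Gamma}[\geq 1]$ to recover the sum in \eqref{20180419:eq4}, and analogously for $B_1 = \mf g^{\Gamma}[1] \cap \mf g^{\tilde\Gamma}[\frac12]$.

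The key observation I would exploit is that, since $f$ has $\tilde\Gamma$-degree $-1$ and the invariant form respects every grading, the quantity $\omega(a,b)=(f|[a,b])$ is forced to vanish whenever $[a,b]$ has $\tilde\Gamma$-degree different from $1$. A short case analysis on the $\tilde\Gamma$-degree of brackets then yields that $A_0$ and $A_1$ are each $\omega$-isotropic and that $A_{1/2}$ is $\omega$-orthogonal to both; by dimension count, $A_{1/2}^{\perp} = A_1 \oplus A_0$. From the non-degeneracy of $\omega$ on $\mf g^{\Gamma}[\frac12]$ I would then conclude that $\omega|_{A_{1/2}}$ is non-degenerate and that the induced pairing $A_1 \times A_0 \to \mb F$ is perfect; in particular $\dim A_1 = \dim A_0$, which is the main ingredient for the subsequent dimension bookkeeping.

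With this in place, I would pick any Lagrangian subspace $\mf l_0 \subseteq A_{1/2} = B_{1/2}$ for the restricted non-degenerate form $\omega|_{A_{1/2}}$ and set
$$\mf l := \mf l_0 \oplus A_1, \qquad \tilde{\mf l} := \mf l_0 \oplus B_1.$$
Isotropy of $\mf l$ in $\mf g^{\Gamma}[\frac12]$ follows from combining the isotropy of $A_1$, the isotropy of $\mf l_0 \subseteq A_{1/2}$, and the orthogonality of $A_{1/2}$ with $A_1$. The Lagrangian condition then comes from the dimension identity $\dim A_1 + \frac12 \dim A_{1/2} = \frac12 \dim \mf g^{\Gamma}[\frac12]$, which uses $\dim A_1 = \dim A_0$; the same argument applies to $\tilde{\mf l}$.

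Finally, to verify \eqref{20180419:eq4} I would use that, since $\Gamma$- and $\tilde\Gamma$-degrees of any element differ by at most $\frac12$,
$$\mf g^{\Gamma}[\geq 1] = \bigl(\mf g^{\Gamma}[\geq 1] \cap \mf g^{\tilde\Gamma}[\geq 1]\bigr) \oplus \bigl(\mf g^{\Gamma}[1] \cap \mf g^{\tilde\Gamma}[\tfrac12]\bigr),$$
and symmetrically for $\tilde\Gamma$. Since $B_1 = \mf g^{\Gamma}[1] \cap \mf g^{\tilde\Gamma}[\frac12]$ and $A_1 = \mf g^{\tilde\Gamma}[1] \cap \mf g^{\Gamma}[\frac12]$, both sides of \eqref{20180419:eq4} collapse to the same direct sum $\mf l_0 \oplus A_1 \oplus B_1 \oplus \bigl(\mf g^{\Gamma}[\geq 1] \cap \mf g^{\tilde\Gamma}[\geq 1]\bigr)$. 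The subtle point, which I anticipate as the main obstacle, is the symplectic linear algebra argument giving $\dim A_1 = \dim A_0$; once that equality is established, the definitions of $\mf l$ and $\tilde{\mf l}$ and the final decomposition are routine bookkeeping.
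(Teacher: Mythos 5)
Your construction is correct and is essentially the paper's own proof: the same decomposition of $\mf g^{\Gamma}[\frac12]$ and $\mf g^{\tilde\Gamma}[\frac12]$ via \eqref{20180523:eq1}, the same choice of a Lagrangian subspace $\mf l_0$ of the common piece $\mf g^{\Gamma}[\frac12]\cap\mf g^{\tilde\Gamma}[\frac12]$ (on which $\omega$ is non-degenerate), and the same definitions $\mf l=\mf l_0\oplus\big(\mf g^{\Gamma}[\frac12]\cap\mf g^{\tilde\Gamma}[1]\big)$, $\tilde{\mf l}=\mf l_0\oplus\big(\mf g^{\tilde\Gamma}[\frac12]\cap\mf g^{\Gamma}[1]\big)$, with \eqref{20180419:eq4} checked by the same bookkeeping. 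The only (harmless) difference is that you obtain the dimension equality $\dim\big(\mf g^{\Gamma}[\frac12]\cap\mf g^{\tilde\Gamma}[1]\big)=\dim\big(\mf g^{\Gamma}[\frac12]\cap\mf g^{\tilde\Gamma}[0]\big)$ from the perfect $\omega$-pairing between these two isotropic pieces, whereas the paper reads it off the pyramid picture; both justifications are valid.
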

Before proving the claim, let us view how to implement it
for the pyramids $p$ and $\tilde p$ of Figure \ref{fig:adj}:
\begin{figure}[H]
\setlength{\unitlength}{0.15in}
\setlength\fboxsep{0pt}
\centering
\begin{picture}(14,9)

\put(-3,7){\framebox(2,2){}}

\put(-4,5){\framebox(2,2){}}
\put(-2,5){\framebox(2,2){}}

\put(-5,3){\framebox(2,2){}}
\put(-3,3){\framebox(2,2){}}
\put(-1,3){\framebox(2,2){}}
\put(1,3){\framebox(2,2){}}

\put(-6,2){\vector(1,0){9}}
\put(2,1){$x$}

\put(-4,1.8){\line(0,1){0.4}}
\put(-3,1.6){\line(0,1){0.8}}
\put(-2,1.8){\line(0,1){0.4}}
\put(-1,1.6){\line(0,1){0.8}}
\put(0,1.8){\line(0,1){0.4}}
\put(1,1.6){\line(0,1){0.8}}
\put(2,1.8){\line(0,1){0.4}}

\put(-1.3,0.6){0}
\put(-3.5,0.6){-1}
\put(0.8,0.6){1}

\put(-1.3,-0.6){$p$}

\put(-4,4){\color{green}{\vector(1,2){1}}}
\put(-2,4){\color{green}{\vector(1,2){1}}}
\put(-3,6){\color{red}{\vector(1,-2){1}}}
\put(-1,6){\color{red}{\vector(1,-2){1}}}
\put(-3,6){\color{gray}{\vector(1,2){1}}}
\put(-2,8){\color{gray}{\vector(1,-2){1}}}

\put(11,7){\framebox(2,2){}}

\put(10,5){\framebox(2,2){}}
\put(12,5){\framebox(2,2){}}

\put(8,3){\framebox(2,2){}}
\put(10,3){\framebox(2,2){}}
\put(12,3){\framebox(2,2){}}
\put(14,3){\framebox(2,2){}}

\put(7,2){\vector(1,0){9}}
\put(15,1){$x$}

\put(9,1.8){\line(0,1){0.4}}
\put(10,1.6){\line(0,1){0.8}}
\put(11,1.8){\line(0,1){0.4}}
\put(12,1.6){\line(0,1){0.8}}
\put(13,1.8){\line(0,1){0.4}}
\put(14,1.6){\line(0,1){0.8}}
\put(15,1.8){\line(0,1){0.4}}

\put(11.7,0.6){0}
\put(9.5,0.6){-1}
\put(13.8,0.6){1}

\put(11.7,-0.6){$\tilde p$}

\put(11,4){\color{red}{\vector(1,4){1}}}
\put(12,8){\color{green}{\vector(1,-4){1}}}
\put(11,6){\color{gray}{\vector(1,2){1}}}
\put(12,8){\color{gray}{\vector(1,-2){1}}}

\end{picture}
\caption{}\label{fig:adj2}
\end{figure}
The spaces $\mf g^{\Gamma}[\frac12]$ and $\mf g^{\tilde\Gamma}[\frac12]$
are spanned by the depicted arrows.
Consider for example the space $\mf g^{\Gamma}[\frac12]$.
The green arrows are the ones which have degree $1$ with respect 
to the grading $\tilde\Gamma$,
and therefore we need to include them in $\mf l$ 
in order for equation \eqref{20180419:eq4} to hold.
The red arrows are the ones which have degree $0$ with respect 
to the grading $\tilde\Gamma$,
and therefore we do not include them in $\mf l$.
The gray arrows are those that have degree $\frac12$ with respect to both
gradings $\Gamma$ and $\tilde\Gamma$
(and therefore are depicted in both pyramids),
and we choose half of them (for example those going upwards)
in order to form a Lagrangian subspace.
\begin{proof}[Proof of Lemma \ref{20180419:lem1}]
By \eqref{20180523:eq1} with $k=\frac12$, we have
$$
\mf g^{\Gamma}[\frac12] 
= 
\big(\mf g^{\Gamma}[\frac12] \cap \mf g^{\tilde\Gamma}[0]\big) 
\oplus 
\big(\mf g^{\Gamma}[\frac12] \cap \mf g^{\tilde\Gamma}[1]\big)
\oplus
\big(\mf g^{\Gamma}[\frac12] \cap \mf g^{\tilde\Gamma}[\frac12]\big)
\,.
$$
Note that
$$
\dim\big(\mf g^{\Gamma}[\frac12] \cap \mf g^{\tilde\Gamma}[0]\big) 
=
\dim\big(\mf g^{\Gamma}[\frac12] \cap \mf g^{\tilde\Gamma}[1]\big)
\,.
$$
(The LHS corresponds to the red arrow in Figure \ref{fig:adj2},
while the RHS corresponds to the green arrows.)
Moreover, $\mf g^{\tilde\Gamma}[\frac12]$
is clearly orthogonal 
to $\mf g^{\tilde\Gamma}[\neq\frac12]$ with respect to $\omega=(f|[\cdot\,,\,\cdot])$.
Hence, $\omega$ restricts to a non-degenerate skewsymmetric form
on $\mf g^{\Gamma}[\frac12]\cap\mf g^{\tilde\Gamma}[\frac12]$.
Let $\bar{\mf l}$ be its Lagrangian subspace.
Then, the claim is obtained for the following choice:
$$
\mf l=\bar{\mf l}\oplus\big(\mf g^{\Gamma}[\frac12] \cap \mf g^{\tilde\Gamma}[1]\big)
\,,
$$
and
$$
\tilde{\mf l}=\bar{\mf l}\oplus\big(\mf g^{\tilde\Gamma}[\frac12] \cap \mf g^{\Gamma}[1]\big)
\,.
$$
Indeed, 
$\mf l\subset\mf g^{\Gamma}[\frac12]$ and $\tilde{\mf l}\subset\mf g^{\tilde\Gamma}[\frac12]$
are obviously isotropic,
and they have half the dimension of $\mf g^{\Gamma}[\frac12]$ 
and $\mf g^{\tilde\Gamma}[\frac12]$
respectively.
Hence, they are Lagrangian.
Moreover, \eqref{20180419:eq4} holds by construction.
\end{proof}

As a consequence of Lemma \ref{20180419:lem1}
and of Definition \ref{def:Walg},
we get that the $W$-algebra
$W(\mf g, f, \Gamma, \mf l)$ associated to the grading $\Gamma$ of $\mf g$
and the Lagrangian subspace $\mf l\subset\mf g^\Gamma[\frac12]$, 
and the $W$-algebra $W(\mf g, f, \tilde\Gamma, \tilde{\mf l})$
associated to the grading $\tilde\Gamma$ of $\mf g$
and the Lagrangian subspace $\tilde{\mf l}\subset\mf g^{\tilde\Gamma}[\frac12]$,
are the same:
\begin{equation}\label{20180523:eq2}
W(\mf g, f, \Gamma, \mf l)
=
W(\mf g, f, \tilde\Gamma, \tilde{\mf l})
= \Big( U(\mf g)/U(\mf g)\Span \{b - (f \vert b)\}_{b \in \mf n} \Big)^{\ad \mf n}
\,,
\end{equation}
where $\mf n\subset\mf g$ is the nilpotent subalgebra \eqref{20180419:eq4}
(cf. \eqref{eq:mn}).

Note that \eqref{20180523:eq2}
is an equality (hence an isomorphism) of algebras,
but NOT of filtered algebras 
(with respect to the Kazhdan filtrations $F^\Gamma$ and $F^{\tilde\Gamma}$
defined in \eqref{eq:kazW}).
Indeed, the gradings $\Gamma$ and $\tilde\Gamma$ are different,
and therefore the corresponding Kazhdan filtrations do not coincide.

\subsection{
The Gan-Ginzburg isomorphism
}
\label{sec:GG}

Recall the following result of Gan and Ginzburg:
\begin{theorem}\cite[Theorem 4.1]{GG02}\label{thm:GG}
Let $\Gamma$ be a good grading for $f\in\mf g$,
and let $\mf{l}_1\subset\mf{l}_2 \subseteq \mf{g}^\Gamma[\frac12] $ be isotropic subspaces
with respect to the bilinear form $ \omega $ in Section \ref{sec:2.1}.
Let also
$$
\mf m_1 = \mf l_1 \oplus \mf g^\Gamma[\geq 1]
\,\subset\,
\mf m_2 = \mf l_2 \oplus \mf g^\Gamma[\geq 1]
\,\,\text{ and }\,\,
\mf n_1 = \mf l_1^\perp \oplus \mf g^\Gamma[\geq 1]
\,\supset\,
\mf n_2 = \mf l_2^\perp \oplus \mf g^\Gamma[\geq 1]
\,,
$$
so that 
$W(\mf g,f,\Gamma,\mf l_i)
=\big(U(\mf g)/ U(\mf g) \Span \{ b - ( f \vert  b) \}_{b \in \mf m_i}\big)^{\ad\mf n_i}$, $i=1,2$.
Then, we have an isomorphism of the corresponding $W$-algebras,
preserving the Kazhdan filtrations,
\begin{equation}\label{20180419:eq2}
\rho^\Gamma_{\mf{l}_1,\mf l_2}\,:\,\, 
W(\mf g,f,\Gamma,\mf l_1) \overset{\sim}{\longrightarrow} W(\mf g,f,\Gamma,\mf l_2)
\,,
\end{equation}
obtained by the restriction of the natural quotient map
\begin{equation}\label{20180419:eq1}
U(\mf g)/ U(\mf g) \Span \{ b - ( f \vert  b) \}_{b \in \mf m_1} 
\twoheadrightarrow U(\mf g) / U(\mf g) \Span \{b - (f \vert  b) \}_{b \in \mf m_2}
\,.
\end{equation}
Consequently, given arbitrary isotropic subspaces 
$ \mf l_1,\mf l_2\subset\mf g^\Gamma[\frac12] $,
the corresponding $W$-algebras are isomorphic filtered algebras 
(with respect to the Kazhdan filtration):
\begin{equation}\label{20180419:eq2b}
\big(W(\mf g,f,\Gamma,\mf l_1),F^\Gamma\big) 
\,\simeq\,
\big(W(\mf g,f,\Gamma,\mf l_2),F^\Gamma\big)
\,.
\end{equation}
\end{theorem}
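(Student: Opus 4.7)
The plan is to construct $\rho^\Gamma_{\mf l_1,\mf l_2}$ as the restriction of the natural surjection of quotient modules of $U(\mf g)$, verify that it is a morphism of filtered algebras, and then prove bijectivity by passing to the associated graded via the classical $W$-algebra.

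First, I would observe that the inclusion $\mf l_1 \subset \mf l_2$ yields $\mf m_1 \subset \mf m_2$ and thus an inclusion of left ideals $\mc I_1 \subset \mc I_2$ of $U(\mf g)$. This gives a canonical $\mf g$-module surjection $\pi\colon U(\mf g)/\mc I_1 \twoheadrightarrow U(\mf g)/\mc I_2$ that preserves the Kazhdan filtration $F^\Gamma_\Delta$ tautologically, as $\pi$ is induced by the identity on $F^\Gamma_\Delta U(\mf g)$. For $x \in W(\mf g,f,\Gamma,\mf l_1)$ and $a \in \mf n_2 \subset \mf n_1$, I would compute $\ad(a)\pi(x) = \pi(\ad(a)x) = 0$ in $U(\mf g)/\mc I_2$, showing that $\pi$ restricts to a map $\rho^\Gamma_{\mf l_1,\mf l_2}\colon W(\mf g,f,\Gamma,\mf l_1) \to W(\mf g,f,\Gamma,\mf l_2)$. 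Since the product on each $W$-algebra is induced by lifting to $U(\mf g)$ and $\pi$ is compatible with multiplication of lifts modulo the left ideals, $\rho^\Gamma_{\mf l_1,\mf l_2}$ is a filtered algebra homomorphism.

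The central task is to establish bijectivity. The strategy is to reduce to the associated graded using the identification $\gr W(\mf g,f,\Gamma,\mf l_i) \simeq W^{cl}(\mf g,f,\Gamma,\mf l_i)$ of \eqref{eq:clfinW}, whose proof goes through verbatim for an arbitrary isotropic $\mf l$. Under this identification, $\gr \rho^\Gamma_{\mf l_1,\mf l_2}$ is the natural quotient map between the two classical Hamiltonian reductions of $S(\mf g)$. The geometric input to invoke is that for any isotropic $\mf l \subset \mf g^\Gamma[\tfrac12]$, the Poisson algebra $W^{cl}(\mf g,f,\Gamma,\mf l)$ is naturally isomorphic to the algebra of regular functions on the Slodowy slice $f + \mf g^e$: the moment map for the $\mf n$-action on the affine subspace $\chi_f + \mf m^\perp \subset \mf g^*$ is transverse to the orbits through $f$, and restriction of functions to the slice identifies the Hamiltonian reduction with $\mc O(f + \mf g^e)$ irrespective of the choice of $\mf l$. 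Granted this identification, $\gr\rho^\Gamma_{\mf l_1,\mf l_2}$ corresponds to the identity on the slice and is thus an isomorphism; since both Kazhdan filtrations are exhaustive and supported in $\mb Z_{\geq 0}$, a standard filtered-to-graded argument upgrades this to bijectivity of $\rho^\Gamma_{\mf l_1,\mf l_2}$ as a filtered algebra isomorphism.

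The hard part will be exactly this geometric identification of the classical $W$-algebra with functions on the Slodowy slice for an arbitrary (not necessarily Lagrangian) isotropic subspace $\mf l$, which requires a careful dimension count and transversality argument for the moment-map geometry. A possible alternative, bypassing the slice description, would be to use Premet's Theorem \ref{thm:Premetconds} in its extension to arbitrary isotropic $\mf l$ to produce PBW bases of both source and target indexed by the same basis of $\mf g^f$, and to verify that $\rho^\Gamma_{\mf l_1,\mf l_2}$ sends one such basis to the other up to strictly lower Kazhdan degree; upper triangularity with respect to the induced order then forces bijectivity. Finally, the consequence \eqref{20180419:eq2b} for arbitrary isotropic subspaces $\mf l_1, \mf l_2$ (not necessarily nested) follows immediately from \eqref{20180419:eq2} by applying the nested case with the trivial subspace and composing: $\rho^\Gamma_{0,\mf l_2} \circ (\rho^\Gamma_{0,\mf l_1})^{-1}$ is the desired filtered algebra isomorphism.
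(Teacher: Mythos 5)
This theorem is not proved in the paper at all: it is quoted verbatim from Gan--Ginzburg \cite{GG02}, and the paper only uses it as input (notably in Section \ref{sec:6}). So there is no internal proof to compare against; what you have written is essentially a reconstruction of the original Gan--Ginzburg argument, and its outline is sound. Your first two steps (the inclusion $\mc I_1\subset\mc I_2$ gives a filtration-preserving surjection of the quotients, which restricts to an algebra map on invariants because $\mf n_2\subset\mf n_1$) are correct and routine. The entire weight of the proof sits where you say it does: one must know that $\gr W(\mf g,f,\Gamma,\mf l)\simeq W^{cl}(\mf g,f,\Gamma,\mf l)$ as in \eqref{eq:clfinW} for an arbitrary isotropic $\mf l$, and that the classical reduction is, independently of $\mf l$, the coordinate ring of the Slodowy slice $f+\mf g^e$; this rests on the geometric statement that the action of the unipotent group attached to $\mf n$ on $f+\mf m^\perp$ is free with the slice as a transversal (the analogue of \cite[Lem.~2.1]{GG02} beyond the Lagrangian case), after which the standard exhaustive, bounded-below filtered-to-graded argument gives bijectivity. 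Your alternative route via Premet generators is also viable and is closer in spirit to how the present paper handles compatibility of the maps with $\eta^f$ in Section \ref{sec:6} (cf.\ the argument around Lemma \ref{20180529:lem}, where $\eta^f\circ\rho^{\Gamma}_{0,\mf l}=\eta^f$). Finally, your derivation of \eqref{20180419:eq2b} for non-nested $\mf l_1,\mf l_2$ by composing $\rho^\Gamma_{0,\mf l_2}\circ(\rho^\Gamma_{0,\mf l_1})^{-1}$ is exactly the mechanism the paper itself exploits in building the chain \eqref{20180523:eq5}.
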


\subsection{
Isomorphism of $W$-algebras for different good gradings and isotropic subspaces
}
\label{sec:6.1}

As a corollary of Lemma \ref{20180419:lem1}, Theorem \ref{thm:GG},
and the observation behind \eqref{20180523:eq4}, we get
\begin{theorem}[{\cite[Thm.1]{BG05}}]
Given two arbitrary good gradings $\Gamma$ and $\tilde\Gamma$ for $f\in\mf g$,
and two isotropic subspaces $\mf l\in\mf g^\Gamma[\frac12]$
and $\tilde{\mf l}\in\mf g^{\tilde\Gamma}[\frac12]$,
there exists an algebra isomorphism
\begin{equation}\label{20180523:eq3}
\phi_{\Gamma,\mf l;\tilde\Gamma,\tilde{\mf l}}\,:\,\,
W(\mf g,f,\Gamma,\mf l) \,\stackrel{\sim}{\longrightarrow}\, W(\mf g,f,\tilde\Gamma,\tilde{\mf l})
\,.
\end{equation}
\end{theorem}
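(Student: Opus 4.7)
The plan is to assemble the isomorphism as a composition of three types of elementary isomorphisms, exactly as flagged in the paragraph preceding the statement. First I would handle the case of two adjacent good gradings $\Gamma$ and $\tilde\Gamma$ with arbitrary isotropic subspaces $\mf l\subset\mf g^\Gamma[\tfrac12]$ and $\tilde{\mf l}\subset\mf g^{\tilde\Gamma}[\tfrac12]$. By Theorem \ref{thm:GG} applied twice (for the grading $\Gamma$ with isotropic subspaces $\mf l$ and a Lagrangian extension $\mf l^{\max}\supset\mf l$, and for $\tilde\Gamma$ with $\tilde{\mf l}$ and a Lagrangian extension $\tilde{\mf l}^{\max}\supset\tilde{\mf l}$) we reduce to the case of Lagrangian $\mf l$ and $\tilde{\mf l}$. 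Now Lemma \ref{20180419:lem1} produces Lagrangian subspaces $\mf l^0\subset\mf g^\Gamma[\tfrac12]$ and $\tilde{\mf l}^0\subset\mf g^{\tilde\Gamma}[\tfrac12]$ satisfying \eqref{20180419:eq4}, so that
\begin{equation*}
W(\mf g,f,\Gamma,\mf l^0)=W(\mf g,f,\tilde\Gamma,\tilde{\mf l}^0)
\end{equation*}
as subquotients of $U(\mf g)$, by \eqref{20180523:eq2}. Composing with the two Gan-Ginzburg isomorphisms \eqref{20180419:eq2} relating $\mf l^0$ to $\mf l^{\max}$ inside grading $\Gamma$, and $\tilde{\mf l}^0$ to $\tilde{\mf l}^{\max}$ inside grading $\tilde\Gamma$, gives the desired $\phi_{\Gamma,\mf l;\tilde\Gamma,\tilde{\mf l}}$ in the adjacent case.

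Next I would handle a general pair of good gradings by using the fact, recalled in \eqref{20180523:eq4} and the discussion immediately following it, that any two pyramids for the fixed partition of $N$ can be connected by a finite chain of pyramids
\begin{equation*}
\Gamma=\Gamma_0,\,\Gamma_1,\,\dots,\,\Gamma_K=\tilde\Gamma
\end{equation*}
in which each consecutive pair is adjacent (for instance by going through the right-aligned pyramid $\Gamma_R$). For each consecutive pair $\Gamma_{i-1},\Gamma_i$ I choose auxiliary isotropic subspaces $\mf l_{i-1}\subset\mf g^{\Gamma_{i-1}}[\tfrac12]$ and $\mf l'_{i}\subset\mf g^{\Gamma_i}[\tfrac12]$ and apply the adjacent case to obtain an isomorphism $W(\mf g,f,\Gamma_{i-1},\mf l_{i-1})\xrightarrow{\sim}W(\mf g,f,\Gamma_i,\mf l'_i)$. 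To concatenate two consecutive isomorphisms at grading $\Gamma_i$, where the isotropic subspaces $\mf l'_i$ and $\mf l_i$ produced from the two sides generally differ, I interpose a Gan-Ginzburg isomorphism $\rho^{\Gamma_i}_{\mf l'_i,\mf l_i^{\max}}\circ(\rho^{\Gamma_i}_{\mf l_i,\mf l_i^{\max}})^{-1}$ in the common grading $\Gamma_i$ to pass from $\mf l'_i$ to $\mf l_i$. At the two endpoints $\Gamma_0=\Gamma$ and $\Gamma_K=\tilde\Gamma$ I similarly use Gan-Ginzburg to match the initial and final isotropic subspaces with the prescribed $\mf l$ and $\tilde{\mf l}$. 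The composite map is an algebra isomorphism $\phi_{\Gamma,\mf l;\tilde\Gamma,\tilde{\mf l}}$ as required.

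The main obstacle is not existence but the bookkeeping: at each step one must actually produce a Lagrangian extension of a given isotropic subspace, which requires the non-degeneracy of $\omega$ on $\mf g^\Gamma[\tfrac12]$ (guaranteed by the good grading axiom), and one must verify that the resulting Lagrangian lies in $\mf g^\Gamma[\tfrac12]$; this is a straightforward dimension count but is the only nontrivial point. A minor subtlety I would flag but not dwell on is that $\phi_{\Gamma,\mf l;\tilde\Gamma,\tilde{\mf l}}$ depends a priori on the chosen chain of adjacent pyramids and on the chosen intermediate isotropic subspaces; however the statement of the theorem only asserts the existence of some algebra isomorphism, so no canonicity claim needs to be proved. (In fact, by a theorem of Brundan-Goodwin the resulting isomorphism class is canonical, but establishing this is not required here.) No filtered-isomorphism claim is made either, which is fortunate because the intermediate identifications \eqref{20180523:eq2} do not preserve the Kazhdan filtration across different gradings; only the Gan-Ginzburg steps do, by \eqref{20180419:eq2b}.
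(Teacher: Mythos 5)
Your strategy is the same as the paper's: connect $\Gamma$ to $\tilde\Gamma$ by a chain of adjacent good gradings, use Lemma \ref{20180419:lem1} (i.e. the coincidence \eqref{20180523:eq2} of the two $W$-algebras for the Lagrangians it produces) at each adjacency, and use Gan--Ginzburg isomorphisms within a fixed grading to adjust the isotropic subspace; this is exactly the composition \eqref{20180523:eq5}. One gluing step is wrong as written, though easily repaired: at an intermediate grading $\Gamma_i$ the two subspaces $\mf l'_i$ and $\mf l_i$ to be reconciled are in general two \emph{distinct} Lagrangians, so no Lagrangian $\mf l_i^{\max}$ contains both, and the map $\rho^{\Gamma_i}_{\mf l'_i,\mf l_i^{\max}}\circ(\rho^{\Gamma_i}_{\mf l_i,\mf l_i^{\max}})^{-1}$ does not exist (its composition order is also off); the same issue arises in your adjacent-case step relating $\mf l^0$ to $\mf l^{\max}$, where neither contains the other, so \eqref{20180419:eq2} does not apply directly. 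The correct move is to pass through a common isotropic subspace contained in both, e.g. $0$: use $\rho^{\Gamma_i}_{0,\mf l_i}\circ(\rho^{\Gamma_i}_{0,\mf l'_i})^{-1}$, which is what \eqref{20180419:eq2b} provides and precisely how the paper factors $\phi_{\Gamma_R,0;\Gamma,\mf l}$ in Section \ref{sec:6.2}. With this replacement (and the analogous one for matching the prescribed $\mf l$, $\tilde{\mf l}$ at the endpoints) your argument coincides with the paper's proof.
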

The isomorphism $\phi_{\Gamma,\mf l;\tilde\Gamma,\tilde{\mf l}}$ is constructed as follows.
Let, according to \eqref{20180523:eq4},
$$
\Gamma=\Gamma_0\,,\,\,
\Gamma_1\,,\dots\,,
\Gamma_K=\tilde{\Gamma}
\,,
$$
be a chain of good gradings joining $\Gamma$ and $\tilde\Gamma$
such that for each $i=0,\dots,K-1$, the gradings $\Gamma_i$ and $\Gamma_{i+1}$
are adjacent.
By Lemma \ref{20180419:lem1}, for each $i$
there exist Lagrangian subspaces 
$\mf l_i\subset\mf g^{\Gamma_i}[\frac12]$ 
and
$\tilde{\mf l}_i\subset\mf g^{\Gamma_{i+1}}[\frac12]$ 
such that
$\mf l_i\oplus\mf g^{\Gamma_i}[\geq1]=\tilde{\mf l}_i\oplus\mf g^{\Gamma_{i+1}}[\geq1]$.
In particular,
the corresponding $W$-algebras coincide:
\begin{equation}\label{20180525:eq1}
W(\mf g,f,\Gamma_i,\mf l_i) \,=\, W(\mf g,f,\Gamma_{i+1},\tilde{\mf l}_i)
\,.
\end{equation}
Moreover, by Theorem \ref{thm:GG},
we have isomorphisms of filtered algebras
$$
\big(W(\mf g,f,\Gamma_{i+1},\tilde{\mf l}_i),F^{\Gamma_{i+1}}\big)
\,\stackrel{\sim}{\longrightarrow}\,
\big(W(\mf g,f,\Gamma_{i+1},\mf l_{i+1}),F^{\Gamma_{i+1}}\big)
\,.
$$
Hence, we have a sequence of isomorphisms of algebras
\begin{equation}\label{20180523:eq5}
\begin{split}
& W(\mf g,f,\Gamma,\mf l)
\stackrel{\text{GG}}{\longrightarrow}
W(\mf g,f,\Gamma_0,\mf l_0)
\,=\,
W(\mf g,f,\Gamma_1,\tilde{\mf l}_0)
\stackrel{\text{GG}}{\longrightarrow}
W(\mf g,f,\Gamma_1,\mf l_1)
=\dots
\\
& \dots
=
W(\mf g,f,\Gamma_K,\tilde{\mf l}_{K-1})
\stackrel{\text{GG}}{\longrightarrow}
W(\mf g,f,\tilde\Gamma,\tilde{\mf l})
\,.
\end{split}
\end{equation}
This composition is the desired isomorphism $\phi_{\Gamma,\mf l;\tilde\Gamma,\tilde{\mf l}}$.

Note that, by the observation at the end of Section \ref{sec:6.0},
$\phi_{\Gamma,\mf l;\tilde\Gamma,\tilde{\mf l}}$ is NOT an isomorphism 
between the filtered algebras
$(W(\mf g,f,\Gamma,\mf l),F^\Gamma)$
and
$(W(\mf g,f,\tilde\Gamma,\tilde{\mf l}),F^{\tilde\Gamma})$.
In the next section we will develop a machinery to reconstruct how 
$\phi_{\Gamma,\mf l;\tilde\Gamma,\tilde{\mf l}}$
acts on the filtrations.

\subsection{
Semisimple gradings of $\mf g$.
}
\label{sec:6.KK}

\begin{definition}\label{def:grad}
Let $h$ be a semisimple element of $\End V$
(that, according to the basic assumption described at the beginning of Section \ref{sec:6},
we assume to be diagonal in the fixed basis of $V$).
The corresponding \emph{semisimple grading} $G_h$ of $\mf g$
is given by $\ad h$-eigenvalues:
$$
\mf g
=
\bigoplus_{k\in\mb F}
\mf g^{G_h}[k]
\,\,\text{ where }\,\,
\mf g^{G_h}[k]
=
\big\{
a\in\mf g\,\big|\,[h,a]=ka
\big\}
\,.
$$
Moreover, we say that $G$ is \emph{neutral} for the nilpotent element $f$,
if the diagonal matrix $h$ has constant eigenvalues on the rectangles of size $p_i\times r_i$
of any pyramid attached to $f$.
\end{definition}
Note that semisimple gradings of $\mf g$ form a vector space,
(isomorphic to the space of diagonal matrices in $\End V$),
and neutral gradings form a vector subspace.
Explicitly,
given two semisimple gradings $G_1$ and $G_2$
attached to the diagonal matrices $h_1$ and $h_2$,
their linear combination is ($c_1,c_2\in\mb F$)
\begin{equation}\label{20180524:eq5}
\mf g^{c_1 G_1+c_2 G_2}[k]
=
\bigoplus_{c_1k_1+c_2k_2=k}
\big(
\mf g^{G_1}[k_1]
\cap
\mf g^{G_2}[k_2]
\big)
\,.
\end{equation}
Obviously, this is the semisimple grading attached to $c_1h_1+c_2h_2$.
\begin{example}\label{ex:grad}
If $\Gamma$ is a good grading of $\mf g$
and we take $h=\frac12 h_\Gamma$ defined in Section \ref{sec:3.1}, 
the corresponding semisimple grading $G_h$ coincides with $\Gamma$.
Hence, semisimple gradings constitute a great generalization of good gradings.
Recall that the various good gradings for $f$
are obtained by shifting the rectangles of sizes $p_i\times r_i$ 
of a pyramid. 
It follows that,
given two good gradings $\Gamma_1$ and $\Gamma_2$,
their difference $\Gamma_1-\Gamma_2$ is neutral for $f$.
\end{example}

Clearly, a semisimple grading $G$ is a Lie algebra grading
(in fact it is an associative algebra grading, if we identify $\mf g\simeq\End V$).
It follows that it induces an associative algebra grading of the universal enveloping 
algebra $U(\mf g)$:
\begin{equation}\label{20180524:eq2}
U(\mf g)
=
\bigoplus_{k}
U(\mf g)^{G}[k]
\,,
\end{equation}
where 
$$
U(\mf g)^{G}[k]
=
\sum_{s=0}^\infty
\sum_{k_1+\dots+k_s=k}
\mf g^{G}[k_1]
\dots
\mf g^{G}[k_s]
\,.
$$
It is natural to ask whether \eqref{20180524:eq2} induces a grading of the $W$-algebras.
This is true for neutral gradings, 
and it will be proved in Theorem \ref{20180524:thm3} below.

The Kazhdan filtration $F^\Gamma$ and the $G$-grading
of $U(\mf g)$ are related by the following:
\begin{lemma}\label{20180524:lem}
Let $\Gamma_1,\Gamma_2$ be good gradings for $f\in\mf g$,
and let $k,\Delta\in\frac12\mb Z$.
We have
$$
F^{\Gamma_1}_\Delta U(\mf g)^{\Gamma_1-\Gamma_2}[k]
=
F^{\Gamma_2}_{\Delta+k} U(\mf g)^{\Gamma_1-\Gamma_2}[k]
\,.
$$
\end{lemma}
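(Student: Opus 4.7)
The plan is to exploit the fact that the two good gradings $\Gamma_1$ and $\Gamma_2$ come from commuting semisimple elements (they are both diagonal in our fixed basis of $V$ by the fundamental assumption made at the beginning of Section \ref{sec:6}), so together they define a bigrading on $\mf g$, and hence on $U(\mf g)$ after passing to the PBW basis.

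First, I would fix a basis $\{v_i\}_{i\in I}$ of $\mf g$ compatible with both the $\Gamma_1$- and $\Gamma_2$-gradings. For each $v_i$, let $k_1(v_i), k_2(v_i)\in\frac12\mb Z$ denote its $\Gamma_1$- and $\Gamma_2$-degrees respectively; in particular $v_i\in U(\mf g)^{\Gamma_1-\Gamma_2}[k_1(v_i)-k_2(v_i)]$. After choosing a total order on $I$, the PBW theorem gives a basis of $U(\mf g)$ consisting of ordered monomials $v_M=v_{i_1}\cdots v_{i_s}$ with $i_1\leq\cdots\leq i_s$. Setting $k_j(v_M):=\sum_\ell k_j(v_{i_\ell})$ for $j=1,2$, each $v_M$ is bihomogeneous of bidegree $(k_1(v_M), k_2(v_M))$ and so lies in $U(\mf g)^{\Gamma_1-\Gamma_2}[k_1(v_M)-k_2(v_M)]$.

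Next, I would establish the standard description of the Kazhdan filtration on PBW monomials:
\begin{equation*}
F^{\Gamma_j}_\Delta U(\mf g) = \Span\{v_M \,:\, s(v_M)-k_j(v_M)\leq \Delta\},\qquad j=1,2,
\end{equation*}
where $s(v_M)$ denotes the length of $v_M$. The inclusion $\supset$ is immediate from \eqref{eq:kaz}. For $\subset$, I would argue by induction on length: any product $\mf g^{\Gamma_j}[a_1]\cdots\mf g^{\Gamma_j}[a_s]$ with $s-\sum a_\ell\leq\Delta$ can be rewritten as a linear combination of PBW monomials, since each swap $v_a v_b=v_b v_a+[v_a,v_b]$ produces a correction term of length $s-1$ and of total $\Gamma_j$-degree $a+b$ (because $[\mf g^{\Gamma_j}[a],\mf g^{\Gamma_j}[b]]\subset\mf g^{\Gamma_j}[a+b]$), hence of $\Gamma_j$-Kazhdan degree bounded by $\Delta-1$, so induction closes.

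Finally, the key numerical observation is that for every PBW monomial $v_M$ one has
\begin{equation*}
[s(v_M)-k_2(v_M)] - [s(v_M)-k_1(v_M)] = k_1(v_M)-k_2(v_M) = k\quad\text{whenever}\quad v_M\in U(\mf g)^{\Gamma_1-\Gamma_2}[k].
\end{equation*}
Since $U(\mf g)^{\Gamma_1-\Gamma_2}[k]$ is itself spanned by PBW monomials of $(\Gamma_1-\Gamma_2)$-degree $k$, the conditions $s-k_1(v_M)\leq\Delta$ and $s-k_2(v_M)\leq\Delta+k$ are equivalent on such monomials, yielding the claimed equality of intersections
\begin{equation*}
F^{\Gamma_1}_\Delta U(\mf g)\cap U(\mf g)^{\Gamma_1-\Gamma_2}[k] = F^{\Gamma_2}_{\Delta+k}U(\mf g)\cap U(\mf g)^{\Gamma_1-\Gamma_2}[k].
\end{equation*}
The main (though essentially standard) obstacle is the PBW description of the Kazhdan filtration; once this is in place, the rest is bookkeeping. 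Note that this argument does not use that $\Gamma_1-\Gamma_2$ is neutral for $f$, only that the two gradings are simultaneously diagonalizable.
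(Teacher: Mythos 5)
Your argument is correct, and it rests on the same numerical kernel as the paper's proof: on an element homogeneous for both gradings, the two Kazhdan degrees differ exactly by its $(\Gamma_1-\Gamma_2)$-degree, and multiplicativity propagates this from $\mf g$ to all of $U(\mf g)$. The implementations differ, however. The paper disposes of the propagation step in one line: since $F^{\Gamma_1}$ and $F^{\Gamma_2}$ are algebra filtrations and $\Gamma_1-\Gamma_2$ is an algebra grading of $U(\mf g)$, it suffices to check the equality on the generating space $\mf g$, where it is an immediate joint-eigenspace computation (an element of $\mf g^{\Gamma_1}[k_1]\cap\mf g^{\Gamma_2}[k_2]$ with $k_1-k_2=k$ has $\Gamma_1$-Kazhdan degree $1-k_1$ and $\Gamma_2$-Kazhdan degree $1-k_2=1-k_1+k$). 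You instead make the propagation explicit by first proving a PBW-monomial spanning description of the Kazhdan filtration in a simultaneously homogeneous basis and then comparing the two inequalities monomial by monomial; this costs you an auxiliary (standard) induction on length that the paper never needs, but it buys a completely explicit justification that the filtration pieces are $(\Gamma_1-\Gamma_2)$-graded, a point the paper leaves implicit in the phrase ``it is enough to prove the claim on the generating space.'' Your closing observation also matches the paper: neither proof uses neutrality of $\Gamma_1-\Gamma_2$, only the simultaneous diagonalizability guaranteed by the standing assumption of Section \ref{sec:6}.
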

\begin{proof}
Since $F^{\Gamma_1}$ and $F^{\Gamma_2}$ are algebra filtrations of $U(\mf g)$,
and $\Gamma_1-\Gamma_2$ defines an algebra grading of $U(\mf g)$,
it is enough to prove the claim on the generating space $\mf g$.
On the other hand, we have
\begin{align*}
& F^{\Gamma_1}_\Delta\mf g^{\Gamma_1-\Gamma_2}[k]
=
\bigoplus_{1-k_1\leq\Delta,\,k_1-k_2=k}
\big(\mf g^{\Gamma_1}[k_1]\cap\mf g^{\Gamma_2}[k_2]\big)
\\
& =
\bigoplus_{1-k_2\leq\Delta+k,\,k_1-k_2=k}
\big(\mf g^{\Gamma_1}[k_1]\cap\mf g^{\Gamma_2}[k_2]\big)
=
F^{\Gamma_2}_{\Delta+k} \mf g^{\Gamma_1-\Gamma_2}[k]
\,.
\end{align*}
\end{proof}

\subsection{
Semisimple grading of the $W$-algebras
}
\label{sec:6.KKb}

Recall the definition \eqref{eq:kaz} of the Kazhdan filtration $(U(\mf g),F^\Gamma)$,
and the induced filtration \eqref{eq:kazW} of the $W$-algebra 
$(W(\mf g,f,\Gamma,\mf l),F^\Gamma)$.
As in \eqref{20180508:eq3}
we extend it to a filtration $F^\Gamma$ of 
$W(\mf g,f,\Gamma,\mf l)[z]\otimes\End V$
by letting
\begin{equation}\label{20180523:eq8}
\begin{split}
& F^\Gamma_\Delta \big(
W(\mf g,f,\Gamma,\mf l)[z]\otimes\End V
\big)
\\
& =
\sum_{\ell\in\mb Z_+}
\sum_{\substack{k\in\frac12\mb Z \\ |k|\leq p_1-1}}
z^\ell 
F^\Gamma_{\Delta+k-\ell}W(\mf g,f,\Gamma,\mf l)
\otimes(\End V)^\Gamma[k]
\,.
\end{split}
\end{equation}
In other words, we assign to $z$ Kazhdan degree equal to $1$,
and we let $U\in (\End V)^\Gamma[j]$ have Kazhdan degree $-j$.
We shall also consider at times 
the analogue extension of the Kazhdan filtration of $U(\mf g)$.

As for the filtration $F^\Gamma$ in \eqref{20180523:eq8}, 
we extend a semisimple grading $G=G_h$ 
\eqref{20180524:eq2} to an associative algebra grading 
of $U(\mf g)[z]\otimes\End V$,
defined by letting $z$ have $G$-degree $0$,
and 
$$
(\End V)^G[k]
=
\big\{
U\in\End V\,\big|\, HU-UH=kU
\big\}
\,.
$$
(Here, with the usual convention, 
$H$ is the same as $h$, viewed as an element of $\End V$.)
Hence,
\begin{equation}\label{20180524:eq3}
\big(
U(\mf g)[z]\otimes\End V
\big)^{G}[k]
=
\bigoplus_{i+j=k}
(U(\mf g)^G[i])[z]\otimes(\End V)^G[j]
\,.
\end{equation}

The analogue of Lemma \ref{20180524:lem} holds for the extended $\Gamma$-filtration
and $G$-grading as well:
\begin{lemma}\label{20180524:lemb}
Let $\Gamma_1,\Gamma_2$ be good gradings for $f\in\mf g$,
and let $k,\Delta\in\frac12\mb Z$.
We have
$$
F^{\Gamma_1}_\Delta 
\big(
U(\mf g)[z]\otimes\End V
\big)^{\Gamma_1-\Gamma_2}[k]
=
F^{\Gamma_2}_{\Delta+k} 
\big(
U(\mf g)[z]\otimes\End V
\big)^{\Gamma_1-\Gamma_2}[k]
\,.
$$
\end{lemma}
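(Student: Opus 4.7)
The plan is to reduce Lemma \ref{20180524:lemb} to the already-established Lemma \ref{20180524:lem} by decomposing the tensor factors according to the joint grading by $\Gamma_1$ and $\Gamma_2$, and then tracking how the Kazhdan degree shifts with the $(\Gamma_1-\Gamma_2)$-degree on each factor separately.

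First I would unpack the extended filtration \eqref{20180523:eq8}. Since $z$ carries Kazhdan weight $+1$ but $G$-weight $0$ for every semisimple $G$ (in particular for $\Gamma_1-\Gamma_2$), the $z$-weight $\ell$ is inert in the sense that it shifts both Kazhdan bounds by the same $-\ell$. It therefore suffices, for each fixed $\ell\in\mb Z_+$, to prove
$$
F^{\Gamma_1}_{\Delta-\ell}\bigl(U(\mf g)\otimes\End V\bigr)^{\Gamma_1-\Gamma_2}[k]
=
F^{\Gamma_2}_{\Delta+k-\ell}\bigl(U(\mf g)\otimes\End V\bigr)^{\Gamma_1-\Gamma_2}[k].
$$

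Next I would use that, by the basic assumption at the beginning of Section \ref{sec:6}, the diagonal elements $h_{\Gamma_1}$ and $h_{\Gamma_2}$ commute, so $\End V$ carries a joint $(\Gamma_1,\Gamma_2)$-bigrading and
$$
(\End V)^{\Gamma_1-\Gamma_2}[j]
=
\bigoplus_{j_1-j_2=j}\bigl((\End V)^{\Gamma_1}[j_1]\cap(\End V)^{\Gamma_2}[j_2]\bigr),
$$
and similarly $U(\mf g)^{\Gamma_1-\Gamma_2}[i]$ decomposes. A homogeneous element $u\otimes U$ with $u\in U(\mf g)^{\Gamma_1-\Gamma_2}[i]$ and $U\in(\End V)^{\Gamma_1}[j_1]\cap(\End V)^{\Gamma_2}[j_2]$, $j_1-j_2=k-i$, has $\Gamma_\alpha$-Kazhdan weight equal to $\deg_{\Gamma_\alpha}(u)-j_\alpha$ by the convention in \eqref{20180523:eq8} that $(\End V)[m]$ has Kazhdan weight $-m$. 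Hence the $(i,j_1,j_2)$-component of the left-hand side is
$$
F^{\Gamma_1}_{\Delta-\ell+j_1}U(\mf g)^{\Gamma_1-\Gamma_2}[i]\,\otimes\,\bigl((\End V)^{\Gamma_1}[j_1]\cap(\End V)^{\Gamma_2}[j_2]\bigr),
$$
while the corresponding component of the right-hand side is
$$
F^{\Gamma_2}_{\Delta+k-\ell+j_2}U(\mf g)^{\Gamma_1-\Gamma_2}[i]\,\otimes\,\bigl((\End V)^{\Gamma_1}[j_1]\cap(\End V)^{\Gamma_2}[j_2]\bigr).
$$

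Then I would close the argument by applying Lemma \ref{20180524:lem} to the first tensor factor with $\Delta$ there replaced by $\Delta-\ell+j_1$, which yields
$$
F^{\Gamma_1}_{\Delta-\ell+j_1}U(\mf g)^{\Gamma_1-\Gamma_2}[i]
=
F^{\Gamma_2}_{\Delta-\ell+j_1+i}U(\mf g)^{\Gamma_1-\Gamma_2}[i].
$$
Using the defining relation $j_1+i=j_2+k$, the right-hand index becomes $\Delta+k-\ell+j_2$, so the two summands coincide. Summing over all $(\ell,i,j_1,j_2)$ with $i+j_1-j_2=k$ gives the desired equality.

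The argument is essentially bookkeeping, and I do not expect a genuine obstacle. The only care required is the justification that $\End V$ is simultaneously graded by $\Gamma_1$ and $\Gamma_2$, which is immediate from the convention (stated at the start of Section \ref{sec:6}) that $h_{\Gamma_1}$ and $h_{\Gamma_2}$ are diagonal in the same fixed basis of $V$; this makes the decomposition step and the compatibility of tensor-product filtrations with tensor-product gradings purely formal.
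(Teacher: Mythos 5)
Your proposal is correct and follows essentially the same route as the paper's proof: decompose into components that are bihomogeneous for the $\Gamma_1$- and $(\Gamma_1-\Gamma_2)$-gradings (equivalently, your $(i,j_1,j_2)$ bookkeeping versus the paper's $(j,k_1,k_2)$), apply Lemma \ref{20180524:lem} to the $U(\mf g)$ factor, and match the shifted filtration indices using the relation between the degrees. The only cosmetic difference is the choice of parametrization of the $\End V$ factor; the substance of the argument is identical.
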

\begin{proof}
By \eqref{20180523:eq8} and \eqref{20180524:eq3} we have
\begin{align*}
& F^{\Gamma_1}_\Delta
\big(
U(\mf g)[z]\otimes\End V
\big)^{\Gamma_1-\Gamma_2}[k]
\\
& =
\sum_{\ell\in\mb Z_+}
\sum_{\substack{j\in\frac12\mb Z}}
\sum_{k_1+k_2=k}
z^\ell 
F^{\Gamma_1}_{\Delta+j-\ell}
U(\mf g)^{\Gamma_1-\Gamma_2}[k_1]
\otimes
\big(
(\End V)^{\Gamma_1}[j]
\cap
(\End V)^{\Gamma_1-\Gamma_2}[k_2]
\big)
\\
& =
\sum_{\ell\in\mb Z_+}
\sum_{\substack{j\in\frac12\mb Z}}
\sum_{k_1+k_2=k}
\!\!\!\!\!\!
z^\ell 
F^{\Gamma_2}_{\Delta\!+\!j\!-\!\ell\!+\!k_1}
U(\mf g)^{\Gamma_1\!-\!\Gamma_2}[k_1]
\!\otimes\!
\big(
(\End V)^{\Gamma_2}[j\!-\!k_2]
\cap
(\End V)^{\Gamma_1\!-\!\Gamma_2}[k_2]
\big)
\\
& =
F^{\Gamma_2}_{\Delta+k}
\big(
U(\mf g)[z]\otimes\End V
\big)^{\Gamma_1-\Gamma_2}[k]
\,.
\end{align*}
\end{proof}

\begin{lemma}\label{20180524:lem0}
Let $G$ be a semisimple grading of $\mf g$,
extended to a grading of $U(\mf g)[z]\otimes\End V$
as in \eqref{20180524:eq3}.
Then:
\begin{enumerate}[(a)]
\item
the identity map $\id_U\in\End V$ is homogeneous of $G$-degree $0$
for every subspace $U\subset V$ spanned by some basis elements of $V$;
\item
for every $j\in\frac12\mb Z$,
the operator $E_j\in U(\mf g)\otimes\End V$ 
defined in \eqref{eq:Ej} is homogeneous of $G$-degree $0$;
\item
the matrix $D\in\End V$ defined in \eqref{eq:D}
is homogeneous of $G$-degree $0$;
\item
assuming, moreover, that the semisimple grading $G$ is neutral for $f$,
$F$ and $F^t$ are both homogeneous of $G$-degree $0$
\end{enumerate}
\end{lemma}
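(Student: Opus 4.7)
The overall strategy is to exploit, throughout, the fact that by the standing assumptions all semisimple elements under consideration ($h_\Gamma$ and the element $h$ defining $G$) are diagonal in the same fixed basis of $V$, so that the various gradings commute with one another. Part (a) is then immediate: if $U$ is spanned by basis vectors then $\id_U$ is a diagonal matrix in the same basis as $h$, hence $[h,\id_U]=0$, giving it $G$-degree $0$.

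For parts (b) and (c) the key remark is that, since $\Gamma$ and $G$ commute as gradings of $\mf g$, each $\Gamma$-homogeneous piece $\mf g^\Gamma[j]$ decomposes further into $G$-homogeneous components, and likewise for $\mf m=\mf l\oplus\mf g[\geq 1]$ once one knows $\mf l$ is $G$-invariant. I would refine the basis $\{u_i\}_{i\in I_j}$ (resp.\ $\{u_i\}_{i\in I_{\mf m}}$) to be simultaneously $\Gamma$- and $G$-homogeneous, say $u_i\in\mf g^G[g(i)]$. By the $\ad h$-invariance of the trace form, the dual basis vector $u^i$ lies in $\mf g^G[-g(i)]$, so $U^i\in(\End V)^G[-g(i)]$; hence in both $E_j=\sum_{i\in I_j}u_iU^i$ and $D=-\sum_{i\in I_{\mf m}}U^iU_i$ each summand has total $G$-degree $0$. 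As both expressions are basis-independent within the relevant subspace, parts (b) and (c) follow.

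For part (d) I would use the neutrality assumption directly: it says that $h$ is constant on each rectangle $p_i\times r_i$ of the pyramid. Since $F$ sends the basis vector $f^k(e_i)$ to $f^{k+1}(e_i)$ (or to $0$) and $F^t$ acts by the reverse shift, both $F$ and $F^t$ preserve each rectangle, hence commute with $h$, yielding $G$-degree $0$ for both.

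The one subtle point I expect to require care is the $G$-invariance of $\mf m$ in part (c). While $\mf g[\geq 1]$ is manifestly $G$-invariant (since $\Gamma$ and $G$ commute), $G$-invariance of the isotropic subspace $\mf l\subset\mf g^\Gamma[\frac12]$ is not automatic and must be verified from the specific construction; in the applications of Section \ref{sec:6} this will follow from the compatible choice of $\mf l$ given by Lemma \ref{20180419:lem1}, and it may be cleanest to state this as a running hypothesis when applying the lemma.
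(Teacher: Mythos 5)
Your proof is correct and essentially the paper's own argument: the paper's pictorial claim that $u_i$ and $U^i$ are ``arrows in opposite directions'' is exactly your observation that, for a simultaneously $\Gamma$- and $G$-homogeneous (e.g.\ elementary-matrix) basis, the dual basis has opposite $G$-degree by invariance of the trace form, so each summand of $E_j$ and $D$ has total degree $0$, and part (d) is the same horizontal-shift/neutrality argument. Your closing caveat about $G$-compatibility of $\mf l$ for part (c) is apt and consistent with the paper, which applies the lemma only with $\mf l=0$ (the aligned-pyramid $D$ of \eqref{eq:D0}) and, where a general $\mf l$ enters (Proposition \ref{20180524:thm2b}), explicitly assumes $\mf l$ is spanned by elementary matrices.
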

\begin{proof}
Claim (a) is obvious.
For claim (b),
recall that $E_j=\sum_{i\in I_j}u_iU^i$,
and we can depict $u_i$ and $U^i$ as arrows of the pyramid
going in opposite directions.
In particular $u_i$ and $U^i$ have opposite $G$-degree,
and their product has $G$-degree $0$.
The same argument proves claim (c).
Finally, note that both $F$ and $F^t$ can be depicted
as a collection of arrows moving horizontally
within the rectangles of the pyramid.
Hence, claim (d) follows by the definition of neutral grading.
\end{proof}

\begin{proposition}\label{20180524:thm1}
For every semisimple neutral grading $G$ of $\mf g$
and every nilpotent element $f\in\mf g$,
the operator $\widetilde{W}(z)\in U(\mf g)[z]\otimes\End V$
defined by \eqref{eq:recW}
(with respect to the right aligned pyramid associated to $f$)
is homogeneous of (extended) $G$-degree $0$.
\end{proposition}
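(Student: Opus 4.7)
The plan is to prove Proposition \ref{20180524:thm1} by induction on the number $p_1$ of columns of the right-aligned pyramid associated to $f$, exploiting the recursive definition \eqref{eq:recW} of $\widetilde W(z)$. The overall strategy is to show that each ingredient appearing in the RHS of \eqref{eq:recW} is (extended) $G$-homogeneous of degree $0$, and then to observe that products, the commutator $[\cdot,\cdot]^1$, and the operation $\Res_x x^{-1}(\cdots)$ all preserve the property of being homogeneous of degree $0$.

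For the base case $p_1 = 1$ the recursion degenerates to $\widetilde W(z) = z \id_V + E$ as in \eqref{20180208:eq1}. The element $z$ has extended $G$-degree $0$ by definition \eqref{20180524:eq3}, $\id_V$ has $G$-degree $0$ by Lemma \ref{20180524:lem0}(a), and $E = \sum_j E_j$ has $G$-degree $0$ by Lemma \ref{20180524:lem0}(b), so the claim holds.

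For the inductive step, I would first check that the restriction $G'$ of $G$ to $\mf g'$ is still neutral for $f'$: since $h$ has constant eigenvalue on each rectangle $p_i \times r_i$ of $p$, it a fortiori has constant eigenvalue on each rectangle $(p_i - 1) \times r_i$ of $p'$. Moreover, the natural embedding $U(\mf g')[z]\otimes \End V' \hookrightarrow U(\mf g)[z]\otimes \End V$ is compatible with the $G'$- and $G$-gradings, so by the inductive hypothesis $\widetilde W'(z)$ is homogeneous of $G$-degree $0$ in the ambient space. Now I would inspect the four summands in the RHS of \eqref{eq:recW}: each of them is built from $\widetilde W'(z)$, $\widetilde W'(x)$, and the ingredients $F,\ F^t,\ E_{-1},\ E_0,\ D,\ z\id_V$ together with various $\id_U$. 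By Lemma \ref{20180524:lem0} each of these factors has $G$-degree $0$; the neutrality of $G$ is used crucially via Lemma \ref{20180524:lem0}(d) to handle $F$ and $F^t$. The residue $\Res_x x^{-1}$ is evaluated by declaring $x$ to have $G$-degree $0$, and the geometric expansion $(1+x^{-1}F)^{-1} = \sum_k (-x^{-1}F)^k$ is then homogeneous of $G$-degree $0$ term by term.

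Once all the building blocks are seen to have $G$-degree $0$, the verification is routine: the $G$-grading of $U(\mf g)[z]\otimes \End V$ is an associative algebra grading (hence closed under the partial commutator $[\cdot,\cdot]^1$), and taking residues preserves it. The conceptual heart of the argument, and the only place where the neutrality assumption enters, is Lemma \ref{20180524:lem0}(d); without it the third summand $\widetilde W'(z) F^t(z\id_V + E_0 + D)\id_{V_-^d}$ would typically fail to be $G$-homogeneous. There is no real obstacle beyond carefully tracking the grading through the nested recursive and residue expressions.
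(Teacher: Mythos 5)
Your argument is correct and is essentially the paper's own proof: induction on $p_1$ through the recursion \eqref{eq:recW}, using that the restriction of $G$ to $\mf g'$ stays semisimple and neutral, together with Lemma \ref{20180524:lem0} (with part (d) as the only place where neutrality enters). The one point worth noting---equally glossed over in the paper---is that when $p_1-1=p_2$ the rectangles of $p'$ merge, so ``constant on each rectangle of $p'$'' does not literally follow from neutrality for $f$; the clean fix is to run the induction with the manifestly inherited, weaker hypothesis that $h$ is constant along each row of the pyramid, which is all Lemma \ref{20180524:lem0}(d) actually uses.
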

\begin{proof}
Note that, if $G$ is a semisimple neutral grading of $\mf g$,
then its restriction to $\mf g'$ is again semisimple and neutral.
Then, the claim follows, inductively,
by the recursive definition \eqref{eq:recW} of the operator $\widetilde{W}(z)$,
using Lemma \ref{20180524:lem0}.
\end{proof}

\begin{proposition}\label{20180524:thm2b}
Let $\Gamma$ be a good grading for $f\in\mf g$,
$\mf l$ be an isotropic subspace of $\mf g^\Gamma[\frac12]$
spanned by elementary matrices (with respect to the fixed basis of $V$),
and $G$ be a neutral semisimple grading of $\mf g$.
Then
the $G$-grading of $U(\mf g)$
induces a $G$-grading of the $W$-algebra:
$$
W(\mf g,f,\Gamma,\mf l)
=
\bigoplus_{k\in\frac12\mb Z}
W(\mf g,f,\Gamma,\mf l)^G[k]
\,.
$$
\end{proposition}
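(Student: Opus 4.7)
The plan is to verify that both the defining ideal $\mc I$ and the $\ad\mf n$-invariance condition in Definition \ref{def:Walg} are compatible with the $G$-grading of $U(\mf g)$ from \eqref{20180524:eq2}, so that the $G$-grading descends to the quotient and restricts to the subspace of invariants. Two structural facts will do all the work: first, the diagonal matrices $h_\Gamma$ and $h$ attached to $\Gamma$ and $G$ both act diagonally in the fixed basis of $V$, hence commute, so the $\Gamma$- and $G$-gradings of $\mf g$ are mutually compatible (each $\mf g^\Gamma[j]$ is a direct sum of $G$-homogeneous subspaces); second, since $G$ is neutral for $f$ we have $F \in (\End V)^G[0]$, and a standard argument using $\tr(HC-CH)=0$ shows that $\tr$ vanishes on $(\End V)^G[k]$ for $k\neq 0$, so the trace form $(f|b)=\tr(FB)$ vanishes unless $b \in \mf g^G[0]$.

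First I would check that the subspaces $\mf m$ and $\mf n$ of \eqref{eq:mn} are $G$-graded. By hypothesis $\mf l$ is spanned by elementary matrices $e_{ij}$, each of which is $G$-homogeneous, so $\mf l$ is $G$-graded; combined with the compatibility of the two gradings, this gives that $\mf m = \mf l \oplus \mf g^\Gamma[\geq 1]$ is $G$-graded. The skewsymmetric form $\omega(a,b)=(f|[a,b])$ pairs $\mf g^\Gamma[\tfrac12]^G[k]$ with $\mf g^\Gamma[\tfrac12]^G[-k]$ (by the same trace argument applied to $[A,B]$), so $\mf l^\perp$ is $G$-graded as well, hence so is $\mf n$.

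Next I would show $\mc I$ is $G$-graded. For $b\in\mf m$ homogeneous of $G$-degree $k\neq 0$, the second fact gives $(f|b)=0$, so $b-(f|b) = b$ is $G$-homogeneous of degree $k$; for $b \in \mf m^G[0]$ both summands of $b-(f|b)$ lie in $U(\mf g)^G[0]$. Thus $\Span\{b-(f|b)\}_{b\in\mf m}$ is a $G$-graded subspace of $U(\mf g)$, and left multiplication by the $G$-graded algebra $U(\mf g)$ yields a $G$-graded left ideal $\mc I$. Hence $U(\mf g)/\mc I$ inherits the $G$-grading.

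Finally, fix a basis of $\mf n$ consisting of $G$-homogeneous elements; for each such $a\in\mf n^G[k]$, the operator $\ad a$ on $U(\mf g)/\mc I$ shifts $G$-degree by $k$, so its kernel is $G$-graded. The $W$-algebra $W(\mf g,f,\Gamma,\mf l) = (U(\mf g)/\mc I)^{\ad\mf n}$ is the intersection of these graded kernels, hence itself $G$-graded. I do not anticipate a serious obstacle: the whole argument reduces to the two observations above, and the delicate hypothesis is precisely the neutrality of $G$, which is exactly what forces $(f|b)=0$ for $b$ of nonzero $G$-degree so that the ``shift'' $b \mapsto b-(f|b)$ remains homogeneous.
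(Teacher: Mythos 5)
Your proposal is correct and follows essentially the same route as the paper: the neutrality of $G$ forces $f$ to have $G$-degree $0$, so that the generators $b-(f|b)$ of $\mc I$ are $G$-homogeneous (using that $\mf l$ is spanned by elementary matrices), and the $G$-gradedness of $\mf l^\perp$ follows from the orthogonality of $G$-homogeneous components, whence the invariant subspace inherits the grading. The only difference is presentational — you work with kernels of $\ad a$ inside the quotient $U(\mf g)/\mc I$, while the paper works with the preimage subalgebra $\widetilde W\subset U(\mf g)$ — which is an equivalent formulation.
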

\begin{proof}
Recall the definition \eqref{eq:Walg} 
of the $W$-algebra:
$$
W(\mf g,f,\Gamma,\mf l)
=
\widetilde{W}/\mc I
\,,
$$
where
$$
\widetilde{W}
=
\Big\{
u\in U(\mf g)\,\big|\,(\ad a)(u)\in\mc I
\,\text{ for all }\, 
a\in\mf l^\perp\oplus\mf g^\Gamma[\geq1]
\Big\}
\,,
$$
is a subalgebra of $U(\mf g)$,
and
$$
\mc I
\,=\,
U(\mf g)\Span\big\{b-(f|b)\,|\,b\in\mf l\oplus\mf g^\Gamma[\geq1]\big\}
\,,
$$
is its algebra ideal.
First, observe that the space $\mc I$
is compatible with any semisimple neutral grading $G$ of $U(\mf g)$.
Indeed, if we choose a basis $\{u_i\}$ of $\mf l\oplus\mf g^\Gamma[\geq1]$
consisting of elementary matrices,
then $u_i-(f|u_i)$ is homogeneous with respect to the neutral grading $G$.

Note that, since the $G$-grading is defined by $\ad h$-eigenvalues
for a semisimple element $h$, 
$(\mf g^G[i]|\mf g^G[j])=0$ unless $i+j=0$.
Since, by assumption, $\mf l$ is compatible with the $G$-grading,
and since, by the neutrality assumption on $G$, $f$ is homogeneous of $G$-degree $0$,
it follows that $\mf l^\perp=\{a\in\mf g^\Gamma[\frac12]\,|\,(a|[f,\mf l])=0\}$
is compatible with the $G$-grading as well.
It follows, that $\widetilde{W}\subset U(\mf g)$ 
decomposes as direct sum of $G$-graded subspaces.
The claim follows.
\end{proof}

\begin{proposition}\label{20180524:thm2}
Let $G$ be a semisimple grading of $\mf g$ neutral for $f$.
Let $\Gamma$, $\tilde \Gamma$ be good gradings for $f$,
and $\mf l\subset\mf g^{\Gamma}[\frac12]$, $\tilde{\mf l}\subset\mf g^{\tilde\Gamma}[\frac12]$ 
be isotropic subspaces.
The corresponding isomorphism 
$\phi_{\Gamma,\mf l;\tilde\Gamma,\tilde{\mf l}}$
defined in \eqref{20180523:eq3} preserves the $G$-grading:
$$
\phi_{\Gamma,\mf l;\tilde\Gamma,\tilde{\mf l}}
\,:\,\,
W(\mf g,f,\Gamma,\mf l)^G[k]
\,\stackrel{\sim}{\longrightarrow}\,
W(\mf g,f,\tilde\Gamma,\tilde{\mf l})^G[k]
\,,
$$
for every degree $k$.
\end{proposition}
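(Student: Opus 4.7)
The plan is to unwind $\phi_{\Gamma,\mf l;\tilde\Gamma,\tilde{\mf l}}$ into its defining composition \eqref{20180523:eq5} and to check that every single factor preserves the $G$-grading; a composition of $G$-graded maps is $G$-graded, so this gives the result. The hypothesis that the isotropic subspaces are compatible with $G$ (which is needed merely to make the $G$-grading on both $W$-algebras well defined by Proposition \ref{20180524:thm2b}) is assumed throughout; concretely, $\mf l$ and $\tilde{\mf l}$ are spanned by elementary matrices.

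The chain \eqref{20180523:eq5} alternates between two kinds of factors. The first kind are the \emph{literal equalities} $W(\mf g,f,\Gamma_i,\mf l_i) = W(\mf g,f,\Gamma_{i+1},\tilde{\mf l}_i)$ produced by Lemma \ref{20180419:lem1}, which hold because the two sides are defined via the same nilpotent subalgebra $\mf n=\mf l_i\oplus\mf g^{\Gamma_i}[\geq1]=\tilde{\mf l}_i\oplus\mf g^{\Gamma_{i+1}}[\geq1]$, hence the same ideal $\mc I$ and the same subspace of $U(\mf g)/\mc I$: this factor is the identity on the underlying subspace and trivially preserves the $G$-grading inherited from $U(\mf g)$. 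The second kind are the \emph{Gan--Ginzburg isomorphisms} $\rho^{\Gamma_{i+1}}_{\tilde{\mf l}_i,\mf l_{i+1}}$ of Theorem \ref{thm:GG}, obtained by restricting to the $W$-algebra the natural quotient map $U(\mf g)/\mc I_1\twoheadrightarrow U(\mf g)/\mc I_2$. Both ideals $\mc I_j$ are generated by elements of the form $b-(f|b)$ with $b\in\mf g$ spanned by elementary matrices; by neutrality of $G$ we have $f\in\mf g^G[0]$, so $(f|b)$ can be nonzero only when $b\in\mf g^G[0]$, and each generator $b-(f|b)$ is $G$-homogeneous. Hence $\mc I_1,\mc I_2$ are $G$-graded left ideals of $U(\mf g)$, the quotient map is $G$-graded, and so is its restriction $\rho^{\Gamma_{i+1}}_{\tilde{\mf l}_i,\mf l_{i+1}}$.

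The delicate point, which I expect to be the main obstacle, is to arrange that \emph{every} intermediate isotropic subspace $\mf l_i$, $\tilde{\mf l}_i$ produced in the chain can be chosen to be $G$-homogeneous (spanned by elementary matrices), because otherwise the previous paragraph's argument fails. In the proof of Lemma \ref{20180419:lem1} the two ``forced'' summands $\mf g^{\Gamma_i}[\frac12]\cap\mf g^{\Gamma_{i+1}}[1]$ and $\mf g^{\Gamma_{i+1}}[\frac12]\cap\mf g^{\Gamma_i}[1]$ are automatically $G$-homogeneous, since they are spanned by elementary matrices. It remains to choose the Lagrangian $\bar{\mf l}$ of $\mf g^{\Gamma_i}[\frac12]\cap\mf g^{\Gamma_{i+1}}[\frac12]$ to be $G$-homogeneous; this is possible because neutrality of $G$ forces $\omega(a,b)=(f|[a,b])$ to vanish on $\mf g^G[k]\times\mf g^G[\ell]$ unless $k+\ell=0$, so $\omega$ decomposes into a symplectic form on the $G$-degree $0$ piece and nondegenerate pairings between degree $k$ and degree $-k$ pieces, and a $G$-homogeneous Lagrangian can be assembled summand by summand. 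Finally, to connect the given endpoints $\mf l$, $\tilde{\mf l}$ into the chain one uses additional Gan--Ginzburg maps with $G$-homogeneous Lagrangian extensions of $\mf l$ and $\tilde{\mf l}$, to which the same argument applies. With every factor in \eqref{20180523:eq5} now $G$-graded, the composition $\phi_{\Gamma,\mf l;\tilde\Gamma,\tilde{\mf l}}$ is $G$-graded, as claimed.
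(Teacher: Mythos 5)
Your proof is correct and follows essentially the same route as the paper: decompose $\phi_{\Gamma,\mf l;\tilde\Gamma,\tilde{\mf l}}$ via \eqref{20180523:eq5} into identity maps and Gan--Ginzburg isomorphisms, and observe that each factor is induced by the identity on $U(\mf g)$, whose $G$-grading descends to all the $W$-algebras involved by Proposition \ref{20180524:thm2b}. Your additional verification that the intermediate Lagrangians of Lemma \ref{20180419:lem1} can be chosen $G$-homogeneous (indeed spanned by elementary matrices) makes explicit a compatibility hypothesis that the paper's proof uses only implicitly when invoking Proposition \ref{20180524:thm2b} at each intermediate step.
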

\begin{proof}
First, consider the case $\Gamma=\tilde\Gamma$ and $\mf l\subset\tilde{\mf l}$.
In this case the isomorphism $\phi_{\Gamma,\mf l;\tilde\Gamma,\tilde{\mf l}}$
coincides with the Gan-Ginzburg isomorphism $\rho^\Gamma_{\mf l,\tilde{\mf l}}$
defined in \eqref{20180419:eq2}.
On the other hand, the Gan-Ginzburg isomorphism
is induced by the quotient map
$U(\mf g)/ U(\mf g) \Span \{ b - ( f \vert  b) \}_{b \in \mf m} 
\twoheadrightarrow U(\mf g) / U(\mf g) \Span \{b - (f \vert  b) \}_{b \in \tilde{\mf m}}$,
which is induced by the identity map on $U(\mf g)$.
Since the identity on $U(\mf g)$ preserves the $G$-grading
and, by Proposition \ref{20180524:thm2b}, 
the $G$-gradings of $W(\mf g,f,\Gamma,\mf l)$
and $W(\mf g,f,\Gamma,\tilde{\mf l})$ are induced by that of $U(\mf g)$,
it follows that $\rho^\Gamma_{\mf l,\tilde{\mf l}}$ preserves the $G$-grading.

The claim in the general case follows by the above special case
and the fact that, by construction \eqref{20180523:eq5},
the isomorphism $\phi_{\Gamma,\mf l;\tilde\Gamma,\tilde{\mf l}}$
is a composition of identity maps and Gan-Ginzburg isomorphisms.
\end{proof}
\begin{corollary}\label{20180529:cor}
Let $\Gamma$, $\tilde \Gamma$ be good gradings for $f$,
and $\mf l\subset\mf g^{\Gamma}[\frac12]$, $\tilde{\mf l}\subset\mf g^{\tilde\Gamma}[\frac12]$ 
be isotropic subspaces.
Consider the isomorphism 
$\phi_{\Gamma,\mf l;\tilde\Gamma,\tilde{\mf l}}$
defined in \eqref{20180523:eq3},
extended to an isomorphism
$$
W(\mf g,f,\Gamma,\mf l)[z]\otimes\End V
\stackrel{\sim}{\longrightarrow}
W(\mf g,f,\tilde\Gamma,\tilde{\mf l})[z]\otimes\End V
\,,
$$
commuting with $z$ and acting only on the first factor of the tensor product.
Let $A(z)\in F^\Gamma_\Delta\big(W(\mf g,f,\Gamma,\mf l)[z]\otimes\End V\big)$
be of $G$-degree $0$ for every semisimple neutral grading $G$.
Then
$$
\phi_{\Gamma,\mf l;\tilde\Gamma,\tilde{\mf l}}(A(z))
\,\in\,
F^{\tilde{\Gamma}}_\Delta
\big(
W(\mf g,f,\tilde\Gamma,\tilde{\mf l})
[z]\otimes\End V
\big)
\,.
$$
\end{corollary}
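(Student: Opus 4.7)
My plan is to trace $A(z)$ through the explicit construction of $\phi_{\Gamma,\mf l;\tilde\Gamma,\tilde{\mf l}}$ described in equation \eqref{20180523:eq5}, controlling both the Kazhdan filtration index and the relevant grading degrees at every intermediate stage. Fix, once and for all, a chain $\Gamma = \Gamma_0,\Gamma_1,\dots,\Gamma_K=\tilde\Gamma$ of pairwise adjacent good gradings for $f$, together with Lagrangian subspaces $\mf l_i\subset\mf g^{\Gamma_i}[\tfrac12]$ and $\tilde{\mf l}_i\subset\mf g^{\Gamma_{i+1}}[\tfrac12]$ supplied by Lemma \ref{20180419:lem1}, so that $\phi_{\Gamma,\mf l;\tilde\Gamma,\tilde{\mf l}}$ decomposes, as in \eqref{20180523:eq5}, into an alternation of Gan--Ginzburg isomorphisms $\rho^{\Gamma_{j}}_{\,\cdot\,,\,\cdot\,}$ of Theorem \ref{thm:GG} and the equalities \eqref{20180525:eq1}. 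I will denote by $A_j(z)$ the image of $A(z)$ after applying the first $j$ steps of this factorization.

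The key input is the observation, recorded in Example \ref{ex:grad}, that the difference of any two good gradings for $f$ is a semisimple grading that is neutral for $f$; in particular $\Gamma_i-\Gamma_{i+1}$ and $\Gamma-\tilde\Gamma$ are neutral. By hypothesis, $A(z)$ is homogeneous of degree $0$ under every such neutral semisimple grading, and Proposition \ref{20180524:thm2} (applied factor-by-factor and extended to $W(\mf g,f,\Gamma,\mf l)[z]\otimes\End V$ using Lemma \ref{20180524:lem0}, which certifies that identity maps, the operators $E_j$, the matrix $D$, and $F,F^t$ all have $G$-degree $0$ for neutral $G$) shows that each $A_j(z)$ remains homogeneous of $G$-degree $0$ for every neutral semisimple grading $G$.

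The inductive filtration control then proceeds step by step. At each Gan--Ginzburg step the Kazhdan filtration index $\Delta$ with respect to the ambient grading $\Gamma_j$ is preserved, directly by Theorem \ref{thm:GG}. At each identity step \eqref{20180525:eq1}, where the underlying algebra is unchanged but the ambient grading switches from $\Gamma_i$ to $\Gamma_{i+1}$, the two relevant Kazhdan filtrations $F^{\Gamma_i}$ and $F^{\Gamma_{i+1}}$ differ in general; however, Lemma \ref{20180524:lemb} states precisely that on the $(\Gamma_i-\Gamma_{i+1})$-degree $0$ component of $U(\mf g)[z]\otimes\End V$ these two filtrations coincide level-by-level, and by the previous paragraph $A_j(z)$ lies in that component. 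Consequently the filtration index $\Delta$ is preserved at the identity step as well, and a straightforward induction gives $\phi_{\Gamma,\mf l;\tilde\Gamma,\tilde{\mf l}}(A(z))\in F^{\tilde\Gamma}_\Delta\bigl(W(\mf g,f,\tilde\Gamma,\tilde{\mf l})[z]\otimes\End V\bigr)$.

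The main conceptual point, and the only potential obstacle, is to recognize that Lemma \ref{20180524:lemb} is exactly the tool that compensates for the filtration jump at the identity steps, and that this compensation is available precisely because the hypothesis on $A(z)$ controls every neutral semisimple grading simultaneously rather than just a single one. Beyond this, the argument is bookkeeping: one must verify that Proposition \ref{20180524:thm2} and Lemma \ref{20180524:lemb}, both stated for $U(\mf g)$ or the $W$-algebra, extend to $W(\mf g,f,\Gamma,\mf l)[z]\otimes\End V$, which is immediate since $z$ is assigned trivial $G$-degree, $\End V$ is $G$-graded by construction, and the Kazhdan filtration on the tensor product was defined in \eqref{20180523:eq8} by tensoring filtrations and shifting by the $\End V$-degree, a construction compatible with the arguments of Lemmas \ref{20180524:lem} and \ref{20180524:lemb}.
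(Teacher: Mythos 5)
Your proposal is correct and follows essentially the same route as the paper's proof: decompose $\phi_{\Gamma,\mf l;\tilde\Gamma,\tilde{\mf l}}$ via the chain \eqref{20180523:eq5}, use Theorem \ref{thm:GG} together with Proposition \ref{20180524:thm2} to preserve both the Kazhdan filtration index and the $G$-degree $0$ property at each Gan--Ginzburg step, and invoke Lemma \ref{20180524:lemb} with the neutral grading $\Gamma_i-\Gamma_{i+1}$ (Example \ref{ex:grad}) to trade $F^{\Gamma_i}_\Delta$ for $F^{\Gamma_{i+1}}_\Delta$ at each identity step, then induct along the chain. Your additional bookkeeping about extending the grading and filtration statements to $W(\mf g,f,\Gamma,\mf l)[z]\otimes\End V$ is a harmless elaboration of what the paper leaves implicit.
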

\begin{proof}
Recall that the isomorphism $\phi_{\Gamma,\mf l;\tilde\Gamma,\tilde{\mf l}}$
is defined by a chain of identity maps
and Gan-Ginzburg maps as in \eqref{20180523:eq5}.
By \eqref{20180525:eq1}
we have
$W(\mf g,f,\Gamma,\mf l_0)=W(\mf g,f,\Gamma_1,\tilde{\mf l}_0)$
for some Lagrangian subspaces $\mf l_0\subset\mf g^{\Gamma}[\frac12]$
and $\tilde{\mf l}_0\subset\mf g^{\Gamma_1}[\frac12]$.
By Theorem \ref{thm:GG} and Proposition \ref{20180524:thm2},
and the assumption on $A(z)$, we have
\begin{align*}
\rho^\Gamma_{\mf l,\mf l_0}(A(z))
& \,\in\,
F^{\Gamma}_\Delta
\big(
W(\mf g,f,\Gamma,\mf l_0)[z]\otimes\End V
\big)^G[0]
\\
& =
F^{\Gamma}_\Delta
\big(
W(\mf g,f,\Gamma_1,\tilde{\mf l}_0)[z]\otimes\End V
\big)^G[0]
\,,
\end{align*}
for every semisimple neutral $G$.
Recall by Example \ref{ex:grad} that the semisimple grading $G=\Gamma-\Gamma_1$ is 
neutral for $f$.
Hence, by Lemma \ref{20180524:lemb}, we also have
$$
\rho^\Gamma_{\mf l,\mf l_0}(A(z))
\,\in\,
F^{\Gamma_1}_\Delta
\big(
W(\mf g,f,\Gamma_1,\tilde{\mf l}_0)[z]\otimes\End V
\big)^G[0]
\,,
$$
again for every semisimple neutral $G$.
Repeating the same argument,
by \eqref{20180525:eq1}
there exist Lagrangian subspaces $\mf l_1\subset\mf g^{\Gamma_1}[\frac12]$
and $\tilde{\mf l}_1\subset\mf g^{\Gamma_2}[\frac12]$
such that
$W(\mf g,f,\Gamma_1,\mf l_1)=W(\mf g,f,\Gamma_2,\tilde{\mf l}_1)$,
and, by Theorem \ref{thm:GG}, Proposition \ref{20180524:thm2},
and Lemma \ref{20180524:lemb},
we have
\begin{align*}
\rho^{\Gamma_1}_{\tilde{\mf l}_0,\mf l_1}
\rho^\Gamma_{\mf l,\mf l_0}
(A(z))
& \,\in\,
F^{\Gamma_1}_\Delta
\big(
W(\mf g,f,\Gamma_1,\mf l_1)[z]\otimes\End V
\big)^G[0]
\\
& =
F^{\Gamma_1}_\Delta
\big(
W(\mf g,f,\Gamma_2,\tilde{\mf l}_1)[z]\otimes\End V
\big)^G[0]
\\
& =
F^{\Gamma_2}_\Delta
\big(
W(\mf g,f,\Gamma_2,\tilde{\mf l}_1)[z]\otimes\End V
\big)^G[0]
\,,
\end{align*}
for every semisimple neutral $G$.
Proceeding by induction, we get the claim.
\end{proof}
\begin{theorem}\label{20180524:thm3}
Given $f$,
let $\Gamma_R$ be the good grading associated to the right aligned pyramid,
$\Gamma$ be an arbitrary good grading for $f$,
and $\mf l$ be an isotropic subspace of $\mf g^\Gamma[\frac12]$,
and consider the algebra isomorphism $\phi_{\Gamma_R,0;\Gamma,{\mf l}}$
defined in \eqref{20180523:eq3}.
We have
\begin{equation}\label{20180524:eq4}
W^{\Gamma,\mf l}(z)
:=
\phi_{\Gamma_R,0;\Gamma,{\mf l}}
\big(W(z))\big)
\,\in\,
F^\Gamma_1\big(
W(\mf g,f,\Gamma,\mf l)[z]\otimes\Hom(V_-,V_+)
\big)^G[0]
\,,
\end{equation}
for every semisimple grading $G$ neutral for $f$.
In \eqref{20180524:eq4} the map $\phi_{\Gamma_R,0;\Gamma,{\mf l}}$
is extended to $W(\mf g,f,\Gamma_R,0)[z]\otimes\Hom(V_-,V_+)$
by acting only on the first factor of the tensor product, and commuting with $z$.
\end{theorem}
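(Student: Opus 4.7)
The plan is to reduce the statement for an arbitrary good grading $\Gamma$ to the corresponding assertion for the right aligned grading $\Gamma_R$, and then transport it along the isomorphism $\phi_{\Gamma_R,0;\Gamma,\mf l}$ using the machinery just developed in Propositions \ref{20180524:thm1}--\ref{20180524:thm2} and Corollary \ref{20180529:cor}. Concretely, I would verify that the operator $W(z)$ associated to $\Gamma_R$ already satisfies the two hypotheses needed to invoke Corollary \ref{20180529:cor}: (i) it lies in the Kazhdan-filtered piece $F^{\Gamma_R}_1$, and (ii) it is homogeneous of $G$-degree $0$ for every neutral semisimple grading $G$. Once both are in place, the conclusion is essentially formal.

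For (i), recall that $W(z)=\widetilde W(z)\bar 1$ with $\widetilde W(z)$ defined by the recursion \eqref{eq:recW} for the right aligned pyramid. The bound $\widetilde W(z)\in F^{\Gamma_R}_1\bigl(U(\mf g)[z]\otimes\End V\bigr)$ is exactly equation \eqref{20180508:eq4}, established inside the proof of Proposition \ref{20180405:prop2}; passing to the quotient by $\mc I$ preserves the Kazhdan filtration by the very definition \eqref{eq:kazW}, giving $W(z)\in F^{\Gamma_R}_1\bigl(W(\mf g,f,\Gamma_R,0)[z]\otimes\Hom(V_-,V_+)\bigr)$. For (ii), Proposition \ref{20180524:thm1} says that $\widetilde W(z)$ is homogeneous of $G$-degree $0$ for every neutral $G$, where the extended $G$-grading on $U(\mf g)[z]\otimes\End V$ assigns degree $0$ to $z$. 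Since the ideal $\mc I$ decomposes as a direct sum of its $G$-graded pieces (as noted in the proof of Proposition \ref{20180524:thm2b}, using that for a neutral $G$ the element $f$ and the subspaces $\mf l$, $\mf l^\perp$ are $G$-graded), the quotient map is $G$-graded, and $W(z)$ inherits $G$-homogeneity of degree $0$.

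With (i) and (ii) in hand, apply Corollary \ref{20180529:cor} (extended in the obvious way to $\Hom(V_-,V_+)$, since $\phi_{\Gamma_R,0;\Gamma,\mf l}$ acts only on the first tensor factor and commutes with $z$ and with all constant endomorphisms of $V$) with $A(z)=W(z)$ and $\Delta=1$. This immediately yields
\[
W^{\Gamma,\mf l}(z)=\phi_{\Gamma_R,0;\Gamma,\mf l}(W(z))\,\in\,F^{\Gamma}_1\bigl(W(\mf g,f,\Gamma,\mf l)[z]\otimes\Hom(V_-,V_+)\bigr),
\]
proving the filtration part of the claim. The $G$-homogeneity of $W^{\Gamma,\mf l}(z)$ for every neutral $G$ then follows from Proposition \ref{20180524:thm2}: each of the elementary arrows in the chain \eqref{20180523:eq5} defining $\phi_{\Gamma_R,0;\Gamma,\mf l}$ is either an equality of algebras or a Gan--Ginzburg map, and both types preserve $G$-gradings for any neutral $G$, so the composition does as well.

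I do not foresee a serious obstacle; the theorem is, by design, a clean packaging of the earlier results. The only point that warrants care is bookkeeping around the extensions of the Kazhdan filtration and the $G$-grading to the auxiliary factors $\mb F[z]$ and $\End V$: one must check that the conventions (degree $1$ for $z$ in Kazhdan, degree $0$ for $z$ in $G$, and Lemma \ref{20180524:lem0} describing the behaviour of $\id_U$, $E_j$, $D$, $F$, $F^t$) are consistent with the extensions used in Lemma \ref{20180524:lemb} and Corollary \ref{20180529:cor}. Once these are aligned, the argument above goes through verbatim.
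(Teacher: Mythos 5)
Your proposal is correct and follows essentially the same route as the paper: establish that $W(z)$ for the right aligned grading lies in $F^{\Gamma_R}_1\bigl(W(\mf g,f,\Gamma_R,0)[z]\otimes\Hom(V_-,V_+)\bigr)^G[0]$ using \eqref{20180508:eq4}, Proposition \ref{20180524:thm1} and Proposition \ref{20180524:thm2b}, and then transport it along $\phi_{\Gamma_R,0;\Gamma,\mf l}$ via Corollary \ref{20180529:cor}. Your explicit appeal to Proposition \ref{20180524:thm2} for the $G$-homogeneity of the image is a slightly more careful bookkeeping of a point the paper leaves implicit in its one-line invocation of the corollary, but it is the same argument.
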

\begin{proof}
By Theorem \ref{thm:20180208}, condition \eqref{20180508:eq4}
and Propositions \ref{20180524:thm1} and \ref{20180524:thm2b}, 
we have
$$
W(z)\,\in\,F^{\Gamma_R}_1
\big(
W(\mf g,f,\Gamma_R,0)[z]\otimes\Hom(V_-,V_+)
\big)^G[0]
\,,
$$
for every semisimple grading $G$ neutral for $f$.
Hence, the statement is a special case of Corollary \ref{20180529:cor}.
\end{proof}

\subsection{Definition of the Premet map}
\label{sec:6.2}

Recall the definition of the subspace $U^\perp$ in \eqref{20180511:eq1}:
\begin{equation}\label{20180511:eq1c}
U^\perp
=
\id_{F^tV}\End V\oplus\id_{V_-}(\End V)^{\Gamma_R}[>0]
\,\subset\mf g
\,,
\end{equation}
where $\Gamma_R$ is the good grading of $f$ associated to the right aligned pyramid.
\begin{lemma}\label{20180529:lem}
For every good grading $\Gamma$, we have
$\mf g^\Gamma[\geq\frac12]\subset U^\perp$.
\end{lemma}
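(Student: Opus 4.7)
The plan is to decompose $a$ along the splitting $V=F^tV\oplus V_-$, writing $a=\id_{F^tV}a+\id_{V_-}a$. The first summand belongs to $\id_{F^tV}\End V\subset U^\perp$ by the very definition of $U^\perp$. Since $V_-$ and $F^tV$ are both spanned by elements of the fixed basis of $V$, the projection $\id_{V_-}$ is $\Gamma$-homogeneous of degree $0$, and hence $\id_{V_-}a$ still has $\Gamma$-degree $\geq\tfrac12$; it remains to show
\[
\id_{V_-}a\in\id_{V_-}(\End V)^{\Gamma_R}[>0]\,.
\]

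By linearity one reduces to treating a single elementary matrix $e_{ij}$ of $\Gamma$-degree $\geq\tfrac12$ whose image is a leftmost box $i$ of some row $R$; the task is then to check that $e_{ij}$ has strictly positive $\Gamma_R$-degree. Let $k,\ell$ denote the lengths of the rows $R\ni i$ and $S\ni j$, let $m\in\{0,\dots,\ell-1\}$ be the position of $j$ counted from the left inside $S$, and let $x_R^+,x_S^+\in\tfrac12\mb Z$ be the $\Gamma$-coordinates of the rightmost boxes of $R$ and $S$. A direct calculation gives
\[
\deg_\Gamma e_{ij}=(x_R^+-x_S^+)+(\ell-k)-m,\qquad \deg_{\Gamma_R}e_{ij}=(\ell-k)-m,
\]
so
\[
\deg_\Gamma e_{ij}-\deg_{\Gamma_R}e_{ij}=x_R^+-x_S^+\,.
\]

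The key ingredient is the structural convexity of pyramids in type $A$: the horizontal span of any row is contained in the span of every strictly longer row (and coincides with it when the two rows belong to the same rectangle). If $k\geq\ell$, this yields $0\leq x_R^+-x_S^+\leq k-\ell$, forcing $\deg_\Gamma e_{ij}\leq-m\leq 0$, which contradicts $\deg_\Gamma e_{ij}\geq\tfrac12$; hence $k<\ell$. The same convexity then provides $x_R^+-x_S^+\leq 0$, and the identity above immediately gives $\deg_{\Gamma_R}e_{ij}\geq\deg_\Gamma e_{ij}\geq\tfrac12>0$, completing the proof. The only substantive point is invoking the pyramid convexity in the correct form; the coordinate bookkeeping is then elementary.
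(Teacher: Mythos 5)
Your argument is correct and is essentially the paper's own proof written out in coordinates: your case $k\geq\ell$ is the paper's observation that an arrow of positive $\Gamma$-degree pointing to a row of greater or equal length cannot end in a leftmost box (hence lies in $\id_{F^tV}\End V$), while your case $k<\ell$ is the observation that passing to the right-aligned pyramid only moves such an arrow further to the right, so its $\Gamma_R$-degree stays strictly positive. The pyramid-nesting (convexity) property you invoke is exactly the implicit ingredient behind the paper's pictorial one-line proof, so the two arguments coincide up to the explicit bookkeeping.
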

\begin{proof}
When going from a pyramid to the corresponding right aligned pyramid,
an arrow going upright goes even more to the right
(i.e. the corresponding element of $\mf g$ increases the degree),
while an arrow going downright has certainly image in $F^tV$.
\end{proof}

Recall the operators $Z(z)\in U(\mf g^f)[z]\otimes\Hom(V_-,V_+)$ in \eqref{eq:Z}
and $W^{\Gamma,\mf l}(z)\in W(\mf g,f,\Gamma,\mf l)[z]\otimes\Hom(V_-,V_+)$ 
defined in Theorem \ref{20180524:thm3}.
\begin{proposition}\label{thm:premetmap}
Let $\Gamma$ be a good grading for $f\in\mf g$
and $\mf l\subset\mf g^\Gamma[\frac12]$ be an isotropic subspace.
The formula
\begin{equation}\label{20180523:eq9}
w^{\Gamma,\mf l}(Z(z))
=
W^{\Gamma,\mf l}(z)
\,,
\end{equation}
defines uniquely a linear map
$w^{\Gamma,\mf l}:\,\mf g^f\to W(\mf g,f,\Gamma,\mf l)$.
Then, the map $w^{\Gamma,\mf l}$
satisfies Premet's conditions (i) and (ii) of Definition \ref{def:premet}
with respect to the subspace $U^\perp$ in \eqref{20180511:eq1c}.
\end{proposition}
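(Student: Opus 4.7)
My plan is to define
\begin{equation*}
w^{\Gamma, \mf l} := \phi_{\Gamma_R, 0; \Gamma, \mf l} \circ w,
\end{equation*}
where $w: \mf g^f \to W(\mf g, f, \Gamma_R, 0)$ is the Premet map constructed in Proposition \ref{20180405:prop2}, and then verify that this satisfies all the required properties. Equation \eqref{20180523:eq9} will be an immediate consequence of \eqref{20180507:eq10}: since $\phi_{\Gamma_R, 0; \Gamma, \mf l}$ acts only on the first tensor factor and commutes with $z$, applying it to $w(Z(z)) = W(z)$ yields $w^{\Gamma, \mf l}(Z(z)) = \phi_{\Gamma_R, 0; \Gamma, \mf l}(W(z)) = W^{\Gamma, \mf l}(z)$. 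Uniqueness follows since Lemma \ref{lem:gfk} identifies $\{\phi_\ell(u_i)\}$ as a basis of $\mf g^f$ and the elements $\{U^i\}_{i \in \mc F}$ are linearly independent in $\Hom(V_-, V_+)$, so matching coefficients of $(-z)^\ell U^i$ on both sides of \eqref{20180523:eq9} determines each $w^{\Gamma, \mf l}(\phi_\ell(u_i))$ uniquely.

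\textbf{Premet's condition (i).} I would first choose the basis $\{u_i\}$ of $\Hom(V_+, V_-)$ to consist of elementary matrices in the fixed basis of $V$; these are $\Gamma$-homogeneous for every good grading of $f$. Within each rectangle of the pyramid attached to $\Gamma$, horizontal shifts between adjacent boxes change the $\Gamma$-degree by $\pm 1$, so $F$ and $F^t$ are $\Gamma$-homogeneous of degrees $-1$ and $+1$ respectively. By \eqref{eq:phik}, each $\phi_\ell(u_i)$ is then $\Gamma$-homogeneous of degree $\deg_\Gamma u_i + \ell = -\deg_\Gamma U^i + \ell$. By Theorem \ref{20180524:thm3}, $W^{\Gamma, \mf l}(z) \in F^\Gamma_1$ in the extended Kazhdan filtration \eqref{20180523:eq8}; extracting the coefficient of $(-z)^\ell U^i$ from this filtration yields
\begin{equation*}
w^{\Gamma, \mf l}(\phi_\ell(u_i)) \in F^\Gamma_{1 + \deg_\Gamma U^i - \ell}\,W(\mf g, f, \Gamma, \mf l) = F^\Gamma_{1 - \deg_\Gamma \phi_\ell(u_i)}\,W(\mf g, f, \Gamma, \mf l),
\end{equation*}
which is precisely Premet's condition (i).

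\textbf{Premet's condition (ii).} The hard part will be to prove, in matrix form,
\begin{equation*}
\eta^f(\gr^\Gamma_1 W^{\Gamma, \mf l}(z)) = Z(z),
\end{equation*}
given the analogous identity $\eta^f(\gr^{\Gamma_R}_1 W(z)) = Z(z)$ for the right aligned pyramid, established in Proposition \ref{20180405:prop2} (equation \eqref{20180510:eq4}). My strategy will be to propagate this identity through the chain \eqref{20180523:eq5} of maps defining $\phi_{\Gamma_R, 0; \Gamma, \mf l}$, which alternates Gan-Ginzburg isomorphisms $\rho^\Gamma_{\mf l_1, \mf l_2}$ (preserving their Kazhdan filtration by Theorem \ref{thm:GG}) with ``change-of-grading'' identifications \eqref{20180525:eq1} between $W$-algebras for adjacent good gradings. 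For a Gan-Ginzburg step, the induced map on the classical associated graded commutes with $\eta^f$: indeed, $\rho^\Gamma_{\mf l_1, \mf l_2}$ is induced by the identity on $U(\mf g)$ (cf.\ \eqref{20180419:eq1}), so any lift of $y$ in $U(\mf g)$ also lifts $\rho(y)$, the two symbols in $S(\mf g)$ coincide, and $\eta^f$ depends only on the fixed splitting $\mf g = \mf g^f \oplus U^\perp$. For a change-of-grading step between adjacent $\Gamma_i$ and $\Gamma_{i+1}$, I would use the neutral grading $G = \Gamma_i - \Gamma_{i+1}$ (cf.\ Example \ref{ex:grad}) and invoke Lemma \ref{20180524:lemb}: the filtrations $F^{\Gamma_i}_\Delta$ and $F^{\Gamma_{i+1}}_\Delta$ agree on $G$-degree zero elements. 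Since $W^{\Gamma, \mf l}(z)$ is $G$-homogeneous of degree zero for every neutral $G$ by Theorem \ref{20180524:thm3}, the associated graded symbols in $S(\mf g)$ at each Kazhdan degree coincide under the two filtrations, so $\eta^f$ returns the same classical element on both sides. Concatenating these compatibilities along the chain and invoking the base case from Proposition \ref{20180405:prop2} at the $\Gamma_R$ end will yield the desired identity. The main technical obstacle will be the careful bookkeeping of lifts of $W$-algebra elements to $U(\mf g)$ across the different quotients and filtrations arising along the chain, in order to confirm that the associated graded symbols genuinely coincide at each step.
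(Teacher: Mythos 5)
Your overall route is the same as the paper's: define $w^{\Gamma,\mf l}$ by transporting the aligned-pyramid Premet map through $\phi_{\Gamma_R,0;\Gamma,\mf l}$ (equivalently, by matching coefficients in \eqref{20180523:eq9}), get condition (i) from Theorem \ref{20180524:thm3}, and get condition (ii) by pushing the identity $\eta^f(\gr^{\Gamma_R}_1 W(z))=Z(z)$ of Proposition \ref{20180405:prop2} along the chain \eqref{20180523:eq5}, using neutrality of $\Gamma_i-\Gamma_{i+1}$ and Lemma \ref{20180524:lemb} to compare the Kazhdan filtrations at adjacent gradings. The uniqueness argument and the verification of condition (i) are fine.

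There is, however, a genuine gap in your treatment of condition (ii), and it sits exactly at the step you defer as ``bookkeeping of lifts''. Your argument for the Gan--Ginzburg steps (``any lift of $y$ in $U(\mf g)$ also lifts $\rho(y)$, so the symbols in $S(\mf g)$ coincide'') only covers the forward maps $\rho^{\Gamma_i}_{\mf l_1,\mf l_2}$ with $\mf l_1\subset\mf l_2$. But the chain connecting $(\Gamma_R,0)$ to $(\Gamma,\mf l)$ necessarily involves non-nested isotropic subspaces, i.e.\ compositions of the form $\rho^{\Gamma_i}_{0,\mf l_i}\circ(\rho^{\Gamma_i}_{0,\tilde{\mf l}_{i-1}})^{-1}$; for an inverse step a lift of $y'$ is \emph{not} a lift of $(\rho)^{-1}(y')$ — the two lifts differ by an element of the larger ideal $U(\mf g)\Span\{b-(f|b)\}_{b\in\tilde{\mf m}_{i-1}}$, so the symbols in $S(\mf g)$ do \emph{not} coincide, and one must show that $\eta^f$ kills the discrepancy. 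Equivalently, one needs $\eta^f(b-(f|b))=\pi^f(b)=0$ for every $b$ in every nilpotent subalgebra $\mf m_i=\mf l_i\oplus\mf g^{\Gamma_i}[\geq1]$ occurring along the chain (this is also what makes $\eta^f\circ\gr_1$ well defined, independently of the lift, on each intermediate $W$-algebra, since $U^\perp$ is fixed while the gradings vary). This is precisely where the specific choice of $U^\perp$ in \eqref{20180511:eq1c} and Lemma \ref{20180529:lem} ($\mf g^{\tilde\Gamma}[\geq\frac12]\subset U^\perp$ for \emph{every} good grading $\tilde\Gamma$) enter: they yield $\eta^f\circ\rho^{\tilde\Gamma}_{0,\tilde{\mf l}}=\eta^f$ for all $(\tilde\Gamma,\tilde{\mf l})$, which collapses the whole chain and gives $\eta^f(\gr^\Gamma_1 W^{\Gamma,\mf l}(z))=\eta^f(\gr^{\Gamma_R}_1 W(z))=Z(z)$. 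Your appeal to ``$\eta^f$ depends only on the fixed splitting'' does not substitute for this: for a general complement $U^\perp$ (e.g.\ one adapted only to $\Gamma_R$, cf.\ Remark \ref{20180511:rem}) the required vanishing would fail, and the statement of the proposition is tied to the particular $U^\perp$ of \eqref{20180511:eq1c} exactly for this reason. So the missing ingredient is not routine bookkeeping but the key lemma of this section.
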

\begin{proof}
The same argument as in the proof of Proposition \ref{20180405:prop2}
shows that equation \eqref{20180523:eq9}
defines uniquely a linear map $w^{\Gamma,\mf l}:\,\mf g^f\to W(\mf g,f,\Gamma,\mf l)$.
Indeed, $Z(z)$ can be expanded as \eqref{20180507:eq11} and,
by Theorem \ref{20180524:thm3}, we have
$$
\begin{array}{l}
\displaystyle{
\vphantom{\Big(}
W^{\Gamma,\mf l}(z)
=
z\id_{V_+}(1+zF^t)^{-1}\id_{V_-}
} \\
\displaystyle{
\vphantom{\Big(}
+\!\
\sum_{h,k=0}^{p_1-1}
\sum_{i\in\mc F(h,k)}
\!\!\!
\sum_{\ell=0}^{\min\{h,k\}}
(-z)^\ell
\phi_{\Gamma_R,0;\Gamma,{\mf l}}(\widetilde{w}_{i,\ell}\bar 1) U^i
\,\in W(\mf g,f,\Gamma,\mf l)[z]\otimes\Hom(V_-,V_+)
\,.}
\end{array}
$$
The first Premet condition can be restated, as we did in \eqref{20180508:eq4},
as
$$
W^{\Gamma,\mf l}(z)
\,\in\, 
F^{\Gamma}_1
\big(
W(\mf g,f,\Gamma,\mf l)[z]\otimes\Hom(V_-,V_+)
\big)
\,,
$$
which holds by Theorem \ref{20180524:thm3}.
We are left to prove the second Premet condition,
which can be stated, as we did in \eqref{20180510:eq1}, 
as
\begin{equation}\label{20180510:eq1b}
\eta^f(\gr^{\Gamma}_1 W^{\Gamma,\mf l}(z)) = Z(z)
\,.
\end{equation}
By Theorem \ref{20180524:thm3} all the operators $W^{\Gamma,\mf l}(z)$
lie in $G$-degree $0$ for every $(\Gamma,\mf l)$ and every semisimple neutral grading $G$.
On the other hand, by Corollary \ref{20180529:cor}
the map $\phi_{\Gamma_R,0;\Gamma,{\mf l}}$ preserves 
the Kazhdan filtrations
when restricted to elements of $G$-degree $0$ for every semisimple neutral grading $G$.
Hence, it induces a map on the associated graded spaces
preserving the Kazhdan degree.
The LHS of \eqref{20180510:eq1b} is then equal to
\begin{equation}\label{20180510:eq1c}
\eta^f(\gr^{\Gamma}_1 W^{\Gamma,\mf l}(z)) 
=
\eta^f(\gr^{\Gamma}_1 \phi_{\Gamma_R,0;\Gamma,{\mf l}} W(z)) 
=
\eta^f(\phi_{\Gamma_R,0;\Gamma,{\mf l}} \gr^{\Gamma_R}_1 W(z)) 
\,.
\end{equation}
Recall that the isomorphism $\phi_{\Gamma_R,0;\Gamma,{\mf l}}$
is defined by a chain of identity maps
and Gan-Ginzburg maps as in \eqref{20180523:eq5}:
\begin{align*}
& \phi_{\Gamma_R,0;\Gamma,{\mf l}}
=
\rho^{\Gamma_K}_{\tilde{\mf l}_{K-1},\mf l_K}
\circ
\dots
\circ
\rho^{\Gamma_2}_{\tilde{\mf l}_1,\mf l_2}
\circ
\rho^{\Gamma_1}_{\tilde{\mf l}_0,\mf l_1}
\\
& =
\rho^{\Gamma_K}_{0,\mf l_K}
\circ
(\rho^{\Gamma_K}_{0,\tilde{\mf l}_{K-1}})^{-1}
\circ
\dots
\circ
\rho^{\Gamma_2}_{0,\mf l_2}
\circ
(\rho^{\Gamma_2}_{0,\tilde{\mf l}_1})^{-1}
\circ
\rho^{\Gamma_1}_{0,\mf l_1}
\circ
(\rho^{\Gamma_1}_{0,\tilde{\mf l}_0})^{-1}
\,.
\end{align*}
Recall also that each Gan-Ginzburg map $\rho^{\tilde\Gamma}_{0,\tilde{\mf l}}$
is induced by the quotient map with kernel generated by the elements
$b-(f|b)$, with $b\in\tilde{\mf l}+\mf g^{\tilde\Gamma}[\geq1]$.
By definition, the map $\eta^f$ in \eqref{diag:commut-pi_gf}
is induced by the quotient map with kernel generated by the elements
$b-(f|b)$, with $b\in U^\perp$.
Hence, by Lemma \ref{20180529:lem}, we have 
$\eta^f\circ \rho^{\tilde{\Gamma}}_{0,\tilde{\mf l}}=\eta^f$
for all good gradings $\tilde\Gamma$ and all isotropic subspaces $\tilde{\mf l}$,
and equation \eqref{20180510:eq1c} becomes
\begin{equation}\label{20180510:eq1d}
\eta^f(\gr^{\Gamma}_1 W^{\Gamma,\mf l}(z)) 
=
\eta^f(\gr^{\Gamma_R}_1 W(z)) 
\,.
\end{equation}
The claim then follows by \eqref{20180510:eq1}.
\end{proof}

\subsection{Proof of Equation \eqref{eq:main} (and of Theorem \ref{thm:main})}
\label{sec:6.4}

Let $\Gamma$ be a good grading for $f\in\mf g$ 
and $\mf l$ be an isotropic subspace of $\mf g^\Gamma[\frac12]$.
Recall the definition \eqref{eq:L} of the operator $L(z)$,
which we now denote with superscript $(\Gamma,\mf l)$
to remind its dependence on the grading $\Gamma$ and the isotropic subspace $\mf l$:
\begin{equation}\label{20180529:eq1}
L^{\Gamma,\mf l}(z)
=
|z\id_V+F+E_{\mf p}+D_{\mf m}|_{V_-^d,V_+^d}\bar 1_{\mf m}
\,\in W(\mf g,f,\Gamma,\mf l)((z^{-1}))\otimes\Hom(V_-^d,V_+^d)
\,,
\end{equation}
where $\mf m=\mf l\oplus\mf g^\Gamma[\geq1]$,
$\mf p\subset\mf g^\Gamma[\leq\frac12]$ is complementary to $\mf m$,
$E_{\mf p}$ is as in \eqref{eq:E}, $D_{\mf m}$ is as in \eqref{eq:D},
and $\bar 1_{\mf m}$ is the image of $1$ in $U(\mf g)/U(\mf g)\langle b-(f|b)\rangle_{b\in\mf m}$. 
Recall also from \cite{DSKV16,DSKV17} (see also \cite{Fed18}) that
\begin{equation}\label{20180529:eq2}
L^{\Gamma,\mf l}(z)
=
z^{\frac{p_1+1}{2}}|\id_V+z^{\Gamma-1}E|_{V_-^d,V_+^d}\bar 1_{\mf m}
\,,
\end{equation}
where $z^{\Gamma-1}$ is the automorphism of $U(\mf g)[z^{\pm\frac12}]$
mapping $a\mapsto z^{i-1}a$ for $a\in\mf g^\Gamma[i]$.
Equation \eqref{eq:main}, which we plan to prove, reads
\begin{equation}\label{20180529:eq3}
|W^{\Gamma,\mf l}(z)|_{V_-^d,V_+^d}
=
L^{\Gamma,\mf l}(z)
\,,
\end{equation}
where $W^{\Gamma,\mf l}(z)$ is defined by \eqref{20180524:eq4}.
Since 
$\phi_{\Gamma_R,0;\Gamma,{\mf l}}
:\, W(\mf g,f,\Gamma_R,0)\to W(\mf g,f,\Gamma,\mf l)$
is an isomorphism of algebras,
it commutes with taking quasideterminants.
Hence,
$$
|W^{\Gamma,\mf l}(z)|_{V_-^d,V_+^d}
=
\phi_{\Gamma_R,0;\Gamma,{\mf l}}
|W^{\Gamma_R,0}(z)|_{V_-^d,V_+^d}
=
\phi_{\Gamma_R,0;\Gamma,{\mf l}}
\big(L^{\Gamma_R,0}(z)\big)
\,,
$$
by \eqref{20180516:eq2}.
Hence, equation \eqref{20180529:eq3}
can be rewritten as
\begin{equation}\label{20180529:eq4}
\phi_{\Gamma_R,0;\Gamma,{\mf l}}
\big(L^{\Gamma_R,0}(z)\big)
=
L^{\Gamma,\mf l}(z)
\,,
\end{equation}
which we are left to prove.

Recall that the isomorphism $\phi_{\Gamma_R,0;\Gamma,{\mf l}}$
is defined by a chain of identity maps
and Gan-Ginzburg isomorphisms as in \eqref{20180523:eq5}:
\begin{align*}
& W(\mf g,f,\Gamma_R,0)
\,=\,
W(\mf g,f,\Gamma_1,\tilde{\mf l}_0)
\stackrel{\rho^{\Gamma_1}_{\tilde{\mf l}_0,\mf l_1}}{\longrightarrow}
W(\mf g,f,\Gamma_1,\mf l_1)
=\dots
\\
& \dots
=
W(\mf g,f,\Gamma_K,\tilde{\mf l}_{K-1})
\stackrel{\rho^{\Gamma_K}_{\tilde{\mf l}_{K-1},\mf l_K}}{\longrightarrow}
W(\mf g,f,\Gamma_K,\mf l_K)
\,,
\end{align*}
where $\Gamma_K=\Gamma$ and $\mf l_K=\mf l$.
First, note that if $(\Gamma,\mf l)$ and $(\tilde\Gamma,\tilde{\mf l})$
satisfy \eqref{20180419:eq4} (i.e. $\mf m=\tilde{\mf m}$, and hence $\mf p=\tilde{\mf p}$),
we obviously have 
$$
L^{\Gamma,\mf l}(z)=L^{\tilde\Gamma,\tilde{\mf l}}(z)
$$
by the very definition \eqref{20180529:eq1}.
On the other hand, given two isotropic subspaces $\mf l_1,\mf l_2$ of $\mf g^\Gamma[\frac12]$,
we also have, by \eqref{20180529:eq2}
$$
\rho^\Gamma_{\mf l_1,\mf l_2}\big(L^{\Gamma,\mf l_1}(z)\big)
=
L^{\Gamma,\mf l_2}(z)
\,,
$$
since the Gan-Ginzburg isomorphism is induced by the map 
$\bar 1_{\mf m_1}\mapsto\bar 1_{\mf m_2}$
(cf. \eqref{20180419:eq2}-\eqref{20180419:eq1}).
The claim \eqref{20180529:eq4} follows.

\section{An example}\label{sec:8}

In this section we show how to compute the PBW generating set constructed in Proposition \ref{20180405:prop2} in the case of
the partition $\lambda=2^{p}1^{q}$, $p\geq1$ and $q\geq0$.

Recall that, in this case, the $\Gamma$-grading of $\mf g$ is as in Figure \ref{fig11}, $\dim(V_-^d)=p$
and $\dim(V_+)=p+q=r$. Moreover, the $\Gamma$-grading of $\mf g$ is $\mf g=\mf g[-1]\oplus\mf g[0]\oplus\mf g[1]$,
where the homogeneous subspaces are described in \eqref{20180219:eq1}.

It follows by a straightforward computation,
using the recursion \eqref{eq:recW}, equation \eqref{20180601:eq1} and the fact that $D|_{V_-^d}=-r$
(see the proof of Theorem \ref{thm:20180208}), that
\begin{equation}
\begin{split}\label{Wtilde}
\widetilde W(z)=&(z+E_0)\id_{V_-^u}+E_{-1}-(z+E_0)F^t(z-r+E_0)\id_{V_-^d}
\\
&+(z+E_0)\id_{V_-^u}E_0F^t\id_{V_-^d}
\,.
\end{split}
\end{equation}
Recall that $W(z)=\widetilde W(z)\bar1$.
Let $A\in\mf g_0^f=\Hom (V_+,V_-)$.
Then, by equation \eqref{eq:W}, we have
\begin{equation}\label{20180601:eq2}
(\widetilde W(z)|A)\bar1=(W(z)|A)=z\tr((1+zF^t)^{-1}A)+w(\phi_z(a))
\,.
\end{equation}
We use equations \eqref{Wtilde} and \eqref{20180601:eq2} to compute $w(\phi_z(a))$ for every $a\in\mf g_0^f$.
First, note that, in the notation of \eqref{20180502:eq2} 
and Lemma \ref{lem:gfk}c), we have
$$
\mf g^f=\bigoplus_{h,k=0}^1\bigoplus_{\ell=0}^{\min\{h,k\}}\phi_\ell(\mf g^f_0(h,k))
\,,
$$
where (see equation \eqref{eq:phik}), for $A\in\mf g_0^f$,
\begin{equation}\label{20180601:eq3}
\phi_0(A)=A
\qquad
\text{and}
\qquad
\phi_1(A)=F^tA\id_{F^tV_-^d}+\id_{V_-^d}AF^t
\,.
\end{equation}
By equation \eqref{eq:V_-ud} it is clear that
\begin{equation}\label{20180601:eq4}
\mf g_0^f=\Hom(V_+,V_-^d)\oplus\Hom(V_+,V_-^u)
\,.
\end{equation}
Let $A\in\Hom(V_+,V_-^u)$. By equation \eqref{Wtilde} we have
$$
(\widetilde W(z)|A)=(z+E_0|A)=z\tr(A)+a=z\tr(A)+\phi_0(a)
\,.
$$
In the second identity we used the completeness relation \eqref{20180215:eq1} (with $a$ and $A$ exchanged), and in the
last identity we used the definition of $\phi_0$ given in \eqref{20180601:eq3}.
Hence, by equation \eqref{20180601:eq2} we have ($A\in\Hom(V_+,V_-^u)$)
\begin{equation}\label{speriamoultimo1}
w(\phi_z(a))=w(\phi_0(a))=w(a)=\phi_0(a)\bar1=a\bar1
\,.
\end{equation}
Next, let $A\in\Hom(V_+,V_-^d)$.
By equation \eqref{Wtilde} we have
\begin{equation}\label{20180601:eq5}
(\widetilde W(z)|A)
=\big(E_{-1}-(z+E_0)F^t(z-r+E_0)+(z+E_0)\id_{V_-^u}E_0F^t
\big|A\big)
\,.
\end{equation}
The RHS of \eqref{20180601:eq5} is a polynomial of order $2$ in $z$ with coefficients in $U(\mf g[\leq0])$. Let us compute
each contribution from the powers of $z$ separately.
It is immediate to check that the coefficient of $z^2$ in the RHS of \eqref{20180601:eq5} is
\begin{equation}\label{20180601:eq6}
-(F^t|A)=-\tr(F^tA)
\,.
\end{equation}
On the other hand, the coefficient of $z$ in the RHS of \eqref{20180601:eq5} is
\begin{equation}
\begin{split}
\label{20180601:eq7}&\big(\id_{V_-^u}E_0F^t-F^t(E_0-r)-E_0F^t\big|A\big)
\\
&=
\big(E_0\big| F^tA(\id_{V_-^u}-\id_{V_+})-A F^t\big)+r\tr(F^tA)
\\
&=-(E_0\big|\phi_1(A)\big)+r\tr(F^tA)
=-\phi_1(a)+r\tr(F^tA)
\,.
\end{split}
\end{equation}
In the first equality we used the cyclic invariance of the trace form, in the second equality we used the decomposition of $V_+$ provided
by Figure \ref{fig11} and the definition of $\phi_1$ given by \eqref{20180601:eq3}, and in the last equality we used
the completeness relation \eqref{20180215:eq1} (with $a$ and $A$ exchanged).
Finally, let us compute the constant term in $z$ in the RHS of \eqref{Wtilde}.
It is
\begin{equation}\label{20180601:eq8}
\big(E_{-1}\big|A\big)-\big(E_0F^t(E_0-r)\id_{V_-^d}\big|A\big)+\big(E_0\id_{V_-^u}E_0F^t\id_{V_-^d}\big|A\big)
\,.
\end{equation}
Let us compute the three summands in \eqref{20180601:eq7} separately.
Using the completeness relation \eqref{20180215:eq1} (with $a$ and $A$ exchanged) and the definition of $\phi_0$ given in
\eqref{20180601:eq3} we have
\begin{equation}\label{20180601:eq8a}
\big(E_{-1}\big|A\big)=a=\phi_0(a)
\,.
\end{equation}
Using the invariance of the trace form and equation \eqref{eq:Ej} the second and third summands
in \eqref{20180601:eq7} can be rewritten as
\begin{equation}\label{20180601:eq8b}
r\big(E_0\big|F^tA\big)
-\big(E_0\big|F^tE_0A\big)
=r\varphi(F^tA)
-\sum_{i\in I_0}u_i\varphi(AU^iF^t)
\,.
\end{equation}
and
\begin{equation}\label{20180601:eq8c}
\big(E_0\big|\id_{V_-^u}E_0F^tA\big)
=\sum_{i\in I_0}u_i\varphi(F^tAU^i\id_{V_-^u})
\,.
\end{equation}
In equations \eqref{20180601:eq8b} and \eqref{20180601:eq8c} we use the following notation: given $A,B\in\End (V)$,
we denote by $\varphi(AB)\in\mf g$ the element corresponding to the endomorphism $AB$.
Combining equations \eqref{20180601:eq6}, \eqref{20180601:eq7}, \eqref{20180601:eq8a}, \eqref{20180601:eq8b}
and \eqref{20180601:eq8c} we get ($A\in\Hom(V_+,V_-^{d})$)
\begin{align*}
&(\widetilde W(z)|A)=-\tr(F^tA)z^2-(\phi_1(a)-r\tr(F^tA))z
\\
&+\phi_0(a)
+r\varphi(F^tA)
-\sum_{i\in I_0}u_i\varphi(AU^iF^t)
+\sum_{i\in I_0}u_i\varphi(F^tAU^i\id_{V_-^u})
\,.
\end{align*}
Hence, using equation \eqref{20180601:eq2} we get ($A\in\Hom(V_+,V_-^d)$)
\begin{equation}
\begin{split}\label{speriamoultimo2}
&w(\phi_z(a))=-z(\phi_1(a)-r\tr(F^tA))\bar1
+\phi_0(a)\bar1+r\varphi(F^tA)\bar1
\\
&
-\sum_{i\in I_0}u_i\varphi(AU^iF^t)\bar1
+\sum_{i\in I_0}u_i\varphi(F^tAU^i\id_{V_-^u})u_i\bar1
\,.
\end{split}
\end{equation}

We can give an even more explicit form of the generators $w(\phi_z(a))$ by fixing a basis of $\mf{gl}_N$.
Let us number the boxes of the pyramid from $1$ to $N=2p+q$ going first from bottom to top and then from right to left and depict
an elementary matrix $e_{ij}\in\mf{gl}_N$ as an arrow going from box $j$ to box $i$. Then, the entries $\widetilde{W}_{ij}(z)$,
$i,j=1,\dots, r$, of the matrix $\widetilde W(z)$ given by equation \eqref{Wtilde} are the following (recall also \eqref{eq:intro2}):
$$
\widetilde{W}_{ij}(z)=
\left\{
\begin{array}{ll}
\delta_{ij}z+e_{ji}\,,& j>p\,,
\\
e_{j+r,i}-\sum_{h=1}^p(\delta_{ih}z+e_{hi})(\delta_{hj}(z-r)+e_{j+r,h+r})
&
\\
+\sum_{h=p+1}^r(\delta_{ih}z+e_{hi})e_{jh}\,,
&
j\leq p
\,.
\end{array}\right.
$$
Hence, we get
$$
w(\phi_z(e_{ji}))=e_{ji}\bar1\,,
\qquad
i=1,\dots,r\,,j=p+1,\dots,r
\,,
$$
and
\begin{align*}
&w(\phi_z(e_{j+r,i}))=-z\delta_{i\leq p}(e_{ji}+e_{j+r,i+r}-\delta_{ij}r)\bar1+e_{j+r,i}\bar1+re_{ji}\bar1
\\
&-\sum_{h=1}^pe_{hi}e_{j+r,h+r}\bar1+\sum_{h=p+1}^re_{hi}e_{jh}\bar1\,,
\qquad
i=1,\dots,r\,,j=1,\dots,p
\,.
\end{align*}
The above formulas provides a componentwise descriptions of equations \eqref{speriamoultimo1} and \eqref{speriamoultimo2}.



\end{document}